\documentclass[10pt]{article}

\usepackage{authblk}
\usepackage[symbol]{footmisc}

\usepackage{amsfonts}
\usepackage{amsthm}
\usepackage{amsmath}
\usepackage{parskip}
\usepackage{graphicx}
\usepackage{amssymb}
\usepackage{cite}
\usepackage{moresize}
\usepackage{graphicx}
\usepackage{subcaption}
\usepackage{mathtools}
\usepackage{lmodern} 

\usepackage{tikz}
\usepackage{pgfplotstable}
\usetikzlibrary{calc,shapes,arrows,arrows.meta,automata,positioning}
\pgfplotsset{compat=1.18}

\numberwithin{equation}{section}
\mathtoolsset{showonlyrefs}

\usepackage{geometry}
\newgeometry{
    top=3cm,
    bottom=3cm,
    outer=2.5cm,
    inner=2.5cm,
}

\usepackage{color}
\usepackage{xcolor}

\usepackage{makecell}

\usepackage{cite}
\usepackage[colorlinks=true,linkcolor=blue,citecolor=green!65!black]{hyperref}

\newcommand{\A}{\mathcal{A}}

\newcommand{\R}{\mathbb{R}}

\newcommand{\N}{\mathbb{N}}

\newcommand{\Z}{\mathcal{Z}}

\newcommand{\Nash}{\mathcal{N}}

\newcommand{\vareps}{\varepsilon}

\newcommand{\abs}[1]{\left\vert #1 \right\vert}
\newcommand{\norm}[1]{\|#1\|}
\newcommand{\rmd}{\mathrm{d}}

\newtheorem{theorem}{Theorem}[section]
\newtheorem{lemma}[theorem]{Lemma}
\newtheorem{proposition}[theorem]{Proposition}
\newtheorem{corollary}[theorem]{Corollary}

\newtheorem{definition}[theorem]{Definition}

\theoremstyle{remark}
\newtheorem{remark}[theorem]{Remark}
\newtheorem{example}[theorem]{Example}

\begin{document}
    \title{A heterogeneous nonlocal advection--diffusion system}

    

    \author{Joseph McCusker$^{\ast \dagger}$, John Christopher Meyer$^\ast$, Mabel Lizzy Rajendran$^\ast$}

    \date{}
    \maketitle

    \begingroup
    \footnotetext[1]{School of Mathematics, Watson Building, University of Birmingham, Edgbaston, Birmingham, B15 2TT, UK}
    \footnotetext[2]{Corresponding author, jxm1601@bham.ac.uk}
    \endgroup

    
	\begin{abstract}
		We present a self-contained investigation on the local and global well-posedness for a system of nonlocal advection--diffusion equations for a heterogeneous population over $\R^d$, $d \in \N$. Each convolution kernel $K_{ij}$, which describes the nonlocal advection of species $i$ according to the distribution of species $j$, is assumed to have its own regularity $\nabla K_{ij} \in L^{q_{ij}}(\R^d),\, 1 < q_{ij} < \infty$. 
		Local well-posedness of the mild solution and its regularity is obtained using semigroup theory and contraction mapping arguments. 
		For families of kernels that satisfy a given interaction cycle condition, global existence is established using a Nash-type inequality to show an \textit{a priori} energy bound. For a separate class of kernels that need not satisfy the interaction cycle condition, a smallness condition on the initial data is provided for a uniform-in-time bound. Numerical examples are then considered to illustrate the influence of the kernel regularity on the solutions.
    \end{abstract} 

     \noindent\textbf{Keywords:} advection--diffusion, nonlocal PDE, well-posedness, global existence, Nash-type inequality, interaction cycle.

    \noindent\textbf{Mathematics Subject Classification (2020):} 35A01, 35B45, 35R09, 35K40, 05C90

	\section{Introduction}
    Partial differential equations with nonlocal advection are widely used to model phenomena such as aggregation, segregation, and adhesion, as they capture how the dynamics of a given population is influenced by its spatial distribution. In recent years, heterogeneous systems of such equations have accumulated interest for modelling territory formation between different species in ecology~\cite{fagan2017perceptual, potts2019spatial, painter2024biological, wang2023open}, cell migration involving subpopulations with different adhesion strengths~\cite{armstrong2006continuum, chen2020mathematical} and chemotaxis between species~\cite{he2021multi, tello2012stabilization}.
    
	We consider the following Cauchy problem for a nonlocal advection-diffusion system in \(d \geq 1\) spatial dimensions which can be written in component form as
	\begin{equation}\label{eqn:agg_diff} \tag{NLADS}
		\begin{dcases}
			\frac{\partial u_i}{\partial t} = D_i \Delta u_i - \nabla \cdot (u_i \nabla F_i[u]), &\quad (t,x) \in (0,T] \times \R^d, \\
			F_i[u](t,x) :=  \sum_{j=1}^{N} (K_{ij} \ast u_j) (t,x), &\quad (t,x) \in (0,T] \times \R^d,\\
			u_{i}(0,x) = u_{0i}(x), &\quad x \in \R^d,
		\end{dcases}
	\end{equation}
	  where \(\ast\) denotes the spatial convolution
    \begin{equation}
        K \ast v(t,x) := \int_{\R^d} K(x-y) v(t,y) \rmd y
    \end{equation}
    and \(T > 0\). For each \(i \in \{1,\dots, N\}\), \(u_i(t,\cdot)\) denotes the population density of the \(i\)-th species in the model at time \(t\), each with a corresponding initial data \(u_{0i}(x)\) and diffusion constant \(D_i > 0\). For each \(i,\, j \in \{1,\dots, N\}\), \(K_{ij}(x)\) denotes a perception kernel.

	In population and opinion dynamics, perception kernels of the form \(K_{ij} = \gamma_{ij} W_{ij}\) are considered, where \(\gamma_{ij} \in \R\) denotes the attractive (when \(\gamma_{ij} > 0\)) or repulsive (when \(\gamma_{ij} < 0\)) strengths and \(W_{ij}\) is a probability density function describing how species \(i\) perceives the spatial or opinion distribution of species \(j\) within some neighbourhood \cite{giunta2024weakly}. The weighting of the perception at each location is determined via a combination of how clearly species \(i\) observes species \(j\) and how relatively significant the presence of species \(j\) is at said location. Under this interpretation, the nonlocal advection term for species \(i\) describes its movement in the direction of \(\nabla F_i[u]\). Some simple examples of perception kernels include:
    \begin{itemize}
        \item \emph{Uniform/top hat distribution}~\cite{potts2019spatial, fagan2017perceptual}. For some sensing radius \(R > 0\), consider
		\begin{equation}
			x \mapsto \begin{cases}
				1/(\omega_d R^d), &\quad \abs{x} \leq R, \\
				0, & \quad \text{otherwise},
			\end{cases}
		\end{equation}
		where \(\omega_d\) is the volume of a \(d\)-dimensional unit ball.
        \item \emph{Raised cosine distribution}~\cite{giunta2025phylogeny}. For some sensing radius \(R > 0\), consider
		\begin{equation}
			x \mapsto \begin{cases}
				\displaystyle{c_d \left(1 + \cos\left(\frac{\pi\abs{x}}{R}\right)\right)}, &\quad \abs{x} \leq R, \\
				0, & \quad \text{otherwise},
			\end{cases}
		\end{equation}
		where \(c_d > 0\) is some normalisation constant.
        \item \emph{Exponential distribution}~\cite{bernoff2013nonlocal, fagan2017perceptual}. For some \(\lambda > 0\),
		\begin{equation}
			x \mapsto \frac{\lambda^d}{\omega_d} \exp(-\lambda \abs{x}), \quad \abs{x} \geq 0.
		\end{equation}
    \end{itemize}
	While the examples stated above are radially symmetric probability density functions, it is also natural to consider other options for a perception kernel. For instance, kernels without radial symmetry arise in models with anisotropic observations such as in biology, where organisms that can only detect other individuals in their direction of travel~\cite{milewski2008simple} and in opinion dynamics models, where a group of individuals is biased towards a particular direction in an opinion space~\cite{koertje2024collective}. Kernels that are not probability density functions may be chosen for models which account for over-crowding in swarms by including a short-range repulsion in an otherwise attractive perception kernel~\cite{fetecau2011swarm}. 
	
	Another example of kernels that are not probability density functions are kernels of the form \(K_{ij} = \gamma_{ij} W_{ij}\), where \(\gamma_{ij} > 0\) for each \(i,\,j\) and \(W_{ij}\) is the Green's function for the elliptic operator \(\alpha_{ij} I - \Delta\). In this case,~\eqref{eqn:agg_diff} is a multi-species parabolic--elliptic Patlak--Keller--Segel system, which is used for modelling chemotaxis phenomena. Such an example is a biofilm including different species or phenotypes of bacteria that communicate through chemical signalling~\cite{he2021multi}. In literature, \(W_{ij}\) is referred to as the \emph{Newtonian potential} for \(\alpha_{ij} = 0\), and the \emph{Bessel potential} for \(\alpha_{ij} > 0\)~\cite[\S V.3]{stein1970singular}.

	The analysis of~\eqref{eqn:agg_diff} in this work falls into three parts. The first part establishes a broad class of kernels \(K\) and initial data \(u_0\) for which the system is locally well-posed, preserves the non-negativity and conserves the mass of each species in the system. The second part focusses on determining a class of kernels such that solutions of~\eqref{eqn:agg_diff} can be continued indefinitely in time so long as \(u_0 \in L^1 \cap L^\infty(\R^d)^N\), in particular, determining for which kernels a finite-time blow-up of the \(L^p\) norms of the solution cannot occur, hence preventing a \emph{chemotactic collapse}, as studied in~\cite{blanchet2006two,herrero1996chemotactic}. The final part considers the dynamical properties of an auxiliary concentration functional, which measures the ratio between the \(L^p\) and \(L^1\) norms of each species. Consequently, such properties unlock machinery for establishing uniform-in-time bounds of the \(L^p\) norms of solutions of~\eqref{eqn:agg_diff} without requiring the solution to tend to a steady state.

	By considering a family of radially symmetric kernels such that, for each \(i,\,j\), \(\nabla K_{ij} \in L^r(\R^2),\, r > 2\), compactly supported, or \(\nabla K_{ij} \in L^3(\R^3)\) and initial data \(u_{0i} \in W^{3,1} \cap W^{3,\infty}(\R^d)\) for either \(d = 2\) or \(d=3\), for each \(i,\, j\),~\cite{cozzi2025global} demonstrate global well-posedness of~\eqref{eqn:agg_diff}, given a small mass estimate on the initial data. By instead considering~\eqref{eqn:agg_diff} over the $d$-dimensional torus and assuming each kernel \(K_{ij}\) is twice differentiable over the \(d\)-dimensional torus with bounded derivatives and with \(u_{0i}\) twice differentiable for each \(i,\, j \in \{1,\dots,N\}\), \cite{giuntaLocalGlobalExistence2022,giunta2025positivity} establish global existence and uniqueness of local solutions for arbitrary spatial dimension, and classical global solutions in one dimension. Note that the analysis for both results utilises the assumption that all the kernels are (weakly) differentiable. This allows the advection terms in~\eqref{eqn:agg_diff} to be considered as lower-order terms with respect to the second-order linear diffusion terms since, for each \(i,\,j\), it follows that \(\nabla (K_{ij} \ast u_j) = (\nabla K_{ij}) \ast u_j\). As a consequence, local in time solutions can be obtained using semigroup theory, which are then continued to global solutions via \emph{a priori} bounds on the solution.

	To account for kernels with jump discontinuities, hence not weakly differentiable, a more delicate analysis is required. Motivated by the top hat kernel,~\cite{carrillo2024well} prove global well-posedness of~\eqref{eqn:agg_diff} for integrable and even kernels of bounded variation using a subtle application of entropy methods and gradient flow. In order to extend the application of these tools from the homogeneous model to the heterogeneous model, the analysis requires the \emph{detailed balance} condition on the kernels, i.e., there exists some constants \(\pi_1,\dots,\pi_N > 0\) such that \(\pi_i K_{ij} = \pi_j K_{ji}\) for each choice of \(i\) and \(j\), which is necessary and sufficient for maintaining a gradient flow structure. For related gradient flow arguments over the torus, see also\cite{jungel2022nonlocal, carrillo2026long}. 
	
	A motivation for this work is studying the global well-posedness of~\eqref{eqn:agg_diff} with minimal structural constraints between the shapes and regularities of the kernels, hence allowing for greater heterogeneity between the behaviours of the sub-populations. For instance, the analysis in this work allows for scenarios in which the so-called \emph{sensing radius} of species \(i\) observing \(j\) is larger than the sensing radius of the reverse observation. In particular, if species \(i\) is attracted to species \(j\) and species \(j\) is repulsed by species \(i\), then the model returns a range of chase and run scenarios, whose numerical solutions have been studied extensively in~\cite{painter2024variations, giunta2025phylogeny}. 

	To allow for minimal assumptions between the kernels, all perception kernels are assumed to have independent regularities, which has not been previously considered for~\eqref{eqn:agg_diff} to the authors' knowledge. That is, we assume that \(\nabla K_{ij} \in L^{q_{ij}}(\R^d),\, q_{ij} \in (1,\infty)\), for each choice of \(i,\, j \in \{1,\dots,N\}\). A key insight is that a global bound for solutions of~\eqref{eqn:agg_diff} follows in the case that the geometric mean of \(q_{ij}\) values over every \emph{interaction cycle}, as introduced in Definition~\ref{defn:int_cycle}, is bounded below by the number of spatial dimensions \(d\). This allows for some of the cross-perception kernels, \(K_{ij},\, i \neq j\), to be more singular than kernels in the homogeneous setting. For example,\cite{karch2011blow} showed that for the Bessel potential kernel, which has the regularity \(\nabla K \in L^{q}(\R^d)\) for any \(q \in [1,\frac{d}{d-1})\), the single-species analog of~\eqref{eqn:agg_diff} has a finite-time blow-up for dimension \(d \geq 2\). However, if a cross-perception kernel is the Bessel potential, then a global bound remains when the other cross-perception kernels are sufficiently regular to offset the singularity of the Bessel potential.

	Another insight that naturally arises from the analysis in this work concerns how an auxiliary concentration functional of each species evolves over time. For the \(N = 1\) system, this framework can be used to directly establish that, if \(u_0 \in L^1 \cap L^p(\R^d)\), \(u\) has a uniform-in-time \(L^p\) bound if one of the three conditions are satisfied: \(\nabla K \in L^q(\R^d)\) for some \(q \in (d,\infty)\), \(\nabla K \in L^d(\R^d)\) and \(u_0\) has sufficiently small mass, and \(\nabla K \in L^q(\R^d)\) for some \(q \in (1,d)\) and \(\norm{u_0}_{L^p}/\norm{u_0}_{L^1}\) is sufficiently small for some \(p \geq \max\{2, q'\}\). The concentration functional framework can also be used to establish uniform-in-time solutions even when the kernels do not satisfy the geometric mean condition for every interaction cycle, given that the concentrations of each species is sufficiently small.

    \section{Notation}

    Below is an overview of the different function spaces considered throughout the following work. Note that, for any \(T > 0\), \(\Omega_T\) denotes the space \((0,T] \times \R^d\) throughout.
    
	\emph{Lebesgue Spaces.} A measurable function \(\varphi : \R^d \to \R\) is in \(L^p(\R^d)\) for some \(p \in [1,\infty]\) if its norm \(\norm{\varphi}_{L^p}\) is finite, defined as
	\begin{equation}
		\norm{\varphi}_{L^p}:= 
		\begin{dcases}
			{\left(\int_{\R^d} \abs{\varphi}^p \rmd x\right)^{1/p}}, & 1 \leq p < \infty,\\
			{\mathrm{ess\, sup}_{x \in \R^d}\abs{\varphi(x)}}, & p = \infty.
		\end{dcases}
	\end{equation}
	We say that two exponenents \(p,\, p' \in [1,\infty]\) are H\"older conjugates of each other if \(\frac{1}{p} + \frac{1}{p'} = 1\), using the convention \(1/\infty = 0\). For \(1 < p < \infty\), we write \(p' := p/(p-1)\).
	
	\emph{Sobolev spaces.} Let \(\alpha\in \N_0^d\) denote a multi-index with size \(\abs{\alpha} = \alpha_1 + \cdots + \alpha_d\) from which we use the notation \(D^\alpha \varphi := \partial_{x_1}^{\alpha_1} \cdots \partial_{x_d}^{\alpha_d}\, \varphi\).
	A locally integrable function \(\varphi: \R^d \to \R\) is in \(W^{k,p}(\R^d)\) for some \(p \in [1,\infty],\, k \in \N_0\), if \(\varphi\) is \(k\)-times (weakly) differentiable and its norm \(\norm{\varphi}_{W^{k,p}}\) is finite, defined as
	\begin{equation}
		\norm{\varphi}_{W^{k,p}} :=  \sum_{\abs{\alpha}\leq k} \norm{D^\alpha \varphi}_{L^p}.
	\end{equation}   
    Furthermore, \(\nabla \varphi = (\partial_{x_1} \varphi,\dots,\partial_{x_d}\varphi) \in L^p(\R^d)\) when \(\norm{\nabla \varphi}_{L^p} := \sum_{i=1}^d \norm{\partial_{x_i} \varphi}_{L^p}\) is finite. Similarly \(\nabla^2 \varphi = (\partial_{x_i} \partial_{x_j} \varphi)_{i,j=1}^d \in L^p(\R^d)\) when \(\norm{\nabla^2 \varphi}_{L^p} := \sum_{i,\, j=1}^d \norm{\partial_{x_i} \partial_{x_j} \varphi}_{L^p}\) is finite.

	\emph{Bessel potential spaces.} For some \(s \in [0,\infty)\), consider the Bessel potential operator \(\mathcal{J}_s\) defined such that, for any measurable \(\psi\),
	\begin{equation}
		{\mathcal{J}_{s}\psi} := (\mathrm{Id} - \Delta)^{-s/2}\psi = J_s \ast \psi,	
	\end{equation}
	where \(\widehat{J_s} := (1 + \abs{x}^2)^{-s/2}\) is the Fourier transform of \(J_s\). With this in mind, we say that a locally integrable function \(\varphi\) is in \(H^{s,p}(\R^d)\) for some \( s \in [0,\infty)\) and \(p \in (1,\infty)\) if there exists some \(\psi \in L^p(\R^d)\) such that \(\mathcal{J}_s \psi = \varphi\) and we write its norm as 
	\begin{equation}\label{def:BPS_norm}
		\norm{\varphi}_{H^{s,p}} := \norm{\psi}_{L^p}.
	\end{equation}	
	Note that \(H^{k,p}(\R^d)\) and \(W^{k,p}(\R^d)\) are equivalent spaces for any \(k \in \N_0,\, p \in (1,\infty)\), see~\cite[\(\S 6.2\)]{grafakos2009modern} for details.

	\emph{Bochner spaces.} For \(1 \leq p \leq \infty\), \(L^p([0,T];X)\) is the space of functions \(f:[0,T] \to X\), such that \(t \mapsto \norm{f(t)}_X\) is measurable, with the norm \(\norm{f}_{L^p_T X}\) defined as
	\begin{equation}\label{defn:L^infty(X)_norm}
		\norm{f}_{L_T^p X} := 
		\begin{dcases}
            {\left(\int_0^T \norm{f(t)}_{X}^p \rmd t\right)^{1/p}}, & \quad 1 \leq p < \infty, \\
		    {\mathrm{ess\, sup}_{0 \leq t \leq T} \norm{f(t)}_X}, & \quad p = \infty.
		\end{dcases} 
	\end{equation}
	Furthermore, we say that \(f \in C(I;X)\) for some interval \(I \subset [0,T]\) if, for every \(t \in I\),
    \begin{equation}
        \lim_{h \to 0} \norm{f(t+h)-f(t)}_X = 0
    \end{equation}
    and \(f \in C^1(I;X)\) if there exists some \(g \in C(I;X)\) such that, for every \(t \in I\),
    \begin{equation}
        \lim_{h \to 0} \left\|\frac{f(t+h)-f(t)}{h} - g(t) \right\|_X = 0.
    \end{equation}
    In such cases we will denote \(g\) as \(\frac{\rmd}{\rmd t} f\).

    \emph{Mixed Lebesgue spaces.} If \(X = L^q(\R^d)\) for some \(q \in [1,\infty]\), then we will denote \(L^{p,q}_{t,x}(\Omega_T) := L^p([0,T];L^q(\R^d))\). If \(p=q\) then we will simply write \(L^p_{t,x}(\Omega_T)\).

    \emph{Classical spaces.} \(C^k(\R^d)\) denotes the space of \(k\)-times continuously differentiable functions and \(C^\infty(\R^d) := \bigcap_{k\geq 1} C^k(\R^d)\) denotes the space of \emph{smooth} functions. \(C^k_u(\R^d)\) denotes the space of \(k\)-times \emph{uniformly} continuous functions.
    Furthermore, consider the space \(\Omega_T := (0,T] \times \R^d\). \(C^{l,k}_{t,x}(\Omega_T)\) denotes the space of functions \(\varphi : \Omega_T \to \R\) such that \(\varphi(\cdot,x) \in C^l((0,T])\) for each \(x \in \R^d\) and \(\varphi(t,\cdot) \in C^k(\R^d)\) for each \(t \in [0,T]\).
    
	\emph{Intersection spaces.} Now consider \(X,\, Y\) to be a pair of Banach spaces with non-empty intersection. Then \(X \cap Y\) is an \emph{intersection} space with norm 
	\begin{equation}\label{defn:l1_cap_lp}
		\norm{\cdot}_{X \cap Y} := \norm{\cdot}_X + \norm{\cdot}_Y.
	\end{equation}
	\emph{Product spaces.} For any \(N \in \N\), \(P = (p_i)_{i=1}^N \in [1,\infty]^N\) and \(Q = (q_{ij})_{ij} \in [1,\infty]^{N \times N}\), we define the following product spaces as
	\begin{equation}\label{def:prod_leb_space}
			L^P(\R^d) := \bigoplus_{i=1}^N L^{p_i}(\R^d), \quad L^Q(\R^d) := \bigoplus_{i,j=1}^N L^{q_{ij}}(\R^d)
	\end{equation}
	with associated norms \(\norm{\varphi}_{L^P} := \sum_{i=1}^N \{\norm{\varphi_i}_{L^{p_i}}\}\) and \(\norm{\varphi}_{L^Q} := \sum_{i,j=1}^N \{\norm{\varphi_{ij}}_{L^{q_{ij}}}\}\).
    Furthermore, for \(R,\, P \in (1,\infty)^N\), such that \(r_i \leq p_i\) for each \(i \in \{1,\dots,N\}\), define the following spaces
    \begin{equation}
        L^1 \cap L^P(\R^d) := \bigoplus_{i=1}^N L^1 \cap L^{p_i}(\R^d), \quad H_{R,P}^s(\R^d) := \bigoplus_{i=1}^N H^{s,r_i} \cap H^{s,p_i}(\R^d) 
    \end{equation}
    with associated norms \(\norm{\varphi}_{L^1 \cap L^P} := \sum_{i=1}^N \{\norm{\varphi_i}_{L^1 \cap L^{p_i}}\}\) and \(\norm{\varphi}_{H_{R,P}^s} := \sum_{i=1}^N \{\norm{\varphi_i}_{{H^{s,r_i} \cap H^{s,p_i}}}\}\). When \(s = n \in \N\), we will denote \(H_{R,P}^n(\R^d) = W_{R,P}^n(\R^d)\).

    \emph{Sum Lebesgue space.} For some \(Q_1 = (q_{ij1})_{ij},\, Q_2 = (q_{ij2})_{ij} \in (1,\infty]^{N \times N}\) we define
	\begin{equation}
		L^{Q_1}(\R^d) + L^{Q_2}(\R^d) := \{G_1 + G_2 : G_1 \in L^{Q_1}(\R^d),\, G_2 \in L^{Q_2}(\R^d)\},
	\end{equation} 
	with the associated norm \(\norm{H}_{L^{Q_1}+L^{Q_2}} := \inf\{\norm{G_1}_{L^{Q_1}} + \norm{G_2}_{L^{Q_2}}: H = G_1 + G_2\}\).

    \section{Main results}

    We first demonstrate the local in time well-posedness of solutions to~\eqref{eqn:agg_diff} in some sense for rough initial data. To achieve this, consider \emph{mild} solutions of~\eqref{eqn:agg_diff}.

	 \begin{definition}[Mild solution]\label{def:mild}
		Let \(u_0 \in L^1 \cap L^P(\R^d)\) for some \(P \in [1,\infty]^N\) and \(\nabla K \in L^Q(\R^d)\) for some \(Q \in [1,\infty]^{N \times N}\). We say that  \(u = (u_1,\dots,u_N) \in C([0,T];L^1 \cap L^P(\R^d))\) is a \emph{mild} solution of~\eqref{eqn:agg_diff} up to some time \(T > 0\) with kernels \(K\) and initial data \(u_0\) if it is a fixed point of \(\Psi[\cdot;u_0] = (\Psi_1[\cdot;u_0],\dots,\Psi_N[\cdot;u_0])\), defined for each \(t \in [0,T]\) and \(i \in \{1,\dots,N\}\) as
		\begin{equation}\label{eqn:mild_solution}
			\Psi_i[u;u_0](t) := S_i(t)u_{0i} - B_i[u,u](t),
		\end{equation}
		where \(S_i(t)\) denotes the heat semigroup for diffusion constant \(D_i\) and \(B_i[u,u](t)\) is defined for each \(t \in [0,T]\) and \(i \in \{1,\dots,N\}\) to satisfy
		\begin{equation}\label{defn:B}
			B_i[u,v](t) := \int_0^t \nabla S_i(t-\tau) \cdot (u_i(\tau) \nabla F_i[v(\tau)]) \rmd \tau,\quad F_i[v(\tau)] = \sum_{j=1}^N K_{ij} \ast v_j(\tau). 
		\end{equation} 
	\end{definition}

	In Section~\ref{section:LWP} existence of a mild solution is established via a contraction mapping argument, from which continuous dependence on the choice of kernels and initial data follows from a Gr\"onwall-type inequality. These results can be summarised as the following theorem.
    
	\begin{theorem}[Local well-posedness]\label{Thm:local}
		Suppose that \(\nabla K \in L^Q(\R^d)\) for some \(Q \in (1,\infty]^{N \times N}\) and \(u_{0} \in L^1 \cap L^{P}(\R^d)\), for some \(P \in [1,\infty)^N\) such that \(q_{ij}' \leq p_j\) for every \(i,\, j \in \{1,\dots,N\}\). Then, there exists some \(T > 0\) and a unique \(u \in C([0,T]; L^1 \cap L^{P}(\R^d))\) which is a mild solution to~\eqref{eqn:agg_diff} up to time \(T\). Furthermore, the solution in the \(L^1 \cap L^P\) sense continuously depends on the initial data in the \(L^1 \cap L^P(\R^d)\) sense and the gradients of the kernels in the \(L^Q(\R^d)\) sense. 
	\end{theorem}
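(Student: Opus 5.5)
The approach is a Banach fixed point argument for \(\Psi[\cdot;u_0]\) on a closed ball of \(X_T := C([0,T];L^1 \cap L^P(\R^d))\). The centre of the ball is the free evolution \((S_i(t)u_{0i})_i\), which lies in \(X_T\) with norm at most \(\norm{u_0}_{L^1\cap L^P}\) because the heat semigroup is contractive and strongly continuous on \(L^1\) and on \(L^{p_i}\) for \(p_i<\infty\). Everything then reduces to a bilinear estimate on \(B_i[u,v]\).

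\textbf{Step 1: bound the drift.} Fix \(i,j\). By Young's inequality,
\[
\norm{\nabla K_{ij}\ast v_j}_{L^\infty}\le \norm{\nabla K_{ij}}_{L^{q_{ij}}}\norm{v_j}_{L^{q_{ij}'}}.
\]
Since \(1\le q_{ij}'\le p_j\), interpolation between \(L^1\) and \(L^{p_j}\) controls \(\norm{v_j}_{L^{q_{ij}'}}\) by \(\norm{v_j}_{L^1\cap L^{p_j}}\); the case \(q_{ij}=\infty\), where \(q_{ij}'=1\), is included. Hence \(\norm{\nabla F_i[v]}_{L^\infty}\le C\norm{\nabla K}_{L^Q}\norm{v}_{L^1\cap L^P}\), and consequently, for \(r\in\{1,p_i\}\),
\[
\norm{u_i\nabla F_i[v]}_{L^r}\le C\norm{\nabla K}_{L^Q}\norm{u_i}_{L^r}\norm{v}_{L^1\cap L^P}.
\]

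\textbf{Step 2: bilinear estimate.} Use the standard gradient heat kernel bound \(\norm{\nabla S_i(t)f}_{L^r}\le C(D_it)^{-1/2}\norm{f}_{L^r}\), valid for \(1\le r\le\infty\). Integrating gives
\[
\sup_{t\le T}\norm{B_i[u,v](t)}_{L^1\cap L^{p_i}}\le C T^{1/2}\norm{\nabla K}_{L^Q}\norm{u}_{X_T}\norm{v}_{X_T}.
\]
Continuity in time of \(B_i[u,v]\) in \(L^1\cap L^{p_i}\) follows from the integrable singularity \(\tau^{-1/2}\) together with strong continuity of the semigroup, via a dominated convergence argument. This is the mildly technical point.

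\textbf{Step 3: contraction.} Since \(B\) is bilinear,
\[
\Psi[u]-\Psi[w]=-\bigl(B[u-w,u]+B[w,u-w]\bigr).
\]
Taking the ball of radius \(R=2\norm{u_0}_{L^1\cap L^P}\) and \(T\) small enough that \(CT^{1/2}\norm{\nabla K}_{L^Q}R<1/4\), the map \(\Psi\) sends the ball into itself and is a contraction on it. This yields a unique fixed point in the ball. Uniqueness in all of \(X_T\) follows by applying the same difference estimate on a short interval \([0,T_1]\), with \(T_1\) depending on the sup norms of two given solutions, and then iterating over successive intervals.

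\textbf{Step 4: continuous dependence.} Let \(u,\tilde u\) be the solutions for data \((u_0,K)\) and \((\tilde u_0,\tilde K)\), both bounded by some \(M\) on a common interval. Write
\[
u-\tilde u=S(u_0-\tilde u_0)-\bigl(B_K[u,u]-B_{\tilde K}[\tilde u,\tilde u]\bigr),
\]
and split the second difference into a part linear in \(\nabla K-\nabla\tilde K\) and parts linear in \(u-\tilde u\). This gives
\[
\norm{u(t)-\tilde u(t)}\le \norm{u_0-\tilde u_0}+C M^2 T^{1/2}\norm{\nabla K-\nabla\tilde K}_{L^Q}+CM\int_0^t (t-\tau)^{-1/2}\norm{u(\tau)-\tilde u(\tau)}\,\rmd\tau .
\]
A singular (Henry-type) Gr\"onwall inequality then yields Lipschitz dependence.

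The main obstacle is ensuring that the two solutions exist on a common time interval with uniform bounds. I would handle this by noting that the existence time from Step 3 depends only on upper bounds for \(\norm{u_0}_{L^1\cap L^P}\) and \(\norm{\nabla K}_{L^Q}\), so it is uniform over a neighbourhood of \((u_0,K)\).
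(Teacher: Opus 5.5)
Your proposal is correct and follows essentially the same route as the paper: the $L^\infty$ bound on $\nabla F_i$ via Young's inequality and $L^1$--$L^{p_j}$ interpolation (Lemma~\ref{lem:advec_op_bbd}), the $(t-\tau)^{-1/2}$ Duhamel estimate (Lemma~\ref{lem:duhamel_bound_lp}), a contraction on a ball of radius comparable to $\norm{u_0}_{L^1\cap L^P}$ (Proposition~\ref{thm:mild_soln}), and a singular Gr\"onwall inequality applied to the same trilinear splitting of $B^K[u,u]-B^G[v,v]$ (Proposition~\ref{lem:cont_dep}). The differences are minor. The paper obtains uniqueness directly from the Gr\"onwall-based continuous dependence estimate, whereas you iterate the short-time difference estimate. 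Your remark that the existence time is uniform for nearby $(u_0,K)$ makes explicit a point the paper leaves implicit, since it simply assumes both solutions exist on a common interval.
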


	\begin{example}
		Typical examples of kernels satisfying \(\nabla K_{ij} \in L^{q_{ij}}(\R^d)\) for each \(i,\, j \in \{1,\dots,N\}\) have the following behaviours in some neighbourhood around the origin (up to some additive constant) and be smooth with sufficiently fast decay as \(|x| \to \infty\) outside the neighbourhood:
		\begin{itemize}
			\item For \(\nabla K_{ij} \in L^{q_{ij}}(\R^d)\), \(q_{ij} > d\), choose \(K_{ij}\) to be of the form \(K_{ij}(x) \propto \abs{x}^\gamma\), \(
            0 \leq \gamma < 1 - d/q_{ij}\), in some neighbourhood of \(0\).
			\item For \(\nabla K_{ij} \in L^{d}(\R^d)\), \(d \geq 2\), choose \(K_{ij}\) to be of the form \(K_{ij}(x) \propto \log(\log(1+\abs{x}^{-1}))\) in some neighbourhood of \(0\).
			\item For \(\nabla K_{ij} \in L^{q_{ij}}(\R^d)\), \(q_{ij} < d\), \(d \geq 2\), choose \(K_{ij}\) to be of the form \(K_{ij}(x) \propto \abs{x}^{-\gamma}\), \(0 < \gamma < -1 + d/q_{ij}\), or \(K_{ij}(x) \propto \log(\abs{x})\) in some neighbourhood of \(0\).
		\end{itemize}
	\end{example}

	By then asserting the technical assumption that \(q_{ij} < \infty\) for each choice of \(i,\, j \in \{1,\dots,N\}\), the instantaneous smoothing of mild solutions for positive time is then proven in Section~\ref{section:regularity} via a bootstrapping argument on Bessel potential spaces of the type \(H^s_{R,P}(\R^d)\). 

	\begin{theorem}[Regularity]\label{thm:regularity}
		Consider \(Q \in (1,\infty)^{N \times N}\) and some \(R,\, P \in (1,\infty)^N\) such that, for every \(i,\,j \in \{1,\dots,N\}\), \(r_j \leq q_{ij}' \leq p_j\). Suppose that \(u\) is a mild solution to~\eqref{eqn:agg_diff} up to time \(T > 0\) with kernels \(\nabla K \in L^Q(\R^d)\) and initial data \(u_{0} \in L^1 \cap L^{P}(\R^d)\). Then, \(u\) has the regularity 
		\begin{equation} 
			u \in C([0,T];L^1 \cap L^P(\R^d)) \cap C((0,T];W_{R,P}^{n}(\R^d)) \cap C^1((0,T];W_{R,P}^{n-2}(\R^d)),
		\end{equation}
		for any \(n \geq 2\).
	\end{theorem}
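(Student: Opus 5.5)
We will prove Theorem~\ref{thm:regularity} by bootstrapping in the Bessel potential scale \(H^s_{R,P}(\R^d)\), working directly with the mild formulation \eqref{eqn:mild_solution}. The membership \(u \in C([0,T];L^1\cap L^P(\R^d))\) is given by Theorem~\ref{Thm:local}. Two ingredients drive the argument.

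First, the heat semigroup smoothing estimates on \(H^{s,p}\): for \(1<p<\infty\), \(s\ge 0\), \(\sigma\ge0\),
\begin{equation}
\norm{\nabla S_i(t)\varphi}_{H^{s+\sigma,p}} \le C\,(1+t^{-(1+\sigma)/2})\norm{\varphi}_{H^{s,p}}, \qquad \norm{S_i(t)\varphi}_{H^{s+\sigma,p}} \le C\,(1+t^{-\sigma/2})\norm{\varphi}_{H^{s,p}} .
\end{equation}
These follow because \(\mathcal{J}_{-\sigma}\nabla S_i(t)\) is an \(L^p\)-Fourier multiplier (Mikhlin).

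Second, a product estimate for the advective flux \(u_i\nabla F_i[u]\). Since \(\nabla(K_{ij}\ast u_j)=(\nabla K_{ij})\ast u_j\), Young's inequality with \(r_j\le q_{ij}'\le p_j\) gives, by interpolation, \(u_j\in L^{q_{ij}'}\) and hence \(\nabla K_{ij}\ast u_j\in L^\infty\). Moreover derivatives commute with the convolution, so \(\mathcal{J}_{-s}\) (approximately \(D^\alpha\) for integer \(s\)) falls on \(u_j\), and
\begin{equation}
\norm{\nabla F_i[u]}_{H^{s,\infty}\text{-type}} \lesssim \sum_j\norm{\nabla K_{ij}}_{L^{q_{ij}}}\norm{u_j}_{H^{s,q_{ij}'}} .
\end{equation}
For integer \(s\) we use the Leibniz rule, \(\norm{u_i\nabla F_i}_{W^{k,p}}\lesssim \norm{u_i}_{W^{k,p}}\sum_j \norm{\nabla K_{ij}}_{L^{q_{ij}}}\norm{u_j}_{W^{k,r_j}\cap W^{k,p_j}}\), with \(p\in\{r_i,p_i\}\). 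This keeps everything in the intersection norm \(H^s_{R,P}\) without needing \(L^\infty\) control of \(u\). To use \(r_i\) we also need \(u\in L^1\cap L^P\), with \(r_i\ge1\) recovered by interpolation.

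The bootstrap then runs as follows. Fix \(0<t_0<T\), and step the smoothness up by \(\sigma\in(0,1)\) at each stage. Assume \(u\in L^\infty([t_0,T];H^s_{R,P})\). Restart the mild formulation at \(t_0\), writing \(u_i(t)=S_i(t-t_0)u_i(t_0)-\int_{t_0}^t\nabla S_i(t-\tau)\cdot(u_i\nabla F_i[u])\rmd\tau\); this is justified by the semigroup property and uniqueness. The linear term lies in \(H^{s+\sigma}\) for \(t>t_0\). The Duhamel term is bounded by \(\int_{t_0}^t (t-\tau)^{-(1+\sigma)/2}\rmd\tau<\infty\) times the product bound. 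Hence \(u\in L^\infty([t_1,T];H^{s+\sigma}_{R,P})\) for any \(t_1>t_0\). Since \(t_0\) is arbitrary, iterating finitely many times reaches any \(n\). The base case \(s=0\) is the local well-posedness bound itself.

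The main obstacle is the fractional product estimate for non-integer \(s\) in the intersection spaces, since \(\nabla F_i\) is only controlled through \(u\) and not in \(L^\infty\)-Sobolev form. We would sidestep it by taking integer steps of size \(1\). Gaining one derivative costs \((t-\tau)^{-1}\), which is not integrable, so instead we split one step into two half-steps: from \(W^{k}\) to \(H^{k+1/2}\) using \(\sigma=1/2\) (cost \((t-\tau)^{-3/4}\)), then use the embedding \(H^{k+1/2}\subset W^{k}\) together with the product bound in \(W^k\) to reach \(H^{k+1-\epsilon}\). If needed we would fall back on the fractional Leibniz (Kato--Ponce) inequality.

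Continuity in time into \(W^n_{R,P}\) on \((0,T]\) follows from strong continuity of \(S_i\) on \(H^{s,p}\) together with dominated convergence in the Duhamel integral, using the extra regularity \(n+1\) from the bootstrap. For the \(C^1\) statement we argue as follows:

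\begin{itemize}
\item Because \(u\in C((t_0,T];W^{n+2}_{R,P})\), the flux \(u_i\nabla F_i[u]\) is continuous in \(W^{n+1}\), with \(n+2\) chosen large.
\item Standard semigroup theory (a Duhamel term with a Hölder-in-time forcing lying in the domain of the generator) then gives that \(u\) is a classical solution in \(W^{n-2}_{R,P}\).
\item The derivative is \(\frac{\rmd}{\rmd t}u_i=D_i\Delta u_i-\nabla\cdot(u_i\nabla F_i[u])\), which is continuous into \(W^{n-2}_{R,P}\).
\item The required Hölder continuity in time is obtained from one further bootstrap level.
\end{itemize}
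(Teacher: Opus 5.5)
Your architecture matches the paper's. You restart the mild formulation at intermediate times, gain a fractional amount of smoothness per step in $H^s_{R,P}$ with an integrable singularity such as $(t-\tau)^{-3/4}$, iterate, and then read off $\frac{\rmd}{\rmd t}u$ from the mild formula.

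\textbf{The gap: the product estimate.} Your ``sidestep'' using only integer-order Leibniz bounds cannot work.
\begin{itemize}
\item If $u\in H^{s}$, an integer-order product bound controls the flux $u_i\nabla F_i[u]$ only in $W^{k}$ with $k=\lfloor s\rfloor$.
\item The Duhamel term then gains at most $H^{k+\sigma}$ with $\sigma<1$, since $\nabla S_i(t-\tau):W^{k,p}\to H^{k+\sigma,p}$ costs $(t-\tau)^{-(1+\sigma)/2}$, which is integrable only for $\sigma<1$.
\item Since $\lfloor k+1-\epsilon\rfloor=k$, the iteration saturates at $H^{k+1-\epsilon}$. Starting from $s=0$ it already stalls at $H^{1-\epsilon}$.
\end{itemize}
So you never reach $W^{k+1}$, which is what your integer induction requires. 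Your two half-steps end exactly at $H^{k+1-\epsilon}$. The second half-step is in fact redundant, because the first could take $\sigma=1-\epsilon$ directly. A bound for the flux in $H^{s}_{R,P}$ with non-integer $s$ is therefore essential, not a contingency.

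\textbf{How the paper closes it.} Lemma~\ref{lem:tri_lin_s_bound} supplies this bound cheaply. The bilinear map $(\varphi,\psi)\mapsto\varphi\nabla(\mathcal{K}\ast\psi)$ satisfies the integer bound \eqref{ineq:tri-lin-k} from $W^{k,r}\times W^{k,q'}$ to $W^{k,r}$ for every $k\in\N_0$; this is exactly your Leibniz/Young estimate. Multilinear complex interpolation (Lemma~\ref{lem:multi_int}), using $[H^{k^0,p},H^{k^1,p}]_\theta=H^{s,p}$, transfers it to every real $s\ge0$. Lemma~\ref{lem:bi-lin_bound} then gives the flux bound in $H^s_{R,P}$. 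With that, the $\sigma=1/2$ bootstrap (Lemma~\ref{lem:mild_sol_extra_reg} and the induction in Proposition~\ref{thm:smoothing}) runs as you outline.

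\textbf{Other ways to repair it.} Your Kato--Ponce fallback also works, but it must be the main tool. Use the form $\norm{fg}_{H^{s,p}}\lesssim\norm{f}_{H^{s,p}}\norm{g}_{L^\infty}+\norm{f}_{L^p}\norm{\mathcal{J}_{-s}g}_{L^\infty}$ with $L^\infty$ endpoints (e.g.\ Grafakos--Oh), together with $\norm{\mathcal{J}_{-s}\nabla F_i[u]}_{L^\infty}\le\sum_j\norm{\nabla K_{ij}}_{L^{q_{ij}}}\norm{u_j}_{H^{s,q_{ij}'}}$. Alternatively, you could gain a full derivative per step via H\"older-in-time maximal regularity, which you currently invoke only for the $C^1$ part.

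\textbf{On the $C^1$ part.} The paper does not need H\"older continuity in time. It differentiates the Duhamel term under the integral. Once the flux lies in $C(W^{n})$, the integrand $\nabla S_i(t-\tau)\cdot\Delta f_i(\tau)$ is integrable in $\tau$ with values in $W^{n-2}$. In other words, it trades spatial regularity of the forcing for time integrability, so your extra bootstrap level for H\"older continuity is unnecessary.
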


	Using a Sobolev embedding argument, it can then be concluded that mild solutions of~\eqref{eqn:agg_diff} are classical solutions in the following sense.

	\begin{definition}[Classical solution]\label{defn:classical-sol}
		Let \(u_0 \in L^1 \cap L^P(\R^d)\) for some \(P \in [1,\infty)^N\) and \(\nabla K \in L^Q(\R^d)\) for some \(Q \in (1,\infty]^{N \times N}\). We say that  \(u = (u_1,\dots,u_N)\) is a \emph{classical} solution of~\eqref{eqn:agg_diff} up to some time \(T > 0\) with kernels \(K\) and initial data \(u_0\) if it satisfies the following for each \(i \in \{1,\dots,N\}\):
		\begin{itemize}
			\item $u_i \in C([0,T];L^1 \cap L^{p_i}(\R^d)) \cap C((0,T]; C_u^{2}(\R^d))$,
			\item $\partial_t u_i,\, \nabla u_i,\, \nabla \cdot (u_i \nabla F_i[u]) \in C((0,T];C_u(\R^d) \cap L^{p_i}(\R^d))$,
			\item $u(t,x)$ solves~\eqref{eqn:agg_diff} pointwise for every \((t,x) \in (0,T] \times \R^d\),
			\item \(u(t) \to u_0\) in the \(L^1 \cap L^P\) sense as \(t \to 0^+\).
		\end{itemize}
	\end{definition}

	With classical derivatives of the solution, non-negativity of solutions can them be established in Section~\ref{section:preserved}, which in turn allows us to demonstrate conservation of mass.

	\begin{theorem}[Non-negativity and conservation of mass]\label{thm:properties}
		Suppose that \(u\) is a solution to~\eqref{eqn:agg_diff} up to time \(T > 0\) with kernels \(\nabla K \in L^Q(\R^d)\) for some \(Q \in (1,\infty)^{N \times N}\) and initial data \(u_{0} \in L^1 \cap L^{P}(\R^d)\) for some \(P \in (1,\infty)^N\) such that \(q_{ij}' \leq p_j\) for every \(i,\, j \in \{1,\dots,N\}\). Further suppose that \(u_{0i} \geq 0\) almost everywhere for all \(i\). Then, for each \(i \in \{1,\dots, N\}\) and \(t > 0\), \(u_i(t) \geq 0\) and \(\norm{u_i(t)}_{L^1} = \norm{u_{0i}}_{L^1}\).
	\end{theorem}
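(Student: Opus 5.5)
The plan is to use the classical regularity from Theorem~\ref{thm:regularity} together with the Sobolev embedding, so that each \(u_i\) is a classical solution in the sense of Definition~\ref{defn:classical-sol}. We then run a Stampacchia-type truncation argument for the negative part, followed by integrating the equation for the mass.

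\emph{Non-negativity.} Write \(u_i^- := \max\{-u_i,0\}\) and set \(b_i := \nabla F_i[u] = \sum_j (\nabla K_{ij}) \ast u_j\). By Young's inequality with \(q_{ij}' \le p_j\) (interpolating between \(L^1\) and \(L^{p_j}\)), \(b_i(t) \in L^\infty\). Differentiating \(\nabla b_i = \sum_j \nabla K_{ij} \ast \nabla u_j\) and using the regularity in \(W^n_{R,P}\) gives \(\nabla b_i(t) \in L^\infty\), uniformly on \([\delta,T]\) for each \(\delta>0\). We compute \(\frac{\rmd}{\rmd t}\tfrac12\norm{u_i^-}_{L^2}^2\). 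To make this legitimate we test the equation against \(-u_i^-\), which lies in \(L^2\) because \(u_i \in L^1\cap L^{p_i}\subset L^2\) after the bootstrap, or because \(u_i \in L^1 \cap L^\infty\) for positive times by the embedding. We use \(\partial_t u_i \in C((0,T];L^{p_i})\) and the \(W^{n-2}\) time regularity to justify the differentiation, and we integrate by parts, with the boundary terms vanishing since \(u_i, \nabla u_i\) lie in \(L^2\) and are uniformly continuous. This gives
\begin{equation}
\frac{\rmd}{\rmd t}\frac12\norm{u_i^-}_{L^2}^2 = -D_i\norm{\nabla u_i^-}_{L^2}^2 - \int_{\R^d} u_i^- \, b_i\cdot\nabla u_i^- \,\rmd x \le -\frac{D_i}{2}\norm{\nabla u_i^-}_{L^2}^2 + \frac{\norm{b_i}_{L^\infty}^2}{2D_i}\norm{u_i^-}_{L^2}^2 .
\end{equation}
The advection term uses only \(\norm{b_i}_{L^\infty}\), so we never need the divergence of \(b_i\). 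Gr\"onwall on \([s,t]\) gives \(\norm{u_i^-(t)}_{L^2}^2 \le e^{C(t-s)}\norm{u_i^-(s)}_{L^2}^2\), where \(C\) depends on \(\sup_{[0,T]}\norm{b_i}_{L^\infty}\). This supremum is finite because \(u \in C([0,T];L^1\cap L^P)\) and Young's inequality control \(b_i\) uniformly up to \(t=0\). Letting \(s\to0^+\) requires \(u_i^-(s)\to u_{0i}^-=0\) in \(L^2\). If \(p_i \ge 2\), this follows from \(L^1\cap L^{p_i}\) continuity and the Lipschitz property of \(x\mapsto x^-\).

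\emph{Main obstacle.} The main obstacle is the case \(p_i<2\), where \(L^2\) continuity at \(t=0\) fails. There I would instead use the \(L^1\) version: test against a smooth convex approximation \(\eta_\epsilon(u_i)\) of \(u_i^-\), for instance \(\eta_\epsilon(s)=\sqrt{s^2+\epsilon^2}-\epsilon\) on \(s<0\) and zero otherwise. The advection term becomes \(\int u_i\eta_\epsilon''(u_i)\, b_i\cdot\nabla u_i\). It is supported where \(|u_i|\lesssim\epsilon\) and is absorbed by the diffusion term \(D_i\int\eta_\epsilon''|\nabla u_i|^2\), leaving an error of size \(\epsilon\norm{b_i}_{L^\infty}^2\,|\{-\epsilon<u_i<0\}|\)-type terms. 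Since \(|u_i\eta_\epsilon''(u_i)|\le 1\), the error is at most \(C\norm{b_i}_{L^\infty}^2\int_{\{u_i<0\}}|u_i|\eta''_\epsilon(u_i)\lesssim \epsilon\)-weighted and vanishes as \(\epsilon\to0\), yielding \(\norm{u_i^-(t)}_{L^1}\le \norm{u_i^-(s)}_{L^1}\). Letting \(s\to0\) with \(L^1\) continuity then gives \(u_i^-\equiv0\). Alternatively, one could approximate \(u_0\) by smooth nonnegative data in \(L^1\cap L^{\max(P,2)}\) and use the continuous dependence in Theorem~\ref{Thm:local}.

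\emph{Conservation of mass.} Once \(u_i\ge0\), we have \(\norm{u_i(t)}_{L^1}=\int u_i(t)\,\rmd x\). For \(0<s<t\), integrate \(\partial_t u_i = \nabla\cdot(D_i\nabla u_i - u_i b_i)\) over \(\R^d\). The flux \(D_i\nabla u_i - u_i b_i\) lies in \(W^{1,1}\), since \(u_i \in W^{n,r_i}\cap W^{n,p_i}\) with \(u_i\in L^1\), and \(b_i,\nabla b_i\in L^\infty\). Hence its divergence integrates to zero, which we check by pairing with cutoffs \(\chi(x/R)\) and letting \(R\to\infty\). Thus \(\int u_i(t)=\int u_i(s)\), and letting \(s\to0^+\) with \(L^1\) continuity gives \(\norm{u_i(t)}_{L^1}=\norm{u_{0i}}_{L^1}\).
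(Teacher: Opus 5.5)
Your proof is correct. For non-negativity it takes a genuinely different route from the paper. For mass conservation it is essentially the paper's argument.

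\textbf{Comparison of the non-negativity arguments.} The paper uses a pointwise maximum principle. It applies the Li--Rodrigo result (Lemma~\ref{lem:AD_positive}) to the time-shifted classical solution, which needs the divergence of the drift, $\Delta F_i[u]=\sum_j\nabla K_{ij}\ast\nabla u_j$, to be bounded. It reaches $t=0$ by mollifying $u_0$, applying the same lemma to the approximate solutions $u^\vareps$ on a uniform interval $(0,t_0]$, and passing to the limit with the continuous dependence estimate of Proposition~\ref{lem:cont_dep}.

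Your Stampacchia/Kato-type estimate on the negative part needs two ingredients:
\begin{itemize}
\item the bound $\sup_{[0,T]}\norm{b_i}_{L^\infty}<\infty$, which holds down to $t=0$ by Young's inequality and $q_{ij}'\le p_j$;
\item the smoothness for $t>0$, to justify the integrations by parts.
\end{itemize}
The rough initial data are then handled directly through the continuity of $u_i$ in $L^1$ at $t=0$.

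This gives a self-contained proof. You need no bound on $\nabla\cdot b_i$ and no approximation of the data. You also avoid needing bounds on $u^\vareps$ and its derivatives uniformly down to $t=0$, which the paper requires to apply the lemma on $(0,t_0]$. Your fallback of smoothing the data and invoking continuous dependence is essentially the paper's route. The paper's route, in exchange, reuses an off-the-shelf maximum principle and the continuous-dependence estimate it has already proved.

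Note that your $L^1$ version works for every $p_i>1$, so you can drop the $L^2$ computation and the case split. Incidentally, the advection term in your $L^2$ identity carries a plus sign. This is harmless, since you bound it in absolute value.

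\textbf{Details to tidy.}

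First, the error term in the $L^1$ argument. For your $\eta_\vareps$, on $s<0$ one has $\eta_\vareps''(s)=\vareps^2(s^2+\vareps^2)^{-3/2}$, which is not supported near $0$. The correct bookkeeping is as follows. After absorbing into the diffusion term via $xy\le D_ix^2+y^2/(4D_i)$, the remainder is $\frac{\norm{b_i}_{L^\infty}^2}{4D_i}\int\eta_\vareps''(u_i)u_i^2\,\rmd x$. Your observation $\abs{u_i\eta_\vareps''(u_i)}\le 1$ gives $\eta_\vareps''(u_i)u_i^2\le\abs{u_i}$. This quantity also tends to $0$ pointwise as $\vareps\to0$, so its integral over $[s,t]\times\R^d$ vanishes by dominated convergence.

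Second, the flux in the mass argument. The flux is not known to lie in $W^{1,1}$: Theorem~\ref{thm:regularity} only gives $\nabla u_i\in L^{r_i}$ with $r_i>1$. This is not needed. Test with $\chi(\cdot/R)$, move the derivatives onto the cutoff, integrate over $[s,t]$, and then let $R\to\infty$. This uses only $u_i\in L^1$ and $b_i\in L^\infty$, exactly as in the paper.
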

    
	In the Section~\ref{section:global}, the focus then shifts to finding sufficient conditions for global in time solutions. This is acheived by balancing the regularities of the perception kernels. A key concept utilised for this purpose is the \emph{perception graph} of the system described by~\eqref{eqn:agg_diff}. 
	
	\begin{definition}[Perception graph]
		The \emph{perception graph} corresponding to the system~\eqref{eqn:agg_diff} is a directed graph with \(N\) nodes, constructed as follows. First enumerate each node from \(1\) up to \(N\). Then, for every pair of nodes \(i,\, j \in \{1,\dots,N\}\), draw an edge from node \(i\) to node \(j\) if there exists some \(\varphi \in C^1(\R^d)\) such that \(\nabla (K_{ij} \ast \varphi) \neq 0\).
	\end{definition} 

	\begin{remark}
		For kernels of the form \(\nabla K_{ij} \in L^{q_{ij}}(\R^d)\) for some \(q_{ij} \in [1,\infty]\), the condition for drawing an edge on the perception graph is equivalent to the assertion that \(\nabla K_{ij}\) is \emph{non-trivial}, i.e.\ \(\norm{\nabla K_{ij}}_{L^{q_{ij}}} > 0\). 
	\end{remark}

	To illustrate the definition, consider the following example which will be referred to throughout the work.

	\begin{example}[Perception graph of a 4-system]\label{ex:perp-graph}
		Suppose the family of kernels \(K\) is given as
		\begin{equation}\label{def:4-sys-kernels}
			K = \begin{pmatrix}
				0 & K_{12} & 0 & 0 \\
				K_{21} & 0 & K_{23} & K_{24} \\
				K_{31} & 0 & 0 & 0 \\
				0 & 0 & 0 & K_{44}
			\end{pmatrix}
		\end{equation}
		where, for each given \(K_{ij}\), \(\nabla K_{ij}\) is non-trivial. Then, the corresponding perception graph of the system~\eqref{eqn:agg_diff} with respect to \(K\) is given in Figure~\ref{fig:eg-perp-graph}.

		\begin{figure}[ht]
			\centering
			\begin{tikzpicture}[state/.append style={minimum size=6mm, inner sep=1pt, thick}]
				\node[state] (1) at (-2, 1) {$1$};
				\node[state] (2) at (0, 0) {$2$};
				\node[state] (3) at (-2, -1) {$3$};
				\node[state] (4) at (1.5, 0) {$4$};
				
				\path[->, >=angle 90]
					(1) edge [thick, bend left=30] (2)
					(2) edge [thick, bend left=30] (1)
					(2) edge [thick, bend left=30] (3)
					(2) edge [thick] (4)
					(3) edge [thick, bend left=30] (1)
					(4) edge [thick, out = -45, in = 45, looseness=12] (4);
			\end{tikzpicture}			
			\caption{A perception graph for the system corresponding to perception kernels given by~\eqref{def:4-sys-kernels}.} 
			\label{fig:eg-perp-graph}
		\end{figure}
	\end{example}
	
	By considering the perception graph for a system, it becomes clear how each species influences their own movement. In Figure~\ref{fig:eg-perp-graph}, observe that species $4$ influences itself directly via the perception kernel $K_{44}$. On the other hand, by following the reverse directions of the perceptions, notice that species $1$ also influences its own movement indirectly via two \emph{interaction cycles}, defined as follows. 

	\begin{definition}[Interaction cycle]\label{defn:int_cycle}
        Suppose \(\nabla K \in L^Q(\R^d)\) for some \(Q \in (1,\infty)^{N \times N}\). For \(n \in \N\), define an $n$-tuple of perception kernels \((K_{i_1 i_2},\dots, K_{i_n i_1})\) corresponding to the index cycle \(i_1 \rightarrow \cdots \rightarrow i_n \rightarrow i_1\) as an \emph{interaction cycle of \(K\)} if \(i_1,\dots,i_n\) are all distinct and \(\nabla K_{i_{k}i_{k+1}}\) is non-trivial for every \(k \in \{1,\dots, n\}\), where \(i_{n+1} = i_1\).
	\end{definition}

	Referring back to species 1 in Figure~\ref{fig:eg-perp-graph}, we see that it influences its own movement via the interaction cycles \((K_{12},K_{21})\) and \((K_{12}, K_{23}, K_{31})\).

	To control the growth of the \(L^P\) norm of the solution, and consequently continue the solution globally in time, it is sufficient to bound an energy functional using a Nash-type inequality, as seen in Section~\ref{section:global}. To conclude the argument, a regularity condition on each interaction cycle is imposed. Said condition can be characterised by a geometric mean of the exponenents \(q_{ij}\) along each of the cycles, as seen in the following theorem.

	\begin{theorem}[Global existence]\label{thm:global-1}
		Suppose that \(u\) is a solution to~\eqref{eqn:agg_diff} for kernels \(\nabla K \in L^Q(\R^d)\) for some \(Q \in (1,\infty)^{N \times N}\) and initial data \(u_0 \in L^1 \cap L^P(\R^d)\) for some \(P \in [2,\infty)^N\) such that \(q_{ij}' \leq p_j\) for every \(i,\, j \in \{1,\dots,N\}\). Further suppose that, for every interaction cycle of \(K\), the corresponding exponents \(q_{ij}\) satisfy
		\begin{equation}\label{cond:reg_cycle-intro}
            \left(q_{i_1 i_2} \cdots q_{i_n i_1} \right)^{1/n} \geq d.
        \end{equation}
        Then, after refining the choice of \(P\) to satisfy
        \begin{equation}
			\frac{d(p_i-1) + 2}{d(p_j-1) + 2} \leq \frac{q_{ij}}{d}
		\end{equation}
		for all \(i,\, j \in \{1,\dots, N\}\) such that \(\nabla K_{ij}\) is non-trivial, the solution to~\eqref{eqn:agg_diff} can be continued in time up to any time \(T > 0\).
	\end{theorem}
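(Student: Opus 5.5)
The plan is to derive an \emph{a priori} bound on \(\norm{u(t)}_{L^P}\) on any finite interval \([0,T]\), and then invoke Theorem~\ref{Thm:local}. The local existence time there depends only on \(\norm{u_0}_{L^1\cap L^P}\) and \(\norm{\nabla K}_{L^Q}\), so such a bound lets the solution be restarted indefinitely. By Theorems~\ref{thm:regularity} and~\ref{thm:properties}, for \(t>0\) the solution is classical, non-negative and mass conserving: \(\norm{u_i(t)}_{L^1}=M_i:=\norm{u_{0i}}_{L^1}\). This justifies differentiating \(\norm{u_i}_{L^{p_i}}^{p_i}\) in time. We may also assume \(u_0\ge 0\), or else work with \(\abs{u_i}\) formally.

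\textbf{Step 1 (energy identity).} Integrating by parts gives
\begin{equation}
\frac{\rmd}{\rmd t}\int u_i^{p_i} = -\frac{4D_i(p_i-1)}{p_i}\norm{\nabla u_i^{p_i/2}}_{L^2}^2 + 2(p_i-1)\sum_j \int u_i^{p_i/2}\,\nabla u_i^{p_i/2}\cdot(\nabla K_{ij}\ast u_j).
\end{equation}
We bound each cross term by H\"older and Young's convolution inequality:
\begin{equation}
\norm{\nabla u_i^{p_i/2}}_{L^2}\,\norm{u_i^{p_i/2}}_{L^{a}}\,\norm{\nabla K_{ij}}_{L^{q_{ij}}}\,\norm{u_j}_{L^{b}},
\qquad \tfrac1a+\tfrac1{q_{ij}}+\tfrac1b=\tfrac12+1 .
\end{equation}
Setting \(w_i=u_i^{p_i/2}\), mass conservation gives \(\norm{w_i}_{L^{2/p_i}}^{2/p_i}=M_i\). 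A Nash / Gagliardo--Nirenberg inequality then interpolates \(\norm{w_i}_{L^a}\) and \(\norm{u_j}_{L^b}=\norm{w_j}_{L^{2b/p_j}}^{2/p_j}\) between this mass and the dissipations \(X_i:=\norm{\nabla w_i}_{L^2}\). The result is a bound of the form \(C\,X_i^{1+\alpha_{ij}}X_j^{\beta_{ij}}\). The choice of \(a,b\), which is admissible because \(q_{ij}'\le p_j\), and the refinement \(\frac{d(p_i-1)+2}{d(p_j-1)+2}\le \frac{q_{ij}}{d}\) are exactly what should make the total homogeneity \(1+\alpha_{ij}+\beta_{ij}\) relative to the scaling of the dissipation \(X_i^2\) subcritical or critical. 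I would check this by a scaling computation, using that \(d(p-1)+2\) is the natural exponent in the Nash inequality for \(u^{p/2}\).

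\textbf{Step 2 (weights from interaction cycles).} Consider the weighted energy \(E=\sum_i \lambda_i\norm{u_i}_{L^{p_i}}^{p_i}\). Young's inequality splits each \(X_i^{1+\alpha_{ij}}X_j^{\beta_{ij}}\) into \(\varepsilon\lambda_iX_i^2+\varepsilon\lambda_jX_j^2\) plus a lower-order constant. This requires, for every edge \(i\to j\) of the perception graph, an inequality relating the weight and exponent choices at \(i\) and at \(j\). Taking logarithms, these are potential inequalities of the form \(\phi_i-\phi_j\le c_{ij}\), where \(c_{ij}\) involves \(\log(q_{ij}/d)\) together with the \(p\)-refinement. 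A system of difference constraints is solvable iff every directed cycle has non-negative total weight (the Bellman--Ford criterion). Along a cycle the \(p\)-terms telescope, leaving \(\sum_k\log(q_{i_ki_{k+1}}/d)\ge0\), which is precisely condition~\eqref{cond:reg_cycle-intro}. Self-loops are the case \(n=1\), \(q_{ii}\ge d\). The same telescoping is what makes the edgewise refinement of \(P\) achievable in the first place.

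\textbf{Step 3 (closing).} Absorb the dissipation to obtain \(\frac{\rmd}{\rmd t}E\le -cE^{1+\kappa}+C\), or at worst \(\le C(1+E)\), using Nash to compare \(X_i^2\) with \(\norm{u_i}_{L^{p_i}}\). Gr\"onwall then bounds \(E\) on \([0,T]\) for every \(T\). The main obstacle is the critical case, where the cycle product equals \(d^n\) and the homogeneities coincide. There Young's inequality cannot produce a small \(\varepsilon\) for free. My first attempt would be to extract smallness by splitting \(\nabla K_{ij}=G_1+G_2\) with \(\norm{G_1}_{L^{q_{ij}}}\) small and \(G_2\in L^\infty\), which is possible since \(q_{ij}<\infty\). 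The \(G_1\) part is then absorbed, and the \(G_2\) part is treated as an \(L^1\)-mass-controlled lower-order term.
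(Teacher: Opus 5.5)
This is essentially the paper's argument: the a priori energy bound of the Proposition in the global-existence section, combined with the Bellman--Ford lemma that constructs $P$. With $a=2$, $b=q_{ij}'$ your scaling check gives exactly $(1-1/c_i)+d/(c_jq_{ij})\le 1\iff c_i/c_j\le q_{ij}/d$ for $c_i=d(p_i-1)+2$. The strict case is absorbed by Young's inequality, and the equality case by splitting $\nabla K_{ij}$ into a bounded part and a part small in $L^{q_{ij}}$, exactly as you propose; this gives $\frac{\rmd}{\rmd t}\mathcal{E}_P\le\mathcal{C}$ and hence continuation.

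The one thing to correct is Step 2. The weights $\lambda_i$ do no work, since multiplying the equations by constants cannot change homogeneities; the paper simply uses $1/(2(p_i-1))$. The difference constraints live on the exponents themselves, $x_i=-\log(d(p_i-1)+2)$ with $x_j-x_i\le\log(q_{ij}/d)$. Bellman--Ford solves this system precisely when the cycle condition holds, and a common shift of all $x_i$ then makes $p_j\ge\max\{2,q_{ij}'\}$.
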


	\begin{example}\label{ex:good-cycles}
		Consider the kernels \(K\) as given in Example~\ref{ex:perp-graph}. From the corresponding perception graph it can be seen that there are three interaction cycles: \((K_{44})\), \((K_{12}, K_{21})\), and \((K_{12}, K_{23}, K_{31})\). Suppose \(\nabla K \in L^Q(\R^d)\) for \(d = 4\) and 
		\begin{equation}
			Q = \begin{pmatrix}
				- & 12 & - & - \\
				3 & - & 2 & 2 \\
				9 & - & - & - \\
				- & - & - & 5
			\end{pmatrix}
		\end{equation}
		where the `$-$' entries correspond to the trivial kernels and hence can be arbitrarily chosen between \(1\) and \(\infty\). By verifying that 
		\begin{align*}
			q_{44} &= 5 \geq d, \\
			(q_{12} q_{21})^{1/2} &= 6 \geq d, \\
			(q_{12} q_{23} q_{31})^{1/3} &= 6 \geq d,
		\end{align*}
		we can conclude via Theorem~\ref{thm:global-1} that the solutions exist globally in time. Note that since the kernel \(K_{24}\) is not contained within an interaction cycle, its regularity has no influence on the continuation of solutions.
	\end{example}

	Note that while Theorem~\ref{thm:global-1} establishes a condition on the continuation of the solution, it is insensitive to the size and shape of the initial data. Therefore, similar to establishing a small mass estimate for chemotaxis models, in Section~\ref{section:dynamics} we proceed to establish a uniform in time bound for the \(L^P\) norms of the solution, depending on the smallness condition on the initial data. To achieve this, first observe the following dynamical property of a concentration functional for each species.

	\begin{theorem}[Dynamical property]\label{thm:conc-dynam}
		Suppose \(\nabla K \in L^Q(\R^d)\) for some \(Q \in (1,\infty)^{N \times N}\) and \(u_0 \in L^1 \cap L^P(\R^d)\) such that \(u_0 \geq 0\) almost everywhere, for some \(P \in [2,\infty)^N\) such that \(q_{ij}' \leq p_j\) for every \(i,\, j \in \{1,\dots,N\}\). For each species \(u_i\) with corresponding exponent \(p_i\) define the concentration functional
		\begin{equation}\label{defn:rho_(intro)}
			\rho_i(t) := \left(\norm{u_i(t)}_{L^{p_i}}/M_i\right)^{p'_i/d}, \quad M_{i} := \norm{u_{0i}}_{L^1}.
		\end{equation}
		If \(\rho_i(t)\) satisfies \(\rho_i(t) \geq \Phi_i(\rho(t))\) for some time \(t > 0\), where \(\Phi : [0,\infty)^N \to [0,\infty)^N\) is defined component-wise as
		\begin{equation}
			\Phi_i(\rho) := A_i \sum_{j=1}^N M_j \norm{\nabla K_{ij}}_{L^{q_{ij}}} \rho_j^{d/q_{ij}},
		\end{equation}
		where \(A_i > 0\) depends on \(p_i,\, d\) and \(D_i\).
		Then,
		\begin{equation}
			\frac{\rmd}{\rmd t} \rho_i(t) \leq - B_i \rho_i(t)^2 (\rho_i(t) - \Phi_i(\rho(t)))
		\end{equation}
		where \(B_i > 0\) also depends on \(p_i,\, d\) and \(D_i\).
	\end{theorem}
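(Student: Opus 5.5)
We compute the $L^{p_i}$ energy identity for each species, bound the advection term by Hölder and Young, control the dissipation from below with a Nash-type (Gagliardo--Nirenberg) inequality expressed in terms of $\rho_i$, and then convert the resulting differential inequality for $\norm{u_i}_{L^{p_i}}^{p_i}$ into one for $\rho_i$.

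\textbf{Energy identity.} By Theorems~\ref{thm:regularity} and~\ref{thm:properties}, $u$ is classical for $t>0$, nonnegative, and conserves mass, so $\norm{u_i(t)}_{L^1}=M_i$. The regularity in $W^n_{R,P}$ justifies integrating by parts in $\frac{\rmd}{\rmd t}\int u_i^{p_i}$. This gives
\begin{equation}
\frac{1}{p_i}\frac{\rmd}{\rmd t}\norm{u_i}_{L^{p_i}}^{p_i} = -D_i(p_i-1)\int u_i^{p_i-2}\abs{\nabla u_i}^2 + (p_i-1)\int u_i^{p_i-1}\nabla u_i\cdot \nabla F_i[u].
\end{equation}
Set $w_i=u_i^{p_i/2}$. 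The dissipation term equals $-\frac{4D_i(p_i-1)}{p_i^2}\norm{\nabla w_i}_{L^2}^2$. The advection term is $\frac{2(p_i-1)}{p_i}\int w_i\nabla w_i\cdot \sum_j (\nabla K_{ij}\ast u_j)$.

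\textbf{Bounding the advection term.} Apply Cauchy--Schwarz to $\nabla w_i$, then Hölder and Young's convolution inequality. Estimate $\norm{\nabla K_{ij}\ast u_j}_{L^\infty}\le \norm{\nabla K_{ij}}_{L^{q_{ij}}}\norm{u_j}_{L^{q_{ij}'}}$. Interpolate using $q_{ij}'\le p_j$ and $\norm{u_j}_{L^1}=M_j$:
\begin{equation}
\norm{u_j}_{L^{q_{ij}'}}\le M_j^{1-\theta}\norm{u_j}_{L^{p_j}}^{\theta},\qquad \theta=\frac{p_j'}{q_{ij}}.
\end{equation}
Here $\theta$ is determined by $1/q_{ij}'=(1-\theta)+\theta/p_j$, i.e. $\theta(1-1/p_j)=1/q_{ij}$. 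This yields exactly $M_j\rho_j^{d/q_{ij}}$, because $\norm{u_j}_{L^{p_j}}^{\theta}M_j^{-\theta}=\rho_j^{d\theta/p_j'}$. This is the step that dictates the exponent $p_i'/d$ in the definition of $\rho_i$. Hence the advection term is at most
\begin{equation}
C\,\norm{\nabla w_i}_{L^2}\norm{w_i}_{L^2}\sum_j M_j\norm{\nabla K_{ij}}_{L^{q_{ij}}}\rho_j^{d/q_{ij}}.
\end{equation}

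\textbf{Nash inequality and the ODE for $\rho_i$.} This is the main obstacle, namely arranging the powers so that exactly the factor $\rho_i^2(\rho_i-\Phi_i)$ appears. Use the Nash inequality
\begin{equation}
\norm{w}_{L^2}^{1+2/d}\le C_d\norm{w}_{L^1}^{2/d}\norm{\nabla w}_{L^2}.
\end{equation}
Apply it to $w_i$ with $\norm{w_i}_{L^1}=\norm{u_i}_{L^{p_i/2}}^{p_i/2}$. For $p_i\ge2$, interpolate $L^{p_i/2}$ between $L^1$ and $L^{p_i}$, so that the ratio $\norm{\nabla w_i}_{L^2}/\norm{w_i}_{L^2}\ge c\,\rho_i$. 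Here I expect the interpolation exponents to combine into the power $p_i'/d$; this needs checking, and the case $p_i=2$ is a direct check. Factor out $\norm{\nabla w_i}_{L^2}\norm{w_i}_{L^2}$:
\begin{equation}
\frac{\rmd}{\rmd t}\norm{u_i}_{L^{p_i}}^{p_i}\le -c\,\norm{\nabla w_i}_{L^2}\norm{w_i}_{L^2}\Big(\frac{\norm{\nabla w_i}_{L^2}}{\norm{w_i}_{L^2}}-C\sum_j\cdots\Big).
\end{equation}
If $\rho_i\ge\Phi_i$, with $A_i$ chosen as the ratio of the constants, then the bracket is at least a multiple of $\rho_i-\Phi_i\ge0$. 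The prefactor then satisfies $\norm{\nabla w_i}_{L^2}\norm{w_i}_{L^2}\ge c\rho_i\norm{u_i}_{L^{p_i}}^{p_i}$.

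Finally, $\frac{\rmd}{\rmd t}\rho_i=\frac{p_i'}{d\,p_i}\rho_i\,\norm{u_i}_{L^{p_i}}^{-p_i}\frac{\rmd}{\rmd t}\norm{u_i}_{L^{p_i}}^{p_i}$. Combining the two bounds gives
\begin{equation}
\frac{\rmd}{\rmd t}\rho_i\le -B_i\rho_i^2(\rho_i-\Phi_i),
\end{equation}
with $A_i,B_i$ depending only on $p_i,d,D_i$.
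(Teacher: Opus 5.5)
Your proposal is correct and follows essentially the same route as the paper. The paper also uses the $L^{p_i}$ energy identity written in terms of $w_i = u_i^{p_i/2}$, the H\"older--Young--interpolation bound producing $M_j\rho_j^{d/q_{ij}}$, the Nash-type bound $\norm{\nabla w_i}_{L^2} \geq C_\Nash^{-1}\norm{w_i}_{L^2}\rho_i$ applied twice, and the logarithmic-derivative conversion to $\rho_i$, giving $A_i = p_i C_\Nash/(2D_i)$ and $B_i = 4D_i/(d p_i C_\Nash^2)$. The interpolation step you flagged as needing a check does work out: $\norm{u_i}_{L^{p_i/2}} \leq M_i^{1/(p_i-1)}\norm{u_i}_{L^{p_i}}^{(p_i-2)/(p_i-1)}$ yields exactly the exponent $p_i'/d$, which is the content of the paper's Lemma~\ref{lem:p-nash}.
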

	
	For \(N = 1\), we use direct methods classify choices of intial data and kernels for which Theorem~\ref{thm:conc-dynam} implies a uniform-in-time \(L^p\) bound on the solutions. 

	For general \(N \in \N\), we apply a cone contraction--expansion fixed point theorem to find positive fixed points of \(\Phi\) and conclude the following smallness condition on the concentration functional \(\rho(t)\) of the solution.

	\begin{theorem}[Small concentration condition]\label{thm:small-conc}
		Suppose \(\nabla K \in L^Q(\R^d)\) for some \(Q \in (1,\infty)^{N \times N}\) and \(u_0 \in L^1 \cap L^P(\R^d)\) such that \(u_0 \geq 0\) almost everywhere, for some \(P \in [2,\infty)^N\) such that \(q_{ij}' \leq p_j\) for every \(i,\, j \in \{1,\dots,N\}\).
		Further suppose that \(Q \in (1,d)^{N \times N} \cup (d,\infty)^{N \times N}\) and the perception graph of~\eqref{eqn:agg_diff} is \emph{strongly connected}, that is, for any pair of nodes \((i,j)\) there exists a path from node \(i\) to node \(j\). Then, there exists some \(\rho^* = (\rho^*_1,\dots,\rho^*_N) \in (0,\infty)^N\) such that \(\rho(t_0) \leq \rho^*\) for some \(t_0 \geq 0\) implies that \(\rho(t) \leq \rho^*\) for all \(t \geq t_0\), where \(\rho(t)\) is defined in~\eqref{defn:rho_(intro)}.
	\end{theorem}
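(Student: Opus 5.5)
The plan is to combine Theorem~\ref{thm:conc-dynam} with a positive fixed point of a rescaled map $\Phi$, so that the box $[0,\rho^*]$ becomes forward invariant. It suffices to find $\rho^* \in (0,\infty)^N$ with $\Phi(\rho^*) < \rho^*$ componentwise, or at least $\Phi(\rho^*) \le \rho^*$ after a small perturbation. Granting this, suppose $\rho(t_0)\le\rho^*$ and that some $\rho_i$ first exceeds $\rho_i^*$ at a time $t_1>t_0$. At $t_1$ we have $\rho(t_1)\le \rho^*$ and $\rho_i(t_1)=\rho_i^*$. Each $\Phi_i$ is nondecreasing in every $\rho_j$, so $\Phi_i(\rho(t_1)) \le \Phi_i(\rho^*) < \rho_i^*=\rho_i(t_1)$. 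Theorem~\ref{thm:conc-dynam} then gives $\frac{\rmd}{\rmd t}\rho_i(t_1)<0$, and by continuity this holds on a neighbourhood of $t_1$. That contradicts $t_1$ being a first crossing. The continuity and differentiability of $\rho_i$ in $t$ come from Theorem~\ref{thm:regularity}, using $p_i\ge2$ and $u_i>0$ in the $C^1((0,T];L^{p_i})$ sense, together with mass conservation from Theorem~\ref{thm:properties}. Global continuation, so that all $t\ge t_0$ make sense, follows because the $L^{P}$ norm stays bounded by the blow-up alternative underlying Theorem~\ref{thm:local}. The non-strict case $\Phi(\rho^*)=\rho^*$ is handled by the same barrier argument with $\rho^*$ replaced by $(1+\epsilon)\rho^*$ where possible, or by a comparison argument on the ODE inequality.

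The main step is constructing $\rho^*$. Write $\Phi_i(\rho)=\sum_j c_{ij}\rho_j^{\alpha_{ij}}$ with $c_{ij}=A_iM_j\norm{\nabla K_{ij}}_{L^{q_{ij}}}\ge0$ and $\alpha_{ij}=d/q_{ij}$. The two cases of the hypothesis behave differently.

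If $Q\in(d,\infty)^{N\times N}$, then every $\alpha_{ij}<1$, so $\Phi$ is sublinear at infinity. We look for a large vector $\rho^*=\lambda v$ with $v>0$: for $\lambda$ large, $\Phi(\lambda v)\le C\lambda^{\max\alpha}\ll \lambda v$ componentwise, which gives strict inequality directly. Here no fixed-point theorem is even needed; strong connectivity only ensures the resulting bound is nondegenerate.

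If $Q\in(1,d)^{N\times N}$, then every $\alpha_{ij}>1$, so $\Phi$ is superlinear. Near the origin $\Phi(\lambda v)\le C\lambda^{\min\alpha}$, which is $\ll\lambda v$ for small $\lambda$. So a small $\rho^*$ works, and it also satisfies $\rho^*>0$.

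In both cases strict inequality $\Phi(\rho^*)<\rho^*$ holds, which avoids the delicate equality case. The paper's stated route is the Krasnosel'skii cone compression--expansion theorem on $[0,\infty)^N$. It applies because strong connectivity makes $\Phi$ irreducible (every component depends positively on the others through paths), which yields a fixed point $\rho^*\in(0,\infty)^N$ with all entries positive. I would present the elementary scaling argument above as the primary route and mention the fixed-point theorem as an alternative that produces the maximal invariant box.

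The main obstacle I expect is the rigour of the first-crossing argument. It needs $\rho_i$ to be differentiable at the crossing time, which holds for $t>0$ by Theorem~\ref{thm:regularity}. The case $t_0=0$ needs a short continuity argument. The case $u_i\equiv0$ (so $M_i=0$) must be treated separately by dropping that species, which strong connectivity otherwise excludes.
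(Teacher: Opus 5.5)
Your argument is correct, but it replaces the paper's key step. The paper builds an \emph{exact} positive fixed point $\Phi(\rho^*)=\rho^*$ via the Krasnosel'skii compression--expansion theorem (Proposition~\ref{prop:phi-fixed}, Remark~\ref{rem:phi-fixed-2}). Strong connectivity supplies both the column condition used on the spheres and the positivity of every component. The paper then gets invariance of $[0,\rho^*]$ from the non-strict face condition $\frac{\rmd}{\rmd t}\rho_i\le 0$ (Proposition~\ref{prop:invar-rect}).

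You instead take a \emph{strict} supersolution $\rho^*=\lambda\mathbf{1}$, with $\lambda$ large if all $d/q_{ij}<1$ and small if all $d/q_{ij}>1$. This needs no fixed-point theorem. It also uses neither strong connectivity nor any column condition, since a zero row just gives $\Phi_i\equiv 0<\lambda$. So your argument proves the statement without that hypothesis, and your remark that strong connectivity ``ensures nondegeneracy'' is superfluous.

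Strictness also makes the barrier step airtight. At a touching time $\rho_i>\Phi_i(\rho)$, so Theorem~\ref{thm:conc-dynam} gives a strictly negative derivative. With an exact fixed point and $q_{ij}<d$, by contrast, at the points $s\rho^*$ with $s>1$ one has $\rho_i<\Phi_i(\rho)$. There Theorem~\ref{thm:conc-dynam} says nothing, so the face condition alone does not exclude exit through the corner.

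What the paper's route buys is sharpness. The fixed point is the natural threshold; for $N=1$ it reproduces the constants of Corollary~\ref{cor:sing-species-asym}. It also ties into the open problem of when $\Phi$ has positive fixed points.

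Two minor corrections. For $q_{ij}>d$ the fixed point gives the \emph{smallest} such invariant box, not the maximal one. And $M_i>0$ is needed simply for $\rho_i$ to be defined, which has nothing to do with strong connectivity.
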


	Section~\ref{section:dynamics} is then concluded by considering examples of systems for \(N = 2\) in which the corresponding \(\Phi\) has a positive fixed point despite not satisfying the assumptions in Theorem~\ref{thm:small-conc}, which motivates extensions of the result.
    
	Section~\ref{section:numerics} presents numerical examples of solutions to~\eqref{eqn:agg_diff}. For a single-species system we illustrate a power law relationship between the strength of the kernel and the concentration of the observed steady state, where the exponent depends on the regularity of the kernel. For a two-species system, we present example pairs of kernels $K_{12}$, $K_{21}$, where $K_{12}$ is unbounded at the origin, for which the corresponding solutions to~\eqref{eqn:agg_diff} have a global energy bound. Motivated by these examples, we then consider a potential generalisation of the regularity conditions on the kernels.
 
\section{Local well-posedness}\label{section:LWP}

    By considering mild solutions of~\eqref{eqn:agg_diff}, as defined in Definition~\ref{def:mild}, we first establish existence of solutions in \(C([0,T];L^1 \cap L^P(\R^d))\) via a contraction argument on the operator \(\Psi^K[\cdot;u_0]\). Similar approaches can be found in~\cite{karch2011blow,cozzi2025global,giuntaLocalGlobalExistence2022}.

	Consider \(v \in L^\infty([0,T];L^1 \cap L^P(\R^d))\) for some \(T > 0\) and \(u_0 \in L^1 \cap L^P(\R^d)\) for some \(P \in [1,\infty)^N\). For brevity, denote \(\Psi^K[v;u_0](t) = S(t)u_0 - B^K[v,v](t)\), where
	\begin{equation}
		S(t)u_0 := (S_1(t)u_{10}, \dots, S_N(t)u_{N0})
	\end{equation} 
	and \(B^K[v,v] = (B^K_1[v,v],\dots,B^K_N[v,v])\) is defined component-wise to be 
	\begin{equation}\label{defn:B_shorthand}
		B^K_i[v,v](t) = \int_0^t \nabla S_i(t-\tau) \cdot \A_i^K [v(\tau),v(\tau)] \rmd \tau,
	\end{equation}
	where, for \(\varphi,\, \psi \in L^1 \cap L^P(\R^d)\), \(\A^K[\varphi,\psi] = (\A^K_1[\varphi,\psi],\dots,\A^K_N[\varphi,\psi])\) is defined component-wise as
	\begin{equation}\label{defn:advec_op}
		\A_i^K[\varphi,\psi] := \varphi_i \nabla F_i[\psi] = \sum_{j=1}^{N} \varphi_i (\nabla K_{ij} \ast \psi_j),
	\end{equation}
	where \(\nabla K \in L^Q(\R^d)\) for some \(Q \in (1,\infty]^{N \times N}\).
    To identify \(P \in [1,\infty)^N\) for which the operator \(\Psi^K[\cdot,u_0]\) is a well-defined function over the space \(L^\infty([0,T];L^1 \cap L^P(\R^d))\) given kernels \(K\), it is sufficient to demonstrate that \(\A\) is a bounded bi-linear operator from \((L^1 \cap L^P(\R^d))^2\) to \(L^1 \cap L^P(\R^d)\) when, for each \(j \in \{1,\dots,N\}\), 
	\begin{equation}\label{cond:P}
			\max_{1 \leq i \leq N} \{q'_{ij}\} \leq p_j.
	\end{equation} 
    
	\begin{lemma}\label{lem:advec_op_bbd}
		Suppose \(\nabla K \in L^Q(\R^d)\) for some \(Q \in (1,\infty]^{N \times N}\). Then \(\A^K\) as defined in~\eqref{defn:advec_op} is a bounded bi-linear operator from \((L^1 \cap L^P(\R^d))^2\) to \(L^1 \cap L^P(\R^d)\) for any \(P \in [1,\infty)\) that satisfies~\eqref{cond:P}. That is, for any \(\varphi,\, \psi \in L^1 \cap L^P(\R^d)\),
		\begin{equation}\label{ineq:advec_op_bound}
			\norm{\A^K[\varphi,\psi]}_{L^1 \cap L^P} \leq \norm{\nabla K}_{L^Q} \norm{\varphi}_{L^1 \cap L^P} \norm{\psi}_{L^1 \cap L^P}.
		\end{equation} 
	\end{lemma}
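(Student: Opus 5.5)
The plan is to bound each term \(\varphi_i(\nabla K_{ij}\ast\psi_j)\) separately in \(L^1\) and in \(L^{p_i}\). Bilinearity is immediate from the linearity of convolution and of pointwise multiplication, so only the norm estimate needs work. The key step is an \(L^\infty\) bound on the advective velocity:
\begin{equation}
\norm{\nabla K_{ij}\ast\psi_j}_{L^\infty} \leq \norm{\nabla K_{ij}}_{L^{q_{ij}}}\norm{\psi_j}_{L^{q_{ij}'}}.
\end{equation}
This is Young's inequality, which here is really H\"older's inequality applied pointwise in \(x\). Condition~\eqref{cond:P} gives \(1\leq q_{ij}'\leq p_j\). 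Interpolating between \(L^1\) and \(L^{p_j}\) via H\"older, \(\norm{\psi_j}_{L^{r}}\leq \norm{\psi_j}_{L^1}^{\theta}\norm{\psi_j}_{L^{p_j}}^{1-\theta}\) for \(r\in[1,p_j]\), and hence
\begin{equation}
\norm{\psi_j}_{L^{q_{ij}'}}\leq \max\{\norm{\psi_j}_{L^1},\norm{\psi_j}_{L^{p_j}}\}\leq \norm{\psi_j}_{L^1\cap L^{p_j}}.
\end{equation}
The case \(q_{ij}=\infty\) corresponds to \(q_{ij}'=1\) and is covered directly.

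Next, H\"older's inequality with the \(L^\infty\) factor gives, for \(s\in\{1,p_i\}\),
\begin{equation}
\norm{\varphi_i(\nabla K_{ij}\ast\psi_j)}_{L^s}\leq \norm{\varphi_i}_{L^s}\norm{\nabla K_{ij}}_{L^{q_{ij}}}\norm{\psi_j}_{L^1\cap L^{p_j}}.
\end{equation}
Adding the \(s=1\) and \(s=p_i\) estimates gives the \(L^1\cap L^{p_i}\) norm of \(\varphi_i\) on the right-hand side. Summing over \(j\) and then over \(i\), and bounding each factor by the full product norm, yields
\begin{equation}
\sum_{i,j}\norm{\nabla K_{ij}}_{L^{q_{ij}}}\norm{\varphi_i}_{L^1\cap L^{p_i}}\norm{\psi_j}_{L^1\cap L^{p_j}}\leq \norm{\nabla K}_{L^Q}\norm{\varphi}_{L^1\cap L^P}\norm{\psi}_{L^1\cap L^P}.
\end{equation}
This last inequality holds because every term on the left appears among the cross terms of the product of the three sums.

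The only delicate point is getting a constant of exactly one. The interpolation bound \(\norm{\psi}_{L^r}\leq\norm{\psi}_{L^1}+\norm{\psi}_{L^{p}}\) needs care at the endpoints, but since \(\theta\in[0,1]\) the weighted geometric mean is bounded by the maximum, and the maximum is bounded by the sum, so the constant stays one. One must also check that the convolution is well defined almost everywhere, and this follows from the same H\"older estimate. I expect no genuine obstacle in this argument.
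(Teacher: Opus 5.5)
Your proof is correct and follows the paper's argument: Young's inequality gives \(\norm{\nabla K_{ij}\ast\psi_j}_{L^\infty}\le\norm{\nabla K_{ij}}_{L^{q_{ij}}}\norm{\psi_j}_{L^{q_{ij}'}}\), H\"older in \(x\) handles the \(\varphi_i\) factor, \(L^1\)--\(L^{p_j}\) interpolation controls \(\norm{\psi_j}_{L^{q_{ij}'}}\), and summing over \(i,j\) finishes. Your intermediate bound by \(\norm{\psi_j}_{L^1\cap L^{p_j}}\) rather than the full \(\norm{\psi}_{L^1\cap L^P}\) is slightly sharper, but the approach is the same.
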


	\begin{proof}
		For each \(i \in \{1,\dots,N\}\) and \(r \in [1,p_i]\), apply H\"older's and Young's inequalities to observe that for any \(\varphi,\, \psi \in L^1 \cap L^P(\R^d)\),
		\begin{equation}\label{ineq:trip_prod}
			\begin{aligned}
				\norm{\A_i^K[\varphi,\psi]}_{L^r} 
				& \leq \norm{\varphi_i}_{L^r} \sum_{j=1}^N \norm{\nabla K_{ij} \ast \psi_j}_{L^\infty} \\
				& \leq \norm{\varphi_i}_{L^r} \sum_{j=1}^N \norm{\nabla K_{ij}}_{L^{q_{ij}}} \norm{\psi_j}_{L^{q_{ij}'}}.
			\end{aligned}
		\end{equation}
		From the assumption that \(p_j \geq q_{ij}'\) for every \(j\), it follows from~\eqref{ineq:lp_int} and Remark~\ref{rem:bessel_int} that
		\begin{equation}\label{ineq:psi_l^p_int}
			\norm{\psi_j}_{L^{q_{ij}'}} \leq \norm{\psi_j}_{L^1} + \norm{\psi_j}_{L^{p_j}} = \norm{\psi_j}_{L^1 \cap L^{p_i}} \leq \norm{\psi}_{L^1 \cap L^P}.
		\end{equation}
		By substituting~\eqref{ineq:psi_l^p_int} into~\eqref{ineq:trip_prod}, it can be concluded that
		\begin{align}
			\norm{\A^K[\varphi,\psi]}_{L^1 \cap L^P} & = \sum_{i=1}^N (\norm{\A_i^K[\varphi,\psi]}_{L^1} + \norm{\A_i^K[\varphi,\psi]}_{L^{p_i}}) \\
			& \leq \norm{\psi}_{L^1 \cap L^P} \sum_{i=1}^N (\norm{\varphi_i}_{L^1} + \norm{\varphi_i}_{L^{p_i}}) \sum_{j=1}^N \norm{\nabla K_{ij}}_{L^{q_{ij}}} \\
			& \leq \norm{\nabla K}_{L^Q} \norm{\varphi}_{L^1 \cap L^P} \norm{\psi}_{L^1 \cap L^P}. 
		\end{align} 
	\end{proof}

    \begin{remark}\label{rem:broader-K}
        Note that the assumption \(\nabla K \in L^Q(\R^d)\) can be relaxed to the condition \(\nabla K \in L^{Q_1}(\R^d) + L^{Q_2}(\R^d)\) for \(Q_1 = (q_{ij1})_{ij},\, Q_2 = (q_{ij2})_{ij} \in (1,\infty]^{N \times N}\). In this case the condition on \(P\) is instead
        \begin{equation}\label{cond:P-v2}
            p_j \geq \max \{q'_{ijk} : i \in \{1,\dots,N\}, k \in \{1,2\}\}
        \end{equation}
        for every \(j \in \{1,\dots,N\}\) and~\eqref{ineq:trip_prod} becomes
        \begin{equation}\label{ineq:trip_prod-v2}
            \norm{\A^K[\varphi,\psi]}_{L^1 \cap L^P} \leq \norm{\nabla K}_{L^{Q_1} + L^{Q_2}} \norm{\varphi}_{L^1 \cap L^P} \norm{\psi}_{L^1 \cap L^P}.
        \end{equation}
        Due to this, all the following results in Section~\ref{section:LWP} follow analogously for the family of kernels \(\nabla K \in L^{Q_1}(\R^d) + L^{Q_2}(\R^d)\) when \(P\) satisfies~\eqref{cond:P-v2}. This relaxed condition allows for power law kernels of the form \(K_{ij}(x) \propto \abs{x}^{\alpha}\), where \(\alpha \in (-d,1)\) and the logarithm kernel, \(K_{ij}(x) \propto \log(\abs{x})\). This is of particular interest since the Newtonian potential is a power law kernel with exponent \(\alpha = 2-d\) when \(d \geq 3\) and a logarithm kernel when \(d = 2\). More generally, this relaxed condition allows us to consider \(\nabla K = \nabla K_1 + \nabla K_2\), where \(\nabla K_1\) has a singularity at the origin and \(\nabla K_2\) has slow decay as \(\abs{x} \to \infty\). 
    \end{remark}
    
    \begin{lemma}\label{lem:duhamel_bound_lp}
		Suppose \(\nabla K \in L^Q(\R^d)\) for some \(Q \in (1,\infty]^{N \times N}\) and that \(u,\, v \in L^\infty([0,T];L^1 \cap L^P(\R^d))\) for some \(P \in [1,\infty)^N\) satisfying~\eqref{cond:P}. Then, for each \(t \in [0,T]\),
		\begin{equation}\label{ineq:L1_Lp_B_bound}
			\begin{aligned}
				\norm{B^K[u,v](t)}_{L^1 \cap L^P} &\leq \frac{C_1}{\sqrt{D_{\min}}} \norm{\nabla K}_{L^Q} \int_0^t (t-\tau)^{-1/2} \norm{u(\tau)}_{L^1 \cap L^P} \norm{v(\tau)}_{L^1 \cap L^P} \rmd \tau
			\end{aligned}
		\end{equation}
		where \(D_{\min} = \min_i \{D_i\}\) and \(C_1\) is given in~\eqref{ineq:semigrp_deriv_bund}. Moreover,
		\begin{equation}\label{ineq:L1_Lp_B_crude_bound}
			\norm{B^K[u,v]}_{L^\infty_T(L^1 \cap L^P)} \leq C_B \sqrt{T} \norm{u}_{L^\infty_T(L^1 \cap L^P)} \norm{v}_{L^\infty_T(L^1 \cap L^P)},
		\end{equation}
		where \(C_B = 2 C_1 \norm{\nabla K}_{L^Q} / \sqrt{D_{\min}}\).
	\end{lemma}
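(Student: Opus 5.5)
The bound is proved componentwise, combining the gradient estimate for the heat semigroup with Lemma~\ref{lem:advec_op_bbd}. Fix \(i\) and \(t \in [0,T]\). Since \(\nabla S_i(s)\) acts as convolution with the gradient of the heat kernel of diffusivity \(D_i\), Young's inequality gives, for any \(r \in [1,\infty]\) and \(f \in L^r(\R^d)^d\),
\begin{equation}
\norm{\nabla S_i(s)\cdot f}_{L^r} \leq \norm{\nabla G_{D_i s}}_{L^1}\norm{f}_{L^r} \leq \frac{C_1}{\sqrt{D_i s}} \norm{f}_{L^r}.
\end{equation}
Here \(G_{D_i s}\) denotes the heat kernel of diffusivity \(D_i\) at time \(s\). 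This is the semigroup derivative bound referred to as~\eqref{ineq:semigrp_deriv_bund}. A scaling computation shows \(\norm{\nabla G_{D s}}_{L^1} = c_d (Ds)^{-1/2}\), and \(C_1\) depends only on \(d\).

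Apply this bound with \(r = 1\) and \(r = p_i\) inside the Bochner integral defining \(B^K_i[u,v](t)\), using Minkowski's integral inequality. This gives
\begin{equation}
\norm{B_i^K[u,v](t)}_{L^1\cap L^{p_i}} \leq \frac{C_1}{\sqrt{D_i}}\int_0^t (t-\tau)^{-1/2}\norm{\A_i^K[u(\tau),v(\tau)]}_{L^1\cap L^{p_i}}\,\rmd\tau.
\end{equation}
Replace \(D_i\) by \(D_{\min}\) and sum over \(i\). Then invoke~\eqref{ineq:advec_op_bound} from Lemma~\ref{lem:advec_op_bbd}, which applies because \(P\) satisfies~\eqref{cond:P}. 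This bounds \(\sum_i \norm{\A_i^K[u(\tau),v(\tau)]}_{L^1\cap L^{p_i}}\) by \(\norm{\nabla K}_{L^Q}\norm{u(\tau)}_{L^1\cap L^P}\norm{v(\tau)}_{L^1\cap L^P}\), and~\eqref{ineq:L1_Lp_B_bound} follows. Two measurability points need a brief check: that \(\tau \mapsto \A^K[u(\tau),v(\tau)]\) is strongly measurable, which holds by bilinearity and boundedness of \(\A^K\), and that the integral converges absolutely, which holds because \((t-\tau)^{-1/2}\) is integrable.

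For the crude bound~\eqref{ineq:L1_Lp_B_crude_bound}, pull out the essential suprema of \(\norm{u(\tau)}_{L^1\cap L^P}\) and \(\norm{v(\tau)}_{L^1\cap L^P}\). Then use \(\int_0^t (t-\tau)^{-1/2}\,\rmd\tau = 2\sqrt t \leq 2\sqrt T\), and take the supremum over \(t \in [0,T]\). This produces the constant \(C_B = 2C_1\norm{\nabla K}_{L^Q}/\sqrt{D_{\min}}\).

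The only non-routine ingredient is the \(L^1\) heat-kernel gradient estimate with the explicit \((D_i s)^{-1/2}\) scaling. Once that is in place, everything else is Minkowski's integral inequality combined with Lemma~\ref{lem:advec_op_bbd}, so I do not expect a real obstacle.
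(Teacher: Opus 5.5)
Your proposal is correct and matches the paper's proof essentially step for step. The paper also applies the semigroup gradient bound~\eqref{ineq:semigrp_deriv_bund} inside the Bochner integral, replaces \(D_i\) by \(D_{\min}\), and then invokes Lemma~\ref{lem:advec_op_bbd}; the crude bound follows from \(\int_0^t (t-\tau)^{-1/2}\,\rmd\tau = 2\sqrt{t} \leq 2\sqrt{T}\).
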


	\begin{proof}
		By properties of the Bochner integral and~\eqref{ineq:semigrp_deriv_bund}, it follows that
		\begin{equation}\label{ineq:B^K_i_first_bound}
			\begin{aligned}
				&\norm{B^K[u,v](t)}_{L^1 \cap L^P} = \sum_{i=1}^N (\norm{B^K_i[u,v](t)}_{L^1} + \norm{B^K_i[u,v](t)}_{L^{p_i}}) \\
				& \quad \leq \sum_{i=1}^N \int_0^t \norm{\nabla S_{i}(t-\tau) \cdot \A_i^K[u(\tau),v(\tau)]}_{L^1} + \norm{\nabla S_{i}(t-\tau) \cdot \A_i^K[u(\tau),v(\tau)]}_{L^{p_i}}\rmd \tau  \\
				& \quad \leq \sum_{i=1}^N \int_0^t C_1 (D_i (t-\tau))^{-{1/2}} (\norm{\A_i^K[u(\tau),v(\tau)]}_{L^1} + \norm{\A_i^K[u(\tau),v(\tau)]}_{L^{p_i}}) \rmd \tau \\
				& \quad \leq \frac{C_1}{\sqrt{D_{\min}}} \int_0^t (t-\tau)^{-{1/2}} \norm{\A^K[u(\tau),v(\tau)]}_{L^1 \cap L^P} \rmd \tau.
			\end{aligned}
		\end{equation}
		Inequality \eqref{ineq:L1_Lp_B_bound} then follows from~\eqref{ineq:advec_op_bound}. Lastly, the inequality~\eqref{ineq:L1_Lp_B_crude_bound} follows immediately from~\eqref{ineq:L1_Lp_B_bound}.
	\end{proof}

	\begin{lemma}\label{cor:cty_in_time}
        Suppose \(v \in C([0,T];L^1 \cap L^P(\R^d))\) for some \(T > 0\) and \(u_0 \in L^1 \cap L^P(\R^d)\). Then \(\Psi^K[v;u_0] \in C([0,T];L^1 \cap L^P(\R^d))\).
    \end{lemma}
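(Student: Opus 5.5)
We split \(\Psi^K[v;u_0](t) = S(t)u_0 - B^K[v,v](t)\) and prove continuity of each piece in \(L^1 \cap L^P(\R^d)\) on \([0,T]\). The linear term is standard: the heat semigroup \(S_i(t)\) is strongly continuous on \(L^r(\R^d)\) for every \(r \in [1,\infty)\). Applying this with \(r = 1\) and \(r = p_i\) for each component gives \(t \mapsto S(t)u_0 \in C([0,T];L^1 \cap L^P(\R^d))\).

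For the Duhamel term, set \(g(\tau) := \A^K[v(\tau),v(\tau)]\). Since \(v\) is continuous in \(L^1 \cap L^P\), Lemma~\ref{lem:advec_op_bbd} and bilinearity give
\[
\norm{g(\tau)-g(\sigma)}_{L^1\cap L^P} \leq \norm{\nabla K}_{L^Q}\big(\norm{v(\tau)}+\norm{v(\sigma)}\big)\norm{v(\tau)-v(\sigma)},
\]
with all norms in \(L^1\cap L^P\). Hence \(g \in C([0,T];L^1 \cap L^P(\R^d))\) and \(\sup_\tau \norm{g(\tau)}_{L^1\cap L^P} =: G < \infty\). Fix \(0 \leq s < t \leq T\) and write \(h = t-s\). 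After substituting \(\tau \mapsto \tau + h\) in the first integral, split
\[
B_i^K[v,v](t) - B_i^K[v,v](s) = \int_0^{h} \nabla S_i(t-\tau)\cdot g_i(\tau)\,\rmd\tau + \int_0^{s} \nabla S_i(s-\tau)\cdot\big(g_i(\tau+h)-g_i(\tau)\big)\,\rmd\tau .
\]
We then estimate each integral using the semigroup gradient bound \eqref{ineq:semigrp_deriv_bund} in both \(L^1\) and \(L^{p_i}\), as in Lemma~\ref{lem:duhamel_bound_lp}.

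The first integral is bounded by \(C_1 D_{\min}^{-1/2} G \int_{s}^{t}(t-\sigma)^{-1/2}\rmd\sigma = 2C_1 D_{\min}^{-1/2} G\sqrt{h}\), which tends to \(0\) as \(h \to 0\). The second integral is bounded by
\[
\frac{C_1}{\sqrt{D_{\min}}}\,2\sqrt{T}\,\sup_{\tau\in[0,T-h]}\norm{g(\tau+h)-g(\tau)}_{L^1\cap L^P}.
\]
This supremum tends to \(0\) as \(h\to 0\) because \(g\) is uniformly continuous on the compact interval \([0,T]\). The bounds are uniform in \(s\), so they give two-sided continuity at every point, including \(t = 0\), where \(B^K[v,v](0) = 0\).

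The main obstacle is handling the singular kernel \((t-\tau)^{-1/2}\) in the difference without assuming any time regularity of \(g\) beyond continuity. The shift \(\tau\mapsto\tau+h\) addresses this: it places the singularity in the short-interval term, which is small simply by integrability, and leaves a translation difference that is controlled by uniform continuity of \(g\). A minor point to check is that the Bochner integrals are well defined. This follows because \(\tau \mapsto \nabla S_i(t-\tau)\cdot g_i(\tau)\) is continuous on \([0,t)\) into \(L^1\cap L^{p_i}\) (combining strong continuity of \(\nabla S_i\) for positive times with continuity of \(g\)) and is dominated by the integrable function \(C(t-\tau)^{-1/2}G\).
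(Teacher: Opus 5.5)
Your proof is correct and follows the paper's own argument. Your splitting of $B^K_i[v,v](t)-B^K_i[v,v](s)$ is exactly the paper's decomposition, written in the original time variable rather than after the substitution $\tau\mapsto t-\tau$, and you bound the two pieces the same way: an $O(\sqrt{h})$ estimate for the short-interval integral and the uniform continuity of $\A^K[v,v]$ for the translation integral. One harmless slip is that the short-interval weight is $\int_0^h (t-\tau)^{-1/2}\,\rmd\tau = 2(\sqrt{t}-\sqrt{s}) \leq 2\sqrt{h}$, so your displayed equality should be an inequality.
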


    \begin{proof}
        To deduce the continuity of \(\Psi^K[v,u_0](t)\) in the \(L^1 \cap L^P\) sense, we establish that both the semigroup term \(S_i(t)u_0\) and the bi-linear term \(B^K[v,v](t)\) are each continuous in time. By the continuity and semigroup property of \(S_i(t)\) from Lemma~\ref{lem:heat_semi_props} and the inequality~\eqref{ineq:semigrp_contraction}, it follows that \(\lim_{t \to t_0} \norm{(S_i(t) - S_i(t_0))u_0}_{L^1 \cap L^{p_i}} = 0.\) For the continuity of \(B^K_i[v,v](t)\) first observe that, for \(t_1 := \min \{t, t_0\} \), \(t_2 := \max\{t, t_0\}\) and some \(i \in \{1,\dots,N\}\),
		\begin{equation}\label{eqn:B-time-diff}
			\begin{aligned}
				&B^K_i[v,v](t_2) - B^K_i[v,v](t_1) \\
				& \quad = \int_0^{t_2} \nabla S_i(\tau) \cdot f_i(t_2 - \tau) \rmd \tau - \int_0^{t_1} \nabla S_i(\tau) \cdot f_i(t_1 - \tau) \rmd \tau  \\
				& \quad = \int_{t_1}^{t_2} \nabla S_i(\tau) \cdot f_i(t_2 - \tau)\rmd \tau + \int_{0}^{t_1} \nabla S_i(\tau) \cdot (f_i(t_2-\tau) - f_i(t_1 - \tau)) \rmd \tau, \\
			\end{aligned}
		\end{equation}
		where \(f_i(\tau) := \A_i^K[v(\tau),v(\tau)]\). It then follows that
		\begin{equation}\label{L1LP-B-time-diff}
			\begin{aligned}
				\norm{B^K(t_2) -& B^K(t_1)}_{L^1 \cap L^P} \leq \sum_{i=1}^N \Big(\Big\|\int_{t_1}^{t_2} \nabla S_i(\tau) \cdot f_i(t_2 - \tau)\rmd \tau \Big\|_{L^1 \cap L^{p_i}}\Big) \\
								&+ \sum_{i=1}^N \Big(\Big\| \int_{0}^{t_1} \nabla S_i(\tau) \cdot (f_i(t_2-\tau) - f_i(t_1 - \tau)) \rmd \tau \Big\|_{L^1 \cap L^{p_i}}\Big).
			\end{aligned}
		\end{equation}
		By inequality~\eqref{ineq:semigrp_deriv_bund} and Lemma~\ref{lem:advec_op_bbd} the first term on the right hand side of~\eqref{L1LP-B-time-diff} can be bounded as
		\begin{equation}
			\begin{aligned}
				\sum_{i=1}^N \Big\|\int_{t_1}^{t_2} \nabla S_i(\tau) \cdot f_i(t_2-\tau)\rmd \tau\Big\|_{L^1 \cap L^{p_i}} 
				& \leq \sum_{i=1}^N\int_{t_1}^{t_2} C_1 (D_i \tau)^{-1/2} \norm{f_i(t_2 - \tau)}_{L^1 \cap L^{p_i}} \rmd \tau \\
				& \leq C_1 \int_{t_1}^{t_2} (D_{\min} \tau)^{-1/2} \norm{f_i(t_2 - \tau)}_{L^1 \cap L^{P}} \rmd \tau \\
				& \leq 2 C_1 \norm{\nabla K}_{L^Q} \norm{v}_{L^\infty_T(L^1 \cap L^P)}^2 \sqrt{(t_2-t_1)/D_{\min}},
			\end{aligned}
		\end{equation}
		which vanishes as \(t_2 - t_1 \to 0\). By again applying~\eqref{ineq:semigrp_deriv_bund}, observe that second term on the right hand side of~\eqref{L1LP-B-time-diff} satisfies
		\begin{equation}
			\begin{aligned}
				\sum_{i=1}^N \Big\|\int_0^{t_1} \nabla &S_i(\tau) \cdot (f_i(t_2-\tau) - f_i(t_1 - \tau)) \rmd \tau\Big\|_{L^1 \cap L^{p_i}} \\
				& \leq \sum_{i=1}^N \int_0^{t_1} C_1 (D_i \tau)^{-1/2} \norm{f_i(t_2-\tau) - f_i(t_1 - \tau)}_{L^1 \cap L^{p_i}} \rmd \tau \\
				& \leq 2C_1 \sqrt{T/D_{\min}} \sup_{0 \leq \tau \leq t_1} \{\norm{f(t_2-\tau) - f(t_1 - \tau)}_{L^1 \cap L^P}\}.
			\end{aligned}
		\end{equation}
		Now recall that \(v\) is uniformly continuous over \([0,T]\) in the \(L^1 \cap L^P\) sense and that \(\varphi \mapsto \A^K[\varphi,\varphi]\) is a continuous mapping over \(L^1 \cap L^P(\R^d)\). Hence, \(f \in C([0,T];L^1 \cap L^P(\R^d))\) and we conclude that the second term will also vanish as \(t\) tends to \(t_0\). As both terms on the right hand side of~\eqref{L1LP-B-time-diff} vanishes as \(t_2 - t_1 \to 0\), equivalently \(t \to t_0\), then \(B^K[v,v] \in C([0,T];L^1 \cap L^P(\R^d))\), as required. 
    \end{proof}

	\begin{proposition}[Local existence of a fixed point]\label{thm:mild_soln}
		Suppose \(\nabla K \in L^Q(\R^d)\) for some \(Q \in (1,\infty]^{N \times N}\) and \(u_0 \in L^1 \cap L^P(\R^d)\) for some \(P \in [1,\infty)^N\) that satisfies~\eqref{cond:P}. Then, for any \(\lambda \in (1,2)\), there exists a unique fixed point \(u\) of \(\Psi^K[\cdot;u_0]\) in the set
		\begin{equation}
			\Z_\lambda := \{v \in C([0,T];L^1 \cap L^P(\R^d)) : \norm{v}_{L^\infty_T(L^1 \cap L^P)} \leq \lambda \norm{u_0}_{L^1 \cap L^{p}}\}
		\end{equation}
		for all \(T \in [0,T_\lambda]\), where \(T_\lambda > 0\) satisfies
		\begin{equation}\label{bound:T}
			C_B \norm{u_0}_{L^1 \cap L^P} \sqrt{T_\lambda} \leq (\lambda-1)/\lambda^2,
		\end{equation} 
		with \(C_B\) given in~\eqref{ineq:L1_Lp_B_crude_bound}. That is, \(u\) is a mild solution to~\eqref{eqn:agg_diff} up to time \(T_\lambda\) with kernels \(K\) and initial data \(u_0\) that is bounded in \(L^\infty([0,T];L^1 \cap L^P(\R^d))\) by \(\lambda \norm{u_0}_{L^1 \cap L^P}\).
	\end{proposition}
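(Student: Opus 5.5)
We will apply the Banach fixed point theorem to \(\Psi^K[\cdot;u_0]\) on \(\Z_\lambda\), equipped with the metric induced by \(\norm{\cdot}_{L^\infty_T(L^1 \cap L^P)}\), for a fixed \(T \in (0,T_\lambda]\). The set \(\Z_\lambda\) is a closed ball in the Banach space \(C([0,T];L^1 \cap L^P(\R^d))\) with the sup norm, so it is complete. By Lemma~\ref{cor:cty_in_time}, \(\Psi^K[v;u_0]\) is continuous in time whenever \(v\) is. It therefore remains to check that \(\Psi^K\) maps \(\Z_\lambda\) into itself and is a strict contraction there.

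\emph{Self-map.} Let \(v \in \Z_\lambda\). Since the heat semigroup is a contraction on \(L^1\) and on \(L^{p_i}\) (inequality~\eqref{ineq:semigrp_contraction}), we have \(\norm{S(t)u_0}_{L^1 \cap L^P} \leq \norm{u_0}_{L^1 \cap L^P}\). Combining this with~\eqref{ineq:L1_Lp_B_crude_bound} gives
\begin{equation}
\norm{\Psi^K[v;u_0]}_{L^\infty_T(L^1 \cap L^P)} \leq \norm{u_0}_{L^1\cap L^P} + C_B\sqrt{T}\,\lambda^2\norm{u_0}_{L^1\cap L^P}^2 \leq \bigl(1 + (\lambda-1)\bigr)\norm{u_0}_{L^1\cap L^P},
\end{equation}
where the last step uses~\eqref{bound:T} together with \(T \leq T_\lambda\). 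The right-hand side equals \(\lambda\norm{u_0}_{L^1\cap L^P}\), so \(\Psi^K[v;u_0] \in \Z_\lambda\).

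\emph{Contraction.} Let \(u,v \in \Z_\lambda\). Bilinearity of \(B^K\) gives the splitting \(B^K[u,u]-B^K[v,v] = B^K[u-v,u] + B^K[v,u-v]\). Applying~\eqref{ineq:L1_Lp_B_crude_bound} to each term yields
\begin{equation}
\norm{\Psi^K[u;u_0]-\Psi^K[v;u_0]}_{L^\infty_T(L^1\cap L^P)} \leq 2C_B\sqrt{T}\,\lambda\norm{u_0}_{L^1\cap L^P}\,\norm{u-v}_{L^\infty_T(L^1\cap L^P)}.
\end{equation}
By~\eqref{bound:T}, the constant on the right is at most \(2(\lambda-1)/\lambda\). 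This quantity is strictly less than \(1\) precisely when \(\lambda<2\), which is where the restriction \(\lambda \in (1,2)\) enters. Banach's theorem then gives a unique fixed point in \(\Z_\lambda\), which is a mild solution in the sense of Definition~\ref{def:mild} and is bounded by \(\lambda\norm{u_0}_{L^1\cap L^P}\).

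The main obstacle is mild bookkeeping rather than real difficulty. First, the degenerate case \(u_0=0\) must be handled: there the zero function is the unique fixed point, and \(T_\lambda\) may be taken arbitrary. Second, uniqueness must hold simultaneously for every \(T\le T_\lambda\). This follows because the restriction of the fixed point on \([0,T_\lambda]\) to \([0,T]\) is again a fixed point lying in the corresponding \(\Z_\lambda\), and both the self-map and the contraction estimates hold with the same constants for any \(T \leq T_\lambda\), so uniqueness on \([0,T]\) applies.
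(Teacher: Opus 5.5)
Your proposal is correct and follows essentially the same route as the paper. Both apply Banach's fixed point theorem on \(\Z_\lambda\): the self-map property comes from \eqref{ineq:semigrp_contraction} and \eqref{ineq:L1_Lp_B_crude_bound}, and the contraction comes from the bilinear splitting of \(B^K\), where \(\lambda<2\) gives \(2(\lambda-1)/\lambda<1\) (equivalently the paper's \((\lambda-1)/\lambda^2<1/(2\lambda)\)). Your remarks on completeness of \(\Z_\lambda\), the case \(u_0=0\), and uniqueness for every \(T\leq T_\lambda\) fill in bookkeeping that the paper leaves implicit.
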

	
	\begin{proof}
		By the Banach fixed point theorem it is sufficient to show that \(\Psi^K[\cdot;u_0]\) is a contraction mapping from \(\Z_\lambda\) to itself when \(T\) satisfies the bound in~\eqref{bound:T} and then show continuity-in-time of the solution in the \(L^1 \cap L^P\) sense.
		
		To show that \(\Psi^K[\cdot;u_0]\) is a contraction mapping for such a small \(T\), we consider \(v,w \in \Z_\lambda\). As \(B\) is bi-linear,
		\begin{equation}\label{equal:B_diff}
			\Psi^K[v;u_0]-\Psi^K[w;u_0] = B^K[w,w] - B^K[v,v] = B^K[w, w - v] + B^K[w-v, v].
		\end{equation}
		Hence, after applying~\eqref{ineq:L1_Lp_B_crude_bound} to both \(B^K[w, w - v]\) and \(B^K[w-v, v]\), it can be deduced from~\eqref{equal:B_diff} that
		\begin{equation}\label{ineq:contraction}
			\begin{aligned}
				&\norm{\Psi^K[v;u_0]-\Psi^K[w;u_0]}_{L^\infty_T(L^1 \cap L^P)}\\
                &\qquad \leq C_B \sqrt{T} (\norm{w}_{L^\infty_T(L^1 \cap L^P)} +\norm{v}_{L^\infty_T(L^1 \cap L^P)}) \norm{w-v}_{L^\infty_T(L^1 \cap L^P)} \\
				&\qquad \leq C_B \sqrt{T} (2 \lambda \norm{u_0}_{L^1 \cap L^P}) \norm{v-w}_{L^\infty_T(L^1 \cap L^P)}.
			\end{aligned}
		\end{equation}
		As \(T \leq T_\lambda\) satisfies~\eqref{bound:T} and \((\lambda - 1)/\lambda^2 < 1/2\lambda\) for all \(\lambda \in (1,2)\) it holds that
		\begin{equation}
			C_B \sqrt{T} \norm{u_0}_{L^1 \cap L^P} < 1/2\lambda
		\end{equation}
		which, via~\eqref{ineq:contraction}, implies that \(\Psi^K[\cdot;u_0]\) is a contraction mapping over \(\Z_\lambda\).
		
		As the continuity in time of \(\Psi^K[v;u_0]\) for any \(v \in \Z_\lambda\) is given by Lemma~\ref{cor:cty_in_time}, it follows that \(\Psi^K[\,\cdot\,;u_0]: \Z_\lambda \to \Z_\lambda\) so long as
		\begin{equation}
			\norm{\Psi^K[v;u_0]}_{L^\infty_T(L^1 \cap L^P)} \leq \norm{S(t)u_{0}}_{L^\infty_T(L^1 \cap L^P)} + \norm{B^K[v,v]}_{L^\infty_T(L^1 \cap L^P)} \leq \lambda \norm{u_0}_{L^1 \cap L^P}.
		\end{equation}
		From~\eqref{ineq:semigrp_contraction}, it follows that \(\norm{S(t)u_{0}}_{L^\infty_T(L^1 \cap L^P)} \leq \norm{u_0}_{L^1 \cap L^P}\). Via~\eqref{ineq:L1_Lp_B_crude_bound} and the choice of \(v \in \Z_\lambda\), observe that
		\begin{equation}\label{ineq:B_u_0}
			\norm{B^K[v,v]}_{L^\infty_T(L^1 \cap L^P)} \leq C_B \sqrt{T} (\lambda \norm{u_0}_{L^1 \cap L^P})^2.
		\end{equation}
		Again, since \(T \leq T_\lambda\), it follows that \(\norm{B^K[v,v]}_{L^\infty_T(L^1 \cap L^P)} \leq (\lambda - 1) \norm{u_0}_{L^1 \cap L^p}\). 
		Therefore, 
		\begin{equation}
			\norm{\Psi^K[v;u_0]}_{L^\infty_T(L^1 \cap L^P)} \leq (1 + (\lambda - 1)) \norm{u_0}_{L^1 \cap L^p} = \lambda \norm{u_0}_{L^1 \cap L^p}
		\end{equation}
		and so \(\Psi^K[\cdot;u_0]\) maps \(\Z_\lambda\) to itself for any choice of \(T \in [0, T_\lambda]\).
	\end{proof}

	To conclude local well-posedness, we now demonstrate uniqueness and the continuous dependence of mild solutions to~\eqref{eqn:agg_diff}. To do so, we utilise the following version of the Gr\"onwall inequality.

	\begin{lemma}[\!{\cite[Corollaries 1,\! 2]{ye2007generalized}}]\label{lem:general_gronwall}
		Suppose \(a(t)\) is a non-negative, non-decreasing function, \(b \geq 0\) and \(\varphi(t)\) is a non-negative and locally integrable function over \([0,T)\) for some \(T > 0\) such that
		\begin{equation}\label{eqn:frac_gron_criteria}
			\varphi(t) \leq a(t) + b \int_0^t (t-\tau)^{-1/2} \varphi(\tau) \rmd \tau
		\end{equation}
		for all \(t \in [0,T)\). Then, for any \(t \in [0,T)\) it follows that
		\begin{equation}
			\varphi(t) \leq a(t) E_{1/2}\left(b \sqrt{\pi t}\right),
		\end{equation}
		where \(E_{1/2}\) is the Mittag-Leffler function of parameter \(1/2\) which for any \(z \geq 0\) can be denoted via~\cite{haubold2011mittag} as
		\begin{equation}\label{def:mittag}
			E_{1/2}(z) := \exp(z^2) \left(1 + 2 \pi^{-1/2} \int_0^z \exp(-x^2)\rmd x\right).
		\end{equation} 
	\end{lemma}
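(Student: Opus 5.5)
The approach is the standard Picard-type iteration for Volterra inequalities with a weakly singular kernel. The explicit Mittag-Leffler form comes from summing the resulting Neumann series. Define the linear operator \(\mathcal{K}\) on non-negative locally integrable functions by
\[
(\mathcal{K}\varphi)(t) := b\int_0^t (t-\tau)^{-1/2}\varphi(\tau)\,\rmd \tau .
\]
The hypothesis reads \(\varphi \leq a + \mathcal{K}\varphi\). Since \(\mathcal{K}\) preserves non-negativity and is monotone, iterating \(n\) times gives
\[
\varphi \leq \sum_{k=0}^{n-1}\mathcal{K}^k a + \mathcal{K}^n\varphi .
\]

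The first step is to compute the iterated kernel. Using the Beta-function identity \(\int_s^t (t-\tau)^{\alpha-1}(\tau-s)^{\beta-1}\rmd\tau = B(\alpha,\beta)(t-s)^{\alpha+\beta-1}\), induction shows
\[
(\mathcal{K}^k\psi)(t) = \int_0^t \frac{(b\Gamma(1/2))^k}{\Gamma(k/2)}(t-\tau)^{k/2-1}\psi(\tau)\,\rmd\tau .
\]
Since \(a\) is non-decreasing, \(a(\tau)\leq a(t)\) for \(\tau\leq t\), so
\[
(\mathcal{K}^k a)(t)\leq a(t)\frac{(b\sqrt{\pi})^k t^{k/2}}{\Gamma(k/2+1)} .
\]
Summing over \(k\) gives \(a(t)\sum_{k\geq0}(b\sqrt{\pi t})^k/\Gamma(k/2+1) = a(t)E_{1/2}(b\sqrt{\pi t})\), which is the series definition of the Mittag-Leffler function.

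The second step is to show that the remainder \(\mathcal{K}^n\varphi(t)\to0\) for fixed \(t<T\). The kernel coefficient satisfies \((b\sqrt\pi)^n t^{n/2-1}/\Gamma(n/2)\to0\) superexponentially, because \(\Gamma(n/2)\) grows factorially. For \(n\geq2\) the kernel \((t-\tau)^{n/2-1}\) is bounded on \([0,t]\), and \(\varphi\) is integrable on \([0,t]\) by local integrability. Hence \(\mathcal{K}^n\varphi(t)\leq C_n\norm{\varphi}_{L^1(0,t)}\) with \(C_n\to0\). This step is where the local integrability assumption is actually used, and it is the main (though mild) technical point. One must check that the iteration is legitimate pointwise, which holds because all quantities are non-negative and Tonelli's theorem justifies exchanging integrals. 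If the inequality holds only almost everywhere, the conclusion holds almost everywhere. For the statement as given, pointwise, the bound holds at every \(t\) where the hypothesis holds.

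The final step is to identify the closed form. Split the series into even and odd \(k\). The even terms give \(\sum_m z^{2m}/m! = \exp(z^2)\). The odd terms \(\sum_m z^{2m+1}/\Gamma(m+3/2)\) equal \(\exp(z^2)\operatorname{erf}(z)\); this is checked by noting that both sides vanish at \(0\) and satisfy the ODE \(y' = 2zy + 2/\sqrt{\pi}\), or simply by citing~\cite{haubold2011mittag}. This recovers~\eqref{def:mittag} with \(z=b\sqrt{\pi t}\). Since the statement is quoted from~\cite{ye2007generalized}, one could alternatively just invoke their Corollaries 1 and 2 with \(\beta = 1/2\) and \(g \equiv b\,\Gamma(1/2)\), after rewriting the kernel as \(b\,\Gamma(1/2)\cdot(t-\tau)^{\beta-1}/\Gamma(\beta)\).
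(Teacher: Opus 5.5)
Your argument is correct and is the same Picard-iteration proof that underlies the cited corollaries of Ye--Gao--Ding (the paper gives no proof beyond that citation); you also verify the closed form \(E_{1/2}(z)=e^{z^2}\bigl(1+\tfrac{2}{\sqrt{\pi}}\int_0^z e^{-x^2}\,\rmd x\bigr)\) via the ODE \(y'=2zy+2/\sqrt{\pi}\). One small slip in your closing remark: Ye--Gao--Ding state the hypothesis with kernel \(g(t)(t-s)^{\beta-1}\), without a factor \(1/\Gamma(\beta)\), and conclude \(u(t)\le a(t)E_\beta(g(t)\Gamma(\beta)t^\beta)\), so the correct choice is \(g\equiv b\), whereas taking \(g\equiv b\,\Gamma(1/2)\) there would only give the weaker bound \(a(t)E_{1/2}(b\pi\sqrt{t})\).
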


	\begin{proposition}[Continuous dependence]\label{lem:cont_dep}
		Suppose that \(u\) and \(v\) are mild solutions to~\eqref{eqn:agg_diff} up to some time \(T > 0\) with kernels \(K\), \(G\) and initial datum \(u_0\), \(v_0\) respectively. Suppose further that \(\nabla K,\, \nabla G \in L^Q(\R^d)^{N \times N}\) for some \(Q \in (1,\infty]^{N \times N}\) and \(u_0,\, v_0 \in L^1 \cap L^P(\R^d)\) for some \(P \in [1,\infty)^N\) that satifies~\eqref{cond:P}. Then, for all \(t \in [0,T]\),
		\begin{equation}\label{eqn:cty_dep_bound}
			\norm{u(t)-v(t)}_{L^1 \cap L^P} \leq \left(\norm{u_0-v_0}_{L^1 \cap L^P} + A_1 \sqrt{t} \norm{\nabla {(K-G)}}_{L^Q}  \right) E_{1/2} (A_2 \sqrt{\pi t}),
		\end{equation}
		where
		\begin{equation}
			\begin{aligned}
				A_1 &:= 2 C_1 D_{\min}^{-1/2} \norm{u}_{L^\infty_T(L^1 \cap L^P)} \norm{v}_{L^\infty_T(L^1 \cap L^P)},\\
				A_2 &:= C_1 D_{\min}^{-1/2} (\norm{\nabla K}_{L^Q} \norm{u}_{L^\infty_T(L^1 \cap L^P)} + \norm{\nabla G}_{L^Q} \norm{v}_{L^\infty_T(L^1 \cap L^P)}) ,
			\end{aligned}
		\end{equation} 
		and \(E_{1/2}\) is defined as in~\eqref{def:mittag}.
	\end{proposition}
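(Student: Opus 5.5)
We would write the difference of the two mild solutions through their Duhamel representations and split the nonlinear part so that one piece carries the kernel difference and the other carries the solution difference. Write \(\Psi^K\) and \(\Psi^G\) for the fixed-point maps associated with \(K\) and \(G\), so that \(u = \Psi^K[u;u_0]\) and \(v = \Psi^G[v;v_0]\). Then for each \(t \in [0,T]\),
\begin{equation}
	u(t) - v(t) = S(t)(u_0 - v_0) - \big(B^K[u,u](t) - B^G[v,v](t)\big).
\end{equation}
The operator \(\A^K[\varphi,\psi]\) is linear in \(K\) as well as bilinear in \((\varphi,\psi)\). Using this, we decompose
\begin{equation}
	B^K[u,u] - B^G[v,v] = B^{K-G}[u,v] + B^K[u,u-v] + B^G[u-v,v],
\end{equation}
which can be checked by expanding \(\A^K[u,u] - \A^G[v,v] = \A^{K-G}[u,v] + \A^K[u,u-v] + \A^G[u-v,v]\) termwise.

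The semigroup term is handled by the contraction estimate~\eqref{ineq:semigrp_contraction}, giving \(\norm{S(t)(u_0-v_0)}_{L^1 \cap L^P} \leq \norm{u_0 - v_0}_{L^1 \cap L^P}\). For the three bilinear pieces we apply the pointwise-in-time bound~\eqref{ineq:L1_Lp_B_bound} of Lemma~\ref{lem:duhamel_bound_lp}, with the kernel there replaced respectively by \(K-G\), \(K\) and \(G\). This is legitimate because \(\nabla(K-G) \in L^Q(\R^d)\) and \(P\) satisfies~\eqref{cond:P}.

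The first piece gives
\begin{equation}
	\frac{C_1}{\sqrt{D_{\min}}}\norm{\nabla(K-G)}_{L^Q}\norm{u}_{L^\infty_T(L^1\cap L^P)}\norm{v}_{L^\infty_T(L^1\cap L^P)}\int_0^t (t-\tau)^{-1/2}\rmd\tau,
\end{equation}
and since \(\int_0^t (t-\tau)^{-1/2}\rmd \tau = 2\sqrt{t}\), this is exactly \(A_1\sqrt{t}\,\norm{\nabla(K-G)}_{L^Q}\). The remaining two pieces are bounded by
\begin{equation}
	\frac{C_1}{\sqrt{D_{\min}}}\big(\norm{\nabla K}_{L^Q}\norm{u}_{L^\infty_T(L^1\cap L^P)} + \norm{\nabla G}_{L^Q}\norm{v}_{L^\infty_T(L^1\cap L^P)}\big)\int_0^t (t-\tau)^{-1/2}\varphi(\tau)\rmd\tau,
\end{equation}
where \(\varphi(\tau) := \norm{u(\tau)-v(\tau)}_{L^1\cap L^P}\). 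The prefactor here is \(A_2\).

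Collecting these estimates yields
\begin{equation}
	\varphi(t) \leq a(t) + A_2\int_0^t (t-\tau)^{-1/2}\varphi(\tau)\rmd \tau, \qquad a(t) := \norm{u_0 - v_0}_{L^1 \cap L^P} + A_1\sqrt{t}\,\norm{\nabla(K-G)}_{L^Q}.
\end{equation}
The function \(a\) is non-negative and non-decreasing. The function \(\varphi\) is non-negative and continuous, hence locally integrable, because \(u, v \in C([0,T];L^1\cap L^P(\R^d))\). Lemma~\ref{lem:general_gronwall} then gives~\eqref{eqn:cty_dep_bound} for \(t \in [0,T)\). The endpoint \(t = T\) follows by continuity of both sides, or by applying the lemma on a slightly larger interval after extending trivially.

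The main obstacle is just the bookkeeping in the bilinear splitting. We must make sure that each norm of \(u\) or \(v\) ends up paired with the correct kernel, so that the constants match \(A_1\) and \(A_2\) as stated, and that only the sup norms of \(u\) and \(v\) (not of \(u-v\)) appear in these coefficients.
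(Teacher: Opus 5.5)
Your proof is correct and matches the paper's argument: the Duhamel difference, a trilinear splitting (the paper uses $B^K[h,u]+B^{K-G}[v,u]+B^G[v,h]$ with $h=u-v$, which mirrors your choice of argument order and gives the same $A_1$, $A_2$), the bound \eqref{ineq:L1_Lp_B_bound}, and then the fractional Gr\"onwall Lemma~\ref{lem:general_gronwall}, with the endpoint $t=T$ obtained by continuity. Of your two justifications for $t=T$, keep the continuity one; the ``trivial extension'' alternative would require the integral inequality to still hold past $T$, and that is not automatic.
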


	\begin{proof}
		Given the couples \((K,\, u_0),\, (G,\, v_0)\) consider their respective solutions \(u\) and \(v\). Denote \(h := u - v\) and observe that, for all \(t \in [0,T]\),
		\begin{equation}\label{eqn:diff_agg}
			\begin{aligned}
				h(t) 
				&= \Psi^K[u;u_0](t) - \Psi^G[v;v_0](t) \\
				&= S(t)(u_0 - v_0) - \left(B^K[u,u](t) - B^G[v,v](t)\right) \\
				&= S(t)h_{0} - \left(B^K[h,u](t) + B^{K-G}[v,u](t) + B^G[v,h](t) \right),
			\end{aligned}
		\end{equation}
		where the final equality follows from the fact that, for each \(t \in [0,T]\), the mapping \((u,K,v) \mapsto B^K[u,v](t)\) is tri-linear.
		From here take the \(L^1 \cap L^P\) norm of both sides of~\eqref{eqn:diff_agg}, then apply~\eqref{ineq:semigrp_contraction} and~\eqref{ineq:L1_Lp_B_bound} to obtain that, for all \(t \in [0,T]\),
		\begin{equation}\label{ineq:cty_dep_gronable}
			\begin{aligned}
				&\norm{h(t)}_{L^1 \cap L^p} \leq \norm{h_0}_{L^1 \cap L^P} + \frac{C_1}{\sqrt{D_{\min}}} \Big\{2 \sqrt{t} \norm{\nabla (K-G)}_{L^Q} \norm{u}_{L^\infty_T(L^1 \cap L^P)} \norm{v}_{L^\infty_T(L^1 \cap L^P)} \\
				&\qquad + \left(\norm{\nabla K}_{L^Q} \norm{u}_{L^\infty_T(L^1 \cap L^P)} + \norm{\nabla G}_{L^Q} \norm{v}_{L^\infty_T(L^1 \cap L^P)}\right)
				\int_0^t (t-\tau)^{-1/2} \norm{h(\tau)}_{L^1 \cap L^P} \rmd \tau \Big\}.
			\end{aligned}
		\end{equation}
		Now notice that~\eqref{ineq:cty_dep_gronable} is of the form~\eqref{eqn:frac_gron_criteria} for \(\varphi(t) = \norm{h(t)}_{L^1 \cap L^P}\). Therefore, apply Lemma~\ref{lem:general_gronwall}, along with the continuity of \(\varphi\) at time \(T\) via Lemma~\ref{cor:cty_in_time}, to conclude that~\eqref{eqn:cty_dep_bound} holds for all \(t \in [0,T]\).
	\end{proof}

	\begin{corollary}[Uniqueness of solutions]\label{cor:uniq-sol}
		Suppose \(u\) is a mild solution to~\eqref{eqn:agg_diff} up to some time \(T > 0\) for kernels \(\nabla K \in L^Q(\R^d)\) and initial data \(u_0 \in L^1 \cap L^P(\R^d)\) for some \(P \in [1,\infty)^N\) satisfying~\eqref{cond:P}. Then \(u(t)\) is unique almost everywhere for all \(t \in [0,T]\).
	\end{corollary}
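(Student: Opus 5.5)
Uniqueness follows from the continuous dependence estimate of Proposition~\ref{lem:cont_dep}, applied with the same kernels and the same initial data. Let \(u\) and \(v\) be two mild solutions of~\eqref{eqn:agg_diff} up to time \(T\), both with kernels \(K\) and initial data \(u_0\). By Definition~\ref{def:mild}, both lie in \(C([0,T];L^1 \cap L^P(\R^d))\). Continuity on the compact interval \([0,T]\) gives finite norms \(\norm{u}_{L^\infty_T(L^1 \cap L^P)}\) and \(\norm{v}_{L^\infty_T(L^1 \cap L^P)}\). Hence the constants \(A_1\) and \(A_2\) in Proposition~\ref{lem:cont_dep} are finite, and the factor \(E_{1/2}(A_2\sqrt{\pi t})\) is finite for every \(t \in [0,T]\).

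Now take \(G = K\) and \(v_0 = u_0\) in~\eqref{eqn:cty_dep_bound}. Then \(\norm{u_0 - v_0}_{L^1 \cap L^P} = 0\) and \(\norm{\nabla(K-G)}_{L^Q} = 0\), so the prefactor vanishes. This gives \(\norm{u(t)-v(t)}_{L^1 \cap L^P} \leq 0\) for every \(t \in [0,T]\). Since \(\norm{\cdot}_{L^1 \cap L^P}\) is a sum of \(L^1\) and \(L^{p_i}\) norms of the components, each \(u_i(t) - v_i(t)\) has zero \(L^1\) norm. Therefore \(u(t) = v(t)\) almost everywhere, for every \(t\).

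The only point to check is that Proposition~\ref{lem:cont_dep} applies to arbitrary mild solutions on \([0,T]\), not just to those built in Proposition~\ref{thm:mild_soln} inside some \(\Z_\lambda\). Its proof uses only the fixed-point identity, trilinearity of \(B\), Lemma~\ref{lem:duhamel_bound_lp}, and the Gr\"onwall-type Lemma~\ref{lem:general_gronwall}. The Gr\"onwall lemma needs \(\varphi(t) = \norm{u(t)-v(t)}_{L^1 \cap L^P}\) to be non-negative and locally integrable on \([0,T)\). This holds because \(\varphi\) is continuous and bounded, as \(u,v \in C([0,T];L^1\cap L^P(\R^d))\). The endpoint \(t = T\) then follows from continuity, exactly as in that proof. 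So there is no real obstacle, and the corollary is immediate.
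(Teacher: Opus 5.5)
Your proposal is correct and takes the same approach as the paper, which applies Proposition~\ref{lem:cont_dep} with identical kernels and initial data to get $\norm{u(t)-v(t)}_{L^1 \cap L^P} = 0$ for all $t \in [0,T]$. Your additional checks (finiteness of $A_1, A_2$, local integrability of $\varphi$ for the Gr\"onwall lemma, and the endpoint $t = T$ via continuity) spell out steps the paper leaves implicit.
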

	
	\begin{proof}
		Suppose that \(v\) is another mild solution to~\eqref{eqn:agg_diff} with kernels \(K\) and initial data \(u_0\). It then follows immediately from Proposition~\ref{lem:cont_dep} that \(\norm{u(t) - v(t)}_{L^1 \cap L^P} = 0\), hence \(u(t) = v(t)\) almost everywhere, for all \(t \in [0,T]\).
	\end{proof}

	Two useful properties of mild solutions of~\eqref{eqn:agg_diff} are the ability to truncate the time interval for which the solution is defined, along with the ability to construct a new mild solution by chaining together solutions so long as the end-point of one is equal to the initial data of another. Indeed, both of these properties are consequences of the following result.

	\begin{proposition}\label{lem:unique_u_decomp}
		Suppose \(u_0 \in L^1 \cap L^P(\R^d)\) for some \(P \in [1,\infty)^N\) and \(K = (K_{ij})_{i,j=1}^N\) is a collection of locally integrable kernels such that \(v \mapsto \Psi^K[v;u_0]\) is a well-defined mapping over the space \(C([0,T];L^1 \cap L^P(\R^d))\). Then, \(u\) is a mild solution of~\eqref{eqn:agg_diff} up to time \(T > 0\) with kernels \(K\) and initial data \(u_0\) if and only if, for any \(T^* \in [0,T]\), \(u\) can be written as
		\begin{equation}\label{def:mild_soln_decomp}
			u(t) = \begin{cases}
				u_A(t), &\quad t \in [0,T^*), \\
				{u_B}(t-T^*), &\quad t \in [T^*,T],
			\end{cases}
		\end{equation}
		where \(u_A\) is a mild solution of~\eqref{eqn:agg_diff} up to time \(T^*\) with kernels \(K\) and initial data \(u_0\) and \({u_B}\) is a mild solution of~\eqref{eqn:agg_diff} up to time \(T - T^*\) with kernels \(K\) and initial data \(u(T^*)\).
	\end{proposition}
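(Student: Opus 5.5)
The proof rests on two algebraic facts about the Duhamel formula: the semigroup property \(S_i(t) = S_i(t-T^*)S_i(T^*)\), and the splitting of the time integral in \(B_i\) at \(\tau = T^*\). I would establish one identity and then read off both directions from it. Fix \(T^* \in [0,T]\) and \(u \in C([0,T];L^1 \cap L^P(\R^d))\). For \(s \in [0,T-T^*]\), write \(w(s) := u(T^*+s)\). The claimed identity is
\begin{equation}
\Psi_i^K[u;u_0](T^*+s) = S_i(s)\,\Psi_i^K[u;u_0](T^*) - \int_0^{s} \nabla S_i(s-\sigma)\cdot \A_i^K[w(\sigma),w(\sigma)]\,\rmd\sigma .
\end{equation}

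To derive it, start from \(\Psi_i^K[u;u_0](T^*+s) = S_i(T^*+s)u_{0i} - \int_0^{T^*+s}\nabla S_i(T^*+s-\tau)\cdot \A_i^K[u(\tau),u(\tau)]\,\rmd\tau\). Split the integral into \(\int_0^{T^*}\) and \(\int_{T^*}^{T^*+s}\).
\begin{itemize}
\item On \([0,T^*]\), write \(\nabla S_i(T^*+s-\tau) = S_i(s)\nabla S_i(T^*-\tau)\) and pull the bounded operator \(S_i(s)\) out of the Bochner integral. This is legitimate because the integrand is Bochner integrable in \(L^1\cap L^{p_i}\) by the bounds in Lemma~\ref{lem:duhamel_bound_lp}. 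The first piece then becomes \(S_i(s)\) applied to \(S_i(T^*)u_{0i} - B_i^K[u,u](T^*)\), which is \(S_i(s)\Psi_i^K[u;u_0](T^*)\).
\item On \([T^*,T^*+s]\), substitute \(\tau = T^*+\sigma\). This gives exactly the \(B\)-term evaluated on \(w\).
\end{itemize}
The identity also needs \(\nabla S_i(t+s)\varphi = S_i(s)\nabla S_i(t)\varphi\), i.e.\ that \(\nabla\) commutes with the heat semigroup. This holds because both are convolution operators with the Gaussian, and it is presumably among the properties in Lemma~\ref{lem:heat_semi_props}.

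For the forward direction, suppose \(u\) is a mild solution up to time \(T\). Set \(u_A := u|_{[0,T^*]}\) and \(u_B := w\). The formula for \(\Psi^K[u;u_0](t)\) with \(t \le T^*\) only involves \(u\) on \([0,t]\), so \(u_A\) is a fixed point on \([0,T^*]\). Since \(\Psi^K[u;u_0](T^*) = u(T^*)\), the identity shows \(w = \Psi^K[w;u(T^*)]\), i.e.\ \(u_B\) is a mild solution with initial data \(u(T^*)\). Continuity of both pieces is inherited from \(u\).

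For the converse, assume \(u_A\) and \(u_B\) are mild solutions as in the statement, and define \(u\) by \eqref{def:mild_soln_decomp}. First, \(u\) is continuous at \(T^*\): we have \(u_B(0) = u(T^*)\), and continuity of \(u_A\) up to \(T^*\) forces \(u_A(t) \to u(T^*)\) as \(t \to T^{*-}\). Here I read the initial data \(u(T^*)\) as \(u_A(T^*)\), since \(u_A\) is defined on the closed interval \([0,T^*]\). Hence \(u \in C([0,T];L^1\cap L^P(\R^d))\). For \(t \le T^*\), \(\Psi^K[u;u_0](t) = \Psi^K[u_A;u_0](t) = u(t)\). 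For \(t = T^*+s\), the identity together with \(\Psi^K[u;u_0](T^*) = u_A(T^*)\) gives \(\Psi^K[u;u_0](t) = \Psi^K[u_B;u_A(T^*)](s) = u_B(s) = u(t)\).

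The main obstacle is not the algebra but justifying the operator manipulations under the stated weak hypothesis that \(\Psi^K\) is merely well-defined. Two points need care: exchanging \(S_i(s)\) with the Bochner integral, which follows from boundedness of \(S_i(s)\) on \(L^1\cap L^{p_i}\) plus Bochner integrability, and the commutation \(\nabla S_i(t+s) = S_i(s)\nabla S_i(t)\) on the integrand. I would handle both by invoking the semigroup properties in Lemma~\ref{lem:heat_semi_props} and standard Bochner integral facts. The endpoint case \(T^* = T\) or \(T^* = 0\) is trivial.
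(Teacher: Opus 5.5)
Your proposal is correct and follows essentially the same route as the paper. The paper also splits the Duhamel integral at \(\tau = T^*\) and uses \(S_i(t+T^*) = S_i(t)S_i(T^*)\) and \(\nabla S_i(t+\tau) = S_i(t)\nabla S_i(\tau)\) to get \(\Psi^K_i[u;u_0](t+T^*) = \Psi^K_i[u_B;u(T^*)](t)\); it handles continuity by imposing \(u_A(T^*) = u_B(0)\), which is exactly your reading of the initial data \(u(T^*)\) as \(u_A(T^*)\).
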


	\begin{proof}
		Suppose \(u_A \in C([0,T^*];L^1 \cap L^P(\R^d))\) and \(u_B \in C([0,T - T^*];L^1 \cap L^P(\R^d))\) for some \(0 < T^* \leq T\) such that \(u_A(T^*) = u_B(0)\). Then, \(u\) as defined in~\eqref{def:mild_soln_decomp} is in \(C([0,T];L^1 \cap L^P(\R^d))\) if \(u_A(T^*) = u_B(0)\) almost everywhere. Similarly, if \(u_A\) and \(u_B\) are defined via~\eqref{def:mild_soln_decomp} given some \(u \in C([0,T];L^1 \cap L^P(\R^d))\), then \(u_A \in C([0,T^*];L^1 \cap L^P(\R^d))\), \(u_B \in C([0,T - T^*];L^1 \cap L^P(\R^d))\) and \(u_A(T^*) = u_B(0)\) almost everywhere.
		
		With continuity in time established, it follows that \(u\) is a mild solution to~\eqref{eqn:agg_diff} up to time \(T\) with initial data \(u_0\) if and only if \(u_A\) is a mild solution up to time \(T^* > 0\) with initial data \(u_0\) and \(u_B\) is a mild solution up to time \(T - T^* > 0\) with initial data \(u(T^*)\), namely if \(u_A,\, u_B\) and \(u\) satisfy
        \begin{align}
			\Psi^K[u;u_0](t) = \Psi^K[u_A;u_0](t), &\quad t \in [0,T^*], \label{eqn:fixed_point_A} \\
			\Psi^K[u;u_0](t) = \Psi^K[u_B;u(T^*)](t-T^*), &\quad t \in [T^*,T]. \label{eqn:fixed_point_B}
        \end{align}
        As \(u(\tau) = u_A(\tau)\) for all \(\tau \in [0,T^*]\),~\eqref{eqn:fixed_point_A} follows directly from the definition of \(\Psi^K\) in~\eqref{eqn:mild_solution} that \(\Psi^K[u;u_0](t) = \Psi^K[u_A;u_0](t)\) for all \(t \in [0,T^*)\). To show~\eqref{eqn:fixed_point_B}, consider the following for each \(i \in \{1,\dots,N\}\), \(t \in [0,T-T^*]\),
		\begin{equation}\label{eqn:u^*_not_as_trivial}
			\begin{aligned}
				&\Psi^K_i[u,u_0](t+T^*) \\
				& \quad = S_i(t+T^*)u_{0i} -  \int_0^{t+{T^*}} \nabla S_i(t+{T^*}-\tau) \cdot (u_i(\tau) \nabla F_i[u(\tau)]) \rmd \tau  \\
				& \quad = S_i(t)S_i(T^*)u_{0i} -  \Bigl( S_i(t) \int_0^{T^*} \nabla S_i(T^*-\tau)  \cdot (u_i(\tau) \nabla F_i[u(\tau)]) \rmd \tau  \\
				& \qquad + \int_{T^*}^{t+{T^*}} \nabla S_i(t+T^*-\tau)  \cdot (u_i(\tau) \nabla F_i[u(\tau)]) \rmd \tau\Bigr)\\
				& \quad = S_i(t) \left(\Psi^K_i[u,u_0](T^*)\right) -  \int_0^t \nabla S_i(t-\tau) \cdot ({u_B}_i(\tau) \nabla F_i[u_B(\tau)]) \rmd \tau \\
                & \quad = \Psi^K_i[{u_B};u(T^*)](t).
			\end{aligned}
		\end{equation}
		The second equality uses the semigroup property of \(S_i(t)\), 
		\begin{equation}
			\nabla S_i(t+\tau) = \nabla S_i(t) S_i(\tau) = S_i(t) \nabla S_i(\tau)
		\end{equation} 
		and the third equality uses a change of variable \(\tau \mapsto \tau - T^*\) for the second integral.
	\end{proof}

	From repeated applications of Proposition~\ref{thm:mild_soln}, if we can \emph{a priori} control the growth of the \(L^1 \cap L^P\) norm of the solution to remain bounded for any \(t > 0\), then we can continue the solution indefinitely. Otherwise the solution may blow-up in the \(L^1 \cap L^P\) norm in finite time. This dichotomy can be summarised as follows.

	\begin{corollary}[Blow-up or continuation of the solution]\label{prop:continuation}
        Suppose the system~\eqref{eqn:agg_diff} has kernels \(\nabla K \in L^Q(\R^d)\) for some \(Q \in (1,\infty]^{N \times N}\) and initial data \(u_0 \in L^1 \cap L^P(\R^d)\), for some \(P \in [1,\infty)^N\) satisfying~\eqref{cond:P}. Then there either exists a solution \(u \in C([0,\infty);L^1 \cap L^P(\R^d))\) or there exists a solution \(u \in C([0,T);L^1 \cap L^P(\R^d))\) for some earliest time \(T > 0\) such that
		\begin{equation}\label{cond:L1LP-blowup}
			\lim_{t \to T} \norm{u(t)}_{L^1 \cap L^P} = \infty.
		\end{equation}
    \end{corollary}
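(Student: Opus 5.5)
The plan is the standard maximal-existence argument, built on Proposition~\ref{thm:mild_soln} (local existence with an explicit existence time), Proposition~\ref{lem:unique_u_decomp} (gluing and restriction of mild solutions) and Corollary~\ref{cor:uniq-sol} (uniqueness).

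First, define the maximal time
\[
T^{\max} := \sup\{T>0 : \text{a mild solution up to time } T \text{ exists}\}.
\]
By Proposition~\ref{thm:mild_soln}, this set is nonempty. Uniqueness together with the restriction part of Proposition~\ref{lem:unique_u_decomp} shows that solutions on different intervals agree on their common domain. Hence they define a single $u\in C([0,T^{\max});L^1\cap L^P(\R^d))$ which is a mild solution up to every $T<T^{\max}$. If $T^{\max}=\infty$, we are in the first alternative.

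Now suppose $T^{\max}<\infty$. The key quantitative observation is that the local existence time in~\eqref{bound:T} depends only on $\norm{u_0}_{L^1\cap L^P}$, through $C_B$, which is fixed by $K$ and $D_{\min}$. Fix $\lambda=3/2$. Then there is a decreasing function $\tau(M)=\bigl((\lambda-1)/(\lambda^2 C_B M)\bigr)^2>0$ such that data of norm at most $M$ generate a mild solution on $[0,\tau(M)]$.

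Assume for contradiction that $\liminf_{t\to T^{\max}}\norm{u(t)}_{L^1\cap L^P}<\infty$. Then there exist $M<\infty$ and times $t_n\uparrow T^{\max}$ with $\norm{u(t_n)}\le M$. Choose $n$ with $t_n>T^{\max}-\tau(M)$. Solve from the datum $u(t_n)$ on $[0,\tau(M)]$, and glue this solution to $u|_{[0,t_n]}$ using the ``if'' direction of Proposition~\ref{lem:unique_u_decomp}. This produces a mild solution up to time $t_n+\tau(M)>T^{\max}$, which contradicts maximality. Therefore $\liminf$, and hence the genuine limit, of $\norm{u(t)}_{L^1\cap L^P}$ is infinite, which is~\eqref{cond:L1LP-blowup}.

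The same argument shows that $T^{\max}$ is the earliest time at which the norm becomes unbounded. On any $[0,T]$ with $T<T^{\max}$, the solution is continuous, so its norm is bounded there.

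The main technical point is the gluing step. One must check that Proposition~\ref{lem:unique_u_decomp} applies, which requires $\Psi^K$ to be well defined on $C([0,T];L^1\cap L^P(\R^d))$; this holds by Lemma~\ref{cor:cty_in_time}. One must also check that continuity at the junction holds, which is automatic since the second solution starts exactly at $u(t_n)$. A second point is the upgrade from liminf to lim, which the argument above handles by working directly with the $\liminf$ in the contradiction hypothesis.
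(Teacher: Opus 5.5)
Your proposal is correct and follows essentially the paper's route. The paper gives no separate proof, only remarking that the dichotomy follows from repeated applications of Proposition~\ref{thm:mild_soln}, whose existence time depends only on $\norm{u_0}_{L^1 \cap L^P}$, combined with the gluing in Proposition~\ref{lem:unique_u_decomp}; your maximal-time argument makes this precise, and your $\liminf$ contradiction is a useful extra detail, since it is what upgrades mere unboundedness near $T^{\max}$ to the genuine limit in~\eqref{cond:L1LP-blowup}.
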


\section{Regularity of the solution}\label{section:regularity}

	Under the regularity assumptions on~\eqref{eqn:agg_diff}, the equations can be treated like second order parabolic partial differential equations with nonlocal \emph{semi-linear} terms. Consequently, it is to be expected that the regularity of its solutions smooth out for positive time, which is proven below by bootstrapping the regularity over the family of spaces \(H_{R,P}^s(\R^d)\) for \(s \geq 0\) and \(R,\, P \in (1,\infty)^N\) chosen such that, for some \(Q \in (1,\infty)^{N \times N}\) and any \(j \in \{1,\dots,N\}\),
	\begin{equation}\label{cond:R-P-Q}
        r_j \leq \min_{1 \leq i \leq N} \{q'_{ij}\} \leq \max_{1 \leq i \leq N} \{q'_{ij}\} \leq p_j.
    \end{equation} 

	The following methodology builds on the approach considered in Lemmas~\ref{lem:advec_op_bbd}--\ref{cor:cty_in_time}.

    \begin{lemma}\label{lem:bi-lin_bound}
		Suppose \(\nabla K \in L^Q(\R^d)\) for some \(Q \in (1,\infty)^{N \times N}\). Then \(\A^K\) as defined in~\eqref{defn:advec_op} is a bounded bi-linear operator from \((H_{R,P}^s(\R^d))^2\) to \(H_{R,P}^s(\R^d)\) for any \(s \geq 0\) and \(R,\, P \in (1,\infty)^N\) that satisfies~\eqref{cond:R-P-Q}. That is, for any \(\varphi,\, \psi \in H_{R,P}^s(\R^d)\),
		\begin{equation}\label{ineq:advec_bi-lin_bound}
			\norm{\A^K[\varphi,\psi]}_{H_{R,P}^s} \leq C_H \norm{\nabla K}_{L^Q} \norm{\varphi}_{H_{R,P}^s} \norm{\psi}_{H_{R,P}^s},
		\end{equation} 
		where \(C_H > 0\) depends on \(s\) and \(d\), as defined in Lemma~\ref{lem:tri_lin_s_bound}.
    \end{lemma}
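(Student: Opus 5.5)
The plan is to mirror the proof of Lemma~\ref{lem:advec_op_bbd}, with the Hölder step replaced by a fractional product estimate. This is presumably the content of Lemma~\ref{lem:tri_lin_s_bound}, which I take as given in the form: for $s \geq 0$, $1<r<\infty$, a kernel gradient $\nabla K_{ij}\in L^{q}(\R^d)$ with $1<q<\infty$, and functions $\varphi_i \in H^{s,r}$, $\psi_j\in H^{s,q'}$,
\begin{equation}
	\norm{\varphi_i (\nabla K_{ij}\ast\psi_j)}_{H^{s,r}} \leq C_H \norm{\nabla K_{ij}}_{L^{q}} \norm{\varphi_i}_{H^{s,r}}\norm{\psi_j}_{H^{s,q'}}.
\end{equation}
Should that lemma not be available in exactly this form, I would prove it as follows. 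First, $\mathcal{J}_{-s}$ commutes with convolution. Second, $\mathcal{J}_s$ is bounded on $L^\infty$, since $J_s\in L^1$. These two facts give
\begin{equation}
	\norm{\nabla K_{ij}\ast \psi_j}_{H^{s,\infty}}\leq \norm{\nabla K_{ij}}_{L^{q}}\norm{\psi_j}_{H^{s,q'}}
\end{equation}
by Young's inequality. One then combines this with the fractional Leibniz (Kato--Ponce) inequality
\begin{equation}
	\norm{fg}_{H^{s,r}}\lesssim \norm{f}_{H^{s,r}}\norm{g}_{L^\infty}+\norm{f}_{L^{r}}\norm{g}_{H^{s,\infty}}.
\end{equation}

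The first step is to fix $i$ and take $r\in\{r_i,p_i\}$, the two exponents appearing in the $i$-th component of the $H^s_{R,P}$ norm. By linearity,
\begin{equation}
	\norm{\A^K_i[\varphi,\psi]}_{H^{s,r}}\leq \sum_{j=1}^N \norm{\varphi_i(\nabla K_{ij}\ast\psi_j)}_{H^{s,r}},
\end{equation}
and each summand is estimated by the product bound above. The factor $\norm{\varphi_i}_{H^{s,r}}$ is directly part of $\norm{\varphi_i}_{H^{s,r_i}\cap H^{s,p_i}}$.

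The second step handles the factor $\norm{\psi_j}_{H^{s,q_{ij}'}}$. Condition~\eqref{cond:R-P-Q} gives $r_j\leq q_{ij}'\leq p_j$. Write $\psi_j=\mathcal{J}_s g$ with a single $g\in L^{r_j}\cap L^{p_j}$. Uniqueness of $g$ follows because $\mathcal{J}_s$ is injective on tempered distributions, so the same $g$ represents $\psi_j$ in both spaces. Lebesgue interpolation (the inequality referred to as~\eqref{ineq:lp_int}, together with Remark~\ref{rem:bessel_int}) then yields
\begin{equation}
	\norm{\psi_j}_{H^{s,q_{ij}'}}\leq \norm{g}_{L^{r_j}}+\norm{g}_{L^{p_j}} = \norm{\psi_j}_{H^{s,r_j}\cap H^{s,p_j}}\leq\norm{\psi}_{H^s_{R,P}}.
\end{equation}
Summing over $r\in\{r_i,p_i\}$ and over $i,j$, and using $\sum_{j}\norm{\nabla K_{ij}}_{L^{q_{ij}}}\leq\norm{\nabla K}_{L^Q}$, gives~\eqref{ineq:advec_bi-lin_bound}, exactly as in the final chain of Lemma~\ref{lem:advec_op_bbd}. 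Bilinearity is immediate from the definition~\eqref{defn:advec_op}.

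The main obstacle is the product estimate at non-integer $s$, specifically the $H^{s,\infty}$ endpoint in Kato--Ponce, which is delicate. To avoid it, I would first try integer $s=n$. There $H^{n,r}=W^{n,r}$, and the Leibniz rule gives
\begin{equation}
	D^\alpha(\varphi_i\, \nabla K_{ij}\ast\psi_j)=\sum_{\beta\leq\alpha}\binom{\alpha}{\beta}D^{\beta}\varphi_i\,(\nabla K_{ij}\ast D^{\alpha-\beta}\psi_j),
\end{equation}
and each term is bounded by Hölder and Young as in~\eqref{ineq:trip_prod}. The general $s$ would then follow by complex interpolation between consecutive integers, since the map is bilinear and bounded at each integer level. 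This interpolation route is the step I would check most carefully, as bilinear complex interpolation of the intersection spaces requires some care.
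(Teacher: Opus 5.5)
Your proposal is correct and follows the paper's proof essentially step for step. You apply Lemma~\ref{lem:tri_lin_s_bound} to each summand $\varphi_i(\nabla K_{ij}\ast\psi_j)$ at $r\in\{r_i,p_i\}$, then use Remark~\ref{rem:bessel_int} with $r_j\le q_{ij}'\le p_j$ to bound $\norm{\psi_j}_{H^{s,q_{ij}'}}$ by $\norm{\psi_j}_{H^{s,r_j}\cap H^{s,p_j}}$, and sum; your fallback proof of the product bound (Leibniz plus H\"older--Young at integer $s$, then multilinear complex interpolation in the single spaces $H^{s,r}\times H^{s,q'}\to H^{s,r}$, so no interpolation of intersection spaces is actually needed) is exactly how the paper proves Lemma~\ref{lem:tri_lin_s_bound}.
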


    \begin{proof}
        By first applying Lemma~\ref{lem:tri_lin_s_bound}, followed by the \(H^{s,p}\) interpolation from Remark~\ref{rem:bessel_int} as \(r_j \leq q_{ij}' \leq p_j\) for any choice of \(i,\,j\), it can be deduced that
		\begin{align}
			\norm{\A^K[\varphi,\psi]}_{H_{R,P}^s} & = \sum_{i=1}^N \left(\norm{\A_i^K[\varphi,\psi]}_{H^{s,r_i}} + \norm{\A_i^K[\varphi,\psi]}_{H^{s,p_i}}\right) \\
			& \leq \sum_{i,\,j=1}^N C_H (\norm{\varphi_i}_{H^{s,r_i}} + \norm{\varphi_i}_{H^{s,p_i}}) \norm{\nabla K_{ij}}_{L^{q_{ij}}} \norm{\psi_j}_{H^{s,q_{ij}'}} \\
			& \leq \sum_{i,\, j=1}^N C_H \norm{\varphi}_{H^{s,r_i} \cap H^{s,p_i}} \norm{\nabla K_{ij}}_{L^{q_{ij}}} \norm{\psi_j}_{H^{s,r_j} \cap H^{s,p_j}} \\
			& \leq C_H \norm{\nabla K}_{L^Q} \norm{\varphi}_{H_{R,P}^s} \norm{\psi}_{H_{R,P}^s}. 
		\end{align}
    \end{proof}

    \begin{remark}
        Similarly to Remark~\ref{rem:broader-K}, it can be shown that the corresponding results in Section~\ref{section:regularity} for kernels of the form \(\nabla K \in L^{Q_1}(\R^d) + L^{Q_2}(\R^d)\) for \(Q_1,\, Q_2 \in (1,\infty)^{N \times N}\) hold, so long as \(R,\, P \in (1,\infty)^N\) satisfy
        \begin{equation}
            r_j \leq \min \{q'_{ijk} : i \in \{1,\dots,N\}, k \in \{1,2\}\} \leq \max \{q'_{ijk} : i \in \{1,\dots,N\}, k \in \{1,2\}\} \leq p_j
        \end{equation}
        for every \(j \in \{1,\dots,N\}\).
    \end{remark}
    
	\begin{lemma}\label{lem:mild_sol_extra_reg}
        Suppose \(\nabla K \in L^Q(\R^d)\) for some \(Q \in (1,\infty)^{N \times N}\). Further suppose that \(u \in C([0,T];H_{R,P}^s(\R^d))\) for some \(s \in [0,\infty)\) and \(R,\, P \in (1,\infty)^N\) that satisfy~\eqref{cond:R-P-Q} is a mild solution of~\eqref{eqn:agg_diff} up to time \(T > 0\) with kernels \(K\) and initial data \(u_0 \in H_{R,P}^s(\R^d)\). Then for any choice of \(\sigma \in [0,1)\), it follows that \(u \in C((0,T]; H_{R,P}^{s+\sigma}(\R^d))\).
	\end{lemma}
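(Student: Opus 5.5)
We work directly with the mild formulation $u(t) = S(t)u_0 - B^K[u,u](t)$ and show that each of the two terms lies in $C((0,T];H_{R,P}^{s+\sigma}(\R^d))$. The key tool is the smoothing estimate for the heat semigroup on Bessel potential spaces. Since $\mathcal{J}_s$ commutes with $S_i(t)$ and $\nabla$, we have, for $p\in(1,\infty)$, $\sigma\in[0,1)$ and $0<t\le T$,
\begin{equation}
\norm{S_i(t)\varphi}_{H^{s+\sigma,p}} \leq C (1 + (D_i t)^{-\sigma/2}) \norm{\varphi}_{H^{s,p}}, \qquad \norm{\nabla S_i(t)\varphi}_{H^{s+\sigma,p}} \leq C (1 + (D_i t)^{-(1+\sigma)/2}) \norm{\varphi}_{H^{s,p}}.
\end{equation}
These follow from the $L^p$-boundedness of $(\mathrm{Id}-\Delta)^{\sigma/2}(\mathrm{Id}-tD_i\Delta)^{-\sigma/2}$ together with the standard bound $\norm{(-t\Delta)^{\alpha}e^{t\Delta}}_{L^p\to L^p}\lesssim 1$ (I assume this is among the semigroup properties collected in the appendix alongside~\eqref{ineq:semigrp_deriv_bund}; otherwise I would prove it via the Mikhlin multiplier theorem).

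\emph{Step 1 (linear term).} For $t\ge\delta>0$, the first estimate shows that $S(t)u_0\in H^{s+\sigma}_{R,P}$ with a bound uniform in $t\in[\delta,T]$. Continuity in time on $[\delta,T]$ then follows by writing $S(t)u_0 = S(t-\delta/2)\,S(\delta/2)u_0$ and using the strong continuity of $S$ on $H^{s+\sigma}_{R,P}$, which holds because $S$ commutes with $\mathcal{J}_{s+\sigma}$ and is strongly continuous on $L^p$.

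\emph{Step 2 (bilinear term).} By Lemma~\ref{lem:bi-lin_bound} we have $f(\tau):=\A^K[u(\tau),u(\tau)]\in C([0,T];H^s_{R,P})$, with $\sup_\tau\norm{f(\tau)}_{H^s_{R,P}}\le C_H\norm{\nabla K}_{L^Q}\norm{u}^2_{L^\infty_T H^s_{R,P}}$. The second smoothing estimate then gives
\begin{equation}
\norm{B^K[u,u](t)}_{H^{s+\sigma}_{R,P}} \leq C\int_0^t \bigl(1+(D_{\min}(t-\tau))^{-(1+\sigma)/2}\bigr)\norm{f(\tau)}_{H^s_{R,P}}\,\rmd\tau,
\end{equation}
which is finite precisely because $(1+\sigma)/2<1$; this is why the statement requires $\sigma<1$. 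This bound is uniform on $[0,T]$.

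\emph{Step 3 (time continuity of the bilinear term).} Here I repeat the splitting~\eqref{eqn:B-time-diff} from Lemma~\ref{cor:cty_in_time}, now measured in the $H^{s+\sigma}_{R,P}$ norm. The short-interval piece over $[t_1,t_2]$ is bounded by $C(t_2-t_1)^{(1-\sigma)/2}$, and the difference piece is bounded by $C T^{(1-\sigma)/2}\sup_\tau\norm{f(t_2-\tau)-f(t_1-\tau)}_{H^s_{R,P}}$. The latter tends to $0$ by the uniform continuity of $f$ in $H^s_{R,P}$. This actually yields continuity on all of $[0,T]$ for the bilinear term; only the linear term forces the restriction to $(0,T]$.

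I expect the main obstacle to be justifying the fractional smoothing bounds on $H^{s,p}$ with the correct singularity $t^{-(1+\sigma)/2}$ for non-integer $\sigma$, rather than only the integer-order bound~\eqref{ineq:semigrp_deriv_bund}. My first approach would be to write $\nabla S_i(t)=\nabla S_i(t/2)\,S_i(t/2)$ and bound $(\mathrm{Id}-\Delta)^{\sigma/2}S_i(t/2)$ on $L^p$ by interpolating between $\sigma=0$ and $\sigma=2$, where the $\sigma=2$ case $\norm{\Delta S_i(t)}\lesssim t^{-1}$ is classical; the complex interpolation of Bessel potential spaces, $[H^{0,p},H^{2,p}]_{\sigma/2}=H^{\sigma,p}$, supplies the interpolation step.
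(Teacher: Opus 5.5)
Your proposal is correct and follows the paper's proof essentially step for step. It uses the same split of the mild formulation into $S(t)u_0$ and $B^K[u,u]$, the same splitting~\eqref{eqn:B-time-diff} with the integrable singularity $\tau^{-(1+\sigma)/2}$ for $\sigma<1$, and the same appeal to uniform continuity of $f$ in $H^s_{R,P}$. The fractional smoothing bounds you flagged as the main obstacle are exactly~\eqref{ineq:snu_1}--\eqref{ineq:snu_2} in Lemma~\ref{lem:semi_embed}, which the paper obtains by interpolating integer-order bounds much as you suggest. Your factorization $S(t)u_0=S(t-\delta/2)S(\delta/2)u_0$ in Step 1 is an equivalent variant of the paper's $S(\tau)\bigl(S(t_2-\tau)-S(t_1-\tau)\bigr)u_0$.
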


	\begin{proof}
		From the definition of a mild solution of~\eqref{eqn:agg_diff} recall that, for all \(t \in [0,T]\),
		\begin{equation}
			u(t) = \Psi^K[u;u_0](t) := S(t)u_0 - B^K[u,u](t).
		\end{equation} 
		Hence, it is sufficient to show that \(S(t)u_0 \in C((0,T];H_{R,P}^{s+\sigma}(\R^d))\) given that \(u_0 \in H_{R,P}^s(\R^d)\), and \(B^K[v,v] \in C([0,T];H_{R,P}^{s+\sigma}(\R^d))\), given that \(v \in C([0,T];H_{R,P}^s(\R^d))\).

		For some arbitrary \(t_0 > 0\), fix some \(\tau \in (0,t_0)\). Then, for any \(t \in [\tau,T]\), denote \(t_1 := \min \{t, t_0\} \) and \(t_2 := \max\{t, t_0\}\). Observe that, via~\eqref{ineq:snu_1},
		\begin{equation}
			\begin{aligned}
				&\norm{S(t_2)u_{0} - S(t_1)u_{0}}_{H_{R,P}^{s+\sigma}} = \sum_{i=1}^N \norm{S_i(\tau) (S_i(t_2 - \tau)u_{0i} - S_i(t_1 - \tau)u_{0i})}_{H^{s+\sigma,r_i} \cap H^{s+\sigma,p_i}} \\
				& \quad \leq \sum_{i=1}^N C_\sigma \max\{(D_i \tau)^{-\sigma/2},\, 1\} \norm{S_i(t_1 - \tau) (S(t_2-t_1)u_{0i} - u_{0i})}_{H^{s,r_i} \cap H^{s,p_i}} \\
				& \quad \leq C_\sigma \max\{(D_{\min} \tau)^{-\sigma/2},\, 1\} \norm{S(t_2-t_1)u_{0} - u_{0}}_{H_{R,P}^{s}}.
			\end{aligned} 
		\end{equation}	
		Consequently, it can be deduced that for any \(t_0 \in (0,T]\), the term \(\norm{S(t)u_{0} - S(t_0)u_{0}}_{H_{R,P}^{s+\sigma}}\) vanishes as \(t \to t_0\). 
		
		Now consider some arbitrary \(v \in C([0,T]; H_{R,P}^s(\R^d))\) and denote \(f(\tau) := \A^K[v(\tau),v(\tau)]\). Due to the continuity in time of \(v\) and the continuity of the mapping \(\varphi \mapsto \A^K[\varphi,\varphi]\) in the \(H_{R,P}^s(\R^d)\) sense over \([0,T]\), it follows that \(f \in C([0,T];H_{R,P}^s(\R^d))\). Recall~\eqref{eqn:B-time-diff} which, for \(t_1 = \min \{t, t_0\} \) and \(t_2 = \max\{t, t_0\}\) and for each \(i \in \{1,\dots,N\}\), it follows that
		\begin{equation}\label{ineq:Hsp-B-diff-bound}
			\begin{aligned}
				\norm{B^K[v,v](t_2) -& B^K[v,v](t_1)}_{H_{R,P}^{s+\sigma}} \leq \sum_{i=1}^N \Big( \Big\|\int_{t_1}^{t_2} \nabla S_i(\tau) \cdot f_i(t_2 - \tau)\rmd \tau\Big\|_{H^{s+\sigma,r_i} \cap H^{s+\sigma,p_i}} \Big) \\
				&+ \sum_{i=1}^N \Big( \Big\|\int_0^{t_1} \nabla S_i(\tau) \cdot (f_i(t_2-\tau) - f_i(t_1 - \tau)) \rmd \tau\Big\|_{H^{s+\sigma,r_i} \cap H^{s+\sigma,p_i}} \Big).
			\end{aligned}
		\end{equation}
		To bound the first term on the right hand side of~\eqref{ineq:Hsp-B-diff-bound}, apply~\eqref{ineq:snu_2} followed by the inequality \(\max \{a, b\} \leq a + b\) for any \(a,\, b \geq 0\) to obtain that
		\begin{equation}\label{ineq:s-to-s+sig-1}
			\begin{aligned}
				&\sum_{i=1}^N \Big\| \int_{t_1}^{t_2} \nabla S_i(\tau) \cdot f_i(t_2-\tau) \rmd \tau \Big\|_{H^{s+\sigma,r_i} \cap H^{s+\sigma,p_i}}\\
				& \qquad \leq \sum_{i=1}^N C_{\sigma + 1} \int_{t_1}^{t_2} \left((D_i \tau)^{-1/2} + (D_i \tau)^{-(1+\sigma)/2}\right) \norm{f_i(t_2-\tau)}_{H^{s,r_i} \cap H^{s,p_i}} \rmd \tau \\
				& \qquad \leq C_{\sigma + 1} \int_{t_1}^{t_2} \left((D_{\min} \tau)^{-1/2} + (D_{\min} \tau)^{-(1+\sigma)/2}\right) \norm{f(t_2-\tau)}_{H_{R,P}^{s}} \rmd \tau \\
				& \qquad \leq C_{\sigma + 1} \norm{\nabla K}_{L^Q} \norm{v}^2_{L^\infty_T H_{R,P}^s} \left(2 D_{\min}^{-1/2} (t_2 - t_1)^{1/2} + \frac{2}{1-\sigma} D_{\min}^{-(1+\sigma)/2} (t_2 - t_1)^{(1-\sigma)/2}\right).
			\end{aligned}
		\end{equation}
		It then follows that the right hand side of~\eqref{ineq:s-to-s+sig-1} vanishes as \(t_2 - t_1 \to 0\), since both the \((t_2 - t_1)^{1/2}\) and \((t_2 - t_1)^{(1-\sigma)/2}\) terms vanish when \(\sigma \in [0,1)\). For the second term repeat the same arguments as in~\eqref{ineq:s-to-s+sig-1} to obtain that
		\begin{equation}\label{ineq:s-to-s+sig-2}
			\begin{aligned}
				&\sum_{i=1}^N \Big\|\int_0^{t_1} \nabla S_i(\tau) \cdot (f_i(t_2-\tau) - f_i(t_1 - \tau)) \rmd \tau \Big\|_{H^{s + \sigma ,r_i} \cap H^{s+\sigma,p_i}}\\
				& \qquad \leq C_{\sigma + 1} \int_{0}^{t_1} \left((D_{\min} \tau)^{-1/2} + (D_{\min} \tau)^{-(1+\sigma)/2}\right) \norm{f_i(t_2-\tau) - f_i(t_1 - \tau)}_{H_{R,P}^{s}} \rmd \tau \\
				& \qquad \leq C_{\sigma + 1} \left(2 D_{\min}^{-1/2} T^{1/2} + \frac{2}{1-\sigma} D_{\min}^{-(1+\sigma)/2} T^{(1-\sigma)/2}\right) \sup_{0 \leq \tau \leq t_1} \norm{f(t_2- \tau) - f(t_1 - \tau)}_{H_{R,P}^s}.
			\end{aligned}
		\end{equation}
		By the uniformly continuity of \(f\) over \([0,T]\) in the \(H_{R,P}^{s}\) sense, the second term must also vanish as \(t\) approaches \(t_0\). By substituting~\eqref{ineq:s-to-s+sig-1} and~\eqref{ineq:s-to-s+sig-2} back into~\eqref{ineq:Hsp-B-diff-bound}, we can conclude that \(B^K[v,v] \in C([0,T];H_{R,P}^{s+\sigma}(\R^d))\), as required.
	\end{proof}

	
	\begin{proposition}[Regularity of the solution]\label{thm:smoothing}
		Let \(u\) be a \emph{mild} solution to~\eqref{eqn:agg_diff} up to some time \(T > 0\) with \(\nabla K \in L^Q(\R^d)\) for some \(Q \in (1,\infty)^{N \times N}\) and initial data \(u_0 \in L^{1} \cap L^P(\R^d)\) for \(P \in (1,\infty)^N\) that satisfies~\eqref{cond:P}. Then, for any \(n \geq 2\) and \(R \in (1,\infty)^N\) that satisfies~\eqref{cond:R-P-Q},
        \begin{equation}\label{cond:u-smooth}
            u \in C([0,T];L^1 \cap L^P(\R^d)) \cap C((0,T];W_{R,P}^{n}(\R^d)) \cap C^1((0,T];W_{R,P}^{n-2}(\R^d)).
        \end{equation}
	\end{proposition}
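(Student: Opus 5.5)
The plan is to bootstrap regularity with Lemma~\ref{lem:mild_sol_extra_reg}, restarting the solution at small positive times through Proposition~\ref{lem:unique_u_decomp}. Time regularity then comes from the equation itself.

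\emph{Step 1: reaching $s=0$ in $H_{R,P}^0$.} The solution is only known in $C([0,T];L^1\cap L^P(\R^d))$, while the bootstrap needs $u\in C([0,T];H^{0}_{R,P})$ with $H^0_{R,P}=\bigoplus_i L^{r_i}\cap L^{p_i}$. Since $1<r_i\le p_i$, interpolation (as in~\eqref{ineq:psi_l^p_int}) gives $\norm{\varphi_i}_{L^{r_i}}\le \norm{\varphi_i}_{L^1}+\norm{\varphi_i}_{L^{p_i}}$. Hence $L^1\cap L^P\hookrightarrow H^0_{R,P}$ continuously, so $u\in C([0,T];H^0_{R,P})$ and $u_0\in H^0_{R,P}$. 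Condition~\eqref{cond:R-P-Q} is exactly what Lemma~\ref{lem:mild_sol_extra_reg} requires.

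\emph{Step 2: induction on $s$.} Fix $\sigma=1/2$ and an arbitrarily small $\varepsilon>0$. Suppose $u\in C([\varepsilon_k,T];H^{s}_{R,P})$ for some $\varepsilon_k<\varepsilon$. By Proposition~\ref{lem:unique_u_decomp}, $t\mapsto u(t+\varepsilon_k)$ is a mild solution on $[0,T-\varepsilon_k]$ with initial data $u(\varepsilon_k)\in H^s_{R,P}$. Lemma~\ref{lem:mild_sol_extra_reg} then gives $u\in C((\varepsilon_k,T];H^{s+1/2}_{R,P})$, so $u\in C([\varepsilon_{k+1},T];H^{s+1/2}_{R,P})$ for any $\varepsilon_{k+1}\in(\varepsilon_k,\varepsilon)$. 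One subtlety is that Lemma~\ref{lem:mild_sol_extra_reg} assumes $u$ is a mild solution in the $H^s_{R,P}$ class. The fixed-point identity is the same identity of functions, so this costs nothing. Also, Lemma~\ref{lem:bi-lin_bound} must be usable at every level $s$; it requires only~\eqref{cond:R-P-Q}. Iterating $2n$ times and letting $\varepsilon\to0$ gives $u\in C((0,T];H^{n}_{R,P})$, which is $C((0,T];W^n_{R,P})$ by the identification $H^{k,p}=W^{k,p}$ for $1<p<\infty$.

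\emph{Step 3: time derivative, the main obstacle.} I must show $u\in C^1((0,T];W^{n-2}_{R,P})$ with $\frac{\rmd}{\rmd t}u_i=D_i\Delta u_i-\nabla\cdot\A_i^K[u,u]$. I would restart at a time $t_0>0$, where $u(t_0)\in W^{n+2}_{R,P}$ by Step 2 applied with $n+2$. Write $u(t)=S(t-t_0)u(t_0)-\int_{t_0}^t\nabla S(t-\tau)\cdot f(\tau)\,\rmd\tau$ with $f=\A^K[u,u]\in C([t_0,T];W^{n+1}_{R,P})$, using Lemma~\ref{lem:bi-lin_bound}. On smooth enough data the heat semigroup is $C^1$ in $W^{n-2}$ with generator $D_i\Delta$. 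For the Duhamel term, substitute $\tau\mapsto t-\tau$, as in~\eqref{eqn:B-time-diff}, and differentiate. This gives $-\nabla\cdot f(t)$ plus $\int\nabla S(\tau)\cdot\partial_t f(t-\tau)\,\rmd\tau$, which needs time-differentiability of $f$ and is circular. To avoid that circularity, I would instead use the standard fact that a mild solution whose forcing $g=-\nabla\cdot f$ lies in $C([t_0,T];W^{n}_{R,P})$, with data in the domain, is a strong solution: $v(t)=\int_{t_0}^t S(t-\tau)g(\tau)\,\rmd\tau$ satisfies $v(t)=\int_{t_0}^t(D\Delta v+g)\,\rmd\tau$. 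This follows by integrating the semigroup identity $S(h)-I=\int_0^h D\Delta S(r)\,\rmd r$ and applying Fubini. The right-hand side is continuous in $W^{n-2}$ by Step 2, so the fundamental theorem of calculus gives $C^1$. Since $t_0>0$ is arbitrary, this yields~\eqref{cond:u-smooth}.
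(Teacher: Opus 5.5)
Your proposal is correct. Steps 1 and 2 match the paper's argument. The paper also starts from $L^1\cap L^P(\R^d)\subset H^0_{R,P}(\R^d)$. It then restarts the solution along an increasing sequence of times $t_n<t^*$ via Proposition~\ref{lem:unique_u_decomp}, and applies Lemma~\ref{lem:mild_sol_extra_reg} with $\sigma=1/2$ to gain half a derivative at each restart.

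The genuine difference is in Step 3. The paper restarts at $t_0>0$ and differentiates the Duhamel term under the integral sign, obtaining
$\frac{\rmd}{\rmd t}B_i = S_i(0)\nabla\cdot f_i(t)+\int_0^t D_i\Delta S_i(t-\tau)\nabla\cdot f_i(\tau)\,\rmd\tau$.
Integrability of the differentiated integrand comes from rewriting it as $\nabla S_i(t-\tau)\cdot\Delta f_i(\tau)$ and using the $(t-\tau)^{-1/2}$ bound~\eqref{ineq:semigrp_deriv_bund}. A test-function identity then recognises the integral as $D_i\Delta B_i$.

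The paper never meets the circularity you worried about. It differentiates the unsubstituted form, where $t$ enters only through the kernel and the upper limit, not through $f$.

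Your route instead proves the integrated (strong) identity $v(t)=\int_{t_0}^t (D\Delta v+g)\,\rmd\tau$ via Fubini and $S(h)-I=\int_0^h D\Delta S(r)\,\rmd r$, then applies the fundamental theorem of calculus. Comparing the two:
\begin{itemize}
\item Your route needs one extra derivative on the forcing, $g=-\nabla\cdot f\in C([t_0,T];W^n_{R,P})$, i.e.\ $u\in C([t_0,T];W^{n+1}_{R,P})$. Step 2 supplies this for free.
\item In exchange, you avoid both the Leibniz rule for a Bochner integral whose kernel is singular at $\tau=t$ and the separate weak identification with $\Delta B_i$. That makes yours the more self-contained and rigorous justification.
\item The paper's version gives an explicit formula for $\frac{\rmd}{\rmd t}B_i$ using only $f\in C(W^n_{R,P})$.
\end{itemize}
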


	\begin{proof}
		Fix some \(t^* \in (0,T)\) and consider some sequence \((t_n)_{n=0}^\infty\) such that \(t_0 = 0\) and \(t_n < t_{n+1} < t^*\) for all \(n \in \N\). Now for each \(n \in \N\) define \(\tau_n := t_{n+1} - t_n\) and the function \(u^{(n)}(t) := u(t+t_n)\) for \(t \in [0,T-t_n]\). As \(u\) is a mild solution to~\eqref{eqn:agg_diff} with initial data \(u_0 \in L^1 \cap L^P(\R^d)\), it follows from Proposition~\ref{lem:unique_u_decomp} that \(u^{(n)} \in C([0,T-t_n];L^1 \cap L^P(\R^d))\) is also a mild solution to~\eqref{eqn:agg_diff} up to time \(T-t_n\) with initial data \(u(t_n) \in L^1 \cap L^P(\R^d)\). By an induction argument we now want to prove that, for each \(n \in \N\),
        \begin{equation}\label{cond:u^n-reg}
            u^{(n)} \in C([0,T-t_n];H_{R,P}^{n/2}(\R^d)).
        \end{equation}
        The base case follows by observing that \(L^1 \cap L^P(\R^d) \subset H_{R,P}^0(\R^d)\) for any \(R\) that satisfies~\eqref{cond:R-P-Q}. For the induction step assume, for the sake of induction, that
        \begin{equation}
            u^{(n-1)} \in C([0,T-t_{n-1}];H_{R,P}^{(n-1)/2}(\R^d)).
        \end{equation}
        By applying Lemma~\ref{lem:mild_sol_extra_reg} for \(\sigma = 1/2\) then observing that \([\tau_{n-1},T-t_{n-1}] \subset (0,T-t_{n-1}]\), it follows that
        \begin{equation}\label{cond:u^n-1_reg}
            u^{(n-1)} \in C([\tau_{n-1},T-t_{n-1}];H_{R,P}^{n/2}(\R^d)).
        \end{equation}
        Due to the relationship \(u^{(n-1)}(t+ \tau_{n-1}) = u^{n}(t)\) for \(t \in [0,T-t_n]\),~\eqref{cond:u^n-1_reg} is equivalent to~\eqref{cond:u^n-reg}, as required. Since \(t^* > t_n\) for all \(n \in \N\) and that \(t^*\) was chosen arbitrarily, deduce that \(u \in C((0,T];W_{R,P}^n(\R^d))\) for any \(n \in \N\).
        
        Now fix some \(n \geq 2\). In order to study the regularity of \(\frac{\rmd}{\rmd t} u(t)\) for any \(t > 0\), consider
        \begin{equation}\label{def:u-tilde-reg}
            \tilde{u}(t) := u(t+t_0) \in C([0,T-t_0];W_{R,P}^n(\R^d))
        \end{equation}
        for some arbitrary \(t_0 \in (0,T]\). From the differentiability property of \(S_i(t)\) in Lemma~\ref{lem:heat_semi_props} it then follows that
        \begin{equation}\label{eqn:deriv-S_i}
            \frac{\rmd}{\rmd t}S_i(t)\tilde{u}(\tau) = D_i \Delta S_i(t)\tilde{u}(\tau).
        \end{equation}
        To find \(\frac{\rmd}{\rmd t} B[\tilde{u},\tilde{u}](t)\), first consider \(f(\tau) = \A^K[\tilde{u}(\tau),\tilde{u}(\tau)]\) for each \(\tau \in [0,T-t_0]\). Due to~\eqref{def:u-tilde-reg} and that \(\varphi \mapsto \A^K[\varphi,\varphi]\) is a continuous mapping over \(H_{R,P}^{s}(\R^d)\), it follows that \(f \in C([0,T-t_0]; W_{R,P}^n(\R^d))\). Moreover, notice \(g(\tau; t)\), defined component-wise as \(g_i(\tau; t) := \Delta S_i(t-\tau) \cdot \nabla f_i(\tau)\), is in the Bochner space \(L^1([0,t];W_{R,P}^{n-2}(\R^d))\) for any \(t \in [0,T-t_0]\). In particular, observe via~\eqref{ineq:semigrp_deriv_bund} and \(\Delta S_i(t) \cdot \nabla f_i = \nabla S_i(t) \cdot \Delta f_i\) that
		\begin{equation}
           \begin{aligned}
                \int_0^{t} \norm{g(\tau; t)}_{W_{R,P}^{n-2}}\, \rmd \tau &= \sum_{i=1}^N \int_0^{t} \norm{\nabla S_i(t-{\tau}) \cdot \Delta f_i({\tau})}_{W^{n-2,r_i} \cap W^{n-2,p_i}} \rmd \tau \\
                &\leq \sum_{i=1}^N \int_0^t C_1 (D_i (t-\tilde{\tau}))^{-1/2} \norm{\Delta f_i({\tau})}_{W^{n-2,r_i} \cap W^{n-2,p_i}} \rmd \tau \\
                &\leq 2 C_1 \sqrt{t / D_{\min}} \norm{f}_{L^\infty_{t} W_{R,P}^{n}}.
           \end{aligned}
        \end{equation}
		Therefore, by differentiating under the integral, it follows that for any \(t \in (0,T-t_0)\), \noeqref{eqn:dt_nonlin_bound}
        \begin{align}\label{eqn:dt_nonlin_bound}
			\begin{aligned}
				\frac{\rmd}{\rmd t} B_i[\tilde{u},\tilde{u}](t) &=  S_i(0) \nabla \cdot f_i(t) + \int_{0}^{t} D_i \Delta S_i(t-{\tau}) \nabla \cdot f_i(\tau) \rmd {\tau}.
			\end{aligned}
		\end{align}
		After applying Fubini's theorem and integration by parts, it holds for any compactly supported test function \(\varphi \in C^\infty(\R^d)\) that
        \begin{equation}\label{eqn:deriv-B_i}
            \int_{\R^d} \varphi(x) \int_0^t \left[\Delta S_i(t-{\tau}) \cdot \nabla f_i({\tau})\right](x)\, \rmd {\tau} \rmd x = \int_{\R^d} \varphi(x) [\Delta B_i[\tilde{u},\tilde{u}](t)](x) \rmd x.
        \end{equation}
		Therefore, by~\eqref{eqn:deriv-S_i}--\eqref{eqn:deriv-B_i} it holds that, for all \(t \in (0,T]\) and \(i \in \{1,\dots,N\}\),
        \begin{equation}\label{eqn:t-deriv-u_i}
            \begin{aligned}
                \frac{\rmd}{\rmd t} u_i(t) 
                & = \frac{\rmd}{\rmd t} S_i(t) u_{0i} - \frac{\rmd}{\rmd t} B_i[u,u](t) \\
                &= D_i \Delta \left(S_i(t) u_{0i} - B_i[u,u](t)\right) - \nabla \cdot (u_i(t) \nabla F_i[u(t)]) \\
                &= D_i \Delta u_i - \nabla \cdot \A^K_i[u(t),u(t)].
            \end{aligned}
        \end{equation}
        Since \(u \in C((0,T];W_{R,P}^n(\R^d))\) it follows that \(\Delta u \in C((0,T];W_{R,P}^{n-2} (\R^d))\) and \(\nabla \cdot \A^K_i[u,u] \in C((0,T];W_{R,P}^{n-1}(\R^d))\).
        Hence, using~\eqref{eqn:t-deriv-u_i} we conclude that \(u \in C^1((0,T];W_{R,P}^{n-2}(\R^d))\) for any \(n \geq 2\).
	\end{proof}


	With the newly established regularity acquired for the spatial derivatives of \(u\) and \(\frac{\rmd}{\rmd t} u\), we can conclude that mild solutions of~\eqref{eqn:agg_diff} are in fact classical in the sense of Definition~\ref{defn:classical-sol}.

	\begin{corollary}\label{cor:class-soln}
		Suppose \(u\) is a \emph{mild} solution to~\eqref{eqn:agg_diff} up to some time \(T > 0\) which satisfies the assumptions in Proposition~\ref{thm:smoothing}. Then, up to being redefined on a set of measure zero for each \(t \in [0,T]\), \(u\) is also a \emph{classical} solution of~\eqref{eqn:agg_diff}.
	\end{corollary}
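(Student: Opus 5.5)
The plan is to deduce each item of Definition~\ref{defn:classical-sol} from the regularity \eqref{cond:u-smooth} in Proposition~\ref{thm:smoothing}, using Sobolev embeddings into uniformly continuous functions. The first step is to choose the integer \(n\) large enough. Since \(p_i \in (1,\infty)\), the Morrey/Sobolev embedding \(W^{m,p_i}(\R^d) \hookrightarrow C^{k}_u(\R^d)\) holds whenever \(m > k + d/p_i\), and it comes with the bound \(\norm{\varphi}_{C^k_u} \leq C \norm{\varphi}_{W^{m,p_i}}\). I would take \(n \geq 4 + \lceil d/\min_i p_i \rceil + 1\), so that both \(W^{n,p_i} \hookrightarrow C^3_u\) and \(W^{n-2,p_i} \hookrightarrow C^1_u\). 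Because \(u_i(t)\) is a priori only an \(L^p\) equivalence class for each \(t\), I would redefine it on a null set to equal its continuous representative. The embedding is continuous, so \(u_i \in C((0,T];W^{n,p_i})\) gives \(u_i \in C((0,T];C^2_u(\R^d))\), and also \(\nabla u_i \in C((0,T];C_u \cap L^{p_i})\).

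The second step handles the time derivative and the pointwise equation. Proposition~\ref{thm:smoothing} gives \(u_i \in C^1((0,T];W^{n-2,p_i})\), with \(\frac{\rmd}{\rmd t}u_i = D_i\Delta u_i - \nabla\cdot\A^K_i[u,u]\) by \eqref{eqn:t-deriv-u_i}. Composing with the embedding into \(C_u\), which is a continuous linear map and therefore commutes with the Fréchet derivative, shows that \(t\mapsto u_i(t)\) is \(C^1\) into \(C_u(\R^d)\). Since point evaluation is a bounded functional on \(C_u\), the classical partial derivative \(\partial_t u_i(t,x)\) exists at every point and equals \((\frac{\rmd}{\rmd t}u_i(t))(x)\). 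Hence \(\partial_t u_i \in C((0,T];C_u\cap L^{p_i})\).

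For the advection term, Lemma~\ref{lem:bi-lin_bound} with \(s=n\) shows that \(\A^K_i[u,u]\in C((0,T];W^{n,r_i}\cap W^{n,p_i})\). Taking the divergence gives \(\nabla\cdot\A^K_i[u,u] \in C((0,T];W^{n-1,p_i})\), which embeds into \(C_u\cap L^{p_i}\). Here I need to check that the weak derivatives coincide with classical ones, i.e.\ that \(\nabla\cdot(u_i\nabla F_i[u])\) computed classically agrees with the Sobolev object. This follows because \(\nabla F_i[u] = \sum_j \nabla K_{ij}\ast u_j\) is \(C^1\) by Young's inequality applied to derivatives of \(u_j \in W^{n,q'_{ij}}\). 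The identity \eqref{eqn:t-deriv-u_i} then holds as an equality of continuous functions, hence pointwise at every \((t,x)\in(0,T]\times\R^d\).

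The final step, convergence \(u(t)\to u_0\) in \(L^1\cap L^P\), is simply part of \(u \in C([0,T];L^1\cap L^P)\). I expect the main obstacle to be the bookkeeping around representatives. One must make sure that a single redefinition on a null set, for each \(t\), simultaneously makes \(u\), \(\nabla u\), \(\Delta u\) and \(\partial_t u\) the classical objects. This should be settled by noting that the embedding maps each Sobolev class to its unique continuous representative, and that distributional derivatives of \(C^k\) functions are their classical derivatives.
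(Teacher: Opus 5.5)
Your proposal is correct. It follows the paper's overall strategy: Sobolev embedding of the regularity \eqref{cond:u-smooth} from Proposition~\ref{thm:smoothing}, then reading \eqref{eqn:t-deriv-u_i} pointwise. Where you differ is in how you treat the advection term.

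The paper expands $\nabla\cdot(u_i\nabla F_i[u]) = \nabla u_i\cdot\nabla F_i[u] + u_i\Delta F_i[u]$. It then notes that $\nabla F_i[u]=\sum_j \nabla K_{ij}\ast u_j$ and $\Delta F_i[u]=\sum_j\nabla K_{ij}\ast\nabla u_j$ are convolutions of an $L^{q_{ij}}$ function with an $L^{q'_{ij}}$ function, using $r_j\le q'_{ij}\le p_j$, so both lie in $C_u\cap L^\infty$. Combined with $u_i,\nabla u_i\in C_u\cap L^{p_i}$, this puts the flux divergence in $C_u\cap L^{p_i}$ at each fixed $t$, with no Sobolev estimate on the flux itself.

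You instead place the whole flux in $W^{n,r_i}\cap W^{n,p_i}$ via Lemma~\ref{lem:bi-lin_bound} and embed its divergence from $W^{n-1,p_i}$. You use the convolution regularity of $\nabla F_i[u]$ only to identify the weak divergence with the classical one.

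Your route has two advantages. First, it gives the continuity in $t$ with values in $C_u\cap L^{p_i}$ that Definition~\ref{defn:classical-sol} requires, because a bounded bilinear map composed with a continuous curve is continuous. Second, composing the $C^1$ curve into $W^{n-2,p_i}$ with the embedding and point evaluation properly identifies $\frac{\rmd}{\rmd t}u_i$ with a classical partial derivative. The paper only checks membership at each fixed $t$ and leaves both points implicit. The paper's route is more elementary for the per-time statement, since it needs only Young's inequality and the product rule rather than the interpolated bilinear estimate.
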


	\begin{proof}
		Due to Proposition~\ref{thm:smoothing}, \(u\) has the regularity given in~\eqref{cond:u-smooth}. Now, for each \(i \in \{1,\dots,N\}\), choose some \(n > 2\) large enough such that \((n-2)p_i > d\). By Sobolev embedding~\cite[pp.~261,\,279--280]{evans2010} it then follows that, up to being redefined on a set of measure zero for each \(t > 0\), \(u_i(t) \in W^{n,p_i}(\R^d) \subset C_u(\R^d) \cap L^{p_i}(\R^d)\), \(\nabla u_i(t) \in W^{n-1,p_i}(\R^d) \subset C_u(\R^d) \cap L^{p_i}(\R^d)\), and \(\frac{\rmd}{\rmd t} u_i(t),\, \nabla^2 u_i(t) \in W^{n-2,\,p_i} \subset C_u(\R^d) \cap L^{p_i}(\R^d)\).
		Now, to demonstrate that \(\nabla \cdot (u_i \nabla F_i[u]) \in C_u(\R^d) \cap L^{p_i}(\R^d)\) for each \(i\), observe that
		\begin{equation}
			\nabla \cdot (u_i \nabla F_i[u]) = \nabla u_i \cdot \nabla F_i[u] + u_i \Delta F_i[u].
		\end{equation}
		Since \(u_i(t),\, \nabla u_i(t) \in C_u(\R^d) \cap L^{p_i}(\R^d)\) for each \(i \in \{1,\dots,N\}\) and \(t > 0\), it is sufficient to show that \(\nabla F_i[u],\, \Delta F_i[u] \in C_u(\R^d) \cap L^\infty(\R^d)\). From~\eqref{cond:u-smooth} we know that \(u(t) \in L^1 \cap L^P(\R^d)\) and \(\nabla u \in L^R \cap L^P(\R^d)\) for some \(R,\, P \in (1,\infty)^N\) such that \(r_j \leq q_{ij}' \leq p_j\) for every choice of \(i,\, j \in \{1,\dots,N\}\), in particular, \(u_j,\, \nabla u_j \in L^{q'_{ij}}(\R^d)\) for every \(i,\, j\). Hence, using that \(\varphi \ast \psi \in C_u(\R^d) \cap L^\infty(\R^d)\) for any \(\varphi \in L^p(\R^d),\, \psi \in L^{p'}(\R^d)\), \(p \in [1,\infty]\), we can conclude that 
		\begin{equation}\label{cond:F[u]}
			\begin{aligned}
				\nabla F_i[u] = \sum_{j=1}^N \nabla K_{ij} \ast u_j &\in C_u(\R^d) \cap L^\infty(\R^d), \\
				\Delta F_i[u] = \sum_{j=1}^N \nabla K_{ij} \ast \nabla u_j &\in C_u(\R^d) \cap L^\infty(\R^d),
			\end{aligned}
		\end{equation}
		where \(\nabla f \ast \nabla g\) denotes \(\sum_{k=1}^d \partial_k f \ast \partial_k g\). Finally, since \(\frac{\rmd}{\rmd t} u_i(t),\, \Delta u_i\), and \(\nabla \cdot (u_i \nabla F_i[u])\) are all (uniformly) continuous functions for every \(t > 0\), it follows from~\eqref{eqn:t-deriv-u_i} that \(u_i\) must solve~\eqref{eqn:agg_diff} pointwise.
	\end{proof}

	\begin{remark}\label{rem:vanish-at-infty}
		Note that \(\varphi \in C_u(\R^d) \cap L^{p}(\R^d)\) for any \(p \in [1,\infty)\) implies that \(\varphi \in L^\infty(\R^d)\) and that \(\varphi(x) \to 0\) as \(\abs{x} \to \infty\). Therefore, for any \(t > 0\), \(u(t),\, \frac{\rmd}{\rmd t} u(t),\, \nabla u(t)\), and \(\Delta u(t)\) all vanish as \(\abs{x} \to \infty\).
	\end{remark}

\section{Non-negativity and conservation of mass}\label{section:preserved}

	Preservation of non-negativity and mass of solutions are proven here by utilising similar arguments to those presented in~\cite{li2010wellposedness}. Due to Corollary~\ref{cor:class-soln}, solutions to~\eqref{eqn:agg_diff} will always be assumed to be classical in the sense of Definition~\ref{defn:classical-sol} throughout the rest of this work.

	\begin{lemma}[\!\!{\cite[Lemma 2.5]{li2010wellposedness}}]\label{lem:AD_positive}
		Let \(\Omega_T := (0,T] \times \R^d\) for some fixed \(T > 0\). Suppose \(\rho \in C^{1,2}_{t,x}({\Omega}_T) \cap C_{t,x}(\bar{\Omega}_T) \cap L^p_{t,x}(\Omega_T)\) for some \(p \in [1,\infty)\), \(g \in C(\R^d)\), and \(f \in C^{0,1}_{t,x}(\Omega_T)\). Suppose further that the following conditions hold:
		\begin{enumerate}
			\item \(\rho\) satisfies the following advection-diffusion equation up to time \(T\)
			\begin{equation}\label{eqn:linear_advec_diff} \tag{LAD}
				\partial_t \rho + \nabla \cdot (f\rho) = \nu \Delta \rho,
			\end{equation}
			for some \(\nu \geq 0\) with the initial condition \(\rho(0,\cdot) = g\).
			\item \(\mathrm{sup}_{\bar{\Omega}_T} \abs{\rho} + \mathrm{sup}_{\Omega_T} (\abs{\partial_t \rho} + \abs{\nabla \rho} + \abs{\Delta \rho}) \) is finite.
			\item \(g \geq 0\) and there exists some \(L > 0\) such that \(\mathrm{sup}_{\Omega_T} \abs{\nabla \cdot (f(t,x))} \leq L\).
		\end{enumerate}
		Then \(\rho \geq 0\) for all \((t,x) \in \bar{\Omega}_T\).
	\end{lemma}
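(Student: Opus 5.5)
Since this lemma is quoted from \cite[Lemma 2.5]{li2010wellposedness}, I would reprove it by an energy estimate on the negative part \(\rho_- := \max\{-\rho,0\}\), rather than by a pointwise maximum principle. On an unbounded domain the energy route avoids having to build barrier functions at infinity.

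\emph{Step 1: formal computation.} Fix an exponent \(q \geq \max\{p,2\}\). Because \(\rho\) is bounded and lies in \(L^p_{t,x}\), it follows that \(\rho_-(t)^q\) is integrable for almost every \(t\). Multiply~\eqref{eqn:linear_advec_diff} by \(-q\rho_-^{q-1}\), which is a \(C^1\) function of \(\rho\) for \(q \geq 2\). On the set \(\{\rho<0\}\) we have \(\nabla\cdot(f\rho) = -\nabla\cdot(f\rho_-)\), so integration by parts formally gives
\begin{equation}
\frac{\rmd}{\rmd t}\int_{\R^d}\rho_-^q\,\rmd x = -\nu q(q-1)\int_{\R^d}\rho_-^{q-2}\abs{\nabla\rho_-}^2\rmd x - (q-1)\int_{\R^d}(\nabla\cdot f)\,\rho_-^q\,\rmd x .
\end{equation}
The transport term is handled by the identity \(\int\rho_-^{q-1}\nabla\cdot(f\rho_-) = \frac{q-1}{q}\int(\nabla\cdot f)\rho_-^q\). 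Dropping the diffusion term and using \(\abs{\nabla\cdot f}\leq L\) yields \(\frac{\rmd}{\rmd t}\int\rho_-^q \leq (q-1)L\int\rho_-^q\). Since \(g\geq 0\), we have \(\int\rho_-(0)^q = 0\), and Gr\"onwall's inequality then forces \(\rho_-\equiv 0\) on \(\bar\Omega_T\).

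\emph{Step 2: rigorous version with cutoffs.} Replace the integrals by \(\int\rho_-^q\chi_R\), where \(\chi_R(x) = \chi(x/R)\) is a smooth cutoff. Work in time-integrated form on \([0,t]\) to avoid differentiating under the integral. The time continuity at \(t=0\) comes from \(\rho\in C_{t,x}(\bar\Omega_T)\) together with boundedness and dominated convergence. Condition 2 guarantees that every integrand built from \(\rho,\partial_t\rho,\nabla\rho,\Delta\rho\) is bounded, so all manipulations are legitimate on compact sets. Since \(\rho_-^{q-1}\) vanishes where \(\rho=0\), the kink of \(\rho_-\) produces no boundary contribution. The cutoff creates error terms of the form \(\nu\int\rho_-^q\Delta\chi_R\) and \(\nu q\int\rho_-^{q-1}\nabla\rho_-\cdot\nabla\chi_R\), which are \(O(R^{-1})\) times bounded quantities integrated against \(\rho_-^{q-1}\). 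These vanish as \(R\to\infty\), because \(\rho_-^{q-1}\) is integrable in space-time for \(q-1\geq p\); if needed, I would simply enlarge \(q\) to ensure this.

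\emph{Main obstacle.} The delicate error term comes from the advection: after integrating by parts it is \(\frac{q-1}{q}\int\rho_-^q\, f\cdot\nabla\chi_R\), and \(f\) is only assumed locally \(C^{0,1}\) with bounded divergence, not bounded. I would first try to remove this term by absorbing it. Multiplying through by \(e^{-(q-1)Lt}\) and using that \(f\cdot\nabla\chi_R\) is supported on the annulus \(R\leq\abs{x}\leq 2R\), it would suffice to show \(\int_0^T\int_{R\leq\abs{x}\leq2R}\abs{f}\rho_-^q\,R^{-1}\to 0\) along some sequence \(R_k\to\infty\). This follows if \(\abs{f}\) grows at most linearly, since then \(\abs{f}R^{-1}\) is bounded on the annulus and \(\rho_-^q\in L^1_{t,x}\). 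In the intended application \(f = \nabla F_i[u]\) is even bounded, by~\eqref{cond:F[u]}. If the general lemma truly needs unbounded \(f\), I would instead adapt the argument of \cite{li2010wellposedness}: a weighted test function \(\rho_-^{q-1}e^{-\lambda\langle x\rangle}\) whose weight decays fast enough to dominate \(f\) locally. I expect this technical tail control to be the only genuinely delicate point, with Step 1 carrying the essential idea.
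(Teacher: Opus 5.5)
The paper gives no proof of this lemma; it imports it from \cite[Lemma 2.5]{li2010wellposedness}. Your Step 1 and the diffusive cutoff terms in Step 2 are fine. As a proof of the lemma \emph{as stated}, however, there is a genuine gap exactly where you flag it. The hypotheses control only $\nabla\cdot f$, not $f$ itself: $f\in C^{0,1}_{t,x}(\Omega_T)$ may grow arbitrarily fast, e.g.\ a divergence-free shear. So nothing in your argument controls the flux term $\int_0^t\!\int\rho_-^q\,f\cdot\nabla\chi_R$ produced by the cutoff as $R\to\infty$. Your fallback does not repair this. With a weight $w$ in place of $\chi_R$, the same integration by parts produces $\int\rho_-^q\,f\cdot\nabla w$. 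For $w=e^{-\lambda\langle x\rangle}$ one has $\abs{\nabla w}\approx\lambda w$, so absorbing this into the Gr\"onwall term again requires $\abs{f}$ to be bounded; a faster-decaying weight makes $\abs{\nabla w}/w$ larger, not smaller. What you have therefore proves the lemma only under a growth restriction on $f$ (at most linear). That suffices for Proposition~\ref{prop:nonnegativity}, where $f=\nabla F_i[u]$ is bounded by~\eqref{cond:F[u]}, but not for the statement.

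The missing observation is that the hypotheses already give decay at infinity, so no barrier is needed. Since $\partial_t\rho$ and $\nabla\rho$ are bounded, $\rho$ is Lipschitz on $\bar\Omega_T$. Together with $\rho\in L^p_{t,x}(\Omega_T)$, this forces $\rho(t,x)\to0$ as $\abs{x}\to\infty$ uniformly in $t\in[0,T]$. Otherwise infinitely many disjoint space-time balls of a fixed radius would carry $\abs{\rho}\geq\delta/2$, contradicting integrability. This is what the cited lemma's assumptions are tailored for: a pointwise minimum principle. For $\lambda>L$, $v:=e^{-\lambda t}\rho$ satisfies
\begin{equation}
\partial_t v=\nu\Delta v-f\cdot\nabla v-(\lambda+\nabla\cdot f)\,v .
\end{equation}
If $\inf v<0$, it is attained at some $(t_0,x_0)$ with $t_0>0$, because $v(0)=g\geq0$. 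There $\partial_t v\leq0$, $\nabla v=0$ and $\Delta v\geq0$, whence $0\geq\partial_t v\geq-(\lambda-L)v>0$, a contradiction. The transport term vanishes at the extremum however large $f$ is.

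If you prefer to stay with energies, the same decay repairs your argument. The function $(\rho+\vareps)_-$ is supported in a fixed ball for all $t$, so testing with $(\rho+\vareps)_-^{q-1}$ needs no cutoff. The only new term, coming from $\vareps\,\nabla\cdot f$, is bounded by $q\vareps L\int\rho_-^{q-1}$, which is integrable in space-time for $q-1\geq p$. Letting $\vareps\to0$ after Gr\"onwall gives $\rho_-\equiv0$ for arbitrary $f$.
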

		

	\begin{proposition}[Non-negativity]\label{prop:nonnegativity}
		Suppose \(u\) is a solution to~\eqref{eqn:agg_diff} up to time \(T > 0\) with \(\nabla K \in L^Q(\R^d)\) for some \(Q \in (1,\infty]^{N \times N}\) and initial data \(u_0 \in L^1 \cap L^P(\R^d)\) from some \(P \in (1,\infty)^N\) that satisfies~\eqref{cond:P}. If \(u_{0} \geq 0\) almost everywhere, then \(u_i(t) \geq 0\) for each \(i\) and for all \(t \in (0,T]\).
	\end{proposition}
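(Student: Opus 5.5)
The plan is to apply Lemma~\ref{lem:AD_positive} to each component separately, treating the nonlocal drift as a given (frozen) velocity field. Fix \(i\) and set \(\rho := u_i\), \(\nu := D_i\) and \(f := \nabla F_i[u] = \sum_j \nabla K_{ij} \ast u_j\). By Corollary~\ref{cor:class-soln}, \(u_i\) solves \(\partial_t u_i + \nabla\cdot(f u_i) = D_i \Delta u_i\) pointwise on \((0,T]\times\R^d\), which is exactly~\eqref{eqn:linear_advec_diff}. The difficulty is that Lemma~\ref{lem:AD_positive} needs continuity up to \(t=0\), uniform bounds on \(\Omega_T\), and \(g\in C(\R^d)\). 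Since \(u_0\) is only in \(L^1\cap L^P\), I would not apply the lemma on \([0,T]\) directly. Instead I would apply it on shifted intervals \([\varepsilon,T]\), with a regularised initial datum, and pass to a limit.

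First, the regularisation step. I would choose nonnegative mollified data \(u_0^\delta := \eta_\delta \ast u_0\), which is smooth, nonnegative, and converges to \(u_0\) in \(L^1\cap L^P\). Let \(u^\delta\) be the mild solutions with data \(u_0^\delta\). By Proposition~\ref{lem:cont_dep} these exist on a common time interval and converge to \(u\) in \(C([0,T'];L^1\cap L^P)\). A cleaner route avoids relying on the existence time: work with \(u\) itself on \([\varepsilon,T]\). Here \(u(\varepsilon)\) is smooth and \(u\) is bounded, but \(u(\varepsilon)\geq0\) is not yet known. So I will use the regularised data: for smooth \(u_0^\delta\in W^{n}_{R,P}\), the bootstrapping in Lemma~\ref{lem:mild_sol_extra_reg} started from \(s=n\) gives \(u^\delta\in C([0,T'];W^n_{R,P})\) up to \(t=0\). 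With \(n\) large, Sobolev embedding as in Corollary~\ref{cor:class-soln} then gives \(u^\delta_i\in C_{t,x}(\bar\Omega_{T'})\), bounded together with \(\partial_t u^\delta_i\), \(\nabla u^\delta_i\) and \(\Delta u^\delta_i\).

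Second, I verify the hypotheses of Lemma~\ref{lem:AD_positive} for \(u^\delta_i\). The requirement \(f\in C^{0,1}_{t,x}\) and the bound \(\sup|\nabla\cdot f|\le L\) follow from~\eqref{cond:F[u]}, since \(\Delta F_i[u^\delta]=\sum_j\nabla K_{ij}\ast\nabla u^\delta_j\). Young's inequality bounds this by \(\sum_j\|\nabla K_{ij}\|_{L^{q_{ij}}}\|\nabla u^\delta_j\|_{L^{q'_{ij}}}\), which is uniformly bounded in time because \(r_j\le q'_{ij}\le p_j\) and \(u^\delta\in C([0,T'];W^{n}_{R,P})\). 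Integrability \(L^p_{t,x}\) is immediate from \(u^\delta\in C([0,T'];L^{p_i})\). The lemma then gives \(u^\delta_i\ge0\). Letting \(\delta\to0\), the \(L^1\) convergence preserves a.e. nonnegativity, and since \(u_i(t)\) is continuous, \(u_i(t)\ge0\) for \(t\in(0,T']\).

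Finally, I extend the result to all of \((0,T]\) by iteration. Using Proposition~\ref{lem:unique_u_decomp}, restart at \(T'\) with the nonnegative data \(u(T')\). The step length given by Proposition~\ref{thm:mild_soln} depends only on \(\|u\|_{L^\infty_T(L^1\cap L^P)}\), which is finite on \([0,T]\), so finitely many steps cover \([0,T]\). I expect the main obstacle to be this regularity-up-to-\(t=0\) step for smooth data, that is, checking that the bootstrap of Section~\ref{section:regularity} yields continuity in \(W^n_{R,P}\) including \(t=0\) when \(u_0^\delta\in W^n_{R,P}\). The \(C^1\) in time statement with bounded \(\partial_t u\) near \(t=0\) follows from~\eqref{eqn:t-deriv-u_i} once \(u^\delta\in C([0,T'];W^{n}_{R,P})\) holds. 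If that fails, the fallback is to apply the lemma on \([\varepsilon,T]\) to solutions started from \(u^\delta(\varepsilon)\), taking limits in both \(\varepsilon\) and \(\delta\).
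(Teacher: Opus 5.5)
Your proposal is correct and follows essentially the same route as the paper: mollify the data, apply Lemma~\ref{lem:AD_positive} to the mollified solutions on a common existence interval, and pass to the limit via Proposition~\ref{lem:cont_dep}. The only difference is the continuation in time, where the paper applies Lemma~\ref{lem:AD_positive} directly to \(u(\cdot+t_0)\), already smooth and bounded for \(t_0>0\), instead of re-mollifying, though your iteration also works. The regularity-up-to-\(t=0\) step you flag, which the paper passes over with ``similar arguments'', does close if you bootstrap from \(s=0\) rather than \(s=n\) (starting at \(s=n\) is circular): the proof of Lemma~\ref{lem:mild_sol_extra_reg} gives \(B^K[v,v]\in C([0,T];H^{s+\sigma}_{R,P}(\R^d))\) including \(t=0\), and \(S(t)u_0^\delta\in C([0,T];H^{n}_{R,P}(\R^d))\) by strong continuity of the heat semigroup since \(u_0^\delta\in H^n_{R,P}(\R^d)\), so \(u^\delta\in C([0,T'];W^n_{R,P}(\R^d))\) and your fallback, which would in any case need the unknown \(u^\delta(\varepsilon)\geq 0\), is unnecessary.
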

    \begin{proof}
		The proof falls into two parts: first it is demonstrated that if \(u(t_0) \geq 0\) for some \(t_0 > 0\), then \(u(t) \geq 0\) for all \(t \in [t_0,T]\), after which it is established that if \(u_{0i} \geq 0\) almost everywhere, then there exists some \(t_0 > 0\) for which \(u(t) \geq 0\) for all \(t \in (0,t_0]\). 
		
		Suppose that \(u_i(t_0) \geq 0\) almost everywhere for each \(i \in\{1,\dots,N\}\) for some given \(t_0 > 0\). By Proposition~\ref{lem:unique_u_decomp} and Corollary~\ref{cor:class-soln}, it follows that \(\tilde{u}(t) := u(t+t_0)\) for any \(n \in \N\) is a solution of~\eqref{eqn:agg_diff} up to time \(T-t_0\) with initial data \(\tilde{u}_0 = u(t_0)\). For each \(i \in \{1,\dots,N\}\), define
		\begin{equation}
			\rho_i := \tilde{u}_i,\quad f_i := \nabla F_i[\tilde{u}], \quad g_i := \tilde{u}_i(t_0).
		\end{equation}
		From Corollary~\ref{cor:class-soln},~\eqref{cond:F[u]} and Remark~\ref{rem:vanish-at-infty} it follows that \(\rho_i,\, f_i\) and \(g_i\) satisfies the regularity conditions in Lemma~\ref{lem:AD_positive} and solves~\eqref{eqn:linear_advec_diff} for \(\nu = D_i\) and for all \(t > 0\). Thus, from Lemma~\ref{lem:AD_positive}, conclude that \({u}_i(t) = \rho_i(t-t_0) \geq 0\) for all \(t \in [t_0,T]\) and \(i \in \{1,\dots,N\}\).

        Now suppose that \(u_{0i} \geq 0\) almost everywhere and consider a family of mollified initial data \(\{u_0^\vareps\}_{\vareps > 0}\) such that, for each \(\vareps > 0\), \(i \in \{1,\dots,N\}\), \(u_{0i}^\vareps := \mu_\vareps \ast u_{0i}\), where \(\mu_\vareps(x) := \vareps^{-d} \mu(x/\vareps)\) for some non-negative, compactly supported, smooth bump function \(\mu \in C^\infty(\R^d),\, \int_{\R^d} \mu = 1\). Since \(\norm{u_0^\vareps}_{L^1 \cap L^P} \leq \norm{u_0}_{L^1 \cap L^P}\) for all \(\vareps > 0\), it follows from Proposition~\ref{thm:mild_soln} that a (mild) solution to~\eqref{eqn:agg_diff} \(u^\vareps\) exists up to some time \(t_0(\norm{u}_{L^1 \cap L^P}) > 0\) independent of \(\vareps > 0\) with initial data \(u_{0i}^\vareps \in C(\R^d)\) for each \(i \in \{1,\dots,N\}\). By then applying Lemma~\ref{lem:AD_positive} using similar arguments used in the first part of the proof, it then follows that \(u_i^\vareps(t) \geq 0\) for all \(t \in (0,t_0]\), \(i \in \{1,\dots,N\}\). It remains to show that the non-negativity is preserved as \(\vareps \to 0\). From the continuous dependence result, Proposition~\ref{lem:cont_dep}, there exists some constant \(C\) such that, for all \(t \in [0,t_0]\),
        \begin{equation}\label{ineq:non-neg-eps-lim}
            \lim_{\vareps \to 0} \norm{u(t) - u^\vareps(t)}_{L^1 \cap L^P} \leq C \lim_{\vareps \to 0} \norm{u_0 - u^\vareps_0}_{L^1 \cap L^P} = 0.
        \end{equation}
        Therefore, as \(u(t)\) is a continuous function for any \(t > 0\), there cannot exist some \(\delta > 0\) and location \(x \in \R^d\) such that \(u(t,x) < -\delta\). If such a point did exist, then there would exist some neighbourhood around \(x\) of non-zero measure \(\mathcal{N}_x\) such that \(u(t,y) < -\delta\) for all \(y \in \mathcal{N}_x\). However, since \(u^\vareps(t) \geq 0\), that would imply that, for some \(\eta > 0\), \(\norm{u(t)-u^\vareps(t)}_{L^1} > \eta\) for all \(\vareps > 0\), a contradiction to~\eqref{ineq:non-neg-eps-lim}.
    \end{proof}
	
	\begin{proposition}[Conservation of mass]\label{prop:cons_mass}
		Suppose \(u\) is the solution to~\eqref{eqn:agg_diff} up to time \(T > 0\) with \(\nabla K \in L^Q(\R^d)\) for some \(Q \in (1,\infty)^{N \times N}\) and \(u_0 \in L^1 \cap L^P(\R^d),\, u_{0} \geq 0\) almost everywhere, from some \(P \in (1,\infty)^N\) that satisfies~\eqref{cond:P}. Then, for each \(i = \{1,\dots,N\}\), \(t \in [0,T]\),
		\begin{equation}
			\int_{\R^d} u_i(t) \rmd x = \int_{\R^d} u_{0i} \rmd x.
		\end{equation}
	\end{proposition}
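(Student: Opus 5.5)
Since \(u\) is classical (Corollary~\ref{cor:class-soln}) and non-negative (Proposition~\ref{prop:nonnegativity}), we have \(\int_{\R^d} u_i(t)\,\rmd x = \norm{u_i(t)}_{L^1}\). It therefore suffices to show that \(m_i(t) := \int_{\R^d} u_i(t)\,\rmd x\) is constant on \([0,T]\).

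\textbf{Continuity at \(t=0\).} Because \(u \in C([0,T];L^1 \cap L^P(\R^d))\) and \(\varphi \mapsto \int \varphi\) is a bounded linear functional on \(L^1\), the map \(m_i\) is continuous on \([0,T]\). Hence it suffices to prove that \(m_i\) is constant on \((0,T]\).

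\textbf{Constancy on \((0,T]\): the direct route.} I would work directly with the mild formulation, which avoids pointwise boundary terms. Write \(u_i(t) = S_i(t)u_{0i} - B_i[u,u](t)\).
\begin{itemize}
\item The heat semigroup conserves integrals: \(\int S_i(t)\varphi = \int \varphi\) for \(\varphi \in L^1\), since the heat kernel has unit mass (Fubini).
\item For \(g \in (L^1(\R^d))^d\) and \(s>0\), \(\nabla S_i(s)\cdot g = (\nabla G_{D_i s}) \ast g\), where \(G\) is the heat kernel. Fubini gives \(\int (\nabla G \ast g) = \big(\int \nabla G\big)\cdot \big(\int g\big) = 0\), because \(\nabla G \in L^1\) is a derivative of a rapidly decaying function and so integrates to zero.
\item Taking \(g = \A_i^K[u(\tau),u(\tau)] \in L^1\) (Lemma~\ref{lem:advec_op_bbd}), the integrand of \(B_i\) has zero integral for each \(\tau\).
\item The Bochner integral defining \(B_i\) converges in \(L^1\) by~\eqref{ineq:semigrp_deriv_bund}, so the integral functional passes inside it, giving \(\int B_i[u,u](t)\,\rmd x = 0\).
\end{itemize}
Therefore \(m_i(t) = \int u_{0i}\) for all \(t\). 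This route even makes \(t=0\) trivial.

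\textbf{Classical alternative.} One could instead integrate the PDE over a ball \(B_R\) and use the divergence theorem, obtaining \(\frac{\rmd}{\rmd t}\int_{B_R} u_i = \int_{\partial B_R} (D_i\nabla u_i - u_i\nabla F_i)\cdot \nu\,\rmd S\). One then needs the boundary flux to vanish as \(R\to\infty\) along a subsequence. This follows because \(\nabla u_i,\, u_i\nabla F_i \in L^1\): we have \(\nabla u \in L^R\) with \(r_i\) close to \(1\) (not exactly \(1\)), so one combines \(L^{r_i}\) with the decay from Remark~\ref{rem:vanish-at-infty}. A coarea argument then gives \(\liminf_R \int_{\partial B_R} |\cdot| = 0\).

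\textbf{Main obstacle.} The delicate point is justifying the exchange of the integral functional with the Bochner integral in the direct route, or, in the classical route, the integrability needed for the flux to vanish. I would use the mild route. There the main check is that \(\nabla S_i(s)\) maps \(L^1\) boundedly into mean-zero \(L^1\) functions, which follows from \(\norm{\nabla G_{D_i s}}_{L^1} \lesssim (D_i s)^{-1/2}\) together with integrability in \(\tau\) near \(t\).
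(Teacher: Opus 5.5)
Your mild-formulation argument is correct, and it takes a genuinely different route from the paper's.

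\emph{The paper's route.} The paper works with the classical solution. It tests the $i$-th equation against the cutoff $\phi_R=\phi(\cdot/R)$ and integrates by parts so that every derivative falls on $\phi_R$. This gives $\bigl|\frac{\rmd}{\rmd t}\int u_i\phi_R\,\rmd x\bigr|\le C(T)(R^{-1}+R^{-2})$, using only $\norm{u_i}_{L^1}$ and $\norm{u_i(\nabla K_{ij}\ast u_j)}_{L^1}$. It then lets $R\to\infty$, and handles $t=0$ via $u\in C([0,T];L^1)$.

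\emph{Your route.} You apply the bounded functional $\varphi\mapsto\int_{\R^d}\varphi\,\rmd x$ directly to $u_i(t)=S_i(t)u_{0i}-B_i[u,u](t)$. The heat kernel has unit mass. The identity $\int_{\R^d}\partial_k\Gamma_{D_i}(s,x)\,\rmd x=0$ together with Fubini kills each integrand of $B_i$. The functional commutes with the $L^1$-valued Bochner integral, which converges by~\eqref{ineq:semigrp_deriv_bund} and Lemma~\ref{lem:advec_op_bbd}.

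\emph{What each buys.} Your route needs neither the smoothing of Proposition~\ref{thm:smoothing} nor non-negativity. The paper's argument does not truly need non-negativity either, since $\int$ is continuous on $L^1$ and all its bounds are sign-free. Your route also avoids the cutoff limit entirely. It holds for every mild solution in the full range of Theorem~\ref{Thm:local}, i.e.\ $Q\in(1,\infty]^{N\times N}$ and $P\in[1,\infty)^N$. The paper's cutoff argument, in turn, uses only the divergence structure of the equation, so it would carry over to settings without an explicit whole-space semigroup representation.

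\emph{Your classical alternative.} As written, it asserts $\nabla u_i\in L^1$, which does not follow from $\nabla u_i\in L^{r_i}$ with $r_i>1$ plus decay at infinity. You could rescue it by averaging the flux over $R\le|x|\le2R$ and applying H\"older with $r_i$ close enough to $1$ that $d(1-1/r_i)<1$. The paper's device of moving both derivatives of $\Delta u_i$ onto $\phi_R$ avoids needing any integrability of $\nabla u_i$ at all.
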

	
	\begin{proof}
		As \(u(t) \geq 0\) for all \(t > 0\), due to Proposition~\ref{prop:nonnegativity}, and \(u \in C([0,T];L^1(\R^d))^N\), it follows that 
		\begin{equation}
			\int_{\R^d} u_{0i} \rmd x = \norm{u_{0i}}_{L^1} = \lim_{t \to 0} \norm{u_i(t)}_{L^1} = \lim_{t \to 0} \int_{\R^d} u_i(t) \rmd x.
		\end{equation}	
		Hence, to conclude conservation of mass, it is sufficient to demonstrate that
        \begin{equation}\label{eqn:mass-zero-deriv}
            \frac{\rmd}{\rmd t} \int_{\R^d} u_i(t) = 0
        \end{equation}
        for each \(i \in \{1,\dots,N\}\) and \(t > 0\). Consider a smooth compactly supported cut-off function \(\phi \in C^\infty(\R^d)\) such that \(\phi(x) = 1\) for all \(\abs{x} \leq 1\) and \(\phi(x) = 0\) for all \(\abs{x} \geq 2\). As \(u(t)\) is smooth for all \(t > 0\) via Proposition~\ref{thm:smoothing}, test the \(i\)-th component of~\eqref{eqn:agg_diff} with \(\phi_R := \phi(\cdot/R)\) for some arbitrary \(R > 0\) in order to obtain
		\begin{equation}
			\frac{\rmd}{\rmd t} \int_{\R^d} u_i \phi_R \rmd x =  \int_{\R^d} (D_i \Delta u_i) \phi_R \rmd x - \int_{\R^d} (\nabla \cdot (u_i \nabla F_i[u])) \phi_R \rmd x.
		\end{equation}
		We then can apply integration by parts to observe that
		\begin{align}
			\abs{\frac{\rmd}{\rmd t} \int_{\R^d} u_i \phi_R \rmd x} & = \abs{D_i \int_{\R^d} u_i \Delta \phi_R \rmd x +  \int_{\R^d} (u_i \nabla F_i[u]) \cdot \nabla \phi_R \rmd x} \nonumber \\
			& \leq D_i \norm{\Delta \phi_R}_{L^\infty} \norm{u_i}_{L^1} + \sum_{j=1}^N \norm{\nabla \phi_R}_{L^\infty} \norm{u_i \nabla K_{ij} \ast u_j}_{L^1}\nonumber \\
			& \leq \frac{D_i}{R^2} \norm{\Delta \phi}_{L^\infty} \norm{u_i}_{L^1} + \frac{N}{R} \norm{u_i}_{L^{1}} \norm{\nabla \phi}_{L^\infty} \max_j \{\norm{\nabla K_{ij}}_{L^{q_{ij}}} \norm{u_j}_{L^{q'_{ij}}}\}.
		\end{align}
		Since \(u_j \in C([0,T];L^1 \cap L^{p_j}) \subset C([0,T];L^{q_{ij}'})\) for \(p_j \geq \max_{i} \{q'_{ij}\}\), both the terms above can be bounded by
		\begin{equation}
			\abs{\frac{\rmd}{\rmd t} \int_{\R^d} u_i \phi_R \rmd x} \leq C(T) \left(\frac{1}{R} + \frac{1}{R^2}\right),
		\end{equation}
		for some \(C(T) > 0\) independent of \(t\). Letting \(R \to \infty\) then implies~\eqref{eqn:mass-zero-deriv} for each \(i\), as required.
	\end{proof}

\section{Global existence}\label{section:global}
	
    The objective of this section is to show that, given sufficient regularity of the solution and some appropriate \(P \in [1,\infty)^N\), the following energy functional for the system 
    \begin{equation}\label{defn:E_P}
            \mathcal{E}_P(t) := \sum_{i=1}^N \frac{1}{2(p_i-1)} \int_{\R^d} u_i(t)^{p_i} \rmd x
    \end{equation}
    is bounded over any time interval \([0,T]\). With such a bound, along with conservation of mass, it follows that \(\norm{u(t)}_{L^1 \cap L^P}\) is also bounded over \([0,T]\), from which we can continue the solution indefinitely in time via Corollary~\ref{prop:continuation}. In order to demonstrate a uniform \emph{a priori} bound for \(\mathcal{E}_P\) for regular systems in the sense of Definition~\ref{defn:int_cycle}, we first consider the following lemmas.
    
    \begin{lemma}\label{lem:energy-deriv}
        Suppose that, for each \(i \in \{1,\dots,N\}\), the non-negative function \(u_i : [0,T] \to L^1 \cap L^{p_i}(\R^d)\) for some \(p_i \in [2,\infty)\) satisfies the regularity condition 
        \begin{equation}\label{cond:u-reg}
            \begin{aligned}
                u_i \in C([0,T];L^1 \cap L^{p_i}(\R^d)) \cap C^1((0,T);L^{p_i}(\R^d)).
            \end{aligned}
        \end{equation}
        Then it follows that \(u_i^{p_i} \in C^1((0,T);L^1(\R^d))\) for each \(i \in \{1,\dots,N\}\) and \(\mathcal{E}_{P} \in C([0,T]) \cap C^1((0,T))\). In particular, the derivative of \(\mathcal{E}_P\) for each \(t \in (0,T)\) is given as 
        \begin{equation}
            \frac{\rmd}{\rmd t} \mathcal{E}_P(t) = \sum_{i=1}^N \frac{p_i}{2 (p_i-1)} \int_{R^d} u_i^{p_i-1}(t) \partial_t u_i(t) \rmd x.
        \end{equation}
    \end{lemma}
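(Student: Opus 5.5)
The plan is to prove the differentiability of \(t \mapsto u_i(t)^{p_i}\) as a map into \(L^1(\R^d)\) directly from the difference quotient, using only the pointwise mean value inequality for \(s \mapsto s^{p}\) and H\"older's inequality. Differentiation under the integral in \(\mathcal{E}_P\) is then immediate, since \(\varphi \mapsto \int_{\R^d} \varphi \,\rmd x\) is a bounded linear functional on \(L^1(\R^d)\).

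Fix \(i\), write \(p = p_i \geq 2\) and \(u = u_i\), and fix \(t \in (0,T)\). The candidate derivative is \(g(t) := p\, u(t)^{p-1} \partial_t u(t)\).

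\textbf{Step 1: \(g(t) \in L^1\).} By H\"older with exponents \(p' = p/(p-1)\) and \(p\),
\begin{equation}
\norm{u^{p-1}\partial_t u}_{L^1} \leq \norm{u}_{L^p}^{p-1}\norm{\partial_t u}_{L^p}.
\end{equation}

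\textbf{Step 2: pointwise remainder estimate.} For \(a, b \geq 0\) and \(p \geq 2\), Taylor's theorem with the mean value form gives
\begin{equation}
\abs{b^p - a^p - p a^{p-1}(b-a)} \leq p(p-1)\,(a+b)^{p-2}\abs{b-a}^2 .
\end{equation}
This is where \(p \geq 2\) is used: \(s^{p-2}\) is then monotone and bounded on \([a,b]\). Apply it with \(a = u(t)\) and \(b = u(t+h)\), both non-negative by assumption. Then use H\"older with exponents \(p/(p-2)\), \(p/2\) (with the obvious modification when \(p=2\)) to get
\begin{equation}
\Big\| u(t+h)^p - u(t)^p - p u(t)^{p-1}\big(u(t+h)-u(t)\big)\Big\|_{L^1} \leq C\big(\norm{u(t+h)}_{L^p} + \norm{u(t)}_{L^p}\big)^{p-2}\norm{u(t+h)-u(t)}_{L^p}^2 .
\end{equation}
Since \(u \in C^1((0,T);L^p)\), the quantity \(\norm{u(t+h)-u(t)}_{L^p}\) is \(O(h)\), so after dividing by \(h\) this remainder tends to zero.

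\textbf{Step 3: the linear term.} By H\"older again,
\begin{equation}
\Big\| u(t)^{p-1}\Big(\frac{u(t+h)-u(t)}{h} - \partial_t u(t)\Big)\Big\|_{L^1} \leq \norm{u(t)}_{L^p}^{p-1}\Big\|\frac{u(t+h)-u(t)}{h}-\partial_t u(t)\Big\|_{L^p} \to 0 .
\end{equation}
Together, Steps 2 and 3 show that \(u^p\) is differentiable into \(L^1\) with derivative \(g\).

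\textbf{Step 4: continuity of \(g\).} To show \(g \in C((0,T);L^1)\), split
\begin{equation}
g(t)-g(s) = p\,\big(u(t)^{p-1}-u(s)^{p-1}\big)\partial_t u(t) + p\, u(s)^{p-1}\big(\partial_t u(t)-\partial_t u(s)\big).
\end{equation}
The second term is controlled by H\"older as in Step 3. For the first term, note that \(\varphi \mapsto \varphi^{p-1}\) is continuous from \(L^p\) into \(L^{p'}\). This follows from the elementary bound
\begin{equation}
\abs{a^{p-1}-b^{p-1}} \leq (p-1)(a+b)^{p-2}\abs{a-b}
\end{equation}
combined with H\"older. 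This step is the mildly delicate one, and I would write it out explicitly rather than cite it.

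\textbf{Step 5: continuity of \(\mathcal{E}_P\) on \([0,T]\).} This is the only place the endpoints matter, since \(C^1\) regularity is only assumed on the open interval. It follows from \(u \in C([0,T];L^p)\): the map \(\varphi \mapsto \norm{\varphi}_{L^p}^p\) is continuous on \(L^p\), and \(\mathcal{E}_P\) is a finite sum of such terms. The derivative formula then follows by summing over \(i\) with the weights \(1/(2(p_i-1))\).
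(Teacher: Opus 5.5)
Your proposal is correct and follows the same route as the paper: the chain rule for \(u_i^{p_i}\), H\"older's inequality pairing \(u_i^{p_i-1} \in L^{p_i'}\) with \(\partial_t u_i \in L^{p_i}\), and passing the time derivative through the integral because integration is a bounded linear functional on \(L^1\). Your Steps 2 and 4, the pointwise Taylor remainder bound and the continuity of \(\varphi \mapsto \varphi^{p-1}\) from \(L^p\) into \(L^{p'}\), make explicit what the paper takes for granted when it invokes the chain rule and asserts \(u_i^{p_i-1} \in C([0,T];L^{p_i'})\).
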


    \begin{proof}
        By the chain rule it holds that, for any \(t \in (0,T)\), \(\partial_t u_i(t)^{p_i} = p_i u_i(t)^{p_i-1} \partial_t u_i(t)\). Since \(u_i^{p_i-1} \in C([0,T];L^{p_i'}(\R^d))\), \(\partial_t u_i \in C((0,T);L^{p_i}(\R^d))\) it follows from H\"older's inequality that \(u_i^{p_i} \in C^1((0,T);L^1(\R^d))\). By recalling the definition of \(C^1((0,T);L^1(\R^d))\), deduce for any \(i \in \{1,\dots,N\}\) and  \(t \in (0,T)\) that
        \begin{equation}
            \begin{aligned}
                &\left|\frac{\rmd}{\rmd t} \int_{\R^d} u_i(t)^{p_i} \rmd x - \int_{\R^d} \partial_t u_i(t)^{p_i} \rmd x\right| \\
                & \quad = \lim_{h \to 0} \left| \int_{\R^d} \frac{u_i(t+h)^{p_i} - u_i(t)^{p_i}}{h} - \partial_t u_i(t)^{p_i} \rmd x\right| \\ 
                & \quad \leq \lim_{h \to 0} \left\|\frac{u_i(t+h)^{p_i} - u_i(t)^{p_i}}{h} - \partial_t u_i(t)^{p_i}\right\|_{L^1} = 0.
            \end{aligned}
        \end{equation}
        That is, the following relationship is well-defined for all \(t \in (0,T)\)
        \begin{equation}\label{eqn:drag_out_diff}
            \frac{\rmd}{\rmd t} \mathcal{E}_P(t) = \sum_{i=1}^N \frac{1}{2(p_i-1)} \frac{\rmd}{\rmd t} \int_{\R^d} u_i(t)^{p_i} \rmd x = \sum_{i=1}^N \frac{p_i}{2 (p_i-1)} \int_{R^d} u_i(t)^{p_i-1} \partial_t u_i(t) \rmd x. 
        \end{equation}
    \end{proof}

	\begin{lemma}\label{lem:p-nash}
		Suppose \(p \in [2, \infty)\), \(\varphi \in L^1(\R^d)\) is non-negative and \(\nabla \varphi^{p/2} \in L^2(\R^d)\). Then
		\begin{equation}\label{ineq:p-nash}
			\norm{\varphi}_{L^p}^{p/2 + p'/d} \leq C_{\mathcal{N}} \norm{\varphi}_{L^1}^{p'/d} \norm{\nabla \varphi^{p/2}}_{L^2},
		\end{equation}
		where \(C_{\mathcal{N}} > 0\) is the constant from Nash's inequality,~\cite{nash1958continuity}.
	\end{lemma}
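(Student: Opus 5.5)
The plan is to apply the classical Nash inequality
\[
\norm{f}_{L^2}^{1+2/d} \leq C_{\mathcal{N}} \norm{f}_{L^1}^{2/d} \norm{\nabla f}_{L^2}
\]
to the function \(f := \varphi^{p/2}\), and then to control \(\norm{f}_{L^1}\) by interpolation between \(\norm{\varphi}_{L^1}\) and \(\norm{\varphi}_{L^p}\).

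First observe that \(\norm{f}_{L^2} = \norm{\varphi}_{L^p}^{p/2}\) and \(\norm{f}_{L^1} = \norm{\varphi}_{L^{p/2}}^{p/2}\). Before applying Nash's inequality we must know that \(f \in L^1 \cap L^2\) with \(\nabla f \in L^2\). Since \(p/2 \in [1,p]\), \(f \in L^1\) follows once \(\varphi \in L^1 \cap L^p\). If \(\varphi \notin L^p\), we use the Sobolev/Nash framework: \(\nabla f \in L^2\) together with \(f\) vanishing at infinity in measure puts \(f\) in a homogeneous space where Nash still applies. The cleanest route is a truncation \(\min\{\varphi, k\}\) followed by monotone convergence. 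In the application, \(\varphi = u_i(t)\) is already smooth and lies in \(L^1 \cap L^p\), so this is a technical point only. I would note it briefly and then assume \(\varphi \in L^1 \cap L^p\).

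Next, interpolate \(L^{p/2}\) between \(L^1\) and \(L^p\). Write \(\frac{2}{p} = \frac{\theta}{1} + \frac{1-\theta}{p}\), which gives \(\theta = \frac{1}{p-1}\), valid for \(p \geq 2\). Then
\[
\norm{\varphi}_{L^{p/2}}^{p/2} \leq \norm{\varphi}_{L^1}^{\frac{p}{2(p-1)}} \, \norm{\varphi}_{L^p}^{\frac{p(p-2)}{2(p-1)}}.
\]
Raising this to the power \(2/d\) and inserting it into Nash's inequality yields
\[
\norm{\varphi}_{L^p}^{p/2 + p/d} \leq C_{\mathcal{N}} \norm{\varphi}_{L^1}^{\frac{p}{d(p-1)}} \, \norm{\varphi}_{L^p}^{\frac{p(p-2)}{d(p-1)}} \, \norm{\nabla \varphi^{p/2}}_{L^2}.
\]

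Finally, note that \(\frac{p}{d(p-1)} = \frac{p'}{d}\), and that the net exponent on the left is
\[
\frac{p}{d} - \frac{p(p-2)}{d(p-1)} = \frac{p}{d(p-1)} = \frac{p'}{d}.
\]
Dividing by \(\norm{\varphi}_{L^p}^{p(p-2)/(d(p-1))}\) therefore gives exactly \eqref{ineq:p-nash}. The case \(\varphi \equiv 0\) is trivial.

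The only real obstacle is justifying Nash's inequality under the minimal hypotheses as stated, that is, ensuring \(\norm{\varphi}_{L^p} < \infty\) so that the division is legitimate. I would handle this with the truncation argument described above. The rest is exponent bookkeeping.
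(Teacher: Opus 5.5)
Your proposal is correct and is essentially the paper's proof. Both apply Nash's inequality to \(f=\varphi^{p/2}\), interpolate \(\norm{\varphi}_{L^{p/2}}\) between \(L^1\) and \(L^p\) with \(\theta = 1/(p-1)\), and divide by the surplus factor \(\norm{\varphi}_{L^p}^{p(p-2)/d(p-1)}\). Your truncation step is also a worthwhile addition: you apply the bound to \(\min\{\varphi,k\}\), whose \(L^1\) norm and \(L^2\) norm of \(\nabla(\cdot)^{p/2}\) are no larger, and then let \(k\to\infty\) by monotone convergence. This justifies the finiteness of \(\norm{\varphi}_{L^p}\), which the paper tacitly assumes when it divides.
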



	\begin{proof}
		Since~\eqref{ineq:p-nash} holds trivially for \(\varphi = 0\) almost everywhere, assume that \(\norm{\varphi}_{L^p} > 0\).

		When \(p = 2\),~\eqref{ineq:p-nash} reduces to the standard Nash's inequality: for \(f \in L^1 \cap W^{1,2}(\R^d)\),
		\begin{equation}\label{ineq:nash's}
			\norm{f}_{L^2}^{1+2/d} \leq C_\mathcal{N} \norm{f}_{L^1}^{2/d} \norm{\nabla f}_{L^2},
		\end{equation}
		where \(C_\mathcal{N} > 0\) depends on \(d\).
		Now suppose that \(p \in (2,\infty)\). As \(\norm{\varphi}_{L^{p}} = \norm{\varphi^{p/2}}_{L^{2}}^{2/p}\), we can apply~\eqref{ineq:nash's}, for \(f = \varphi^{p/2}\) to deduce that
		\begin{equation}\label{ineq:p_nash_1}
			\norm{\varphi}_{L^p}^{p/2 + p/d} = \norm{\varphi^{p/2}}_{L^2}^{1+2/d} \leq C_\mathcal{N} \norm{\varphi^{p/2}}_{L^1}^{2/d} \norm{\nabla \varphi^{p/2}}_{L^2}.
		\end{equation}
		Now observe that, via \(L^p\) interpolation in Lemma~\ref{lem:lp_int}, it follows for any \(p \in [2,\infty)\) that
        \begin{equation}\label{ineq:p_nash_2}
            \norm{\varphi^{p/2}}_{L^1}^{2/d} = \norm{\varphi}_{L^{p/2}}^{p/d} \leq \norm{\varphi}_{L^1}^{p/d(p-1)} \norm{\varphi}_{L^p}^{p(p-2)/d(p-1)}.
        \end{equation} 
        Substituting~\eqref{ineq:p_nash_2} into~\eqref{ineq:p_nash_1} yields
		\begin{equation}\label{ineq:p_nash_3}
			\norm{\varphi}_{L^p}^{p/2 + p/d} \leq C_\mathcal{N} \norm{\varphi}_{L^1}^{p/d(p-1)} \norm{\varphi}_{L^p}^{p(p-2)/d(p-1)} \norm{\nabla \varphi^{p/2}}_{L^2}.
		\end{equation}
		Hence,~\eqref{ineq:p-nash} can be concluded after dividing both sides of~\eqref{ineq:p_nash_3} by \(\norm{\varphi}_{L^p}^{p(p-2)/d(p-1)}\).
	\end{proof}	

	\begin{lemma}
		Suppose \(x,\, y,\, z \geq 0\) and \(\eta > 0\). Then, for any \(\alpha,\beta \geq 0\) such that \(\alpha + \beta < 1\),
		\begin{equation}\label{ineq:young_prod}
			xy^{\alpha}z^\beta \leq \eta^{1-1/\delta} x^{1/\delta} + \eta (y + z),
		\end{equation}
        where \(\delta = 1 - (\alpha+\beta)\).
	\end{lemma}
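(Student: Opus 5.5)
The plan is to obtain \eqref{ineq:young_prod} from the weighted arithmetic--geometric mean inequality (equivalently, a three-term Young's inequality) with weights \(\delta,\, \alpha,\, \beta\). These weights sum to one by the definition \(\delta = 1-(\alpha+\beta)\). Since \(\alpha + \beta < 1\), we have \(\delta \in (0,1]\), so \(1/\delta\) and all powers of \(\eta\) below are well defined.

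The key step is to redistribute powers of \(\eta\) so that the left-hand side becomes a geometric mean. Define
\begin{equation}
a := \eta^{1-1/\delta} x^{1/\delta}, \qquad b := \eta y, \qquad c := \eta z .
\end{equation}
Then \(a^\delta b^\alpha c^\beta = \eta^{\delta - 1}\, x \, \eta^{\alpha+\beta} y^\alpha z^\beta\). The total power of \(\eta\) is \(\delta - 1 + \alpha + \beta = 0\), hence
\begin{equation}
x y^\alpha z^\beta = a^\delta b^\alpha c^\beta .
\end{equation}

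Next apply the weighted AM--GM inequality \(a^\delta b^\alpha c^\beta \leq \delta a + \alpha b + \beta c\) for \(a,\, b,\, c \geq 0\) and non-negative weights summing to one. This is concavity of the logarithm when all of \(a,\, b,\, c\) are positive. If some of them vanish, the left-hand side is zero (with the convention \(0^0 = 1\) when a weight is zero), and the right-hand side is non-negative. Finally, since each weight lies in \([0,1]\), we get \(\delta a + \alpha b + \beta c \leq a + b + c = \eta^{1-1/\delta}x^{1/\delta} + \eta(y+z)\), which is \eqref{ineq:young_prod}.

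The only point needing care is the degenerate case \(\alpha = 0\) or \(\beta = 0\), where the corresponding factor \(y^0\) or \(z^0\) equals one even if \(y\) or \(z\) is zero. In that case I would simply drop that factor and run the same argument with two terms; the extra non-negative term \(\eta y\) or \(\eta z\) on the right only helps. I do not expect any genuine obstacle, since the whole argument reduces to the choice of scaling in \(a,\, b,\, c\).
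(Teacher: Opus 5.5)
Your proof is correct and is essentially the paper's argument: the paper writes \(xy^\alpha z^\beta = \eta\,((x/\eta)^{1/\delta})^\delta y^\alpha z^\beta\) and applies the three-term Young (weighted AM--GM) inequality with weights \(\delta,\alpha,\beta\), then bounds the weights by \(1\), which is exactly your rescaling \(a=\eta(x/\eta)^{1/\delta}\), \(b=\eta y\), \(c=\eta z\). Your handling of vanishing arguments (including the convention \(0^0=1\) for zero weights) is slightly more careful than the paper's, which states the product inequality only for positive arguments.
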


	\begin{proof}
		Recall Young's product inequality, which states that, for any \(x,\,y,\,z > 0\) and \(\lambda_1,\, \lambda_2,\, \lambda_3 \geq 0\) such that \(\lambda_1 + \lambda_2 + \lambda_3 = 1\), then
        \begin{equation}\label{ineq:3-young}
            x^{\lambda_1} y^{\lambda_2} z^{\lambda_3} \leq \lambda_1 x + \lambda_2 y + \lambda_3 z.
        \end{equation}
        By choosing \(\delta = 1 - (\alpha + \beta)\), it then follows from~\eqref{ineq:3-young} that
		\begin{equation}
			xy^{\alpha} z^\beta = \eta ((x/\eta)^{1/\delta})^\delta y^{\alpha}z^\beta \leq \eta^{1-1/\delta} \delta x^{1/\delta} + \eta (\alpha y + \beta z),
		\end{equation} 
		which yields~\eqref{ineq:young_prod} since \(\delta,\, \alpha,\, \beta \leq 1\).
	\end{proof}

    With these lemmas in mind, we now establish an \emph{a priori} bound on \(\mathcal{E}_P\), given a certain relationship between \(P\) and \(Q\). 

	\begin{proposition}[A priori energy bound]\label{prop:L^p_est}
		Suppose \(u\) is a solution to~\eqref{eqn:agg_diff} up to time \(T > 0\) with kernels \(\nabla K \in L^Q(\R^d)\) for some \(Q \in (1,\infty)^{N \times N}\) and initial data \(u_0 \in L^1 \cap L^P(\R^d)\), \(P \in [2,\infty)^N\), satisfying \(u_0 \geq 0\) almost everywhere and
		\begin{equation}\label{defn:system_mass}
			(\norm{u_{01}}_{L^1},\dots,\norm{u_{0N}}_{L^1}) = M
		\end{equation}
		for some \(M \in (0,\infty)^N\). 
		If \(P\) and \(Q\) satisfy the relations~\eqref{cond:P} and
		\begin{equation}\label{ineq:r_ij}
			\frac{d(p_i-1) + 2}{d(p_j-1) + 2} \leq \frac{q_{ij}}{d}
		\end{equation}
		for all \(i,\, j \in \{1,\dots, N\}\) such that \(\nabla K_{ij}\) is non-trivial, then \(\mathcal{E}_P(t)\) as defined in~\eqref{defn:E_P} satisfies the bound
		\begin{equation}\label{eqn:global_bound_p} 
			\mathcal{E}_P(t) \leq \mathcal{C} t + \mathcal{E}_P(0)
		\end{equation}
		for all \(t > 0\), for some \(\mathcal{C} > 0\) which depends on \(P,\, Q,\, d,\, N,\,M,\, D_{\min}\) and \(\nabla K\).
	\end{proposition}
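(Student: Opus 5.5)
The plan is to differentiate $\mathcal{E}_P$ along the solution, split $\frac{\rmd}{\rmd t}\mathcal{E}_P$ into a dissipative diffusion part and an advection part, and bound each advection term by powers of the dissipation quantities
\[
X_k := \norm{\nabla u_k^{p_k/2}}_{L^2}, \qquad k \in \{1,\dots,N\}.
\]
Condition~\eqref{ineq:r_ij} is what makes the total power of the $X$'s at most $2$; Young's inequality~\eqref{ineq:young_prod} then absorbs the advection into the dissipation up to a constant $\mathcal{C}$.

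\textbf{Differentiating the energy.} By Proposition~\ref{thm:smoothing}, Corollary~\ref{cor:class-soln} and Remark~\ref{rem:vanish-at-infty}, $u$ satisfies~\eqref{cond:u-reg} and decays at infinity. By Proposition~\ref{prop:nonnegativity} it is non-negative, and by Proposition~\ref{prop:cons_mass} it has $\norm{u_i(t)}_{L^1}=M_i$. Lemma~\ref{lem:energy-deriv} therefore applies. Inserting the equation and integrating by parts (boundary terms vanish by the decay and a cut-off argument as in Proposition~\ref{prop:cons_mass}) gives, for each $i$,
\[
\frac{p_i}{2(p_i-1)}\int u_i^{p_i-1}\partial_t u_i
= -\frac{2D_i}{p_i} X_i^2 + \int u_i^{p_i/2}\,\nabla u_i^{p_i/2}\cdot \nabla F_i[u] .
\]
The advection term is at most
\[
\sum_j \norm{u_i}_{L^{p_i}}^{p_i/2}\, X_i\, \norm{\nabla K_{ij}}_{L^{q_{ij}}}\,\norm{u_j}_{L^{q_{ij}'}},
\]
by H\"older and Young's convolution inequality.

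\textbf{Converting norms to powers of the $X$'s.} Interpolate between $L^1$ and $L^{p_j}$ (allowed since $q_{ij}'\le p_j$):
\[
\norm{u_j}_{L^{q_{ij}'}} \le M_j^{1-\theta}\norm{u_j}_{L^{p_j}}^{\theta}, \qquad \theta = p_j'/q_{ij}.
\]
Then apply the $p$-Nash inequality, Lemma~\ref{lem:p-nash}, to both $\norm{u_i}_{L^{p_i}}$ and $\norm{u_j}_{L^{p_j}}$. Using $p/2+p'/d = p'(d(p-1)+2)/(2d)$, the $(i,j)$ advection term is bounded by $C(M,\nabla K)\, X_i^{a_i} X_j^{b_{ij}}$, where
\[
a_i = 2 - \frac{2}{d(p_i-1)+2},\qquad b_{ij} = \frac{2d}{q_{ij}(d(p_j-1)+2)} .
\]
A direct computation shows that $a_i+b_{ij}\le 2$ is exactly~\eqref{ineq:r_ij}.

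\textbf{Absorbing the advection.} If the inequality is strict, apply~\eqref{ineq:young_prod} with $y=X_i^2$, $z=X_j^2$ and $\alpha=a_i/2$, $\beta=b_{ij}/2$, so that $\alpha+\beta<1$. Choosing $\eta$ small relative to $D_{\min}/\max_k p_k$ absorbs the $\eta(X_i^2+X_j^2)$ terms into the dissipation, leaving
\[
\frac{\rmd}{\rmd t}\mathcal{E}_P \le \mathcal{C}.
\]
Integrating via Lemma~\ref{lem:energy-deriv}, using continuity of $\mathcal{E}_P$ at $t=0$, gives~\eqref{eqn:global_bound_p}.

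\textbf{Main obstacle: the equality case.} When~\eqref{ineq:r_ij} holds with equality, $a_i+b_{ij}=2$ and~\eqref{ineq:young_prod} no longer applies. To handle this, split
\[
\nabla K_{ij} = G_1 + G_2, \qquad G_1 := \nabla K_{ij}\mathbf{1}_{\{\abs{\nabla K_{ij}}>L\}}.
\]
Since $q_{ij}<\infty$, $\norm{G_1}_{L^{q_{ij}}}\to0$ as $L\to\infty$, while $G_2\in L^{q_{ij}}\cap L^\infty$. The $G_1$ part has total exponent exactly $2$ but an arbitrarily small coefficient, so it is absorbed by the dissipation directly. For the $G_2$ part, use a slightly larger exponent $\tilde q\in(q_{ij},\infty)$; since $q_{ij}'\le p_j$, then $\tilde q'\le p_j$, so interpolation between $L^1$ and $L^{p_j}$ is still valid, and it lowers $b_{ij}$ strictly so that the strict-case argument applies. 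I expect the bookkeeping of this splitting, with constants depending only on $P,Q,d,N,M,D_{\min}$ and $\nabla K$, to be the delicate step.
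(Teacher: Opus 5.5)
Your proposal is correct and matches the paper's proof step for step: the same tested identity for $\frac{\rmd}{\rmd t}\mathcal{E}_P$, the same Nash-derived exponents $2(1-1/c_i)$ and $2d/(c_j q_{ij})$ with $c_i = d(p_i-1)+2$, Young's product inequality~\eqref{ineq:young_prod} in the strict case, and the same splitting of $\nabla K_{ij}$ into the piece on $\{\abs{\nabla K_{ij}} > L\}$ (small $L^{q_{ij}}$ norm) and a bounded piece in the equality case. The only difference is minor. For the bounded piece the paper effectively takes your $\tilde q = \infty$, using $\norm{G_2 \ast u_j}_{L^\infty} \le L M_j$, so only $\norm{\nabla w_i}_{L^2}^{2(1-1/c_i)}$ remains to absorb; your finite $\tilde q > q_{ij}$ works equally well but needs the extra interpolation and Nash step for $u_j$.
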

    
	\begin{proof}
		Recall from Proposition~\ref{thm:smoothing} and Corollary~\ref{cor:class-soln} that \(u\) satisfies the regularity condition~\eqref{cond:u-reg} in Lemma~\ref{lem:energy-deriv}. Therefore, to deduce~\eqref{eqn:global_bound_p}, it is sufficient to show that \(\frac{\rmd}{\rmd t}\mathcal{E}_P(t) \leq \mathcal{C}\) for all \(t > 0\).
        
		By testing each \(i\)-th component equation~\eqref{eqn:agg_diff} with \(\frac{p_i}{2(p_i - 1)} u_i^{{p_i}-1}\), applying integration by parts and the identity \(\nabla \varphi^{p/2} = \frac{p}{2} \varphi^{(p-2)/2} \nabla \varphi\), observe that 
        \begin{equation}\label{eqn:lp_rearrange}
            \begin{aligned}
                &\frac{p_i}{2(p_i-1)} \int_{\R^d} u_i^{p_i-1} {\partial_t} u_i \rmd x \\
                & \qquad = \frac{p_i}{2(p_i-1)} \left( D_i \int_{\R^d} u_i^{{p_i}-1} \Delta u_i \rmd x - \int_{\R^d} u_i^{{p_i}-1} \nabla \cdot (u_i \nabla F_i[u]) \rmd x \right) \\
    			& \qquad = - \frac{p_i}{2} \left( D_i \int_{\R^d} u_i^{{p_i}-2} \abs{\nabla u_i}^2 \rmd x -  \int_{\R^d} (u_i^{{p_i}-2} \nabla u_i) \cdot (u_i \nabla F_i[u]) \rmd x \right) \\
    			& \qquad = - \frac{2 D_i}{p_i}  \int_{\R^d} \abs{\nabla u_i^{{p_i}/2}}^2 \rmd x + \int_{\R^d} u_i^{{p_i}/2} \nabla u_i^{{p_i}/2} \cdot \nabla F_i[u] \rmd x.
            \end{aligned}
        \end{equation}
        By summing~\eqref{eqn:lp_rearrange} over \(i\), recalling~\eqref{eqn:drag_out_diff} and expanding \(F_i[u] := \sum_{j=1}^N K_{ij} \ast u_j\), we obtain
		\begin{equation}\label{ineq:lp_tested}
			\frac{\rmd}{\rmd t}\mathcal{E}_P(t) = \sum_{i,\,j=1}^N \int_{\R^d}  w_i \nabla w_i \cdot \left(\nabla K_{ij} \ast u_j\right) \rmd x - \sum_{i=1}^N \frac{2D_i}{{p_i}} \norm{\nabla w_i}_{L^2}^2,
		\end{equation}
		where \(w_i := u_i^{{p_i}/2}\). After two applications of Hölder's inequality it follows that
        \noeqref{ineq:RHS_1}
		\begin{equation}\label{ineq:RHS_1}
			\begin{aligned}
			\sum_{i,\,j=1}^N \int_{\R^d} \nabla w_i \cdot w_i \left(\nabla K_{ij} \ast u_j\right) \rmd x \leq \sum_{i,\, j=1}^N \norm{\nabla w_i}_{L^2} \norm{w_i}_{L^{2}} \norm{\nabla K_{ij} \ast u_j}_{L^\infty}.
			\end{aligned}
		\end{equation}

		For each choice of \(i,\, j \in \{1,\dots,N\}\), it is then sufficient to establish that
		\begin{equation}\label{ineq:applied_young}
			\norm{w_i}_{L^{2}} \norm{\nabla w_i}_{L^2} \norm{\nabla K_{ij} \ast u_j}_{L^\infty} \leq \mathcal{C}_{ij} + \frac{D_{\min}}{N p_{\max}}  \left(\norm{\nabla w_i}_{L^2}^2 + \norm{\nabla w_j}_{L^2}^2 \right),
		\end{equation}
        where \(\mathcal{C}_{ij} > 0\) depends on \(p_i,\, p_j,\, p_{\max},\, q_{ij},\, d,\, N,\, D_{\min},\, \nabla K_{ij},\, M_i\) and \(M_j\). Indeed, from~\eqref{ineq:lp_tested}--\eqref{ineq:applied_young} it can then be concluded that
		\begin{equation}\label{ineq:i_dependent}
			\begin{aligned}
			    \frac{\rmd}{\rmd t}\mathcal{E}_P(t) & \leq \sum_{i,\, j=1}^N \mathcal{C}_{ij} + \sum_{i,\, j=1}^N \frac{D_{\min}}{N p_{\max}}  \left(\norm{\nabla w_i}_{L^2}^2 + \norm{\nabla w_j}_{L^2}^2 \right) - \sum_{i=1}^N \frac{2D_i}{{p_i}} \norm{\nabla w_i}_{L^2}^2. \\
                & = \mathcal{C} +  \sum_{i=1}^N \left(\frac{2 D_{\min}}{p_{\max}} - \frac{2D_i}{{p_i}} \right) \norm{\nabla w_i}_{L^2}^2 \\
                & \leq \mathcal{C},
			\end{aligned}
		\end{equation}
        where \(\mathcal{C} := \sum_{i,\, j=1}^N \mathcal{C}_{ij}\).

		If \(\nabla K_{ij}\) is trivial, then~\eqref{ineq:applied_young} follows immediately. To demonstrate~\eqref{ineq:applied_young} in the case that \(\nabla K_{ij}\) is non-trivial, begin by recalling that \(\norm{u_{i}(t)}_{L^1} = M_i\) for each \(i \in \{1,\dots, N\}\) due to the nonnegativity and conservation of mass of solutions, as proven in Section~\ref{section:preserved}. Also note that \(\norm{u_i}_{L^{p_i}}^{p_i} = \norm{w_i}_{L^2}^2\). Consequently, for \(c_i := d(p_i - 1) + 2\), Lemma~\ref{lem:p-nash} yields
		\begin{equation}\label{ineq:nash-w}
			\norm{w_i}_{L^2} \leq C_\Nash^{1 - 2/c_i} M_i^{p_i/c_i} \norm{\nabla w_i}_{L^2}^{1 - 2/c_i}.
		\end{equation}
		
		To appropriately bound the term \(\norm{\nabla K_{ij} \ast u_j}_{L^\infty}\) such that~\eqref{ineq:applied_young} holds, consider the cases \(c_i/c_j < q_{ij}/d\) and \(c_i/c_j = q_{ij}/d\) separately.

		\emph{Case 1:} \(c_i/c_j < q_{ij}/d\). Since \(q_{ij}' \leq p_j\) from~\eqref{cond:P}, apply Young's convolution inequality and \(L^p\) interpolation to observe that
		\begin{equation}\label{ineq:conv_bound}
			\norm{\nabla K_{ij} \ast u_j}_{L^\infty} \leq \norm{\nabla K_{ij}}_{L^{q_{ij}}} \norm{u_j}_{L^{q'_{ij}}} \leq \norm{\nabla K_{ij}}_{L^{q_{ij}}} M_j^{1-p_j'/q_{ij}} \norm{u_j}_{L^{p_j}}^{p_j'/q_{ij}}.
		\end{equation}
		From the observation that \(1/(p_j-1) = d/(c_j-2)\) and~\eqref{ineq:nash-w}, it then follows that
		\begin{equation}\label{ineq:apply-nash-wj}
			\norm{u_j}_{L^{p_j}}^{p_j'/q_{ij}} = \norm{w_j}_{L^2}^{2d/q_{ij}(c_j-2)} \leq C_\Nash^{2d/c_j q_{ij}} M_j^{2p_j'/c_j q_{ij}} \norm{\nabla w_j}_{L^2}^{2d/c_j q_{ij}}.
		\end{equation}
		Hence,~\eqref{ineq:nash-w},~\eqref{ineq:conv_bound} and~\eqref{ineq:apply-nash-wj} yields
		\begin{equation}\label{ineq:first-nash}
			\norm{w_i}_{L^2} \norm{\nabla w_i}_{L^2} \norm{\nabla K_{ij} \ast u_j}_{L^\infty} \leq A_{ij} \norm{\nabla K_{ij}}_{L^{q_{ij}}} \norm{\nabla w_i}_{L^2}^{2(1-1/c_i)} \norm{\nabla w_j}_{L^2}^{2d/c_j q_{ij}},
		\end{equation} 
		where \(A_{ij}\) is given as
        \begin{equation}
            A_{ij} := C_\Nash^{1 + 2(d/c_jq_{ij} - 1/c_i)}  M_i^{p_i/c_i} M_j^{1 - (p_j'/q_{ij}) (1 - 2/c_j)}.
        \end{equation}
        Since the assumption that \(c_i/c_j < q_{ij}/d\) is equivalent to \((1 - 1/c_i) + d/c_jq_{ij} < 1\), apply via~\eqref{ineq:young_prod} to the right hand side of~\eqref{ineq:first-nash} for \(x = A_{ij} \norm{\nabla K_{ij}}_{L^{q_{ij}}}\), \(y = \norm{\nabla w_i}_{L^2}^2\), \(z = \norm{\nabla w_j}_{L^2}^2\), and \(\eta = D_{\min}/N p_{\max}\), to conclude~\eqref{ineq:applied_young}.

		\emph{Case 2:} \(c_i/c_j = q_{ij}/d\). Let \(\vareps_{ij} = D_{\min}/(2 A_{ij} N p_{\max} + 1)\). By the dominated convergence theorem, there exists some \(L_{ij}(\vareps_{ij}, \nabla K_{ij}) > 0\) such that
		\begin{equation}
			\norm{\chi_{\{\abs{\nabla K_{ij}} \leq L_{ij}\}} \nabla K_{ij}}_{L^{\infty}} \leq L_{ij},\quad \text{and}\quad \norm{\chi_{\{\abs{\nabla K_{ij}} > L_{ij}\}} \nabla K_{ij}}_{L^{q_{ij}}} \leq \vareps_{ij}.
		\end{equation}
		From the linearity of convolutions and Young's convolution inequality it then follows that
		\begin{equation}\label{ineq:K_splitting}
            \begin{aligned}
                \norm{w_i}_{L^2} \norm{\nabla w_i}_{L^2} \norm{\nabla K_{ij} \ast u_j}_{L^{\infty}} &\leq L_{ij} M_j \norm{w_i}_{L^2} \norm{\nabla w_i}_{L^2} + \vareps_{ij} \norm{u_j}_{L^{q'_{ij}}} \norm{w_i}_{L^2} \norm{\nabla w_i}_{L^2}.
            \end{aligned}
		\end{equation}
		Applying~\eqref{ineq:nash-w} followed by \(a x^{1-1/c_i} \leq \eta^{1-c_i} a^{c_i} + \eta x\) for \(a = C_\Nash^{1-2/c_i} M_i^{p_i/c_i} M_j L_{ij}\), \(x = \norm{\nabla w_i}_{L^2}^2\), and \(\eta = D_{\min}/2Np_{\max}\), we obtain that 
		\begin{equation}\label{ineq:ay-case-2.1}
			L_{ij} M_j \norm{w_i}_{L^2} \norm{\nabla w_i}_{L^2} \leq \mathcal{C}_{ij} + \frac{D_{\min}}{2 N p_{\max}} \norm{\nabla w_i}_{L^2}^2
		\end{equation}
		for some \(\mathcal{C}_{ij} > 0\) that depends on \(p_i,\, p_j,\, p_{\max},\, q_{ij},\, d,\, N,\, D_{\min},\, \nabla K_{ij},\, M_i\) and \(M_j\).

		From~\eqref{ineq:nash-w} and~\eqref{ineq:apply-nash-wj} it similarly follows that
		\begin{equation}\label{ineq:case-2.2}
			\vareps_{ij} \norm{u_j}_{L^{q'_{ij}}} \norm{w_i}_{L^2} \norm{\nabla w_i}_{L^2} \leq \vareps_{ij} A_{ij} \norm{\nabla w_i}_{L^2}^{2(1-1/c_i)} \norm{\nabla w_j}_{L^2}^{2d/c_j q_{ij}}.
		\end{equation}
		Since \(c_i/c_j = q_{ij}/d\) is equivalent to \((1-1/c_i) + d/c_jq_{ij} = 1\), deduce that
        \begin{equation}
            \norm{\nabla w_i}_{L^2}^{2(1-1/c_i)} \norm{\nabla w_j}_{L^2}^{2d/c_j q_{ij}} \leq \norm{\nabla w_i}_{L^2}^2 + \norm{\nabla w_j}_{L^2}^2.
        \end{equation}
        Hence, along with the choice of \(\varepsilon_{ij}\),~\eqref{ineq:case-2.2} implies that
		\begin{equation}\label{ineq:ay-case-2.2}
			\vareps_{ij} \norm{u_j}_{L^{q'_{ij}}} \norm{w_i}_{L^2} \norm{\nabla w_i}_{L^2} \leq \frac{D_{\min}}{2 N p_{\max}} (\norm{\nabla w_i}_{L^2}^2 + \norm{\nabla w_j}_{L^2}^2).
		\end{equation}

		After substituting~\eqref{ineq:ay-case-2.1} and~\eqref{ineq:ay-case-2.2} back into~\eqref{ineq:K_splitting}, conclude~\eqref{ineq:applied_young}.		
	\end{proof}

     \begin{remark}
        If the kernels are instead assumed to be of the form \(\nabla K \in L^{Q_1}(\R^d) + L^{Q_2}(\R^d)\), then~\eqref{ineq:r_ij} becomes 
        \begin{equation}\label{ineq:r_ij-v2}
			\frac{d(p_i-1) + 2}{d(p_j-1) + 2} \leq \frac{\min\{q_{ij1}, q_{ij2}\}}{d},
		\end{equation}
        since then \(c_i/c_j \leq q_{ijk}/d\) holds for both \(k \in \{1,2\}\). That is, the existence of the uniform-in-time bound for \(\frac{\rmd}{\rmd t}\mathcal{E}_P\) can be proven, so long as \(P\), \(Q_1\) and \(Q_2\) satisfy~\eqref{ineq:r_ij-v2} for any \(i,\, j\) for which \(\nabla K_{ij}\) is non-trivial.
    \end{remark}
    
    Theorem~\ref{thm:global-1} is then a consequence of Proposition~\ref{prop:L^p_est} together with the following lemma.
	
	\begin{lemma}
		Suppose \(\nabla K \in L^Q(\R^d)\), where \(Q\) satisfies
		\begin{equation}\label{ineq:reg-cycle}
			\left(q_{i_1 i_2} \cdots q_{i_n i_1}\right)^{1/n} \geq d
		\end{equation}
		for every interaction cycle, then there exists some \(P \in [2,\infty)^N\) such that \(P\) and \(Q\) satisfy~\eqref{cond:P} and 
		\begin{equation}\label{ineq:r_ij-again}
				\frac{d(p_i-1) + 2}{d(p_j-1) + 2} \leq \frac{q_{ij}}{d}.
		\end{equation}
		for every \(i,\, j \in \{1,\dots,N\}\) for which \(\nabla K_{ij}\) is non-trivial. 
	\end{lemma}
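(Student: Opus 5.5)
Writing \(c_i := d(p_i-1)+2\), the target inequality~\eqref{ineq:r_ij-again} reads \(\log c_i - \log c_j \leq \log(q_{ij}/d)\) for every edge \(i \to j\) of the perception graph. I would therefore treat this as a potential problem on a weighted directed graph: assign to each edge \(i\to j\) the weight \(w_{ij} := \log(q_{ij}/d)\), and seek a potential \(\phi_i = \log c_i\) with \(\phi_i - \phi_j \leq w_{ij}\) on every edge. This is the classical feasibility problem for difference constraints (shortest paths), which is solvable precisely when the graph has no negative-weight cycle. The cycle condition~\eqref{ineq:reg-cycle} says exactly that \(\sum_k w_{i_k i_{k+1}} = \log\bigl(q_{i_1i_2}\cdots q_{i_ni_1}/d^n\bigr) \geq 0\) for every simple cycle. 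Any closed walk decomposes into simple cycles, and self-loops are cycles of length one. Hence no closed walk has negative weight.

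The construction I would use is the following. Define, for each node \(i\),
\[
\phi_i := \min\Bigl\{0,\ \min_{\text{paths } i=j_0\to j_1\to\cdots\to j_m} \textstyle\sum_{k} w_{j_kj_{k+1}}\Bigr\},
\]
the minimum over all directed paths starting at \(i\), together with the empty path. This minimum is finite: without negative cycles it suffices to consider simple paths, of which there are finitely many. For an edge \(i\to j\), prepending that edge to any path from \(j\) yields a path from \(i\), so \(\phi_i \leq w_{ij} + \phi_j\). This holds also when \(\phi_j = 0\) is attained by the empty path, using the edge alone, provided \(w_{ij}\) is counted; one checks that \(\min\{0,\cdot\}\) is consistent here by instead setting \(\phi_i := \min\) over paths including the empty one. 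Then \(\phi \leq 0\), and the inequality \(\phi_i - \phi_j \leq w_{ij}\) holds for all edges.

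The next step is to convert \(\phi\) into \(P\). Set \(c_i := \lambda e^{\phi_i}\) with a scaling \(\lambda>0\). Differences \(\log c_i - \log c_j\) are invariant under \(\lambda\), so~\eqref{ineq:r_ij-again} holds for every \(\lambda\). Define \(p_i := 1 + (c_i - 2)/d\). Choosing \(\lambda\) large makes every \(p_i\) as large as desired, since \(e^{\phi_i} \geq e^{\min_k\phi_k} > 0\). In particular it gives \(p_i \geq 2\) and \(p_j \geq \max_i q'_{ij}\), which is finite because \(q_{ij} > 1\). This yields~\eqref{cond:P} and \(P \in [2,\infty)^N\).

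The only delicate point is the no-negative-cycle argument, specifically the claim that a closed walk's weight is a sum of simple-cycle weights. I would handle it by the standard induction: repeatedly cut out a first repeated vertex. With that in hand, the rest is bookkeeping.
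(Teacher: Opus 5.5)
Your proposal is correct and follows essentially the paper's argument. The paper also recasts the constraint as difference constraints with weights $w_{ij}=\log q_{ij}-\log d$ and solves them by shortest-path distances. It measures from an auxiliary source node with zero-weight edges, via Bellman--Ford, using $x_i=-\log\bigl(d(p_i-1)+2\bigr)$, whereas you take minimal walk weights out of each node $i$ with $\phi_i=\log c_i$. It then rescales by a large constant to obtain $p_j\ge\max\{2,q'_{ij}\}$. Your explicit check that minimal walks are well defined, because closed walks decompose into interaction cycles of nonnegative weight, covers a point the paper only cites to the textbook.
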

	
	\begin{proof}
        Consider the transformations \(x_i = -\log(d(p_i-1)+2)\) for each \(i \in \{1,\dots,N\}\) and \(w_{ij} = \log q_{ij} - \log d\) for every \(i,\,j \in \{1,\dots,N\}\) for which \(\nabla K_{ij}\) is non-trivial. It then follows that~\eqref{ineq:r_ij-again} is equivalent to 
		\begin{equation}\label{ineq:x-w}
			x_j - x_i \leq w_{ij},
		\end{equation}
		for each \(i,\, j\) for which \(\nabla K_{ij}\) is non-trivial. The existence of such a \(x = (x_1,\dots,x_N) \in \R^N\) that satisfies~\eqref{ineq:x-w}, given that \(\sum_{k=1}^n w_{i_k i_{k+1}} \geq 0\) for every cycle, is then given by a standard constructive argument, as seen in~\cite[Theorem 24.9]{cormen2009}. 
		
		The construction is given as follows. First, consider a constraint graph with respect to the \(w_{ij}\) terms in which each edge from node \(i\) to node \(j\) on the perception graph is weighted by \(w_{ij}\). After that, add source node, along with edges of zero weight from the source node to nodes \(1, \dots, N\). By then applying the Bellman--Ford algorithm\cite[p.~651]{cormen2009} to find the shortest path from the source node to nodes \(1, \dots, N\), each \(x_i\) is then given as the weight of the shortest path from the source node to node \(i\). 
        
        To verify that our candidate choice of \(x\) satisfies~\eqref{ineq:x-w} for any \(i,\, j\) for which \(\nabla K_{ij}\) is non-trivial, consider the path given by appending the edge from node \(i\) to node \(j\) to the shortest path from the source node to node \(i\). Since this is a path from the source node to node \(j\) with a weight of \(x_i + w_{ij}\), we know by definition of \(x_j\) that \(x_j \leq x_i + w_{ij}\), as required.
		
		Now given \(x\) from the above construction, note that \(y_i = x_i - a\) also satisfies~\eqref{ineq:x-w} for any \(a \in \R\). Consequently, any \(P = (p_1,\dots,p_N)\) of the form
		\begin{equation}\label{eqn:p-from-x}
			p_i = \frac{\exp(a-x_i) - 2}{d} + 1,
		\end{equation}
		satisfies~\eqref{ineq:r_ij-again} for any \(a \in \R\) and \(i,\, j\) for which \(\nabla K_{ij}\) is non-trivial. Since \(p_i \to \infty\) as \(a \to \infty\) for every \(i \in \{1,\dots,N\}\), \(a\) can be chosen large enough so that \(\max\{q'_{ij},\, 2\} \leq p_j\) for every \(i,\, j \in \{1,\dots,N\}\).
	\end{proof}

    \begin{remark}
        Note that~\eqref{ineq:reg-cycle} must be a necessary condition of \(Q\) since the condition~\eqref{ineq:r_ij-again} implies that
		\begin{equation}
			\prod_{k=1}^{n} \frac{q_{i_k i_{k+1}}}{d} \geq \prod_{k=1}^{n} \frac{d(p_{i_k}-1)+2}{d(p_{i_{k+1}}-1)+2} = 1, \quad i_{n+1} = i_1,
		\end{equation} 
		holds for every interaction cycle.
    \end{remark}

	\begin{example}
		To illustrate how a \(P\) is chosen given some \(Q\) satisfying the geometric cycle condition~\eqref{ineq:reg-cycle}, recall the example of a system of~\eqref{eqn:agg_diff} given in Examples~\ref{ex:perp-graph} and~\ref{ex:good-cycles}. The corresponding constraint graph with source node, labelled \(\mathcal{O}\) is given by Figure~\ref{fig:constraint-graph}.

		\begin{figure}[ht]
			\centering
			\begin{tikzpicture}[state/.append style={minimum size=6mm, inner sep=1pt, thick}]
				\node[state] (0) at (-4, 0) {$\mathcal{O}$};
				\node[state] (1) at (-2, 1) {$1$};
				\node[state] (2) at (0, 0) {$2$};
				\node[state] (3) at (-2, -1) {$3$};
				\node[state] (4) at (1.75, 0) {$4$};
				
				\path[->, >=angle 90]
					(1) edge [thick, bend left=30] node [anchor = south] {$w_{12}$} (2)
					(2) edge [thick, bend left=30] node [anchor = north] {$w_{21}$} (1)
					(2) edge [thick, bend left=30] node [anchor = north west] {$w_{23}$} (3)
					(2) edge [thick] node [anchor = south] {$w_{24}$} (4)
					(3) edge [thick, bend left=30] node [anchor = east] {$w_{31}$} (1)
					(4) edge [thick, out = -45, in = 45, looseness=12] node [anchor = west] {$w_{44}$} (4)
					(0) edge [dashed, bend left=30] node [anchor = north] {$0$} (1)
					(0) edge [dashed, out = 75, in = 90, looseness = 1.4] node [anchor = south] {$0$} (2)
					(0) edge [dashed, bend right=30] node [anchor = south] {$0$} (3)
					(0) edge [dashed, out = -75, in = -90, looseness = 0.9] node [anchor = north] {$0$} (4);
			\end{tikzpicture}			
			\caption{The constraint graph corresponding to the system given by~\eqref{def:4-sys-kernels} with weights $w_{ij} = \log q_{ij} - \log d$.} 
			\label{fig:constraint-graph}
		\end{figure}

		Recall that we suppose \(d = 4\) and the regularities of each kernel are given by 
		\begin{equation}\label{eqn:Q}
			Q = \begin{pmatrix}
				- & 12 & - & - \\
				3 & - & 2 & 2 \\
				9 & - & - & - \\
				- & - & - & 5
			\end{pmatrix}.
		\end{equation}
		By then applying the Bellman--Ford algorithm, since \(Q\) satisfies~\eqref{ineq:reg-cycle} for every interaction cycle, or manually checking every path, it follows that each \(x_i\) is given as \(x_1 = \log(3) - \log(4),\, x_2 = 0,\) and \(x_3 = x_4 = \log(2) - \log(4)\). Thus, via~\eqref{eqn:p-from-x}, choose \(P\) of the form
		\begin{equation}
			p_1 = \frac{2b + 3}{6}, \quad p_2 = \frac{b + 2}{4}, \quad p_3 = p_4 = \frac{b+1}{2}.
		\end{equation}
		It can then be verified that, for any \(b \geq 6\), \(P\) and \(Q\) satisfy the conditions in Proposition~\ref{prop:L^p_est}.
	\end{example}

\section{Dynamical properties}\label{section:dynamics}
    
    For families of kernels \(\nabla K \in L^Q(\R^d)\) that do not satisfy the geometric mean condition~\eqref{ineq:reg-cycle} for every interaction cycle, a global \emph{a priori} bound can still be attained if there is a sufficient smallness condition on the ratios between \(\norm{u_{0i}}_{L^{p_i}}\) and \(\norm{u_{0i}}_{L^1}\) for each \(i \in \{1,\dots,N\}\). Throughout this section, we assume that~\eqref{eqn:agg_diff} cannot be decoupled into two distinct systems, since in that case the two systems can be studied separately.

    To begin, we establish a state-dependent differential inequality for solutions to~\eqref{eqn:agg_diff}.

    \begin{proposition}\label{prop:global-bound}
        Suppose \(u\) is a solution of~\eqref{eqn:agg_diff} up to some time \(T > 0\) with kernels \(\nabla K \in L^Q(\R^d)\) for some \(Q \in (1,\infty)^{N \times N}\) and initial data \(u_0 \in L^1 \cap L^P(\R^d)\), such that \(u_0 \geq 0\) almost everywhere, for \(P \in [2,\infty)^N\) that satisfies~\eqref{cond:P}. Now consider \(\rho \in C^1([0,T];\R^N)\) such that, for \(i \in \{1,\dots,N\}\),
        \begin{equation}\label{defn:rho}
            \rho_i(t) := \left(\norm{u_i(t)}_{L^{p_i}}/M_i\right)^{p'_i/d},
        \end{equation}
        where \(M_i := \norm{u_{0i}}_{L^1} > 0\). Then, \(\rho_i\) is non-increasing when 
        \begin{equation}\label{cond:rho_i}
            \rho_i(t) \geq \frac{p_i C_\Nash}{2 D_i} \sum_{j=1}^N M_j \norm{\nabla K_{ij}}_{L^{q_{ij}}} \rho_j(t)^{d/q_{ij}} =: \Phi_i(\rho(t)).
        \end{equation}
        Furthermore, for \(t \in (0,T]\) such that \(\rho_i(t)\) satisfies~\eqref{cond:rho_i}, then the following holds
        \begin{equation}\label{ineq:rho_deriv}
            \frac{\rmd}{\rmd t} \rho_i(t) \leq - C_i \rho_i(t)^2 \big( \rho_i(t) - \Phi_i(\rho(t)) \big),
        \end{equation}
        where \(C_i := 4 D_i/(d p_i C_\Nash^2)\).
    \end{proposition}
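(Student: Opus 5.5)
We work with the single quantity \(X_i(t) := \norm{u_i(t)}_{L^{p_i}}^{p_i} = \norm{w_i}_{L^2}^2\), \(w_i := u_i^{p_i/2}\), and convert its evolution into one for \(\rho_i\).

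\emph{Step 1: energy identity for one species.} By Lemma~\ref{lem:energy-deriv} and the computation~\eqref{eqn:lp_rearrange} (multiplied by \(2(p_i-1)\)), we have for \(t\in(0,T]\)
\begin{equation}
\frac{1}{p_i}\frac{\rmd}{\rmd t}X_i = -\frac{4D_i(p_i-1)}{p_i^2}\norm{\nabla w_i}_{L^2}^2 + \frac{2(p_i-1)}{p_i}\int_{\R^d} w_i\nabla w_i\cdot\nabla F_i[u]\,\rmd x.
\end{equation}
The advection term is bounded by \(\norm{w_i}_{L^2}\norm{\nabla w_i}_{L^2}\sum_j\norm{\nabla K_{ij}}_{L^{q_{ij}}}\norm{u_j}_{L^{q_{ij}'}}\). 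By interpolation between \(L^1\) and \(L^{p_j}\) (as in~\eqref{ineq:conv_bound}) and conservation of mass (Proposition~\ref{prop:cons_mass}),
\begin{equation}
\norm{u_j}_{L^{q'_{ij}}}\le M_j^{1-p_j'/q_{ij}}\norm{u_j}_{L^{p_j}}^{p_j'/q_{ij}} = M_j\rho_j^{d/q_{ij}}.
\end{equation}
This is exactly why the exponent \(p_j'/d\) appears in the definition of \(\rho_j\). Hence
\begin{equation}
\frac{\rmd}{\rmd t}X_i \le 2(p_i-1)\norm{\nabla w_i}_{L^2}\Big(\norm{w_i}_{L^2}\, G_i - \frac{2D_i}{p_i}\norm{\nabla w_i}_{L^2}\Big),\qquad G_i := \sum_j M_j\norm{\nabla K_{ij}}_{L^{q_{ij}}}\rho_j^{d/q_{ij}}.
\end{equation}

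\emph{Step 2: Nash lower bound on the dissipation.} Lemma~\ref{lem:p-nash} gives \(\norm{\nabla w_i}_{L^2}\ge C_\Nash^{-1}M_i^{-p_i'/d}\norm{u_i}_{L^{p_i}}^{p_i/2+p_i'/d}\). Since \(\norm{w_i}_{L^2} = \norm{u_i}_{L^{p_i}}^{p_i/2}\), this yields
\begin{equation}
\norm{\nabla w_i}_{L^2}\ge C_\Nash^{-1}\norm{w_i}_{L^2}\rho_i.
\end{equation}
The bracket is then at most \(\norm{w_i}_{L^2}\big(G_i - \tfrac{2D_i}{p_iC_\Nash}\rho_i\big) = -\tfrac{2D_i}{p_iC_\Nash}\norm{w_i}_{L^2}(\rho_i-\Phi_i)\). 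Whenever \(\rho_i\ge\Phi_i(\rho)\) this quantity is nonpositive, so we may multiply it by the lower bound for \(\norm{\nabla w_i}_{L^2}\) once more:
\begin{equation}
\frac{\rmd}{\rmd t}X_i \le -\frac{4D_i(p_i-1)}{p_iC_\Nash^2}X_i\,\rho_i(\rho_i-\Phi_i).
\end{equation}
This already shows that \(\rho_i\) is non-increasing under~\eqref{cond:rho_i}.

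\emph{Step 3: chain rule.} Since \(\rho_i = (X_i/M_i^{p_i})^{1/(d(p_i-1))}\),
\begin{equation}
\dot\rho_i = \frac{\rho_i}{d(p_i-1)}\frac{\dot X_i}{X_i} \le -\frac{4D_i}{dp_iC_\Nash^2}\rho_i^2(\rho_i-\Phi_i),
\end{equation}
which is~\eqref{ineq:rho_deriv} with \(C_i = 4D_i/(dp_iC_\Nash^2)\). If \(X_i=0\), then \(u_i\equiv0\), contradicting \(M_i>0\).

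\emph{Main obstacle.} The delicate point is the sign bookkeeping in Step 2: the Nash bound may be used a second time only once the bracket is known to be nonpositive, which is precisely condition~\eqref{cond:rho_i}. Differentiability of \(\rho_i\) for \(t>0\) follows from Lemma~\ref{lem:energy-deriv} together with \(X_i>0\).
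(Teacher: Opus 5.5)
Your proposal is correct and follows essentially the same route as the paper. Both arguments use the tested $L^{p_i}$ identity, then the H\"older--Young--interpolation bound $M_j\rho_j^{d/q_{ij}}$, then the Nash-type lower bound $\norm{\nabla w_i}_{L^2}\ge C_\Nash^{-1}\norm{w_i}_{L^2}\rho_i$. That bound is applied once unconditionally and a second time only under~\eqref{cond:rho_i}, and a final change of variables gives the inequality for $\rho_i$. The differences are cosmetic: you track $X_i=\norm{w_i}_{L^2}^2$ and use the chain rule where the paper uses a logarithmic derivative. Your remark that $X_i>0$ by conservation of mass also makes explicit the division by $\norm{w_i}_{L^2}^2$ that the paper performs implicitly.
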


    \begin{proof}
        By repeating the arguments presented in the proof for Proposition~\ref{prop:L^p_est} up to~\eqref{eqn:lp_rearrange}, it follows that, for \(w_i := u_i^{p_i/2}\),
        \begin{equation}\label{eqn:msgb-tested}
            \frac{1}{2(p_i-1)} \frac{\rmd}{\rmd t} \norm{w_i}_{L^2}^2 = - \frac{2 D_i}{p_i} \norm{\nabla w_i}_{L^2}^2 + \sum_{j=1}^N \int_{\R^d} \nabla w_i \cdot w_i \left( \nabla K_{ij} \ast u_j \right) \rmd x.
        \end{equation}
        From here it is sufficient to bound both the right hand terms. By re-arranging the Nash-type inequality~\eqref{ineq:p-nash} it follows that
        \begin{equation}\label{ineq:msgb-diff-bound}
            \norm{\nabla w_i}_{L^2} \geq C_\Nash^{-1} \norm{u_i}_{L^1}^{-p_i'/d} \norm{u_i}_{L^{p_i}}^{p_i/2 + p_i'/d} = C_\Nash^{-1} \norm{w_i}_{L^2} \rho_i. 
        \end{equation}
        By H\"older's inequality, Young's convolution inequality and \(L^p\) interpolation it then follows that, for each \(j \in \{1,\dots,N\}\),
        \begin{equation}\label{ineq:msgb-agg-bound}
            \begin{aligned}
                \int_{\R^d} \nabla w_i \cdot w_i \left( \nabla K_{ij} \ast u_j \right) \rmd x 
                & \leq \norm{\nabla w_i}_{L^2} \norm{w_i}_{L^2} \norm{\nabla K_{ij}}_{L^{q_{ij}}} \norm{u_j}_{L^1}^{1-p_j'/q_{ij}} \norm{u_j}_{L^{p_j}}^{p_j'/q_{ij}} \\&= \norm{\nabla w_i}_{L^2} \norm{w_i}_{L^2} \norm{\nabla K_{ij}}_{L^{q_{ij}}} M_j \rho_j^{d/q_{ij}} ,
            \end{aligned}
        \end{equation}
        where, from nonnegativity and conservation of mass, \(\norm{u_j}_{L^1} = M_j\) for each \(j \in \{1,\dots,N\}\). Substituting the bounds~\eqref{ineq:msgb-diff-bound} and~\eqref{ineq:msgb-agg-bound} into~\eqref{eqn:msgb-tested} then yields
        \begin{equation}\label{ineq:msgb-deriv-first}
            \frac{1}{2(p_i-1)} \frac{\rmd}{\rmd t} \norm{w_i}_{L^2}^2 \leq - \norm{w_i}_{L^2} \norm{\nabla w_i}_{L^2} \left(\frac{2D_i}{p_i C_\Nash} \rho_i - \sum_{j=1}^N \norm{\nabla K_{ij}}_{L^{q_{ij}}} M_j \rho_j^{d/q_{ij}} \right).
        \end{equation}
        As \(\norm{u_i}_{L^{p_i}}\) is non-increasing if and only if \(\rho_i\) is non-increasing, we can conclude that \(\rho_i\) is non-increasing when \(\rho_i \geq \Phi_i(\rho)\) holds, where \(\Phi_i(\rho)\) is defined in~\eqref{cond:rho_i}. If \(\rho\) satisfies \(\rho_i \geq \Phi_i(\rho)\) for some \(i \in \{1,\dots,N\}\), then apply~\eqref{ineq:msgb-diff-bound} to~\eqref{ineq:msgb-deriv-first} and deduce that 
        \begin{equation}\label{ineq:msgb-nash-again}
            \frac{1}{2(p_i-1)} \frac{\rmd}{\rmd t} \norm{w_i}_{L^2}^2 \leq - \frac{2 D_i}{p_i C_\Nash^2} \norm{w_i}_{L^{2}}^{2} \rho_i \left(\rho_i - \Phi_i(\rho) \right).
        \end{equation}
        Now divide both sides of~\eqref{ineq:msgb-nash-again} by \(\norm{w_i}_{L^2}^2\). From the identities \(f'/f = (\log f)'\) when \(f > 0\) is differentiable and \(\norm{w_i}_{L^2}^2 = M_i^{p_i} \rho_i^{d(p_i-1)}\), observe that
        \begin{equation}\label{eqn:log-deriv}
            \begin{aligned}
                \frac{1}{2(p_i-1)} \frac{1}{\norm{w_i}_{L^2}^2} \frac{\rmd}{\rmd t} \norm{w_i}_{L^2}^2 
                = \frac{1}{2(p_i-1)} \frac{\rmd}{\rmd t} \big(\log \big(\rho_i^{d(p_i-1)}\big) + \log \left(M_i^{p_i}\right) \big) 
                = \frac{d}{2} \rho_i^{-1} \frac{\rmd}{\rmd t} \rho_i.
            \end{aligned}
        \end{equation}
        By combining~\eqref{ineq:msgb-nash-again} with~\eqref{eqn:log-deriv}, we can conclude~\eqref{ineq:rho_deriv} holds for each \(i \in \{1,\dots,N\}\) such that \(\rho_i \geq \Phi_i(\rho)\).
    \end{proof}  

    This result can be understood to be that \(\rho_i \geq \Phi_i(\rho)\) is a sufficient condition for the diffusive term to dominate the aggregative terms for species \(i\). This idea can be illustrated by the following trichotomy for the single species version of~\eqref{eqn:agg_diff} in the following sense: depending on the exponent \(q \in (1,\infty)\) in the regularity condition \(\nabla K \in L^q(\R^d)\) there exists either a uniform-in-time bound of the \(L^p\) norm of the solution, a smallness condition on the \(L^p\) norm, or a small mass condition.
    
    \begin{corollary}\label{cor:sing-species-asym}
        \label{cor:mild-strong}
        Suppose \(u(t)\) satisfies the conditions in Proposition~\ref{prop:global-bound} for \(N = 1\) and denote \(\lambda := d/q\), \(M := \norm{u_0}_{L^1} > 0\). Then the following hold for any \(p \in [\max\{2,q'\},\, \infty)\).
        \begin{enumerate}
            \item If \(\lambda < 1\) and \(d \geq 1\), then
            \begin{equation}\label{ineq:1S-limsup}
                \limsup_{t \to \infty} \norm{u(t)}_{L^p} \leq M \left(\frac{p C_\Nash M \norm{\nabla K}_{L^q}}{2D} \right)^{\frac{d}{p'(1-\lambda)}}.
            \end{equation}
            \item If \(\lambda > 1\) for some \(d \geq 2\) and, for some \(t_0 \geq 0\), \(u(t_0)\) satisfies the following small \(L^p\) estimate
            \begin{equation}\label{ineq:1S-data-bd}
                \norm{u(t_0)}_{L^p} <  M \left(\frac{2D}{p C_\Nash M \norm{\nabla K}_{L^q}}\right)^{\frac{d}{p'(\lambda-1)}},
            \end{equation}
            then \(\lim_{t \to \infty} \norm{u(t)}_{L^p} = 0\).
            \item If \(\lambda = 1\) for some \(d \geq 2\) and \(u_0\) satisfies the small mass estimate
            \begin{equation}\label{ineq:1S-mass-bd}
                M < \frac{2D}{p C_\Nash \norm{\nabla K}_{L^d}},
            \end{equation}
            then \(\lim_{t \to \infty} \norm{u(t)}_{L^p} = 0\).
        \end{enumerate}
    \end{corollary}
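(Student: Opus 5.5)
The plan is to reduce everything to the scalar differential inequality of Proposition~\ref{prop:global-bound} with $N=1$. There $\Phi(\rho)=a\rho^{\lambda}$ with $a:=\frac{pC_\Nash}{2D}M\norm{\nabla K}_{L^q}$, and
\begin{equation}
\rho'(t)\le -C\rho^2(\rho-a\rho^\lambda)=-C\rho^{2+\lambda}(\rho^{1-\lambda}-a)
\end{equation}
holds whenever $\rho\ge a\rho^\lambda$. We also have $\norm{u(t)}_{L^p}=M\rho(t)^{d/p'}$, so every statement about $\norm{u}_{L^p}$ becomes a statement about $\rho$. The condition $p\ge\max\{2,q'\}$ makes~\eqref{cond:P} and Proposition~\ref{prop:global-bound} applicable. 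Global existence also needs justification; I would obtain it in each case from the monotonicity below, which bounds $\norm{u}_{L^p}$ and hence $\norm{u}_{L^1\cap L^p}$. Then Corollary~\ref{prop:continuation} extends the solution to $[0,\infty)$. In case 1 one can alternatively use Theorem~\ref{thm:global-1}, since $q>d$. The thresholds are $\rho_*:=a^{1/(1-\lambda)}$ when $\lambda\neq1$, and the condition $a<1$ when $\lambda=1$. These translate exactly into~\eqref{ineq:1S-limsup},~\eqref{ineq:1S-data-bd} and~\eqref{ineq:1S-mass-bd}, because $M\rho_*^{d/p'}$ equals the stated quantity.

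\emph{Case $\lambda<1$.} The set $\{\rho\ge\rho_*\}$ is exactly where $\rho\ge\Phi(\rho)$, and there $\rho$ is non-increasing. Fix $\epsilon>0$. On $\{\rho\ge\rho_*+\epsilon\}$ the right-hand side satisfies $\rho'\le-c_\epsilon<0$, using the positive lower bound on $\rho^{2+\lambda}$ there. So $\rho$ enters $[0,\rho_*+\epsilon]$ in finite time. It cannot leave this set afterwards: at any crossing time $\rho>\rho_*$, so $\rho$ is decreasing there. This invariance step is a standard barrier argument for the $C^1$ function $\rho$. Hence $\limsup\rho\le\rho_*$, which gives~\eqref{ineq:1S-limsup}.

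\emph{Case $\lambda>1$.} Now $\rho\ge a\rho^\lambda$ is equivalent to $\rho\le\rho_*$, since $\rho^{1-\lambda}\ge a$ for small $\rho$. By hypothesis $\rho(t_0)<\rho_*$, so $\rho$ is non-increasing from $t_0$ on. The same barrier argument shows it stays below $\rho(t_0)=:\bar\rho<\rho_*$. Then $\bar\rho^{1-\lambda}-a=:\delta>0$ and $\rho'\le-C\delta\rho^{2+\lambda}$ for all $t\ge t_0$. Integrating $\frac{\rmd}{\rmd t}\rho^{-(1+\lambda)}\ge(1+\lambda)C\delta$ gives $\rho(t)\lesssim t^{-1/(1+\lambda)}\to0$, so $\norm{u(t)}_{L^p}\to0$.

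\emph{Case $\lambda=1$.} The inequality reads $\rho'\le-C(1-a)\rho^3$ for all $\rho$, since the condition $\rho\ge a\rho$ holds automatically when $a<1$. Integration gives $\rho(t)\lesssim t^{-1/2}\to0$.

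The main obstacle I expect is a technical one in Proposition~\ref{prop:global-bound}: the derivative bound holds only on the set where the condition is satisfied, and one must make sure $\rho$ is genuinely $C^1$ and that division by $\norm{w}_{L^2}$ is legitimate. The latter is fine because $M>0$ and mass is conserved, so $\rho>0$. A second delicate point is turning the conditional inequality into global invariance statements. I would handle this by a first-exit-time contradiction: at the first time $\rho$ exceeds the threshold, its derivative there would have to be nonnegative, but at that point the condition holds and the inequality forces $\rho'<0$ strictly. Alternatively one can use the weak non-increase of $\rho$ together with continuity.
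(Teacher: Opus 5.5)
Your proposal is correct and follows the paper's proof essentially step for step. Both reduce to the scalar inequality of Proposition~\ref{prop:global-bound} with $\Phi(\rho)=a\rho^\lambda$, use $\rho_*=a^{1/(1-\lambda)}$ (or $a<1$ when $\lambda=1$) as the threshold, and translate back via $\norm{u(t)}_{L^p}=M\rho^{d/p'}$. Your $\epsilon$-barrier argument for $\lambda<1$, the explicit decay rates $t^{-1/(1+\lambda)}$ and $t^{-1/2}$, and the justification of global existence through Corollary~\ref{prop:continuation} are more careful versions of steps the paper states only briefly.
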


    \begin{proof}
        First recall \(\rho(t) = (\norm{u}_{L^p}/M)^{p'/d}\) and \(M = \norm{u_0}_{L^1}\) from Proposition~\ref{prop:global-bound} and denote \(a := p C_\Nash M \norm{\nabla K}_{L^q} / 2 D\). Now, for \(\lambda \neq 1\) consider the positive fixed point of the operator \(\Phi(\rho) := a \rho^\lambda\), \(\rho^* = a^{1/(1-\lambda)}\). When \(\lambda < 1\), observe that \(\rho(t) \geq \Phi(\rho(t))\) exactly when \(\rho(t) \geq \rho^*\). Thus, for all \(t \geq 0\) such that \(\rho(t) \geq \rho^*\), it holds via~\eqref{ineq:rho_deriv} that
        \begin{equation}
            \frac{\rmd}{\rmd t}\rho(t) \leq -C \rho(t)^2 (\rho(t) - a \rho(t)^\lambda) \leq -C (\rho^*)^2 (\rho(t) - a \rho(t)^\lambda),
        \end{equation}
        where \(C = 4D/pC_\Nash^2\). Therefore for any \(\rho(t_0) \geq \rho^*\), it can be deduced that either \(\rho(T) = \rho^*\) for some \(T \geq t_0\), or \(\rho(t) \to \rho^*\) as \(t \to \infty\). Additionally, since \(\rho\) must be non-increasing when \(\rho = \rho^*\), notice that \(\rho(t_0) \leq \rho^*\) implies that \(\rho(t) \leq \rho^*\) for all \(t \geq t_0\). Therefore \(\limsup_{t \to \infty} \rho(t) \leq \rho^*\), an equivalent condition to~\eqref{ineq:1S-limsup}, occurs independently of \(\rho(t_0)\) for any \(t_0 \geq 0\). For the case \(\lambda > 1\) it instead follows that \(\rho(t) > \Phi(\rho(t))\) exactly when \(\rho(t) < \rho^*\). Again, from~\eqref{ineq:rho_deriv} it follows that
        \begin{equation}
            \frac{\rmd}{\rmd t}\rho(t) \leq -C \rho(t)^2 (\rho(t) - a \rho(t)^\lambda),
        \end{equation}
        from which it can be concluded that \(\rho(t)\) vanishes as \(t \to \infty\), so long as \(\rho(t_0) < \rho^*\) for some \(t_0 \geq 0\), which is equivalent to condition~\eqref{ineq:1S-data-bd}.
        Finally for \(\lambda = 1\), notice that \(\rho(t) > a \rho(t)\) if \(a < 1\), hence~\eqref{ineq:rho_deriv} yields
        \begin{equation}
            \frac{\rmd}{\rmd t}\rho(t) \leq -C (1 - a) \rho(t)^3, 
        \end{equation}
        implying that \(\lim_{t \to \infty} \rho(t) = 0\) if \(a < 1\), which is equivalent to~\eqref{ineq:1S-mass-bd}.
    \end{proof}
    
    

    While establishing the generalisation of Corollary~\ref{cor:sing-species-asym} for an \(N\) species system is beyond the scope of this work, a uniform \(L^P\) bound of the solution can be shown, given a small data condition. In particular, the result holds for systems that do not satisfy the regularity condition in Theorem~\ref{thm:global-1}. 
	
	Consider \(\Phi\) from~\eqref{cond:rho_i}, which can be written component-wise as
    \begin{equation}\label{defn:phi}
        \Phi_i(\rho) := \sum_{j=1}^N a_{ij} \rho_j^{\lambda_{ij}},
    \end{equation}
    where, for each choice of \(i,\, j \in \{1,\dots,N\}\), \(a_{ij}\) and \(\lambda_{ij}\) are given as
    \begin{equation}\label{defn:lam-a}
        a_{ij} := \frac{C_\Nash p_i \norm{\nabla K_{ij}}_{L^{q_{ij}}} M_j}{2 D_i}, \quad \lambda_{ij} := \frac{d}{q_{ij}}.
    \end{equation}
    Then, supposing that \(\Phi\) has a positive fixed point \(\rho^* \in (0,\infty)^N\), it follows from Proposition~\ref{prop:global-bound} that \(\rho^*\) determines an invariant set of \(\rho(t)\).

    \begin{proposition}\label{prop:invar-rect}
        Suppose \(u(t)\) satisfies the conditions in Proposition~\ref{prop:global-bound} and consider \(\rho(t)\) given by~\eqref{defn:rho} and \(\Phi\) given in~\eqref{defn:phi}--\eqref{defn:lam-a}. If \(\rho^* \in (0,\infty)^N\) is a positive fixed point of \(\Phi\), then the rectangle corresponding to \(\rho^*\), defined as
        \begin{equation}\label{eqn:rect*}
            R^* := \{\rho \in \R^N: \rho_i \in [0,\rho_i^*],\ 1 \leq i \leq N\}
        \end{equation}
        is an \emph{invariant set} of the mapping \(t \mapsto \rho(t)\). That is, if \(\rho(t_0) \in R^*\) for some \(t_0 \geq 0\), then \(\rho(t) \in R^*\) for all \(t \geq t_0\).
    \end{proposition}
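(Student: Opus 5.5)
The plan is to exploit the monotonicity of \(\Phi\) together with the sign information in Proposition~\ref{prop:global-bound}. Each \(\Phi_i\) is a sum of terms \(a_{ij}\rho_j^{\lambda_{ij}}\) with \(a_{ij}\ge 0\) and \(\lambda_{ij}>0\), so \(\Phi\) is non-decreasing in every coordinate on \([0,\infty)^N\). Hence, whenever \(\rho\in R^*\) and \(\rho_i=\rho_i^*\) for some index \(i\),
\begin{equation}
\Phi_i(\rho)\le \Phi_i(\rho^*)=\rho_i^*=\rho_i .
\end{equation}
This means condition~\eqref{cond:rho_i} holds on the face \(\{\rho_i=\rho_i^*\}\) of \(R^*\). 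By~\eqref{ineq:rho_deriv}, \(\frac{\rmd}{\rmd t}\rho_i\le 0\) there, so the vector field points (weakly) inward on every face.

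To make this rigorous, I would argue by contradiction using a strict perturbation, since a merely non-positive derivative on the boundary does not by itself prevent exit.

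Suppose \(\rho(t_0)\in R^*\) but \(\rho(t_1)\notin R^*\) for some \(t_1>t_0\). Non-negativity of \(\rho\) is automatic, so some coordinate \(\rho_i\) must exceed \(\rho_i^*\).

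First I would handle the rescaled rectangle \(R^*_\theta\) with corners \(\theta\rho^*\) for \(\theta>1\) close to \(1\). Fix a component \(j\) with \(\lambda_{ij}\). If \(\lambda_{ij}<1\), then \(\Phi_i(\theta\rho^*)\le\theta^{\max_j\lambda_{ij}}\rho_i^*\), which is strictly less than \(\theta\rho_i^*\). If some \(\lambda_{ij}>1\), this scaling fails, and the inequality reverses.

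So instead I would use a first-exit-time argument directly on \(R^*\). Let
\begin{equation}
\tau:=\sup\{t\ge t_0:\rho(s)\in R^*\ \text{for all } s\in[t_0,t]\}.
\end{equation}
By continuity of \(\rho\), which follows from \(u\in C([0,T];L^P)\), we have \(\rho(\tau)\in R^*\). Moreover, there are times \(s\downarrow\tau\) and an index \(i\) with \(\rho_i(s)>\rho_i^*\) and \(\rho_i(\tau)=\rho_i^*\).

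The main obstacle is that the inequality \(\rho_i(s)\ge\Phi_i(\rho(s))\) is only known at \(\tau\), not at the nearby times \(s\), where other coordinates may also have left. To handle this, I would use the quantitative bound~\eqref{ineq:rho_deriv}. On a short interval after \(\tau\), let \(m(s):=\max_k \rho_k(s)/\rho_k^*\ge 1\), attained at some index \(k\). Then
\begin{equation}
\Phi_k(\rho(s))\le\sum_j a_{kj}(m\rho_j^*)^{\lambda_{kj}}\le m^{\Lambda}\rho_k^*,\qquad \Lambda:=\max\lambda_{kj}.
\end{equation}
It follows that
\begin{equation}
\rho_k-\Phi_k\ge \rho_k^*(m-m^{\Lambda})\ge -C(m-1)
\end{equation}
for \(m\) near \(1\).

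Plugging this into~\eqref{ineq:rho_deriv}, which I would extend to hold for \(\frac{\rmd}{\rmd t}\rho_k\le -C_k\rho_k^2(\rho_k-\Phi_k)\) from~\eqref{ineq:msgb-deriv-first} regardless of sign, gives the differential inequality \(D^+m\le C(m-1)\) for the upper Dini derivative of the maximum. Since \(m(\tau)=1\), Grönwall's inequality yields \(m\le 1\) on that short interval. This contradicts the definition of \(\tau\).

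Checking that~\eqref{ineq:msgb-deriv-first} indeed gives \(\frac{\rmd}{\rmd t}\rho_k\le C\rho_k^2(\Phi_k-\rho_k)_+\)-type control without the sign hypothesis is the step I expect to require care. It uses \(\norm{\nabla w_k}\ge C_\Nash^{-1}\norm{w_k}\rho_k\) only where the bracket is positive, and otherwise bounds \(\norm{\nabla w_k}_{L^2}\) via the dissipation term.
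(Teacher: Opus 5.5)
Your proposal is correct, and it is more careful than the paper's proof. The paper's argument is exactly your first paragraph: since $\Phi$ is monotone, $\Phi_i(\rho)\le\Phi_i(\rho^*)=\rho_i$ on the face $\{\rho\in R^*:\rho_i=\rho_i^*\}$, so $\frac{\rmd}{\rmd t}\rho_i\le 0$ there by \eqref{ineq:rho_deriv}. The paper then declares this sufficient for invariance.

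As you note, this is not sufficient on its own, for two reasons. First, a non-positive derivative at a boundary point does not prevent exit. Second, just outside $R^*$ (several coordinates above their caps, some $\lambda_{ij}>1$) the hypothesis $\rho_i\ge\Phi_i(\rho)$ of Proposition~\ref{prop:global-bound} can fail. Then \eqref{ineq:rho_deriv} is unavailable exactly where it is needed.

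Your exit-time argument supplies the missing justification. It combines $m(s)=\max_k\rho_k(s)/\rho_k^*$, the bound $\Phi_k(\rho)\le m^{\Lambda}\rho_k^*$ for a maximising index $k$, and a Dini-derivative Gr\"onwall comparison run from the last time at which $m=1$. The price is a version of \eqref{ineq:rho_deriv} valid near $\partial R^*$ without the sign hypothesis. When $\Lambda\le 1$ this price is zero: whenever $m\ge 1$, the maximising index satisfies $\rho_k=m\rho_k^*\ge m^{\Lambda}\rho_k^*\ge\Phi_k(\rho)$, so $D^+m\le 0$ follows directly from \eqref{ineq:rho_deriv}.

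The step you flagged does hold, but with two corrections to how you stated it.
\begin{itemize}
\item It does not follow from \eqref{ineq:msgb-deriv-first}. That inequality has already replaced $\norm{\nabla w_k}_{L^2}^2$ by the Nash lower bound times $\norm{\nabla w_k}_{L^2}$, so it says nothing useful when $\rho_k<\Phi_k$.
\item It does not hold ``regardless of sign'' globally; it needs the restriction below.
\end{itemize}

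Instead, return to \eqref{eqn:msgb-tested}. With $X=\norm{\nabla w_k}_{L^2}$, its right-hand side is at most $\frac{2D_k}{p_k}X\,(C_\Nash^{-1}\norm{w_k}_{L^2}\Phi_k(\rho)-X)$. Write $X=C_\Nash^{-1}\norm{w_k}_{L^2}\,y$, where $y\ge\rho_k$ by \eqref{ineq:msgb-diff-bound}. The bound becomes $\frac{2D_k}{p_kC_\Nash^2}\norm{w_k}_{L^2}^2\,y(\Phi_k-y)$, which is non-increasing in $y$ on $[\Phi_k/2,\infty)$.

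Hence \eqref{ineq:msgb-nash-again}, and therefore \eqref{ineq:rho_deriv} with the same constant $C_k$, holds whenever $\rho_k\ge\Phi_k(\rho)/2$, with no sign condition on $\rho_k-\Phi_k$. For the maximising index this condition holds once $m^{\Lambda-1}\le 2$, which is all your argument uses. When $\rho_k\ll\Phi_k$ one only gets a growth bound of order $\rho_k\Phi_k^2$, so state the extension with this restriction.
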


    \begin{proof}
        First notice that the boundary of \(R^*\) can be written as the union of its faces,
        \begin{equation}
            \begin{aligned}
                \partial R^* = \bigcup_{i=1}^N &\left(\partial R^0_i \cup \partial R^*_i\right),\\
                \partial R^0_i := \{\rho \in R^* : \rho_i = 0\},&\quad  \partial R^*_i := \{\rho \in R^* : \rho_i = \rho_i^*\}.
            \end{aligned}
        \end{equation}
        By definition, \(\rho_i(t) \geq 0\) for every \(i \in \{1,\dots,N\}\), hence \(\rho(t)\) cannot pass through \(\partial R^0_i\) for any \(i\). To show that \(\rho(t)\) cannot pass through \(\partial R^*_i\) from within \(R^*\), it is sufficient to demonstrate that \(\rho_i(t) \in \partial R^*_i\) implies that \(\frac{\rmd}{\rmd t} \rho_i(t) \leq 0\). As \(a_{ij},\, \lambda_{ij} \geq 0\) for each \(i,\,j\), it follows from the observation \(x \mapsto x^\lambda\) is an increasing function for any \(\lambda > 0\) that \(\rho_i \leq \bar{\rho}_i\) for all \(i\) implies that \(\Phi_i(\rho) \leq \Phi_i(\bar{\rho})\) for all \(i\). In particular, for any \(\rho \in \partial R^*_i\), it follows that \(\rho_j \leq \rho^*_j\) for every \(j\) and so \(\Phi_i(\rho) \leq \Phi_i(\rho^*) = \rho_i\). Therefore, by the dynamic constraint~\eqref{ineq:rho_deriv}, we can conclude that \(\frac{\rmd}{\rmd t}\rho_i(t) \leq 0\) for all \(\rho(t) \in \partial R^*_i\), as required.
    \end{proof}

	In terms of solutions of~\eqref{eqn:agg_diff}, Proposition~\ref{prop:invar-rect} can be re-stated as the following result.
    
    \begin{corollary}\label{cor:small-lp}
        Suppose \(u(t)\) satisfies the conditions in Proposition~\ref{prop:global-bound} and consider \(\rho(t)\) given in~\eqref{defn:rho} and \(\Phi\) given in~\eqref{defn:phi}--\eqref{defn:lam-a}. If \(\Phi\) has a positive fixed point \(\rho^* \in (0,\infty)^N\), then \(u(t)\) can be extended to a global in time solution so long as \(u_0\) satisfies \(\norm{u_{0i}}_{L^p} \leq \norm{u_{0i}}_{L^1} (\rho_i^*)^{d/p'}\) for every \(i \in \{1,\dots,N\}\). In particular, for each \(i\), such a solution satisfies the bound  \(\norm{u_{i}(t)}_{L^p} \leq \norm{u_{0i}}_{L^1} (\rho_i^*)^{d/p'}\) for all \(t \geq 0\).
    \end{corollary}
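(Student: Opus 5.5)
The plan is to translate Proposition~\ref{prop:invar-rect} back into statements about \(u\), and then apply the blow-up alternative of Corollary~\ref{prop:continuation}. Here \(p\) is read componentwise as \(p_i\), with \(P\in[2,\infty)^N\) satisfying~\eqref{cond:P}, as in Proposition~\ref{prop:global-bound}.

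\textbf{Step 1: the initial concentration lies in \(R^*\).} Since \(\rho_i(t) = (\norm{u_i(t)}_{L^{p_i}}/M_i)^{p_i'/d}\) and \(x\mapsto x^{p_i'/d}\) is increasing, the hypothesis \(\norm{u_{0i}}_{L^{p_i}} \leq M_i (\rho_i^*)^{d/p_i'}\) is exactly \(\rho_i(0)\leq \rho_i^*\). Thus \(\rho(0)\in R^*\).

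\textbf{Step 2: invariance on the maximal interval.} By Corollary~\ref{prop:continuation}, there is a maximal solution on \([0,T_{\max})\). On every \([0,T]\) with \(T<T_{\max}\), Proposition~\ref{prop:global-bound} applies. Proposition~\ref{prop:invar-rect} then gives \(\rho(t)\in R^*\) for all \(t\in[0,T]\).

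One technical point needs care here. Proposition~\ref{prop:global-bound} asks for \(\rho\in C^1\), but \(\rho\) is only guaranteed to be \(C^1\) on \((0,T]\), and merely continuous at \(0\). This follows from Lemma~\ref{lem:energy-deriv}, together with the fact that \(\norm{u_i(t)}_{L^{p_i}}>0\) because the mass is positive. I would therefore run the invariance argument on \([\varepsilon,T]\) and use continuity at \(0\).

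The invariance argument itself is the main subtlety, since the proof of Proposition~\ref{prop:invar-rect} only shows \(\dot\rho_i\le0\) on the face \(\rho_i=\rho_i^*\). To make it rigorous, I would suppose \(t_1=\sup\{t:\rho(s)\in R^* \text{ for } s\le t\}<T\). At \(t_1\) some \(\rho_i(t_1)=\rho_i^*\). By monotonicity of \(\Phi_i\), we have \(\Phi_i(\rho(t_1))\le\rho_i^*\).

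If the inequality is strict, then by continuity \(\rho_i\ge\Phi_i(\rho)\) holds on a neighbourhood of \(t_1\). Hence \(\rho_i\) is non-increasing there and cannot exceed \(\rho_i^*\), which is a contradiction.

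The equality case is more delicate. To handle it, I would perturb by replacing \(\rho^*\) with a strictly supersolution rectangle, or argue with the strict inequality version for \(\rho^*(1+\delta)\) where possible. Alternatively, I would use the differential inequality~\eqref{ineq:rho_deriv} together with a local Lipschitz bound on \(\Phi\) near \(\rho^*>0\) (note \(\rho_j^*>0\)) to get a Grönwall-type estimate: \(\frac{d}{dt}(\rho_i-\rho_i^*)_+ \le C\max_j(\rho_j-\rho_j^*)_+\). Summing over \(i\) and starting from zero, this forces \(\max_j(\rho_j-\rho_j^*)_+\equiv 0\) near \(t_1\). I expect this rigorous exit-time argument to be the only real obstacle.

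\textbf{Step 3: uniform bound and global existence.} Invariance gives \(\norm{u_i(t)}_{L^{p_i}}\le M_i(\rho_i^*)^{d/p_i'}\) for all \(t<T_{\max}\). Nonnegativity and conservation of mass (Theorem~\ref{thm:properties}) give \(\norm{u_i(t)}_{L^1}=M_i\). Hence \(\norm{u(t)}_{L^1\cap L^P}\) is bounded uniformly on \([0,T_{\max})\), so~\eqref{cond:L1LP-blowup} fails. Corollary~\ref{prop:continuation} then yields \(T_{\max}=\infty\), and the stated bound holds for all \(t\ge0\).
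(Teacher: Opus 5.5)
Read as a citation of Proposition~\ref{prop:invar-rect}, your proof is the paper's proof. The paper gives no separate argument: it presents the corollary as a restatement of that proposition, with global existence coming from Corollary~\ref{prop:continuation} and conservation of mass, exactly as in your Steps~1 and~3.

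Where you go beyond the paper, your Gr\"onwall sketch has a hole as written. This is the exit-time argument in the equality case, which the paper's proof of Proposition~\ref{prop:invar-rect} does not address. Inequality~\eqref{ineq:rho_deriv} is only asserted where \(\rho_i \geq \Phi_i(\rho)\). If several components leave a corner of \(R^*\) together, you can have \(\rho_i^* < \rho_i < \Phi_i(\rho)\). There Proposition~\ref{prop:global-bound} gives no upper bound on \(\dot\rho_i\) at all. So \(\frac{\rmd}{\rmd t}(\rho_i-\rho_i^*)_+ \leq C\max_j(\rho_j-\rho_j^*)_+\) does not follow from the proposition plus Lipschitz continuity of \(\Phi\).

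Your perturbation alternative does not close this in general either. The rectangle \((1+\delta)\rho^*\) is a strict supersolution only when every active \(\lambda_{ij}<1\). When \(\lambda_{ij}>1\), the strict supersolutions are \((1-\delta)\rho^*\), and their rectangles exclude initial data with \(\rho(0)\in\partial R^*\), which the non-strict hypothesis of the corollary allows.

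The missing ingredient is a slightly stronger form of~\eqref{ineq:rho_deriv}, obtained by reopening its proof. Set \(X:=\norm{\nabla w_i}_{L^2}\), \(A:=2D_i/p_i\) and \(c:=\norm{w_i}_{L^2}/C_{\Nash}\).

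\begin{itemize}
\item The right-hand side of~\eqref{eqn:msgb-tested} is at most \(AX(c\,\Phi_i(\rho)-X)\), while the Nash bound~\eqref{ineq:msgb-diff-bound} gives \(X\geq c\rho_i\).
\item This parabola is decreasing for \(X\geq c\,\Phi_i(\rho)/2\). So whenever \(\rho_i\geq\Phi_i(\rho)/2\), its supremum over \(X\geq c\rho_i\) is attained at \(X=c\rho_i\).
\item This reproduces~\eqref{ineq:msgb-nash-again}, and hence~\eqref{ineq:rho_deriv}, on the larger set \(\{\rho_i\geq\Phi_i(\rho)/2\}\).
\item At an exit time \(t_1\), every index with \(\rho_i(t_1)=\rho_i^*\) satisfies \(\Phi_i(\rho(t_1))\leq\rho_i^*<2\rho_i(t_1)\). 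Hence the extended inequality holds for \(t\) near \(t_1\).
\item Wherever \(\rho_i>\rho_i^*\) in that window, you get \(\dot\rho_i\leq C_i\rho_i^2(\Phi_i(\rho)-\rho_i)_+\leq C\bigl(\Phi_i(\rho\vee\rho^*)-\Phi_i(\rho^*)\bigr)\), where \(\vee\) is the componentwise maximum. This uses \(\rho_i^*=\Phi_i(\rho^*)\) and the monotonicity of \(\Phi_i\).
\item Since \(\rho(t)\vee\rho^*\to\rho^*\) as \(t\to t_1\), and \(\Phi\) is locally Lipschitz near \(\rho^*\in(0,\infty)^N\), this is exactly your Gr\"onwall inequality for \(\sum_i(\rho_i-\rho_i^*)_+\).
\end{itemize}

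With this, your exit-time argument closes.
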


    With Corollary~\ref{cor:small-lp} in mind, it remains to establish a class of operators of the form in~\eqref{defn:phi} that produce a positive fixed point of \(\Phi\). To achieve this, we utilise the Krasnolselskii cone compression--expansion fixed point theorem for the Banach space \(\R^N\), which follows from~\cite[Theorem 2.3.4]{guo1988nonlinear}.

    \begin{lemma}\label{lem:cone-exp-comp}
        Consider the Banach space \((\R^N, \norm{\cdot})\), where \(\norm{\cdot}\) denotes an arbitrary norm over \(\R^N\). Let \(\Omega_1,\, \Omega_2 \subset \R^N\) be bounded open sets such that \(0 \in \Omega_1\), \(\overline{\Omega}_1 \subset \Omega_2\) and let \(C \subset \R^N\) be a cone, i.e.\ \(x \in C\) implies that \(\alpha x \in C\) for any \(\alpha \geq 0\), and \(x,\, -x \in C\) implies that \(x = 0\). If a continuous operator \(\Phi: C \cap (\overline{\Omega}_2 \setminus \Omega_1) \to C\) satisfies either of the following conditions:
        \begin{align}
            \left. \begin{aligned}
                \norm{\Phi(x)} \leq \norm{x}, \quad x \in C \cap \partial \Omega_1 \\
                \norm{\Phi(x)} \geq \norm{x}, \quad x \in C \cap \partial \Omega_2 
            \end{aligned} \right\},  \label{hyp:cone-1}
            \\[8pt] 
            \left. \begin{aligned}
                \norm{\Phi(x)} \geq \norm{x}, \quad x \in C \cap \partial \Omega_1 \\
                \norm{\Phi(x)} \leq \norm{x}, \quad x \in C \cap \partial \Omega_2
            \end{aligned} \right\}, \label{hyp:cone-2}
        \end{align}
        then \(\Phi\) has at least one fixed point in \(C \cap (\overline{\Omega}_2 \setminus \Omega_1)\).
    \end{lemma}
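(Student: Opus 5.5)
The plan is to deduce Lemma~\ref{lem:cone-exp-comp} from the Krasnosel'skii compression--expansion theorem \cite[Theorem 2.3.4]{guo1988nonlinear}. That theorem is stated for a completely continuous operator on a cone in a general Banach space, with the norm inequalities taken in the same ``either/or'' form. So the whole task is to check that the hypotheses match in the finite-dimensional setting $(\R^N,\norm{\cdot})$.

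\textbf{Step 1 (the cone).} The cone in the cited theorem must be closed and convex. The lemma only says $C$ is closed under nonnegative scaling and pointed, so I will read ``cone'' in the standard sense of a closed convex set with these properties. In our application $C=[0,\infty)^N$, which is closed and convex. I will state this convention explicitly.

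\textbf{Step 2 (complete continuity).} Since $\R^N$ is finite dimensional and $\overline{\Omega}_2$ is bounded, $C\cap(\overline{\Omega}_2\setminus\Omega_1)$ is compact by Heine--Borel. A continuous $\Phi$ therefore maps it to a bounded set, hence a relatively compact one. So $\Phi$ is completely continuous, which is the only compactness requirement in \cite{guo1988nonlinear}.

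\textbf{Step 3 (match the conditions).} Conditions~\eqref{hyp:cone-1} and~\eqref{hyp:cone-2} are exactly the compression and expansion conditions of the cited theorem, with $0\in\Omega_1$ and $\overline\Omega_1\subset\Omega_2$. Guo's conclusion is a fixed point in $C\cap(\overline{\Omega}_2\setminus\Omega_1)$, which is the claim.

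The only delicate point is that versions of the theorem differ slightly. Some require $\Phi$ to be defined on all of $C\cap\overline{\Omega}_2$; others use strict inequalities together with a separate argument for fixed points on the boundary. If needed, I will extend $\Phi$ continuously to $C\cap\overline\Omega_2$ using Dugundji's extension theorem, with values in the convex closed set $C$. This does not affect the boundary conditions. The fixed point index argument in \cite{guo1988nonlinear} only uses $\Phi$ on the annulus, so any fixed point it produces lies in $C\cap(\overline{\Omega}_2\setminus\Omega_1)$.

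Step 1 is the main obstacle: the implicit convexity and closedness of $C$ must be made explicit. The rest follows directly from the finite-dimensional setting.
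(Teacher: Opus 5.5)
Your proposal is correct and takes the same route as the paper, which obtains the lemma directly from \cite[Theorem 2.3.4]{guo1988nonlinear}; your Heine--Borel argument for complete continuity is what makes that citation apply in \(\R^N\). Your Step~1 caveat is genuinely needed, because with the paper's literal definition of a cone the lemma fails: take \(C\) to be the union of the two nonnegative coordinate rays in \(\R^2\), take \(\Omega_1, \Omega_2\) to be balls of radii \(1\) and \(2\), and let \(\Phi\) swap \(t e_1 \leftrightarrow t e_2\), which preserves norms but has no fixed point; the cone \([0,\infty)^N\) used in Proposition~\ref{prop:phi-fixed} is closed and convex, so the paper's application is unaffected.
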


    \begin{proposition}\label{prop:phi-fixed}
        Suppose \(A = (a_{ij})\) and \(\Lambda = (\lambda_{ij})\) are non-negative \(N \times N\) matrices and define \(\Phi : [0,\infty)^N \to [0,\infty)^N\) as in~\eqref{defn:phi}. If, for any \(j \in \{1,\dots,N\}\), there is some \(i \in \{1,\dots,N\}\) such that \(a_{ij} > 0\) and for each \(i,\, j\) satisfying \(a_{ij} > 0\) it follows that \(\lambda_{ij} > 1\), then \(\Phi\) has a non-zero fixed point \(x^* \in [0,\infty)^N \setminus \{0\}\). \\
        Further suppose that \(A\) is \emph{irreducible}, i.e.\ for every pair \((i,\,j)\) there exists some path, \(i = i_0 \rightarrow \cdots \rightarrow i_l = j\), such that \(a_{i_{k} i_{k-1}} > 0\) for each \(k \in \{1,\dots,l\}\). Then, any non-zero fixed point of \(\Phi\) is in \((0,\infty)^N\).
    \end{proposition}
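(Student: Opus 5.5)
We will apply Lemma~\ref{lem:cone-exp-comp} on the cone \(C = [0,\infty)^N\) with the \(\ell^1\) norm \(\norm{x} = \sum_i x_i\), taking \(\Omega_1 = \{\norm{x} < r\}\) and \(\Omega_2 = \{\norm{x} < R\}\) for some \(0 < r < R\) to be chosen. Clearly \(\Phi\) is continuous and maps \(C\) into \(C\), because \(a_{ij} \geq 0\) and every exponent \(\lambda_{ij}\) that actually appears (those with \(a_{ij} > 0\)) is positive. Set \(\lambda_{\min} := \min\{\lambda_{ij} : a_{ij} > 0\} > 1\) and \(\lambda_{\max} := \max\{\lambda_{ij} : a_{ij}>0\}\). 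We aim to verify hypothesis~\eqref{hyp:cone-1}, i.e.\ compression near \(0\) and expansion far away, which is the superlinear case.

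For the small sphere, take \(x \in C\) with \(\norm{x} = r \leq 1\). Then each \(x_j \leq r \leq 1\), so \(x_j^{\lambda_{ij}} \leq x_j^{\lambda_{\min}} \leq r^{\lambda_{\min}-1} x_j\) whenever \(a_{ij}>0\). Summing gives \(\norm{\Phi(x)} \leq r^{\lambda_{\min}-1} \max_j\big(\sum_i a_{ij}\big) \norm{x}\). Since \(\lambda_{\min}>1\), we can choose \(r\) small enough that this is \(\leq \norm{x}\).

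For the large sphere, take \(\norm{x} = R \geq N\). Some coordinate then satisfies \(x_{j_0} \geq R/N \geq 1\). By the column hypothesis there is an \(i\) with \(a_{ij_0}>0\), so \(\norm{\Phi(x)} \geq a_{ij_0} x_{j_0}^{\lambda_{ij_0}} \geq a_{\min} (R/N)^{\lambda_{\min}}\), where \(a_{\min}\) is the smallest positive entry of \(A\). Because \(\lambda_{\min} > 1\), this exceeds \(R\) once \(R\) is large. This is precisely where the column condition is needed, since it prevents \(\Phi\) from vanishing on large vectors concentrated in a single coordinate. Lemma~\ref{lem:cone-exp-comp} then gives a fixed point with \(r \leq \norm{x^*} \leq R\), in particular \(x^* \neq 0\).

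For the second claim, let \(x^*\) be a nonzero fixed point, and let \(Z = \{i : x^*_i = 0\}\), with some index \(j\) such that \(x^*_j > 0\). If \(i \in Z\), then \(0 = x^*_i = \sum_k a_{ik} (x^*_k)^{\lambda_{ik}}\), so every \(k\) with \(a_{ik}>0\) lies in \(Z\). Irreducibility gives a path \(j = i_0 \to \cdots \to i_l = i\) with \(a_{i_k i_{k-1}}>0\). Moving backward from \(i_l = i \in Z\), the implication just shown places each \(i_{k-1}\) in \(Z\), and finally \(j \in Z\), a contradiction. Hence \(Z = \emptyset\). The main point of care is bookkeeping of the edge direction in the irreducibility definition so that the backward propagation goes the right way; the analytic estimates are routine.
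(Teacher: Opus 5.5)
Your proof is correct and follows the paper's argument. Both apply Krasnoselskii's compression--expansion theorem (Lemma~\ref{lem:cone-exp-comp}) on the cone $[0,\infty)^N$ under hypothesis~\eqref{hyp:cone-1}, with the column condition supplying expansion on the large sphere, and then propagate positivity along irreducibility paths. The differences are cosmetic: you work with the $\ell^1$ norm and uniform constants $\lambda_{\min}$, $a_{\min}$, where the paper uses the max norm with entrywise thresholds $(N a_{ij})^{-1/(\lambda_{ij}-1)}$ and $a_{ij}^{-1/(\lambda_{ij}-1)}$, and you run the positivity step backwards by contradiction rather than forwards from a positive coordinate.
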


	\begin{remark}
		For \(\Phi\) corresponding to~\eqref{eqn:agg_diff}, it follows from~\eqref{defn:lam-a} that \(a_{ij} > 0\) if and only if \(\norm{\nabla K_{ij}}_{L^{q_{ij}}} > 0\). Consequently, the condition that \(A\) is irreducible is equivalent to the condition that the perception graph of~\eqref{eqn:agg_diff} is strongly connected.
	\end{remark}

    \begin{proof}
        Consider the max norm \(\norm{x}_\infty := \max_{1\leq i \leq N} \{\abs{x_i}\}\) and the cone \([0,\infty)^N\). In order to satisfy the assumptions in Lemma~\ref{lem:cone-exp-comp}, we verify that \(\Phi\) satisfies~\eqref{hyp:cone-1} with respect to the bounded open sets \(\Omega_1 = B(r_1)\), \(\Omega_2 = B(r_2)\) for some \(0 < r_1 < r_2 < \infty\) where, for any \(r > 0\), \(B(r) := \{x \in \R^N : \norm{x}_\infty < r\}\).

        To find some \(r_1\) satisfying \(\norm{\Phi(x)} \leq \norm{x}_\infty\) for every \(x \in [0,\infty)^N \cap \partial B(r_1)\) recall that \(\sum_{j} x_j/N \leq \norm{x}_\infty\). Thus, if \(a_{ij} x_j^{\lambda_{ij}} \leq x_j/N\) for every choice of \(i,\,j \in \{1,\dots,N\}\), then 
        \begin{equation}\label{ineq:r_1-bound-i}
            \Phi_i(x) = \sum_{j=1}^N a_{ij} x_j^{\lambda_{ij}} \leq \frac{1}{N} \sum_{j=1}^N x_j \leq \norm{x}_\infty
        \end{equation}
        for all \(i\). As the condition on \(x_j,\, a_{ij}\) and \(\lambda_{ij}\) is equivalent to \(x_j \leq (N a_{ij})^{-1/(\lambda_{ij}-1)}\) when \(a_{ij} > 0\), and trivially holds when \(a_{ij} = 0\), it can be concluded that \(\norm{\Phi(x)} \leq \norm{x}_\infty\) for all \(x \in [0,\infty)^N \cap \partial B(r_1)\) when
        \begin{equation}\label{cond:cone-r_1}
            r_1 \leq \min \left\{(N a_{ij})^{-1/(\lambda_{ij}-1)} : 1 \leq i,\, j \leq N,\, a_{ij} > 0\right\}.
        \end{equation}
        To obtain \(r_2\) for which \(\norm{\Phi(x)}_\infty \geq \norm{x}_\infty\) for all \(x \in [0,\infty)^N \cap \partial B(r_2)\), first assume that, for some \(x \in [0,\infty)^N\), its maximum value is achieved by the \(j\)-th term, \(x_j\). For our given \(j\), recall from our assumption on \(A\) that there is at least one choice of \(i\) such that \(a_{ij} > 0\). Therefore, so long as \(x_j\) is large enough to satisfy \(x_j \geq a_{ij}^{-1/(\lambda_{ij}-1)}\) for some \(i\), it follows that
        \begin{equation}
            \norm{\Phi(x)}_\infty \geq \Phi_i(x) \geq a_{ij} x_j^{\lambda_{ij}} \geq x_j =\norm{x}_\infty.
        \end{equation}  
        After accounting for the arbitrary choice of \(j\) such that \(x_j = \norm{x}_\infty\), it can be deduced that \(\norm{\Phi(x)}_\infty \geq \norm{x}_\infty\) for any \(x \in [0,\infty)^N \cap \partial B(r_2)\) whenever \(r_2\) is large enough to satisfy
        \begin{equation}\label{cond:cone-r_2}
            r_2 \geq \max_{1 \leq j \leq N} \left\{\min \left\{a_{ij}^{-1/(\lambda_{ij}-1)} : 1 \leq i \leq N,\, a_{ij} > 0 \right\} \right\}.
        \end{equation}
        After choosing some \(0 < r_1 < r_2\) satisfying~\eqref{cond:cone-r_1} and~\eqref{cond:cone-r_2} respectively, we can apply Lemma~\ref{lem:cone-exp-comp} to conclude that there exists a non-zero fixed point \(x^*\) satisfying \(r_1 \leq \norm{x^*}_\infty \leq r_2\).

        Now further assume that the matrix \(A\) is irreducible. As \(x^* \in [0,\infty)^N \setminus \{0\}\), there is some \(j_0\) such that \(x_{j_0}^* > 0\). Now choose some \(j_1 \in \{1,\dots,N\} \setminus \{j_0\}\). By the irreducible assumption on \(A\) there exists some path \(j_0 = i_0 \rightarrow \cdots \rightarrow i_{l} = j_1\) such that \(a_{i_{k} i_{k-1}} > 0\) for each \(k \leq l\). Hence, given that \(x^*_{i_{k-1}} > 0\) for some \(k \in \{1,\dots,l\}\) it follows that
        \begin{equation}\label{ineq:x_i_k}
            x^*_{i_k} = \Phi_{i_k}(x^*) \geq a_{i_{k} i_{k-1}} (x^*_{i_{k-1}})^{\lambda_{i_{k} i_{k-1}}} > 0.
        \end{equation}
        Since \(x^*_{j_0} > 0\) it can then be deduced, by iterating~\eqref{ineq:x_i_k} for each \(i_k\) along the path between \(j_0\) and \(j_1\), that \(x^*_{j_1} > 0\). For each \(n\) up to \(N-1\) repeat the same argument for \(j_n \in \{1,\dots,N\} \setminus \{j_0,\dots,j_{n-1}\}\) to conclude that \(x^*_j > 0\) for all \(j \in \{1,\dots,N\}\).
    \end{proof}	

    \begin{remark}\label{rem:phi-fixed-2}
        In the case that \(\lambda_{ij} < 1\) rather than \(\lambda_{ij} > 1\) for each \(i,\, j\) such that \(a_{ij} > 0\), a similar argument to the proof of Proposition~\ref{prop:phi-fixed} can be used to find some \(0 < r_1 < r_2 < \infty\) such that \(\Phi\) satisfies~\eqref{hyp:cone-2}.
    \end{remark}

    Although Proposition~\ref{prop:phi-fixed} and Remark~\ref{rem:phi-fixed-2} can be applied for a large class of kernels, they do not provide the full picture for when the operator \(\Phi\) has a positive fixed point. In the case of a two-species system, we demonstrate the existence of positive fixed points for two examples that do not fall into the aforementioned class of kernels.

    \begin{example}[No self-perception]\label{ex:no-self-perp}
        Suppose \(\nabla K_{11}\) and \(\nabla K_{22}\) are trivial and that \(\nabla K_{12} \in L^{q_{12}}(\R^d)\) and \(\nabla K_{21} \in L^{q_{21}}(\R^d)\) are both non-trivial for some \(q_{12},\, q_{22} \in (1,\infty)\) satisfying \((q_{12} q_{21})^{1/2} \neq d\). Then, the corresponding operator \(\Phi\) will be of the form
        \begin{equation}
            \Phi(x) := (a_{12} x_2^{\lambda_{12}},\, a_{21} x_1^{\lambda_{21}}),
        \end{equation}
        where \(a_{12},\, a_{21} > 0\) and \(\lambda := \lambda_{12} \lambda_{21} \neq 1\). By then explicitly solving the simultaneous equation \(\Phi(x) = x\) it follows that
        \begin{equation}\label{eqn:2-cycle-fixed}
            x^* = ((a_{12}a_{21}^{\lambda_{12}})^{1/(1-\lambda)},\, (a_{12}^{\lambda_{21}}a_{21})^{1/(1-\lambda)})
        \end{equation}
        is a positive fixed point, as desired.
    \end{example}

    \begin{example}[Unilateral cross-species perception]
        Suppose that \(K_{21}\) is equivalently zero, \(K_{12},\, K_{22}\) are not constant and \(\nabla K_{ij} \in L^{q_{ij}}(\R^d)\) for each choice of \(i,\,j \in \{1,2\}\), where \(q_{11},\, q_{22} < d\) and \(q_{12} \in (1,\infty)\). Then, the corresponding \(\Phi\) will be of the form
        \begin{equation}\label{defn:phi-scene-1}
            \Phi(x) := (a_{11} x_1^{\lambda_{11}} + a_{12} x_2^{\lambda_{12}},\, a_{22} x_2^{\lambda_{22}})
        \end{equation}
        where \(a_{11} \geq 0,\, a_{12},\, a_{22} > 0\), \(\lambda_{11},\, \lambda_{22} > 1\) and \(\lambda_{12} > 0\). Note that the condition \(a_{21} = 0\) implies that \(A\) is not irreducible. It follows immediately from~\eqref{defn:phi-scene-1} that a positive fixed point of \(\Phi\) satisfies \(x^*_2 = a_{22}^{-1/(\lambda_{22}-1)}\), which is then substituted back into \(\Phi\). If \(a_{11} = 0\), then \(x_1^* = a_{12} a_{22}^{-\lambda_{12}/\lambda_{22}-1}\). To determine if \(x^*_1\) exists when \(a_{11} > 0\), consider the function
        \begin{equation}
            f(y) := a_{11} y^{\lambda_{11}} + a_{12} a_{22}^{-\lambda_{12}/(\lambda_{22}-1)} - y.
        \end{equation}
        By the intermediate value theorem, it follows that there exists some \(y^* > 0\) such that \(f(y^*) = 0\) when \(f(y_{\min}) \leq 0\), where \(y_{\min} = (\lambda_{11} a_{11})^{-1/(\lambda_{11}-1)}\) is the minimiser of \(f\). After some re-arranging, observe that \(f(y_{\min}) \leq 0\) holds if and only if
        \begin{equation}\label{ineq:uni-perp-small-a}
            \left(\lambda_{11} a_{11}\right)^{1/(\lambda_{11} - 1)}  \leq \left(\frac{\lambda_{11} - 1}{\lambda_{11}}\right) a_{21}^{-1} a_{22}^{\lambda_{12}/(\lambda_{22}-1)},
        \end{equation}
        in which case \(x^*_1 = y^*\) satisfies \(\Phi_1(x) = x_1\). By recalling from the definition of \(a_{ij}\) in~\eqref{defn:lam-a},~\eqref{ineq:uni-perp-small-a} is equivalent to the following condition on the masses \(M_1,\, M_2\)
        \begin{equation}\label{ineq:mass-relationship}
            M_1 \leq c M_2^{\alpha},\quad \alpha = \frac{\lambda_{11} - 1}{\lambda_{22} - 1} (\lambda_{21} + 1 -\lambda_{22})
        \end{equation} for some \(c > 0\) depending on \(d,\, D,\, K,\, P\) and \(Q\). That is, if \(M_1,\, M_2\) satisfy~\eqref{ineq:mass-relationship} then there exists smallness estimates on \(\norm{u_{01}}_{L^{p_1}}\) and \(\norm{u_{02}}_{L^{p_2}}\) for which~\eqref{eqn:agg_diff} has a global-in-time bound.
    \end{example} 
    
    
\section{Numerical examples}\label{section:numerics}

	The focus of this section is to illustrate some relationships between the regularities of the perception kernel and the behaviour of their corresponding solutions of~\eqref{eqn:agg_diff}. To do so, we consider a special family of perception kernels of the form \(K = \gamma W_s\), where \(\gamma > 0\) is the attractive strength of the kernel and, for each \(s \in (-d, \infty)\), \(W_s\) is a probability density function given as
	\begin{equation}\label{defn:gen-hats}
		W_s(x) :=  
		\begin{dcases}
			{\frac{s + d}{\omega_d s} (1 - \abs{x}^{s}) \chi_{B_1}(x)}, & s \neq 0, \\
			{-\frac{d}{\omega_d} \log(\abs{x}) \chi_{B_1}(x)}, & s = 0,
		\end{dcases}
	\end{equation} 
	where \(\chi_{B_1}\) is the indicator function for the unit ball \(B_1 := \{x: |x| \leq 1\}\) and \(\omega_d\) is the volume of \(B_1\) in \(d\) dimensions. Since all kernels in this section are chosen to be attractive, solutions of~\eqref{eqn:agg_diff} are specifically solutions to aggregation--diffusion systems.
 
	This choice of family of kernels allows for a wide range of regularities at the origin and at the boundary \(|x| = 1\). For \(s \leq 0\), \(W_s\) blows up at the origin and, as \(s\) tends to \(-d\) from above, an increasing proportion of the mass is concentrated about the origin and so \(W_s\) tends to the Dirac delta distribution (i.e.\ a point mass) in the sense of distributions. At the other extreme, as \(s\) tends to infinity, \(W_s(x)\) tends to the uniform distribution over \(B_1\) in the sense of distributions.

	Since the analysis in this work assumes that all the kernels are weakly differentiable and \(q\)-integrable for some \(q \in (1,\infty)\), the results obtained regarding global existence and uniform bounds of the solutions as covered in Sections~\ref{section:global} and~\ref{section:dynamics} only apply to \(W_s\) when \(s \in (-d+1,\infty)\). This follows by observing that the pointwise derivative, given as
    \begin{equation}\label{defn:gen-hats-deriv}
        \nabla W_s(x) = - \frac{s+d}{\omega_d} \frac{x}{\abs{x}} \abs{x}^{s-1} \chi_{B_1}(x)
    \end{equation}
    for all \(x \in B_1 \setminus \{0\}\) is \(q\)-integrable for some \(q > 1\). In particular, the \(L^q\) norm of \(\nabla W_s\),
	\begin{equation}
		\norm{\nabla W_s}_{L^q}^q = \int_{B_1} \abs{\nabla W_s(x)}^{q} \rmd x = \frac{d(s+d)^q}{\omega_d^{q-1}} \int_0^1 r^{q(s-1) + d-1} \rmd r,
	\end{equation}
	is well-defined and finite for any choice of \(s \in (-d+1,\infty)\) and \(q \in (1,\infty)\) satisfying \(q(s-1) > - d\). Thus, we can conclude that \(\nabla W_s \in L^q(\R^d)\) for any choice of \(q \in (1,\infty)\) when \(s \in [1,\infty)\) and for all \(q \in (1, d/(1-s))\) when \(s \in (-d+1,1)\). With this range in mind, define the critical integrability of \(\nabla W_s\) to be
	\begin{equation}
		\bar{q} := \sup\{q \in [1,\infty) : \nabla W_s \in L^q(\R^d)\} = 
		\begin{dcases}
			\frac{d}{1-s}, \, &s \in (-d+1,1), \\
			\infty, \, &s \in [1,\infty).
		\end{dcases}
	\end{equation}
	Similarly, by recalling the regularity parameter \(\lambda := d/q\) used throughout Section~\ref{section:dynamics}, define the critical regularity of \(W_s\) to be
	\begin{equation}
		\bar{\lambda} := \inf\{\lambda \in (0,d] : \nabla W_s \in L^{d/\lambda}(\R^d)\} = \max\{1-s,\, 0\}.
	\end{equation}
    Throughout this section, we study how numerical solutions of~\eqref{eqn:agg_diff} behave with respect to different choices of \(s\). By doing so, we can illustrate the main results presented in this work and explore their potential generalisations.

	\begin{figure}
		\centering
        \resizebox{0.45\linewidth}{!}{
		\begin{subfigure}{0.45\linewidth}
			\begin{tikzpicture}[baseline=(current axis.north)]
			\begin{axis}[
				xlabel={$x$},
				ylabel={$W_s(x)$},
                xlabel style={font=\small\sffamily, yshift=0em},
				ylabel style={font=\small\sffamily, yshift=-0.5em},
                tick label style={font=\footnotesize},
				xtick={-1, 0, 1},
				ytick={0, 0.5, 1.0, 1.5, 2.0},
				grid=major,
				grid style={dashed,gray!30},
				xmin=-1,
				xmax=1,
				ymin=0,
				ymax=2,
				legend pos=north east,
				legend style={font=\scriptsize},
				domain=-1:1,
				samples=50,
				smooth,
			]

			\addplot[
				color=black,
				line width=1,
				dashed,
				domain=-1:-0.1,
				samples=50,
				forget plot
			] {(6/10) * (1 - (abs(x))^5))};
			\addplot[
				color=black,
				line width=1,
				dashed,
				domain=0.1:1,
				samples=50,
				forget plot
			] {(6/10) * (1 - (abs(x))^5))};

			\addplot[
				color=black,
				line width=1,
				dashed,
				domain=-0.1:0.1,
				samples=100
			] {(6/10) * (1 - (abs(x))^5))};
            
			\addplot[
				color=black,
				line width=1,
				domain=-1:-0.1,
				samples=50,
				forget plot
			] {(1+1)/2 * (1 - (abs(x))^(1/1))};

			\addplot[
				color=black,
				line width=1,
				domain=0.1:1,
				samples=50,
				forget plot
			] {(1+1)/2 * (1 - (abs(x))^(1/1))};

			\addplot[
				color=black,
				line width=1,
				domain=-0.1:0.1,
				samples=50
			] {(1+1)/2 * (1 - (abs(x))^(1/1))};







			\addplot[
				color=black,
				line width=1,
				densely dotted,
				domain=-1:-0.1,
				samples=50
			] { 1/2 * (abs(x)^-1/2 - 1)};

            
			\addplot[
				color=black,
				line width=1,
				densely dotted,
				domain=0.1:1,
				samples=50,
				forget plot
			] { 1/2 * (abs(x)^-1/2 - 1)};

			\addplot[
				color=black,
				line width=1,
				densely dotted,
				domain=-0.1:0.1,
				samples=100,
				forget plot
			] { 1/2 * (abs(x)^-1/2 - 1)};

			\legend{$s = 5$, $s = 1$, $s = -0.5$}
			\end{axis}
		\end{tikzpicture}
		\end{subfigure}%
        }%
		\hspace{0.05\linewidth}%
        \resizebox{0.45\linewidth}{!}{
		\begin{subfigure}{0.45\linewidth}
			\begin{tikzpicture}[baseline=(current axis.north)]
			\begin{axis}[
				xlabel={$x$},
				ylabel={$W'_s(x)$},
                xlabel style={font=\small\sffamily, yshift=0em},
				ylabel style={font=\small\sffamily, yshift=-0.5em},
                tick label style={font=\footnotesize},
				xtick={-1, 0, 1},
				ytick={-4.0, -2.0, 0, 2.0, 4.0},
				grid=major,
				grid style={dashed,gray!30},
				xmin=-1,
				xmax=1,
				ymin=-4,
				ymax=4,
				legend pos=north east,
				legend style={font=\scriptsize},
				domain=-1:1,
				samples=50,
				smooth,
			]

			\addplot[
				color=black,
				line width=1,
				dashed,
				domain=-1:0,
				samples=100,
				forget plot
			] {3 * abs(x)^4};

			\addplot[
				color=black,
				line width=1,
				dashed,
				domain=0:1,
				samples=100,
				forget plot
			] {- 3 * abs(x)^4};

			\addplot[
				color=black,
				line width=1,
				dashed,
				domain=0.00001:0.1,
				samples=100
			] {- 3 * abs(x)^4};

			\addlegendentry{$s=5$}
            
			\addplot[
				color=black,
				line width=1,
				domain=-1:0
			] {1};

			\addplot[
				color=black,
				line width=1,
				domain=0:1,
				forget plot
			] {-1};

			\addplot[
				color=black,
				line width=1,
				domain=-1:1,
				forget plot
			] coordinates {(0,1) (0,-1)};





			\addlegendentry{$s=1$}






			\addplot[
				color=black,
				line width=1,
				densely dotted,
				domain=-1:-0.1,
				samples=50,
				forget plot
			] {1/4 * (abs(x))^(-3/2)};

			\addplot[
				color=black,
				line width=1,
				densely dotted,
				domain=0.1:1,
				samples=50,
				forget plot
			] {- 1/4 * (abs(x))^(-3/2)};

			\addplot[
				color=black,
				line width=1,
				densely dotted,
				domain=-0.1:-0.01,
				samples=50,
				forget plot
			] {1/4 * (abs(x))^(-3/2)};

			\addplot[
				color=black,
				line width=1,
				densely dotted,
				domain=0.01:0.1,
				samples=50
			] {- 1/4 * (abs(x))^(-3/2)};

			\addlegendentry{$s= -0.5$}
			\end{axis}
		\end{tikzpicture}
		\end{subfigure}
        }
		\caption{Graphs of the functions $W_s(x)$ and $W'_s(x)$ over the domain \(x \in [-1,1]\), as defined in~\eqref{defn:gen-hats} and~\eqref{defn:gen-hats-deriv} respectively, for $s \in \{-0.5,\, 1,\, 5\}$.} \label{plot:hats}
	\end{figure}

    In the following examples, numerical solutions of~\eqref{eqn:agg_diff} are generated using the finite element method which is implemented via the FEniCSx package~\cite{BarattaEtal2023, ScroggsEtal2022, BasixJoss, AlnaesEtal2014}. 
    
    \begin{example}[Relationship between $s$ and concentration of non-trivial steady states]\label{eg:num-steady}
        In this example we consider the large time behaviours of numerical approximations of solutions to the following 1D Cauchy problem for an aggregation--diffusion equation:
        \begin{equation}\label{eqn:1d-cauchy}
            \begin{dcases}
                \partial_t u = D\, \partial_{xx} u - \gamma\, \partial_x(u\, \partial_x(W_s \ast u)), &\quad (t, x) \in  (0,\infty) \times [-L, L]\\
                u(0,x) = \frac{1}{2} \chi_{[-1,1]}(x), & \quad x \in [-L,L],
            \end{dcases}
        \end{equation}
        with zero flux boundary conditions, \(D = 0.1\), \(L > 1\), and \(s,\, \gamma > 0\). The spatial convolution is defined by extending the solution by zero outside the domain. To ensure that solutions of~\eqref{eqn:1d-cauchy} closely approximate solutions over \(\R\), \(L\) is chosen to be large enough for the boundary effects to be negligible. 

        Depending on the choice of \(\gamma > 0\) for each \(s\), we numerically observe that the solution \(u(t)\) of~\eqref{eqn:1d-cauchy} either tends to \( u_\infty \equiv 0\) as \(t \to \infty\), i.e.\ the trivial steady state, or tends to a non-trivial steady state characterised by a single concentrated peak at the origin. Note that initial data with a larger support can produce a steady state consisting of multiple peaks, but we do not consider such scenarios here.

        While determining the non-trivial steady state $u_\infty$ analytically is beyond the scope of this work, we can determine how its concentration depends on the choice of $\gamma$ and $s$. Since $\bar{\lambda} = 1 - s < 1$ for any $s \in (0,1)$, it follows via Corollary~\ref{cor:sing-species-asym} that, for any choice of \(\lambda \in (\bar{\lambda},1)\) and \(p \geq \max\{2, 1/(1-\lambda)\}\),
        \begin{equation}\label{ineq:lp-gamma-D}
            \norm{u_\infty}_{L^p} \leq C \Big(\frac{\gamma}{D}\Big)^{1/p'(1-\lambda)}
        \end{equation}
        where \(C\) depends on \(p,\, \lambda\) and \(W_s\). This suggests that, for any fixed \(D > 0\) and \(s \in (0,1)\), and for sufficiently large \(\gamma\), the \(L^p\) norm of \(u_\infty\) satisfies
        \begin{equation}\label{ineq:u-inf-lp}
            \norm{u_\infty}_{L^p} \leq \inf \{C \gamma^{1/p'(1-\lambda)} : \lambda > \bar{\lambda}\} = \bar{C} \gamma^{1/p's}.
        \end{equation}
        From here, it is then natural to ask if the condition~\eqref{ineq:u-inf-lp} is algebraically sharp. That is, for an arbitrary choice of $s \in (0,1)$ and \(p \geq \max\{2, 1/s\}\), does the power law relationship $\norm{u_\infty}_{L^p} \propto \gamma^{1/p's}$
        hold for sufficiently large $\gamma$?

        By observing the steady states numerical solutions of~\eqref{eqn:1d-cauchy} found by running the solution up until a steady state was attained, this power law relationship does seem to hold. The log--log plot with respect to $\norm{u_\infty}_{L^p}$ and $\gamma$ given in Figure~\ref{subfig:s=0.5} illustrates this relationship clearly for $s = 0.5$ as the gradient of each line of best fit seems to be equal to \(1/p's\), up to a \(10^{-3}\) error. Since $u(t)$ appears to tend toward the trivial steady state below some \(\gamma^*\) depending on $s$, we have only sampled \(\gamma\) greater than \(\gamma^*\). 

        Even though Corollary~\ref{cor:sing-species-asym} only predicts~\eqref{ineq:u-inf-lp} for \(s \in (0,1)\), the numerical solutions suggest that the power law relationship holds for any choice of $s > 0$. For the choice of $s = 2$, the relationship has been illustrated in Figure~\ref{subfig:s=2} for a range of sampled \(\gamma\) greater than \(\gamma^*\). These observations seem to indicate that, for the one-parameter family of kernels $W_s$, the definition of $\bar{\lambda}$ can be generalised to simply being $\bar{\lambda} := 1 - s$.

        \begin{figure}[!ht]
    	\centering
    	\begin{subfigure}[t]{0.47\linewidth}
            \centering
    			\begin{tikzpicture}
    				\begin{axis}[
    					xlabel={$\log(\gamma)$},
    					ylabel={$\log(\|u_\infty\|_{L^p})$},
    					xlabel style={font=\small\sffamily, yshift=0em},
    					ylabel style={font=\small\sffamily, yshift=0.0em},
    					tick label style={font=\footnotesize},
    					grid=major,
    					grid style={dashed,gray!30},
    					width=8cm,
    					height=6.5cm,
    					xmin=-0.1,
    					xmax=1.6,
    					ymin=0.5,
    					ymax=3.75,
    					legend pos=north west,
    					legend style={font=\scriptsize\sffamily, inner sep=2pt, outer sep=0pt},
    					mark size=2pt,
    					]
    
    					\addplot[
    					color=black,
    					mark=*,
    					only marks
    					] coordinates {
                        (0.0, 0.716)
                        (0.1, 0.816)
                        (0.2, 0.916)
                        (0.3, 1.017)
                        (0.4, 1.117)
                        (0.5, 1.217)
                        (0.6, 1.317)
                        (0.7, 1.418)
                        (0.8, 1.519)
                        (0.9, 1.62)
                        (1.0, 1.721)
                        (1.1, 1.823)
                        (1.2, 1.925)
                        (1.3, 2.029)
                        (1.4, 2.133)
                        (1.5, 2.24)
    					};
    					\addplot[
    					color=black,
    					line width=1pt,
    					no marks,
    					domain=0:1.5
    					] {0.716 + 1.012*x};

    					\addplot[
    					color=black,
    					mark=*,
    					only marks
    					] coordinates {
                        (0.0, 1.031)
    					(0.1, 1.165)
                        (0.2, 1.298)
                        (0.3, 1.432)
                        (0.4, 1.565)
                        (0.5, 1.699)
                        (0.6, 1.833)
                        (0.7, 1.967)
                        (0.8, 2.101)
                        (0.9, 2.236)
                        (1.0, 2.371)
                        (1.1, 2.507)
                        (1.2, 2.644)
                        (1.3, 2.782)
                        (1.4, 2.922)
                        (1.5, 3.064)
    					};
    					\addplot[
    					color=black,
    					densely dashdotted,
    					line width=1pt,
    					no marks,
    					domain=0:1.5
    					] {1.031 + 1.351*x};
    
                        \addplot[
    					color=black,
    					mark=*,
    					only marks
    					] coordinates {
                        (0.0, 1.208)
                        (0.1, 1.358)
                        (0.2, 1.509)
                        (0.3, 1.659)
                        (0.4, 1.809)
                        (0.5, 1.96)
                        (0.6, 2.11)
                        (0.7, 2.261)
                        (0.8, 2.412)
                        (0.9, 2.564)
                        (1.0, 2.716)
                        (1.1, 2.869)
                        (1.2, 3.023)
                        (1.3, 3.178)
                        (1.4, 3.336)
                        (1.5, 3.496)
    					};
    					\addplot[
    					color=black,
    					densely dashdotted,
    					line width=1pt,
    					no marks,
    					domain=0:1.5
    					] {1.208 + 1.509*x};
    					\legend{,$p = 2$,,$p = 3$,,$p = 4$}
    				\end{axis}
    			\end{tikzpicture}
    		\caption{\(s=0.5\)}
            \label{subfig:s=0.5}
    	\end{subfigure}
    	\hfill
        \begin{subfigure}[t]{0.47\linewidth}
            \centering
    			\begin{tikzpicture}
    				\begin{axis}[
    					xlabel={$\log(\gamma)$},
    					ylabel={$\log(\|u_\infty\|_{L^p})$},
    					xlabel style={font=\small\sffamily, yshift=0em},
    					ylabel style={font=\small\sffamily, yshift=0.0em},
    					tick label style={font=\footnotesize},
    					grid=major,
    					grid style={dashed,gray!30},
    					width=8cm,
    					height=6.5cm,
    					xmin=0.45,
    					xmax=1.55,
    					ymin=0.1,
    					ymax=0.75,
    					legend pos=north west,
    					legend style={font=\scriptsize\sffamily, inner sep=2pt, outer sep=0pt},
    					mark size=2pt,
    					]
    
    					\addplot[
    					color=black,
    					mark=*,
    					only marks
    					] coordinates {
                        (0.5, 0.169)
                        (0.6, 0.194)
                        (0.7, 0.219)
                        (0.8, 0.244)
                        (0.9, 0.269)
                        (1.0, 0.294)
                        (1.1, 0.32)
                        (1.2, 0.345)
                        (1.3, 0.37)
                        (1.4, 0.395)
                        (1.5, 0.42)
    					};
    					\addplot[
    					color=black,
    					line width=1pt,
    					no marks,
    					domain=0.5:1.5
    					] {0.043 + 0.251*x};

    					\addplot[
    					color=black,
    					mark=*,
    					only marks
    					] coordinates {
                        (0.5, 0.273)
                        (0.6, 0.307)
                        (0.7, 0.34)
                        (0.8, 0.374)
                        (0.9, 0.407)
                        (1.0, 0.441)
                        (1.1, 0.474)
                        (1.2, 0.507)
                        (1.3, 0.541)
                        (1.4, 0.574)
                        (1.5, 0.607)
    					};
    					\addplot[
    					color=black,
    					densely dashdotted,
    					line width=1pt,
    					no marks,
    					domain=0.5:1.5
    					] {0.106 + 0.334*x};
    
                        \addplot[
    					color=black,
    					mark=*,
    					only marks
    					] coordinates {
                        (0.5, 0.34)
                        (0.6, 0.378)
                        (0.7, 0.416)
                        (0.8, 0.453)
                        (0.9, 0.491)
                        (1.0, 0.528)
                        (1.1, 0.566)
                        (1.2, 0.603)
                        (1.3, 0.641)
                        (1.4, 0.678)
                        (1.5, 0.716)
    					};
    					\addplot[
    					color=black,
    					densely dashdotted,
    					line width=1pt,
    					no marks,
    					domain=0.5:1.5
    					] {0.153 + 0.375*x};
    					\legend{,$p = 2$,,$p = 3$,,$p = 4$}
    				\end{axis}
    			\end{tikzpicture}
    		\caption{\(s=2\)}
            \label{subfig:s=2}
    	\end{subfigure}
		\caption{Plots of \((\log(\gamma),\, \log(\norm{u_\infty}_{L^p}))\) along with their lines of best fit for $s \in \{0.5, 2\}$ and $p \in \{2,3,4\}$. All simulations were run until a constant value for \(\norm{u_\infty}_{L^p}\) was observed. Time steps were less than \(10^{-2}\) and the mesh intervals had widths less than \(10^{-4}\).}
	\end{figure}
    \end{example}

    \begin{example}[Global existence due to interaction cycle property]\label{eg:num-good-cycle}
    
    In this example we instead consider the numerical approximations of solutions to the following Cauchy problem for a cross-aggregation--diffusion system, in \(d \in \{1,\, 2\}\) dimensions:
    \begin{equation}\label{eqn:2d-cauchy}
            \begin{dcases}
                \partial_t u_1 = D_1 \Delta u_1 - \nabla (u_1\, \nabla (W_{s_{12}} \ast u_2)), &\quad (t, x) \in  (0,\infty) \times [-L,L]^d\\
                \partial_t u_2 = D_2 \Delta u_2 - \nabla (u_2 \nabla (W_{s_{21}} \ast u_1)), &\quad (t, x) \in  (0,\infty) \times [-L,L]^d\\
                u_1(0,x) = u_2(0,x) = \max\{1 - \abs{x},\, 0\}, & \quad x \in [-L,L]^d,
            \end{dcases}
        \end{equation}
        with zero flux boundary conditions, \(D_1,\, D_2 > 0\), \(L > 1\), and \(s_{12}, s_{21} > -1\). Similarly to the previous example, the spatial convolution is defined by extending the solution by zero outside the domain and $L$ is chosen to be large enough so that the boundary effects are negligible. 

        \begin{figure}[!ht]
        	\centering
            \begin{subfigure}[t]{0.32\linewidth}
                \centering

            		  \caption{{\footnotesize $d = 2$, $D_2 = 0.05$, $(s_{12}, s_{21}) = (0.4, 0.4)$}}
                \label{s_1-s_2=0.4,0.4}
            \end{subfigure}
            \hfill
            \begin{subfigure}[t]{0.32\linewidth}
                \centering
        			%
            		  \caption{{\footnotesize $d = 2$, $D_2 = 0.11$, $(s_{12},s_{21}) = (-0.08,0.67)$}}
                \label{s_1-s_2=-0.08,0.67}
            \end{subfigure}
            \hfill
            \begin{subfigure}[t]{0.32\linewidth}
                \centering
        			%
            		  \caption{{\footnotesize $d=1$, $D_2 = 0.15$, $(s_{12}, s_{21}) = (-0.2, 0.7)$}}
                \label{s_1-s_2=-0.2,0.7}
            \end{subfigure}
    		\caption{Plots of \(\mathcal{E}_{(3,3)}(t)\) and \(\frac{\rmd}{\rmd t} \mathcal{E}_{(3,3)}(t)\) for numerical solutions of~\eqref{eqn:2d-cauchy} for fixed \(D_1 = 0.2\) and varying choices of \((s_{12},s_{21})\), \(D_2\) and \(d \in \{1,\,2\}\). Time steps were less than \(10^{-2}\) and the mesh elements had area less than \(10^{-4}\).}
    	\end{figure}
        
        \begin{figure}[!ht]
            \centering
            \resizebox{0.5\linewidth}{!}{
                %
                  }
            \caption{A graph showing the region for which different pairs of critical regularities \((\lambda_{12}, \lambda_{21})\) implies a uniform-in-time bound for \(\frac{\rmd}{\rmd t} \mathcal{E}_P(t)\). The critical regularities corresponding to the choices $(s_{12}, s_{21}) \in \{(0.4, 0.4), (-0.08, 0.67), (-0.2,0.7)\}$ are included along the with level set \((\bar{\lambda}_{12} \bar{\lambda}_{21})^{1/2} = 0.6\).}
            \label{fig:good-lambda-range}
        \end{figure}
    
        For choices of \(s_{12}\) and \(s_{21}\) such that \((\bar{q}_{12} \bar{q}_{21})^{1/2} > d\), or equivalently \(\bar{\lambda}_{12}\bar{\lambda}_{21} < 1\), it follows from Proposition~\ref{prop:L^p_est} that the \((p_1,p_2)\)-energy functional
        \begin{equation}
            \mathcal{E}_{(p_1,p_2)}(t) := \frac{1}{2(p_1-1)} \int_{\R^d} \abs{u_1(t)}^{p_1} \rmd x + \frac{1}{2(p_2-1)} \int_{\R^d} \abs{u_2(t)}^{p_2} \rmd x
        \end{equation}
        has a uniformly bounded in time derivative, \(\frac{\rmd}{\rmd t} \mathcal{E}_{(p_1, p_2)} (t) \leq \mathcal{C}\). 
        For the choice $(p_1, p_2) = (3,3)$, the uniform energy bound can be observed for solutions that do not tend toward the trivial steady state. In Figures~\ref{s_1-s_2=0.4,0.4} and~\ref{s_1-s_2=-0.08,0.67}, where \((s_{12}, s_{21})\) has been chosen each time so that \(\bar{\lambda}_{12} \bar{\lambda}_{21} \approx 0.6 < 1\), the size of \(\frac{\rmd}{\rmd t} \mathcal{E}_{(p_1, p_2)}\) appears to eventually peak before decreasing.

        Again, similarly to the previous example, the numerical simulations imply that the critical regularity parameter \(\bar{\lambda}\) can be generalised to be equal to \(1 - s\), even when \(\nabla W_s \not\in L^q(\R^d)\) for any \(q \in (1,\infty)\). This is illustrated in Figure~\ref{s_1-s_2=-0.2,0.7}, where the choice of \(s_{12} = -0.2\) when \(d = 1\) falls outside the scope of Proposition~\ref{prop:L^p_est}, yet the numerical solution to~\eqref{eqn:2d-cauchy} can be observed to have a bounded energy derivative and tends towards a non-trivial steady state.   
    \end{example}

\section{Discussion}\label{section:conclusion}
    
	By allowing each component of the initial data \(u_{0i}\) and each kernel \(K_{ij}\) to have their own regularity, we have demonstrated the importance of considering the interaction cycles in the system~\eqref{eqn:agg_diff} when establishing global-in-time estimates of the solution. Furthermore, by considering the dynamics of the concentration of each species \(u_i\), we have been able to extend the global well-posedness and small \(L^p\) estimates, as previously proven in~\cite{karch2011blow} for the single-species system, to a multi-species system.
    
	On the other hand, a result from~\cite{karch2011blow} that has not been extended to the heterogeneous system in this work is the demonstration of a finite-time blow-up for certain solutions to~\eqref{eqn:agg_diff}. For $N = 1$, it is proven that, for any \(q < d\), there exists some kernel satisfying \(\nabla K \in L^q(\R^d) \setminus L^d(\R^d)\) and some initial data \(u_0 \in L^1 \cap L^\infty(\R^d)\) such that the second moment of the solution vanishes in finite time, a sufficient condition for a chemotactic collapse. This raises the open problem of whether the geometric mean condition for every interaction cycle in Theorem~\ref{thm:global-1} is sharp. That is, if we assume that there is a choice of exponents \(q_{i_1 i_2}, \dots, q_{i_n i_1}\) such that \(\nabla K_{i_k i_{k+1}} \in L^{q_{i_k i_{k+1}}}(\R^d)\) for each \(k \in \{1,\dots,n\}\), \(i_{n+1} = i_1\), and \((q_{i_1 i_2} \cdots q_{i_n i_1})^{1/n} < d\), then there exists a choice of kernels \(K_{i_1 i_2}, \dots, K_{i_n i_1}\) and initial data \(u_0\) such that the solution to~\eqref{eqn:agg_diff} exhibits a finite-time blow-up. 

    Another open problem that follows on from Theorem~\ref{thm:small-conc} is obtaining a complete classification of kernels and initial data for which the corresponding operator \(\Phi\), as defined in~\eqref{defn:phi}, has a positive fixed point. Motivated by Example~\ref{ex:no-self-perp} and the interaction cycle condition in Theorem~\ref{thm:global-1}, we believe that it should be possible to show that \(\Phi\) has a positive fixed point independent of the initial data if the perception graph is strongly connected and the interaction cycles either all satisfy \(\lambda_{i_1 i_2} \cdots \lambda_{i_n i_1} > 1\), or all satisfy \(\lambda_{i_1 i_2} \cdots \lambda_{i_n i_1} < 1\). 
    
	Finally, the numerical examples considered in this work indicate the existence of a broader regularity condition on the kernels. In particular, Examples~\ref{eg:num-steady} and~\ref{eg:num-good-cycle} strongly suggest that the condition \(\nabla K \in L^q(\R^d)\) for some \(q > 1\) is not necessary for global existence of solutions. By investigating weaker regularity conditions, it may be possible to unify the well-posedness results of this work with analogous results for top hat kernels, as studied in~\cite{carrillo2024well}.

    
	\appendix

	\section{Sobolev space inequalities}

	\begin{lemma}[\!\!{\cite[\(\S\)B.2]{evans2010}}]\label{lem:lp_int}
		Suppose \(\psi \in L^p \cap L^q(\R^d)\) for some \(1 \leq p \leq q \leq \infty\). Then \(\psi \in L^r(\R^d)\) for all \(r \in [p,q]\) with the \(L^r\) bound
		\begin{equation}\label{ineq:lp_int}
			\norm{\psi}_{L^r} \leq \norm{\psi}_{L^p}^{1-\theta} \norm{\psi}_{L^q}^\theta,
		\end{equation}
		where \(\theta \in [0,1]\) satisfies 
		\begin{equation}\label{cond:lp_int_theta}
			\frac{1}{r} = \frac{1-\theta}{p} + \frac{\theta}{q}.
		\end{equation}
	\end{lemma}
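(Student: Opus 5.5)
The statement is the classical Riesz–Thorin-free interpolation of Lebesgue norms, so I will prove it directly from H\"older's inequality, with the endpoint cases handled separately.

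\emph{Trivial cases.} If \(r = p\) or \(r = q\), then \(\theta = 0\) or \(\theta = 1\) and the bound is an identity. So I assume \(p < r < q\), which forces \(\theta \in (0,1)\).

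\emph{Case \(q < \infty\).} I write \(\abs{\psi}^r = \abs{\psi}^{(1-\theta) r}\abs{\psi}^{\theta r}\). I then apply H\"older's inequality with the exponent pair
\begin{equation}
a = \frac{p}{(1-\theta)r}, \qquad b = \frac{q}{\theta r}.
\end{equation}
By~\eqref{cond:lp_int_theta} these satisfy \(1/a + 1/b = r\bigl((1-\theta)/p + \theta/q\bigr) = 1\), and both lie in \((1,\infty)\). H\"older then gives
\begin{equation}
\int_{\R^d} \abs{\psi}^r \rmd x \leq \Bigl(\int_{\R^d} \abs{\psi}^p \rmd x\Bigr)^{(1-\theta)r/p} \Bigl(\int_{\R^d} \abs{\psi}^q \rmd x\Bigr)^{\theta r/q} = \norm{\psi}_{L^p}^{(1-\theta)r}\norm{\psi}_{L^q}^{\theta r}.
\end{equation}
Taking \(r\)-th roots yields~\eqref{ineq:lp_int}. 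In particular the right-hand side is finite, so \(\psi \in L^r(\R^d)\).

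\emph{Case \(q = \infty\).} Here~\eqref{cond:lp_int_theta} reduces to \(1-\theta = p/r\). I bound \(\abs{\psi}^r \leq \norm{\psi}_{L^\infty}^{r-p}\abs{\psi}^p\) almost everywhere and integrate. Taking \(r\)-th roots gives \(\norm{\psi}_{L^r} \leq \norm{\psi}_{L^p}^{p/r}\norm{\psi}_{L^\infty}^{1-p/r}\), which is exactly~\eqref{ineq:lp_int}.

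No step is a genuine obstacle. The only points requiring care are checking that the H\"older exponents \(a\) and \(b\) are conjugate, which is a direct consequence of~\eqref{cond:lp_int_theta}, and treating the \(q=\infty\) endpoint separately as above.
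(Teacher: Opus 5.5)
Your proof is correct and is essentially the standard H\"older argument from the cited source (Evans, \S B.2), which the paper invokes without reproducing: split \(\abs{\psi}^r\) into two powers and apply H\"older with the conjugate exponents \(p/((1-\theta)r)\) and \(q/(\theta r)\). Your separate treatment of the degenerate cases \(r \in \{p,q\}\) and of the endpoint \(q = \infty\) is a small addition of care that the textbook proof leaves implicit.
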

	\begin{remark}\label{rem:bessel_int}
		If \(\varphi \in H^{s,p} \cap H^{s,q}(\R^d)\) for some \(s \in [0,\infty)\) and \(1 < p \leq q < \infty\), then using the definition a Bessel potential space there exists some locally integrable \(\psi\) such that \(\mathcal{J}_s \psi = \varphi\). Therefore it can be shown that \(\varphi \in H^{s,r}\) for any \(r \in [p,q]\) with the bound
		\begin{equation}\label{ineq:bessel_int}
			\norm{\varphi}_{H^{s,r}} = \norm{\psi}_{L^r} \leq \norm{\psi}_{L^p}^{1-\theta} \norm{\psi}_{L^q}^\theta =  \norm{\varphi}_{H^{s,p}}^{1-\theta} \norm{\varphi}_{H^{s,q}}^\theta,
		\end{equation}
		where \(\theta \in [0,1]\) satisfies~\eqref{cond:lp_int_theta}. Furthermore, by considering the following inequality~\cite[\(\S\)B.2]{evans2010}, for any \(a,\, b \geq 0\) and \(\theta \in [0,1]\),
        \begin{equation}
            a^{1-\theta}b^{\theta} \leq (1-\theta)a + \theta b \leq a + b
        \end{equation}
        it follows via~\eqref{ineq:bessel_int} that \(H^{s,p} \cap H^{s,q}(\R^d) \subset H^{s,r}(\R^d)\) for all \(s \in [0,\infty)\) as
        \begin{equation}\label{ineq:inbtwn-space}
            \norm{\varphi}_{H^{s,r}} \leq \norm{\varphi}_{H^{s,p}} + \norm{\varphi}_{H^{s,q}} =: \norm{\varphi}_{H^{s,p} \cap H^{s,q}}.
        \end{equation}
	\end{remark}
    
	The next two results are simple generalisations of the well-known Hölder's and Young's inequalities~\cite[pp.~39,\,90--91]{lieb2001analysis}. Both results can be extended to hold over \(W^{k,p}\) spaces using the Leibniz rule for differentiation and \(\partial_{x_i} (\varphi \ast \psi) = (\partial_{x_i} \varphi) \ast \psi =  \varphi \ast (\partial_{x_i} \psi)\) respectively, where \(\displaystyle{(\varphi \ast \psi)(x) := \int_{\R^d} \varphi(x-y) \psi(y) \rmd y}\).

	\begin{lemma}\label{lem:sobolev_ineqs}
		Suppose \(\varphi \in W^{k,p}(\R^d)\) and \(\psi \in W^{m,q}(\R^d)\) for \(1 \leq p,\,q \leq \infty\) and \(k,\, m \in \N_0\). Then we have the following:
        \begin{itemize}
            \item If \(k=m\) and \(0 \leq 1/p + 1/q \leq 1\), then \(\varphi \psi \in W^{k,r}(\R^d)\) with the bound
    		\begin{equation}\label{ineq:holder}
    			\norm{\varphi \psi}_{W^{k,r}} \leq C_H \norm{\varphi}_{W^{k,p}}\norm{\psi}_{W^{k,q}}, \quad 1/r = 1/p + 1/q,
    		\end{equation}
    		where \(C_H > 0\) depends on \(k,\, d\) and \(C_H = 1\) when \(k \in \{0,\, 1\}\).
            \item If \(0 \leq 1/p + 1/q - 1 \leq 1\), then 
			\begin{equation}\label{ineq:young_conv}
				\norm{\varphi \ast \psi}_{W^{k+m,r}} \leq \norm{\varphi}_{W^{k,p}}\norm{\psi}_{W^{m,q}},\quad 1/r = 1/p + 1/q - 1.
			\end{equation}
        \end{itemize}      
	\end{lemma}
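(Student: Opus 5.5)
We need to prove Lemma sobolev_ineqs: the Hölder bound in $W^{k,r}$ and the Young bound in $W^{k+m,r}$.

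For the first bullet, start from the Leibniz rule for weak derivatives: for any multi-index $\alpha$ with $\abs{\alpha}\le k$,
\begin{equation}
D^\alpha(\varphi\psi)=\sum_{\beta\le\alpha}\binom{\alpha}{\beta}D^\beta\varphi\,D^{\alpha-\beta}\psi .
\end{equation}
This identity holds for $\varphi\in W^{k,p}$, $\psi\in W^{k,q}$ with $1/p+1/q\le1$. To justify it, approximate one factor by mollification; when $p$ or $q$ is infinite, mollify the finite-exponent factor and use weak-$\ast$ convergence for the other. For each term apply Hölder's inequality with $1/r=1/p+1/q$:
\begin{equation}
\norm{D^\beta\varphi\,D^{\alpha-\beta}\psi}_{L^r}\le\norm{D^\beta\varphi}_{L^p}\norm{D^{\alpha-\beta}\psi}_{L^q}.
\end{equation}
Summing over $\abs{\alpha}\le k$ and $\beta\le\alpha$, every product $\norm{D^\beta\varphi}_{L^p}\norm{D^\gamma\psi}_{L^q}$ with $\abs{\beta},\abs{\gamma}\le k$ appears with some combinatorial weight. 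Bound each weight by $C_H:=\max\binom{\alpha}{\beta}$ times a counting factor depending only on $k$ and $d$. The sum is then dominated by $C_H\norm{\varphi}_{W^{k,p}}\norm{\psi}_{W^{k,q}}$, because the product of the two sums contains all pairs $(\beta,\gamma)$.

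For $k=0$ this is plain Hölder, so $C_H=1$. For $k=1$ we have $\partial_i(\varphi\psi)=\partial_i\varphi\,\psi+\varphi\,\partial_i\psi$. The resulting terms $\norm{\varphi}\norm{\psi}$, $\norm{\partial_i\varphi}\norm{\psi}$ and $\norm{\varphi}\norm{\partial_i\psi}$ each appear with coefficient one. They are all contained in the expansion of $(\norm{\varphi}_{L^p}+\sum_i\norm{\partial_i\varphi}_{L^p})(\norm{\psi}_{L^q}+\sum_i\norm{\partial_i\psi}_{L^q})$, so $C_H=1$.

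For the second bullet, use that derivatives commute with convolution. Every $\gamma$ with $\abs{\gamma}\le k+m$ can be split as $\gamma=\beta+\delta$ with $\abs{\beta}\le k$ and $\abs{\delta}\le m$, and then
\begin{equation}
D^\gamma(\varphi\ast\psi)=D^\beta\varphi\ast D^\delta\psi .
\end{equation}
This follows by testing against $C_c^\infty$ functions and Fubini, which is legitimate once Young's inequality shows the convolutions are finite. Young's inequality with $1/r=1/p+1/q-1$ gives
\begin{equation}
\norm{D^\gamma(\varphi\ast\psi)}_{L^r}\le\norm{D^\beta\varphi}_{L^p}\norm{D^\delta\psi}_{L^q}.
\end{equation}
For each $\gamma$ fix one such splitting. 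The map $\gamma\mapsto(\beta,\delta)$ is injective, since $\gamma=\beta+\delta$ is recovered from the pair. Summing over $\gamma$ is therefore bounded by the full double sum over $(\beta,\delta)$, which equals $\norm{\varphi}_{W^{k,p}}\norm{\psi}_{W^{m,q}}$. This gives constant one.

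The only delicate point is justifying the Leibniz and derivative-convolution identities at the endpoint exponents $p$ or $q\in\{1,\infty\}$, where density arguments need care. I would handle it by mollifying only the factor with finite exponent and passing to the limit in distributions.
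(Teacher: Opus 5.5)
Your proposal is correct and takes the same route as the paper. The paper's one-line justification uses the Leibniz rule with H\"older's inequality for the product bound, and commutes derivatives with convolution before applying Young's inequality for the convolution bound. Your injective splitting $\gamma\mapsto(\beta,\delta)$, with $\gamma=\beta+\delta$, $\abs{\beta}\le k$ and $\abs{\delta}\le m$, supplies the bookkeeping for the constant one that the paper leaves implicit.
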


	As \((H^{s^0,p}(\R^d), H^{s^1,p}(\R^d))\) form a \emph{complex interpolation pair} for any choice of \(s^0, s^1 \in [0,\infty)\) and \(p \in (1,\infty)\), they satisfy the conditions of the multi-linear interpolation result in~\cite[Theorem 4.4.1]{bergh1976}. That is, we can state the following. 

	\begin{lemma}\label{lem:multi_int}
		For some \(n \in \N\), let \((H^{s^0_{i},\,p_i}, H^{s^1_{i},\,p_i})_{i=1}^{n+1}\) be a collection of pairs of Bessel potential spaces, such that \(0 \leq s^0_{i} \leq s^1_{i} < \infty\) and \(p_i \in (1,\infty)\) for each \(i \in \{1,\dots,n+1\}\). Now suppose that \(T : \bigoplus_{i = 1}^n (H^{s^0_i,\,p_i} \cap H^{s^1_i,\,p_i}) \to H^{s_{n+1}^0,\,p_{n+1}} \cap H^{s_{n+1}^1,\,p_{n+1}}\) is a multi-linear operator such that, for any \(\varphi = (\varphi_1, \dots, \varphi_n) \in \bigoplus_{i = 1}^n (H^{s^0_i,\,p_i} \cap H^{s^1_i,\,p_i})\), there exists constants \(M_0,\, M_1 > 0\) such that
		\begin{equation}
			\norm{T[\varphi]}_{H^{s_{n+1}^0,\,p_{n+1}}} \leq M_0 \prod_{i=1}^{n} \norm{\varphi_i}_{H^{s^0_{i},\,p_i}},\quad \text{and}\quad \norm{T[\varphi]}_{H^{s_{n+1}^1,\,p_{n+1}}} \leq M_1 \prod_{i=1}^{n} \norm{\varphi_i}_{H^{s^1_{i},\,p_i}}.
		\end{equation}	
		Then, for any \(\theta \in [0,1]\), there exists a unique extension \(T:\bigoplus_{i = 1}^n H^{s^\theta_{i},\, p_i} \to H^{s_{n+1}^\theta,p_{n+1}}\) with operator norm \(M_\theta \leq M_0^{1-\theta} M_1^\theta\), where \((s^\theta_i)_{i=1}^{n+1}\) satisfies \(s^\theta_i = (1-\theta)s^0_{i} + \theta s^1_{i}\) for each \(i\).
	\end{lemma}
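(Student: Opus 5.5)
The statement is Lemma~\ref{lem:multi_int}, the multilinear complex interpolation result. My plan is to obtain it as a direct specialisation of the multilinear interpolation theorem \cite[Theorem 4.4.1]{bergh1976}, applied with the complex method.

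\textbf{Step 1: the interpolation couples.} For each \(i\), the pair \((H^{s^0_i,p_i}, H^{s^1_i,p_i})\) is a compatible Banach couple. Both spaces embed continuously in \(\mathcal{S}'(\R^d)\), and \(\mathcal{J}_{s}\) is an isometric isomorphism from \(L^{p_i}\) onto \(H^{s,p_i}\). The key input is the standard identification of the complex interpolation space,
\begin{equation}
[H^{s^0_i,p_i}, H^{s^1_i,p_i}]_\theta = H^{s^\theta_i,p_i}
\end{equation}
with equal norms, valid for \(1<p_i<\infty\). I would cite this from Bergh--L\"ofstr\"om (Theorem 6.4.5) rather than reprove it. The underlying idea is to conjugate by \(\mathcal{J}_{s^0_i}\) and interpolate the analytic family \(z\mapsto(\mathrm{Id}-\Delta)^{z(s^1_i-s^0_i)/2}\). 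Its imaginary powers are bounded on \(L^{p_i}\) by the Mikhlin multiplier theorem, with polynomial growth in \(\operatorname{Im} z\), which is admissible.

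\textbf{Step 2: the endpoint bounds.} The multilinear operator \(T\) is defined on the intersections \(\bigoplus_i(H^{s^0_i,p_i}\cap H^{s^1_i,p_i})\) and satisfies the endpoint bounds with constants \(M_0\) and \(M_1\). These are exactly the hypotheses of \cite[Theorem 4.4.1]{bergh1976}, which then gives
\begin{equation}
\norm{T[\varphi]}_{[\cdot]_\theta} \leq M_0^{1-\theta}M_1^{\theta}\prod_i \norm{\varphi_i}_{[\cdot]_\theta}
\end{equation}
on the intersections. The proof there takes \(F_i\in\mathcal{F}(A^0_i,A^1_i)\) with \(F_i(\theta)=\varphi_i\). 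It then forms \(G(z)=M_0^{z-1}M_1^{-z}\,T[F_1(z),\dots,F_n(z)]\), an analytic function in the strip, and applies the three-lines bound. Combining this with Step 1 converts the interpolation norms into \(H^{s^\theta,p}\) norms.

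\textbf{Step 3: extension and uniqueness.} The intersection \(H^{s^0,p}\cap H^{s^1,p}\) contains \(\mathcal{S}(\R^d)\), and is therefore dense in \(H^{s^\theta,p}\), since \(\mathcal{J}_s\) maps \(\mathcal{S}\) onto \(\mathcal{S}\) and \(\mathcal{S}\) is dense in \(L^p\). A bounded multilinear map on a dense subspace extends uniquely, with the same norm, by the usual Cauchy-sequence argument. Multilinearity lets one estimate differences term by term with the product bound.

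\textbf{Main obstacle.} The hard step is Step 1, the identification of the complex interpolation spaces. The density/Schwartz technicality in Step 2 is secondary: the three-lines argument must be run on the dense class of functions that Theorem 4.4.1 requires. Since that theorem is stated for general compatible couples, I would rely on the citation and only verify that the couples are compatible.
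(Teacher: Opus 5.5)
Your proposal follows the paper's own argument: the paper simply notes that $(H^{s^0,p},H^{s^1,p})$ is a complex interpolation couple and invokes \cite[Theorem 4.4.1]{bergh1976}, and your Steps 1--3 unpack exactly that citation. One caveat, which the paper's one-line citation also glosses over: Bergh--L\"ofstr\"om identify $[H^{s^0_i,p_i},H^{s^1_i,p_i}]_\theta$ with $H^{s^\theta_i,p_i}$ only up to equivalent norms, not equal ones, and your own Step 1 sketch gives the same (there the Mikhlin bounds on $(\mathrm{Id}-\Delta)^{i\tau}$ grow polynomially in $\tau$); so the argument actually yields $M_\theta\le C\,M_0^{1-\theta}M_1^{\theta}$ with $C$ depending on $s^0_i,s^1_i,p_i,\theta,d$, which is harmless for the use in Lemma~\ref{lem:tri_lin_s_bound}, but you should not claim equality of norms.
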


	One application of the mutli-linear interpolation is the following tri-linear bound.

	\begin{lemma}\label{lem:tri_lin_s_bound}
		Suppose \(\nabla \mathcal{K} \in L^q(\R^d)\), \(\varphi \in H^{s,r}(\R^d)\) and \(\psi \in H^{s,q'}(\R^d)\) for \(s \in [0,\infty)\) and \(r,\,q \in (1,\infty)\). Then, \(\abs{\varphi \nabla (\mathcal{K}\ast \psi)} \in H^{s,r}(\R^d)\) with the bound
		\begin{equation}\label{ineq:int_flux_bound}
			\norm{\varphi \nabla (\mathcal{K}\ast \psi)}_{H^{s,r}} \leq  C_H \norm{\nabla \mathcal{K}}_{L^q} \norm{\varphi}_{H^{s,r}} \norm{\psi}_{H^{s,q'}},
		\end{equation}
		where \(C_H > 0\) depends on \(s\) and \(d\).
	\end{lemma}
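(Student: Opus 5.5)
The plan is to prove \eqref{ineq:int_flux_bound} at integer endpoints and then fill in the intermediate values of \(s\) with the multi-linear interpolation result, Lemma~\ref{lem:multi_int}. Fix \(\nabla\mathcal{K} \in L^q(\R^d)\) and consider the bilinear map
\[
T[\varphi,\psi] := \varphi\,(\nabla\mathcal{K} \ast \psi).
\]
We use \(\nabla(\mathcal{K}\ast\psi) = (\nabla\mathcal{K})\ast\psi\). We first show \(T\) is bounded \(H^{k,r}\times H^{k,q'} \to H^{k,r}\) for every \(k \in \N_0\). The bound for \(s\in[k,k+1]\) then follows by interpolating between \(k\) and \(k+1\), using that \(H^{k,p}\) and \(W^{k,p}\) agree with equivalent norms for \(p\in(1,\infty)\).

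For \(k=0\), apply H\"older and then Young's convolution inequality:
\[
\norm{\varphi\,(\nabla\mathcal{K}\ast\psi)}_{L^r} \leq \norm{\varphi}_{L^r}\norm{\nabla\mathcal{K}\ast\psi}_{L^\infty} \leq \norm{\varphi}_{L^r}\norm{\nabla\mathcal{K}}_{L^q}\norm{\psi}_{L^{q'}}.
\]
This is exactly the estimate already used in Lemma~\ref{lem:advec_op_bbd}.

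For \(k\geq 1\), expand derivatives with the Leibniz rule. For \(\abs{\alpha}\leq k\),
\[
D^\alpha T[\varphi,\psi] = \sum_{\beta\leq\alpha}\binom{\alpha}{\beta} D^{\alpha-\beta}\varphi\,\big(\nabla\mathcal{K}\ast D^\beta\psi\big),
\]
where all derivatives are placed on \(\psi\) inside the convolution, so that \(\nabla\mathcal{K}\) is never differentiated. Each term is bounded in \(L^r\) by \(\norm{D^{\alpha-\beta}\varphi}_{L^r}\norm{\nabla\mathcal{K}}_{L^q}\norm{D^\beta\psi}_{L^{q'}}\). This is the \(W^{k,p}\) extension of H\"older and Young noted before Lemma~\ref{lem:sobolev_ineqs}, used with \(p=\infty\) for the convolution factor. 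Summing gives
\[
\norm{T[\varphi,\psi]}_{W^{k,r}} \leq C_k\norm{\nabla\mathcal{K}}_{L^q}\norm{\varphi}_{W^{k,r}}\norm{\psi}_{W^{k,q'}},
\]
with \(C_k\) depending only on \(k\) and \(d\) through the binomial coefficients. Converting to Bessel norms costs only constants depending on \(k\), \(d\), \(r\) and \(q\).

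For non-integer \(s\), write \(s\in(k,k+1)\) with \(\theta = s-k\). Apply Lemma~\ref{lem:multi_int} with \(n=2\) and exponents \(p_1=r\), \(p_2=q'\), \(p_3=r\), and with smoothness indices \(s_i^0=k\), \(s_i^1=k+1\) for all three spaces. The endpoint constants are \(M_0,M_1\lesssim\norm{\nabla\mathcal{K}}_{L^q}\), so the interpolated operator norm satisfies \(M_\theta\leq M_0^{1-\theta}M_1^\theta \lesssim \norm{\nabla\mathcal{K}}_{L^q}\), which is \eqref{ineq:int_flux_bound}.

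The main obstacle is making this interpolation step legitimate. Lemma~\ref{lem:multi_int} requires \(T\) to be defined on the intersection spaces \(H^{k,p_i}\cap H^{k+1,p_i}\), and its unique extension must agree with the pointwise product on \(H^{s,r}\times H^{s,q'}\). We would handle this with a density argument: Schwartz functions are dense in every \(H^{s,p}\), and by the \(k=0\) estimate the pointwise definition is continuous into \(L^r\), so it coincides with the extension. The remaining point is the constant. The lemma states \(C_H\) depends on \(s\) and \(d\), but the norm equivalence \(H^{k,p}\simeq W^{k,p}\) introduces a dependence on \(r\) and \(q\). We would either absorb this dependence into \(C_H\) by allowing it to depend on \(r\) and \(q\) as well, or note that it is harmless where the lemma is used in Lemma~\ref{lem:bi-lin_bound}, since there the exponents are fixed.
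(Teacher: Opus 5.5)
Your proposal is correct and follows the paper's proof essentially step for step. The paper likewise bounds the bilinear map at integer orders $k$ by H\"older followed by Young (Lemma~\ref{lem:sobolev_ineqs}), keeping all derivatives off $\nabla\mathcal{K}$, and then fills in non-integer $s$ with Lemma~\ref{lem:multi_int}, obtaining $C_H(s,d)\le C_H(k^0,d)^{1-\theta}C_H(k^1,d)^{\theta}$. Your two extra remarks are legitimate points that the paper passes over silently: the density argument showing that the interpolated extension coincides with the pointwise product, and the fact that the equivalence $H^{k,p}\simeq W^{k,p}$ makes the constant depend on $r$ and $q$ as well.
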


	\begin{proof}
		For some fixed \(\nabla \mathcal{K} \in L^q(\R^d)\) consider the bi-linear map \(T(\varphi,\psi) := \varphi \nabla (\mathcal{K} \ast \psi)\), where \(\varphi \in W^{k,r}(\R^d)\) and  \(\psi \in W^{k,q'}(\R^d)\) for some \(k \in \N_0\). By applying~\eqref{ineq:holder} followed by~\eqref{ineq:young_conv} from Lemma~\ref{lem:sobolev_ineqs}, observe that
		\begin{equation}\label{ineq:tri-lin-k}
			\norm{\varphi \nabla (\mathcal{K} \ast \psi)}_{W^{k,r}} \leq C_H(k,d) \norm{\nabla \mathcal{K}}_{L^q} \norm{\varphi}_{W^{k,r}}  \norm{\psi}_{W^{k,q'}}.
		\end{equation}
		Now for some \(s \in [0,\infty)\), consider~\eqref{ineq:tri-lin-k} for \(k^0,\, k^1 \in \N_0\) such that \(k^0 \leq s \leq k^1\). By then applying Lemma~\ref{lem:multi_int} on the operator \(T\), conclude that~\eqref{ineq:int_flux_bound} holds for some constant \(C_H(s,d) \leq C_H(k^0,d)^{1-\theta} C_H(k^1,d)^\theta\), where \(\theta\) is chosen such that \(s = (1-\theta) k^0 + \theta k^1\). 
	\end{proof}

	\section{Heat semigroup}

	\begin{definition}[Heat semigroup]\label{defn:heat_semi}
		For some fixed \(\nu > 0\), we define \(\{S(t)\}_{t \geq 0}\) to be the \emph{heat semigroup} over \(\R^d\) with diffusion constant \(\nu\). For \(t=0\), \(S(0)\) is the identity operator and, for each \(t > 0\), the heat semigroup can be written as the convolution operator \(S(t) := \Gamma_\nu(t,\cdot)\, \ast\), where
		\begin{equation}\label{eqn:heat_semi_conv}
			 \Gamma_\nu(t,x) := (4 \pi \nu t)^{-d/2} \exp(-\abs{x}^2/4 \nu t)
		\end{equation}
		is the heat kernel for diffusion constant \(\nu\).
	\end{definition}

	As bounded continuous functions are dense in \(L^p(\R^n)\) for all \(1 \leq p < \infty\), the following properties hold from~\cite[\(\S 7\)]{jost2007partial}.
    
	\begin{lemma}\label{lem:heat_semi_props}
		Suppose \(1 \leq p < \infty\) and \(\nu > 0\). Then the heat semigroup \(S(t)\) with diffusion constant \(\nu > 0\) satisfies the following for all \(\varphi \in L^{p}(\R^d)\), and \(t,\,s \in [0,\infty)\):
		\begin{itemize}
			\item \(S(0) \varphi = \varphi\), \emph{(Identity at 0)}
			\item \(S(t+s) \varphi = S(t)S(s)\varphi\), \emph{(Semigroup property)}
			\item \(\lim_{t \to 0} \norm{S(t)\varphi - \varphi}_{L^{p}} = 0\), \emph{(Strong continuity)}
			\item \(\frac{\rmd}{\rmd t} S(t)\varphi = \nu \Delta S(t) \varphi,\) for all \(t > 0\). \emph{(Differentiability)}
		\end{itemize}
	\end{lemma}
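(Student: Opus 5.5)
The proof goes property by property, using only the explicit Gaussian kernel \(\Gamma_\nu\) from~\eqref{eqn:heat_semi_conv} and Young's convolution inequality~\eqref{ineq:young_conv}. The first fact to record is that \(\Gamma_\nu(t,\cdot) \geq 0\) and \(\int_{\R^d} \Gamma_\nu(t,x)\,\rmd x = 1\). With Young's inequality this gives the contraction bound \(\norm{S(t)\varphi}_{L^p} \leq \norm{\varphi}_{L^p}\) for every \(t \geq 0\), so each \(S(t)\) is well defined and bounded on \(L^p(\R^d)\). The identity at \(0\) holds by definition.

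\emph{Semigroup property.} For \(t,s>0\), the Fourier transform of \(\Gamma_\nu(t,\cdot)\) is \(\exp(-\nu t\abs{\xi}^2)\). Hence \(\Gamma_\nu(t,\cdot)\ast\Gamma_\nu(s,\cdot) = \Gamma_\nu(t+s,\cdot)\); one could equally check this by completing the square in the Gaussian integral. For \(\varphi \in L^p\), the function \((y,z) \mapsto \Gamma_\nu(t,x-y)\Gamma_\nu(s,y-z)\abs{\varphi(z)}\) is integrable for almost every \(x\), by Young applied to \(\abs{\varphi}\). Fubini then allows regrouping the double convolution, which gives \(S(t)S(s)\varphi = S(t+s)\varphi\). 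The cases \(t=0\) or \(s=0\) are trivial.

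\emph{Strong continuity.} By Minkowski's integral inequality,
\[
\norm{S(t)\varphi-\varphi}_{L^p} \leq \int_{\R^d} \Gamma_\nu(1,z)\,\norm{\varphi(\cdot - \sqrt{t}\,z)-\varphi}_{L^p}\,\rmd z ,
\]
after substituting \(y=\sqrt{t}z\) and using the unit mass of \(\Gamma_\nu\). Translation is continuous in \(L^p\) for \(p<\infty\), so the integrand tends to \(0\) pointwise in \(z\) as \(t \to 0\). It is dominated by \(2\norm{\varphi}_{L^p}\Gamma_\nu(1,z)\), so dominated convergence finishes the argument. Continuity of translations can be proved first for bounded continuous compactly supported functions and then extended by density, using the uniform contraction bound; this is the density remark preceding the lemma.

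\emph{Differentiability.} This is the step needing the most care. Fix \(t>0\) and \(\abs{h}<t/2\). Then
\[
\frac{S(t+h)\varphi-S(t)\varphi}{h} - \partial_t\Gamma_\nu(t,\cdot)\ast\varphi = g_h\ast\varphi,
\qquad g_h := \frac{\Gamma_\nu(t+h,\cdot)-\Gamma_\nu(t,\cdot)}{h}-\partial_t\Gamma_\nu(t,\cdot).
\]
By Young it suffices to show \(\norm{g_h}_{L^1}\to0\). The mean value theorem gives \(\abs{g_h(x)} \leq 2\sup_{\tau\in[t/2,3t/2]}\abs{\partial_t\Gamma_\nu(\tau,x)}\). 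Differentiating the Gaussian explicitly, this supremum is bounded by \(C_t(1+\abs{x}^2)\exp(-\abs{x}^2/(6\nu t))\), which is integrable. Since \(g_h\to0\) pointwise, dominated convergence gives the \(L^1\) convergence. The explicit derivative also shows \(\partial_t\Gamma_\nu = \nu\Delta_x\Gamma_\nu\). The same Gaussian domination of the spatial derivatives justifies differentiating under the convolution, giving \(\Delta(\Gamma_\nu(t,\cdot)\ast\varphi) = (\Delta\Gamma_\nu(t,\cdot))\ast\varphi\). Therefore \(\frac{\rmd}{\rmd t}S(t)\varphi = \nu\Delta S(t)\varphi\), with the derivative taken in \(L^p\). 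The main obstacle is obtaining this uniform-in-\(h\) Gaussian domination, and it is handled by restricting \(\tau\) to the compact interval \([t/2,3t/2]\) away from \(0\).
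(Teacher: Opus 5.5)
Your argument is correct, and it takes a different route from the paper. The paper does not prove the lemma itself: it cites \cite[\S 7]{jost2007partial} and remarks that bounded continuous functions are dense in \(L^p(\R^d)\), implying that properties known for regular data pass to \(L^p\) by density together with the contraction bound. You instead derive everything directly on \(L^p\) from the explicit Gaussian. You use the Fourier transform and Tonelli/Fubini for the semigroup law, and Minkowski's integral inequality after the scaling \(y = \sqrt{t}\,z\), plus continuity of translations, for strong continuity. For differentiability you use \(L^1\)-convergence of the difference-quotient kernel \(g_h\).

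The citation buys brevity, but the density transfer is not automatic for the differentiability item. Passing a derivative identity from a dense class to all of \(L^p\) needs a bound on \(h^{-1}(S(t+h)-S(t))\) as operators on \(L^p\), uniform for small \(h\). It also needs the derivative to be understood in the \(L^p\) norm rather than pointwise. Your estimate \(\norm{g_h \ast \varphi}_{L^p} \leq \norm{g_h}_{L^1} \norm{\varphi}_{L^p}\) with \(\norm{g_h}_{L^1} \to 0\) supplies exactly this; indeed it gives differentiability in operator norm for \(t>0\). Since convolution with \(g_h\) commutes with derivatives and with \(\mathcal{J}_s\), your argument also transfers verbatim to the \(W^{k,p}\) and \(H^{s,p}\) settings where the paper later uses the lemma.

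One small imprecision: in the density step for continuity of translations, the uniform bound you need is that translations are isometries of \(L^p\), not the contraction bound for \(S(t)\). This does not affect the argument.
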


	\begin{remark}\label{rem:semi_s,p}
        Note that the strong continuity of \(S(t)\) also holds for the \(H^{s,p}\) norm. First observe that \(\Gamma_\nu(t,\cdot) \ast J_s = J_s \ast \Gamma_\nu(t,\cdot)\), which implies that \(S(t) \mathcal{J}_s = \mathcal{J}_s S(t)\). By then considering \(\varphi = \mathcal{J}_s \psi \in H^{s,p}(\R^d)\) for some \(\psi \in L^p(\R^d)\) it then follows that
        \begin{equation}\label{eqn:strong-cty}
            \lim_{t \to 0} \norm{S(t)\varphi - \varphi}_{H^{s,p}} = \lim_{t \to 0} \norm{\mathcal{J}_s(S(t)\psi - \psi)}_{H^{s,p}} = \lim_{t \to 0} \norm{S(t)\psi - \psi}_{L^{p}} = 0.
        \end{equation}
    \end{remark}
    
	
	Of particular interest are the following inequalities for the heat semigroup, as stated in~\cite{karch2011blow, cozzi2025global}.\noeqref{ineq:semigrp_hess_bound}
	\begin{lemma}\label{lem:semi_lp_bounds}
		Let \(1 \leq p \leq \infty\) and suppose \(\varphi \in L^{p}(\R^d)\). Then, for all \(t > 0\),
		\begin{align}
			\norm{S(t)\varphi}_{L^{p}} & \leq \norm{\varphi}_{L^{p}} \label{ineq:semigrp_contraction}\\
			\norm{\nabla S(t) \varphi}_{L^{p}} & \leq C_1 (\nu t)^{-\frac{1}{2}} \norm{\varphi}_{L^{p}}, \label{ineq:semigrp_deriv_bund}
			\\ \norm{\nabla^2 S(t) \varphi}_{L^p} & \leq C_{2} (\nu t)^{-1} \norm{\varphi}_{L^p}, \label{ineq:semigrp_hess_bound}
		\end{align}
		where \(C_1,\, C_{2} > 0\) both depend on \(d\).
	\end{lemma}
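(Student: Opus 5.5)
The statement is Lemma~\ref{lem:semi_lp_bounds}, the three heat semigroup bounds. I would prove all three the same way. Write $S(t)\varphi = \Gamma_\nu(t,\cdot)\ast\varphi$, differentiate under the convolution, and apply Young's convolution inequality with the kernel in $L^1$. This gives
\begin{equation}
\norm{\nabla^k S(t)\varphi}_{L^p} \leq \norm{\nabla^k_x \Gamma_\nu(t,\cdot)}_{L^1}\norm{\varphi}_{L^p}, \qquad k\in\{0,1,2\}.
\end{equation}
So everything reduces to computing $L^1$ norms of the heat kernel and its first two spatial derivatives.

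For $k=0$ there is nothing to estimate: $\Gamma_\nu(t,\cdot)\geq 0$ and it integrates to one, so $\norm{\Gamma_\nu(t,\cdot)}_{L^1}=1$. This gives~\eqref{ineq:semigrp_contraction}.

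For $k=1,2$ I would use the parabolic scaling $\Gamma_\nu(t,x) = (\nu t)^{-d/2}\,G(x/\sqrt{\nu t})$, where $G(y):=(4\pi)^{-d/2}e^{-\abs{y}^2/4}$. Differentiating gives
\begin{equation}
\nabla^k_x\Gamma_\nu(t,x) = (\nu t)^{-d/2-k/2}\,(\nabla^k G)(x/\sqrt{\nu t}).
\end{equation}
Substituting $x=\sqrt{\nu t}\,y$ then yields
\begin{equation}
\norm{\nabla^k\Gamma_\nu(t,\cdot)}_{L^1} = (\nu t)^{-k/2}\norm{\nabla^k G}_{L^1}.
\end{equation}
The constants are $C_1:=\norm{\nabla G}_{L^1}$ and $C_2:=\norm{\nabla^2 G}_{L^1}$, using the norm conventions of the Notation section (sums of component norms). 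They are finite because $\nabla G$ and $\nabla^2 G$ are polynomials of degree at most two times a Gaussian. They depend only on $d$, which gives~\eqref{ineq:semigrp_deriv_bund} and~\eqref{ineq:semigrp_hess_bound}.

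The only step needing care is justifying $\nabla^k(\Gamma\ast\varphi)=(\nabla^k\Gamma)\ast\varphi$ for $\varphi\in L^p$, including $p=\infty$. I would handle this by dominated convergence on difference quotients. For fixed $t>0$, each $\nabla^k\Gamma_\nu(t,\cdot)$ lies in $L^1\cap L^\infty$ and has Gaussian decay, so a mean-value bound on the difference quotients gives a dominating function in $L^{p'}$. Alternatively, one can verify the identity distributionally and then apply Young's inequality. Either route also covers the vector-valued versions used with $\nabla S(t)\cdot f$ in the body of the paper, applied componentwise.
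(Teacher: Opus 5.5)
Your proposal is correct and follows the paper's proof. Both arguments use Young's convolution inequality to reduce everything to the $L^1$ norms of $\Gamma_\nu(t,\cdot)$ and its first two derivatives; your parabolic scaling computation and your justification of $\nabla^k(\Gamma_\nu(t,\cdot)\ast\varphi) = (\nabla^k\Gamma_\nu(t,\cdot))\ast\varphi$ simply fill in details the paper leaves implicit.
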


    \begin{proof}
        Recall the definition of the heat semigroup, \( S(t) \varphi = \Gamma_\nu(t,\cdot) \ast \varphi\), where \(\Gamma_\nu(t,\cdot)\) is a Gaussian function for all \(t > 0\). Therefore, via Young's convolution inequality it can be seen that, for any \(\alpha \in \N_0^d\),
        \begin{equation}
            \norm{D^\alpha S(t) \varphi}_{L^p} \leq \norm{D^\alpha \Gamma_\nu(t,\cdot)}_{L^1} \norm{\varphi}_{L^p}.
        \end{equation}
        The result then follows by evaluating \(\norm{\Gamma_\nu(t,\cdot)}_{L^1}\), \(\norm{\nabla \Gamma_\nu(t,\cdot)}_{L^1}\) and \(\norm{\nabla^2 \Gamma_\nu(t,\cdot)}_{L^1}\) respectively.
    \end{proof}
    Due to the inequality in~\eqref{ineq:young_conv}, it follows that Lemmas~\ref{lem:heat_semi_props} and~\ref{lem:semi_lp_bounds} can be shown to also hold for \(W^{k,p}\) spaces. Furthermore, observe that from the definitions of \(\norm{\psi}_{W^{k,p}}\), \(\norm{\nabla \psi}_{W^{k,p}}\) and \(\norm{\nabla^2 \psi}_{W^{k,p}}\),
    \begin{equation}\label{ineq:semi_wkp}
        \begin{aligned}
            \norm{S(t)\varphi}_{W^{{k+1},p}} &= \norm{S(t)\varphi}_{W^{k,p}} + \norm{\nabla S(t) \varphi}_{W^{k,p}} \leq \left(1 + C_1(\nu t)^{-1/2}\right) \norm{\varphi}_{W^{k,p}}, \\
            \norm{\nabla S(t)\varphi}_{W^{{k+1},p}} &= \norm{\nabla S(t)\varphi}_{W^{k,p}} + \norm{\nabla^2 S(t) \varphi}_{W^{k,p}} \leq \left(C_1(\nu t)^{-1/2} + C_2(\nu t)^{-1}\right) \norm{\varphi}_{W^{k,p}}.
        \end{aligned}
    \end{equation}
    
    As \(1 \leq (\nu t)^{-1/2} \leq (\nu t)^{-1}\) for all \(t \leq \nu^{-1}\) and vice versa for \(t > \nu^{-1}\), we can apply Lemma~\ref{lem:multi_int} to conclude the following result~\cite{taylor2011vol3}.
    
	\begin{lemma}\label{lem:semi_embed}
		Suppose \(\nu > 0\), \(s \in [0,\infty)\), \(\sigma \in [0,1]\) and \(p \in (1,\infty)\). Then for any \(0 < t \leq \nu^{-1}\), \(S(t)\) and \(\nabla S(t)\) are bounded operators between \(H^{s,p}(\R^d)\) and \(H^{s+\sigma,p}(\R^d)\). That is, for any \(\varphi \in H^{s,p}(\R^d)\), \(t > 0\),
		\begin{align}
			\norm{S(t)\varphi}_{H^{s+\sigma,p}} &\leq C_{\sigma} \max\{(\nu t)^{-\sigma/2},\, 1\} \norm{\varphi}_{H^{s,p}} \label{ineq:snu_1}\\
			\norm{\nabla S(t)\varphi}_{H^{s+\sigma,p}} &\leq C_{\sigma+1} \max\{(\nu t)^{-(\sigma+1)/2},\, (\nu t)^{-1/2}\} \norm{\varphi}_{H^{s,p}}, \label{ineq:snu_2}
		\end{align}
		where \(C_{\sigma} ,\, C_{\sigma+1} > 0\) are constants depending on \(\sigma\) and \(d\).
	\end{lemma}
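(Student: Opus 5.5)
The plan is to interpolate between integer orders via the multilinear interpolation result, Lemma~\ref{lem:multi_int}, applied with \(n=1\) to the linear operators \(S(t)\) and \(\nabla S(t)\), where \(t\in(0,\nu^{-1}]\) is fixed. This is what the remark preceding the statement suggests.

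\emph{Integer-order endpoint bounds.} Take an integer \(k\ge \lceil s\rceil+1\). Because convolution with \(\Gamma_\nu(t,\cdot)\) commutes with derivatives, \eqref{ineq:semi_wkp} gives
\[
\norm{S(t)\varphi}_{W^{k+1,p}} \le (1+C_1(\nu t)^{-1/2})\norm{\varphi}_{W^{k,p}}
\le (1+C_1)(\nu t)^{-1/2}\norm{\varphi}_{W^{k,p}}
\]
for \(\nu t\le1\). Together with \(\norm{S(t)\varphi}_{W^{k,p}}\le \norm{\varphi}_{W^{k,p}}\), which follows from \eqref{ineq:semigrp_contraction}, this yields endpoint estimates for smoothing orders \(\sigma=0\) and \(\sigma=1\) at every integer level. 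Using the equivalence \(H^{k,p}\simeq W^{k,p}\) for \(p\in(1,\infty)\), these transfer to \(H^{k,p}\to H^{k,p}\) and \(H^{k,p}\to H^{k+1,p}\), at the cost of constants depending only on \(k\), \(d\) and \(p\).

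\emph{Two-step interpolation.} First interpolate in the source and target regularity simultaneously. Applying Lemma~\ref{lem:multi_int} to the pair \(H^{k,p}\to H^{k+\sigma_0,p}\) and \(H^{k+1,p}\to H^{k+1+\sigma_0,p}\) with \(\sigma_0\in\{0,1\}\) gives, for any real \(s\ge0\), bounds \(H^{s,p}\to H^{s,p}\) with constant \(C\) and \(H^{s,p}\to H^{s+1,p}\) with constant \(C(\nu t)^{-1/2}\). Then interpolate once more, with the source fixed at \(H^{s,p}\) and the target running between \(H^{s,p}\) and \(H^{s+1,p}\), using \(\theta=\sigma\). This gives
\[
\norm{S(t)\varphi}_{H^{s+\sigma,p}}\le C^{1-\theta}\bigl(C(\nu t)^{-1/2}\bigr)^{\theta}\norm{\varphi}_{H^{s,p}},
\]
which is \eqref{ineq:snu_1} because \((\nu t)^{-\sigma/2}=\max\{(\nu t)^{-\sigma/2},1\}\) when \(\nu t\le1\).

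\emph{Gradient estimate.} For \eqref{ineq:snu_2} the same two steps apply, starting from the second line of \eqref{ineq:semi_wkp} together with \(\norm{\nabla S(t)\varphi}_{W^{k,p}}\le C_1(\nu t)^{-1/2}\norm{\varphi}_{W^{k,p}}\). The endpoint constants are then \((\nu t)^{-1/2}\) and \(C(\nu t)^{-1}\), so interpolation produces \((\nu t)^{-(1+\sigma)/2}\), which equals the stated maximum in this regime.

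\emph{Main obstacle.} The statement claims the bound for all \(t>0\), whereas the argument above is carried out for \(\nu t\le 1\). For \(\nu t>1\) I would not interpolate the powers. Instead I would use that the endpoint constants are then bounded by \(1+C_1\) and \((C_1+C_2)(\nu t)^{-1/2}\) respectively. Interpolating these gives \(\max\{\cdot,1\}\) in \eqref{ineq:snu_1} and \((\nu t)^{-1/2}\) in \eqref{ineq:snu_2}, matching the stated maxima.

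The step needing most care is the first interpolation in Lemma~\ref{lem:multi_int}, where the source space varies along with the target. Each of \(H^{k,p}\) and \(H^{k+1,p}\) must be identified with its \(W\)-norm counterpart before interpolating, and the constants must stay uniform in \(t\). If that route proves awkward, I would fall back on working directly with \(\mathcal{J}_s\), which commutes with \(S(t)\) as in Remark~\ref{rem:semi_s,p}, thereby reducing everything to \(s=0\).
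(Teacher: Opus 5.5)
Your proposal is correct and follows essentially the paper's route: the integer-order bounds \eqref{ineq:semi_wkp}, interpolation via Lemma~\ref{lem:multi_int}, and the split at \(\nu t = 1\) that produces the stated maxima. The only slip is the integer level, which should be \(k=\lfloor s\rfloor\) (so that \(s\in[k,k+1]\)) rather than \(k\ge\lceil s\rceil+1\); this is harmless because your endpoint estimates hold at every integer level, and your \(\mathcal{J}_s\) fallback would also remove the dependence of the constants on \(\lfloor s\rfloor\), which the statement implicitly requires.
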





	{\bibliography{refs_abbrev}{}

@article{armstrong2006continuum,
  title = {A continuum approach to modelling cell-cell adhesion},
  journal = {J. Theoret. Biol.},
  volume = {243},
  number = {1},
  pages = {98--113},
  year = {2006},
  issn = {0022-5193},
  author = {Armstrong, Nicola J. and Painter, Kevin J.  and Sherratt,Jonathan A.}
}

@article{bernoff2013nonlocal,
  title={Nonlocal aggregation models: A primer of swarm equilibria},
  author={Bernoff, Andrew J. and Topaz, Chad M.},
  journal={SIAM Rev.},
  volume={55},
  number={4},
  pages={709--747},
  year={2013},
  publisher={SIAM}
}

@article{blanchet2006two,
  title={TWO-DIMENSIONAL {K}ELLER-{S}EGEL MODEL: OPTIMAL CRITICAL MASS AND QUALITATIVE PROPERTIES OF THE SOLUTIONS},
  author={Blanchet, Adrien and Dolbeault, Jean and Perthame, Beno{\^i}t},
  journal={Electron. J. Differential Equations},
  volume = {2006},
  number={44},
  pages={1--33},
  year={2006}
}

@article{carrillo2024well,
  title={Well-posedness of aggregation-diffusion systems with irregular kernels},
  author={Carrillo, Jos{\'e} A. and Salmaniw, Yurij and Skrzeczkowski, Jakub},
  journal={Ann. Inst. H. Poincar{\'e} C Anal. Non Lin{\'e}aire},
  year={2026},
  note={Online first}
}

@article{carrillo2026long,
  title={Long-time behaviour and bifurcation analysis of a two-species aggregation-diffusion system on the torus},
  author={Carrillo, Jos{\'e} A. and Salmaniw, Yurij},
  journal={Calc. Var. Partial Differential Equations},
  volume={65},
  number={1},
  pages={19},
  year={2026},
  publisher={Springer}
}

@article{cozzi2025global,
  title={Global existence for a nonlocal multi-species aggregation-diffusion equation},
  author={Cozzi, Elaine and Radke, Zachary},
  journal={Quart. Appl. Math.},
  volume = {84},
  number = {1},
  pages = {1--29},
  year={2026}
}

@article{fagan2017perceptual,
  title={Perceptual ranges, information gathering, and foraging success in dynamic landscapes},
  author={Fagan, William F. and Gurarie, Eliezer and Bewick, Sharon and Howard, Allison and Cantrell, Robert S. and Cosner, Chris},
  journal={Am. Nat.},
  volume={189},
  number={5},
  pages={474--489},
  year={2017},
  publisher={University of Chicago Press Chicago, IL}
}

@article{fetecau2011swarm,
  title={Swarm dynamics and equilibria for a nonlocal aggregation model},
  author={Fetecau, Razvan C and Huang, Yanghong and Kolokolnikov, Theodore},
  journal={Nonlinearity},
  volume={24},
  number={10},
  pages={2681},
  year={2011},
  publisher={IOP Publishing}
}

@article{giuntaLocalGlobalExistence2022,
  author = {Valeria Giunta and Thomas Hillen and Mark A. Lewis and Jonathan R. Potts},
  title = {Local and global existence for nonlocal multispecies advection-diffusion models},
  journal = {SIAM J. Appl. Dyn. Syst.},
  volume = {21},
  number = {3},
  pages = {1686--1708},
  year = {2022}
}

@misc{giunta2025phylogeny,
  title={A phylogeny of biological patterns formed by nonlocal advection},
  author={Valeria Giunta and Thomas Hillen and Mark A. Lewis and Jonathan R. Potts},
  year={2025},
  note={arXiv preprint arXiv:2506.00489}
}

@article{giunta2025positivity,
    title = {Positivity and global existence for nonlocal advection-diffusion models of interacting populations},
    journal = {AIMS Math.},
    volume = {10},
    number = {9},
    pages = {21254-21272},
    year = {2025},
    issn = {2473-6988},
    author = {Valeria Giunta and Thomas Hillen and Mark A. Lewis and Jonathan R. Potts},
}

@article{haubold2011mittag,
  title={{M}ittag-{L}effler functions and their applications},
  author={Haubold, Hans J. and Mathai, Arak M. and Saxena, Ram K.},
  journal={J. Appl. Math.},
  volume={2011},
  number={1},
  pages={298628},
  year={2011},
  publisher={Wiley Online Library}
}

@article{he2021multi,
  title={Multi-species {Patlak--Keller--Segel} system},
  author={He, Siming and Tadmor, Eitan},
  journal={Indiana Univ. Math. J.},
  volume={70},
  number={4},
  pages={1577--1624},
  year={2021},
  publisher={JSTOR}
}

@article{herrero1996chemotactic,
  title={Chemotactic collapse for the {Keller--Segel} model},
  author={Herrero, Miguel A. and Vel{\'a}zquez, Juan J. L.},
  journal={J. Math. Biol.},
  volume={35},
  number={2},
  pages={177--194},
  year={1996},
  publisher={Springer}
}

@article{jungel2022nonlocal,
  title = {Nonlocal cross-diffusion systems for multi-species populations and networks},
  journal = {Nonlinear Anal.},
  volume = {219},
  pages = {112800},
  year = {2022},
  issn = {0362-546X},
  author = {Ansgar J\"ungel and Stefan Portisch and Antoine Zurek}
}

@article{karch2011blow,
  title={Blow-up versus global existence of solutions to aggregation equations},
  author={Karch, Grzegorz and Suzuki, Kanako},
  journal={Appl. Math. (Warsaw)},
  volume={38},
  number={3},
  pages={243--258},
  year={2011},
  publisher={Institute of Mathematics}
}

@article{koertje2024collective,
  title={Collective group drift in a partial-differential-equation-based opinion dynamics model with biased perception kernels},
  author={Koertje, Christian and Sayama, Hiroki},
  journal={Phys. Rev. E},
  volume={109},
  number={3},
  pages={034304},
  year={2024},
  publisher={APS}
}

@article{li2010wellposedness,
  title={Wellposedness and regularity of solutions of an aggregation equation},
  journal ={Rev. Mat. Iberoam.},
  author={Li, Dong and Rodrigo, Jos{\'e} L.},
  volume = {26},
  number = {1},
  pages = {261--294},
  year={2010}
}

@article{milewski2008simple,
  title={A simple model for biological aggregation with asymmetric sensing},
  author={Milewski, Paul A. and Xu, Yang},
  journal={Commun. Math. Sci.},
  volume={6},
  number={2},
  pages={397--416},
  year={2008},
  publisher={International Press of Boston, Inc.}
}

@article{nash1958continuity,
  title={Continuity of solutions of parabolic and elliptic equations},
  author={Nash, John},
  journal={Amer. J. Math.},
  volume={80},
  number={4},
  pages={931--954},
  year={1958},
  publisher={JSTOR}
}

@article{painter2024biological,
  author = {Painter, Kevin J. and Hillen, Thomas and Potts, Jonathan R.},
  title = {Biological modeling with nonlocal advection-diffusion equations},
  journal = {Math. Models Methods Appl. Sci.},
  volume = {34},
  number = {1},
  pages = {57--107},
  year = {2024}
}

@article{painter2024variations,
  title={Variations in non-local interaction range lead to emergent chase-and-run in heterogeneous populations},
  author={Painter, Kevin J. and Giunta, Valeria and Potts, Jonathan R. and Bernardi, Sara},
  journal={J. R. Soc. Interface},
  volume={21},
  number={219},
  pages={20240409},
  year={2024},
  publisher={The Royal Society}
}

@article{potts2019spatial,
  title={Spatial memory and taxis-driven pattern formation in model ecosystems},
  author={Potts, Jonathan R. and Lewis, Mark A.},
  journal={Bull. Math. Biol.},
  volume={81},
  number={7},
  pages={2725--2747},
  year={2019},
  publisher={Springer}
}

@article{tello2012stabilization,
  title={Stabilization in a two-species chemotaxis system with a logistic source},
  author={Tello, Jos{\'e} I. and Winkler, Michael},
  journal={Nonlinearity},
  volume={25},
  number={5},
  pages={1413--1425},
  year={2012}
}

@article{wang2023open,
  title = {Open problems in {PDE} models for knowledge-based animal movement via nonlocal perception and cognitive mapping},
  journal = {J. Math. Biol.},
  volume = {86},
  number = {5},
  pages = {71},
  year = {2023},
  issn = {0303-6812},
  author = {Wang, Hao and Salmaniw, Yurij}
}

@article{ye2007generalized,
  title={A generalized {G}r{\"o}nwall inequality and its application to a fractional differential equation},
  author={Ye, Haiping and Gao, Jianming and Ding, Yongsheng},
  journal={J. Math. Anal. Appl.},
  volume={328},
  number={2},
  pages={1075--1081},
  year={2007},
  publisher={Elsevier}
}

@book{bergh1976,
  title = {Interpolation {{Spaces}}: {{An Introduction}}},
  shorttitle = {Interpolation {{Spaces}}},
  author = {Bergh, J{\"o}ran and L{\"o}fstr{\"o}m, J{\"o}rgen},
  year = {1976},
  series = {Grundlehren der mathematischen {{Wissenschaften}}},
  volume = {223},
  publisher = {Springer},
  doi = {10.1007/978-3-642-66451-9},
  urldate = {2024-11-12},
  copyright = {http://www.springer.com/tdm},
  isbn = {978-3-642-66453-3 978-3-642-66451-9},
  langid = {english}
}

@book{cormen2009,
  author = {Cormen, Thomas H. and Leiserson, Charles E. and Rivest, Ronald L. and Stein, Clifford},
  publisher = {The MIT Press, Cambridge, MA},
  edition = {3rd},
  title = {Introduction to Algorithms},
  year = {2009}
}

@book{evans2010,
  title = {Partial Differential Equations},
  author = {Evans, Lawrence C.},
  year = {2010},
  series = {Graduate Studies in Mathematics},
  edition = {2nd},
  volume = {19},
  publisher = {American mathematical society},
  address = {Providence (R.I.)},
  isbn = {978-0-8218-4974-3},
  langid = {english},
  lccn = {515.353}
}

@book{grafakos2009modern,
  title={Modern Fourier Analysis},
  author={Grafakos, Loukas},
  edition = {2nd},
  volume={250},
  year={2009},
  publisher={Springer},
  doi = {https://doi.org/10.1007/978-0-387-09434-2},
}

@book{jost2007partial,
  title={Partial differential equations},
  author={Jost, J{\"u}rgen},
  year={2007},
  publisher={Springer}
}

@book{lieb2001analysis,
  title={Analysis},
  author={Lieb, Elliott H. and Loss, Michael},
  volume={14},
  year={1997},
  edition={1st},
  publisher={American Mathematical Soc.}
}

@book{stein1970singular,
  title={Singular integrals and differentiability properties of functions},
  author={Stein, Elias M.},
  year={1970},
  publisher={Princeton University Press}
}

@book{guo1988nonlinear,
  title={Nonlinear problems in abstract cones},
  author={Guo, Dajun and Lakshmikantham, Vangipuram},
  volume={1},
  year={1988},
  publisher={Academic press}
}

@book{taylor2011vol3,
  title = {Partial {{Differential Equations III}}: {{Nonlinear Equations}}},
  shorttitle = {Partial {{Differential Equations III}}},
  author = {Taylor, Michael E.},
  year = {2011},
  edition = {3rd},
  series = {Applied {{Mathematical Sciences}}},
  volume = {117},
  publisher = {Springer New York},
  doi = {10.1007/978-1-4419-7049-7},
  urldate = {2024-10-22},
  copyright = {http://www.springer.com/tdm},
  isbn = {978-1-4419-7048-0 978-1-4419-7049-7},
  langid = {english}
}

@misc{BarattaEtal2023,
  title     = {{DOLFINx}: the next generation {FEniCS} problem solving environment},
  author    = {Baratta, Igor A. and Dean, Joseph P. and Dokken, J{\o}rgen S. and Habera, Michal and Hale, Jack S. and Richardson, Chris N. and Rognes, Marie E. and Scroggs, Matthew W. and Sime, Nathan and Wells, Garth N.},
  doi       = {10.5281/zenodo.10447666},
  year      = {2023},
  howpublished = {preprint}
}

@article{ScroggsEtal2022,
  title     = {Construction of arbitrary order finite element degree-of-freedom maps on polygonal and polyhedral cell meshes},
  author    = {Scroggs, Matthew W. and Dokken, J{\o}rgen S. and Richardson, Chris N. and Wells, Garth N.},
  journal   = {ACM Trans. Math. Software},
  year      = {2022},
  volume    = {48},
  number    = {2},
  doi       = {10.1145/3524456},
  pages     = {{18:1--18:23}},
}

@article{BasixJoss,
  title     = {Basix: a runtime finite element basis evaluation library},
  author    = {Scroggs, Matthew W. and Baratta, Igor A. and Richardson, Chris N. and Wells, Garth N.},
  journal   = {J. Open Source Softw.},
  year      = {2022},
  volume    = {7},
  number    = {73},
  doi       = {10.21105/joss.03982},
  pages     = {3982}
}

@article{AlnaesEtal2014,
  title     = {Unified Form Language: A domain-specific language for weak formulations of partial differential equations},
  author    = {Alnaes, Martin S. and Logg, Anders and {\O}lgaard, Kristian B. and Rognes, Marie E. and Wells, Garth N.},
  journal   = {ACM Trans. Math. Software},
  year      = {2014},
  volume    = {40},
  doi       = {10.1145/2566630},
}

@article{chen2020mathematical,
  title={Mathematical models for cell migration: a non-local perspective},
  author={Chen, Li and Painter, Kevin and Surulescu, Christina and Zhigun, Anna},
  journal={Philos. Trans. Roy. Soc. B},
  volume={375},
  number={1807},
  pages={20190379},
  year={2020},
  publisher={The Royal Society}
}

@article{giunta2024weakly,
  title={Weakly nonlinear analysis of a two-species non-local advection--diffusion system},
  author={Valeria Giunta and Thomas Hillen and Mark A. Lewis and Jonathan R. Potts},
  journal={Nonlinear Anal. Real World Appl.},
  volume={78},
  pages={104086},
  year={2024},
  publisher={Elsevier}
}
	\bibliographystyle{plain}
    }

\end{document}